\documentclass{amsart}
\usepackage[utf8]{inputenc}

\usepackage{amsmath}
\usepackage{mathtools}
\usepackage{amssymb}
\usepackage{mismath}
\usepackage{dirtytalk}
\usepackage{amsthm}
\usepackage{float}

\DeclareRobustCommand{\SkipTocEntry}[5]{}

\usepackage{blindtext}

\usepackage[margin=1.05in]{geometry}

\usepackage{array}
\usepackage{tabularx}
\usepackage{longtable}
\usepackage{chngpage}

\usepackage[table,dvipsnames]{xcolor}
\usepackage{hyperref}

\definecolor{bytreebluevertex}{RGB}{128,128,255}
\definecolor{bytreeblueline}{RGB}{77,77,255}
\definecolor{bytreeyellowline}{RGB}{255,169,82}
\definecolor{bytreeyellowvertex}{RGB}{255,232,191}

\usepackage{pdflscape}
\usepackage{comment}

\newtheorem{theorem}{Theorem}[section]
\newtheorem{corollary}[theorem]{Corollary}
\newtheorem{lemma}[theorem]{Lemma}
\newtheorem{fact}[theorem]{Fact}
\newtheorem{proposition}[theorem]{Proposition}

\usepackage[toc]{appendix}

\theoremstyle{definition}

\newtheorem{notation}[theorem]{Notation}
\newtheorem{definition}[theorem]{Definition}

\newtheorem{example}[theorem]{Example}

\newtheorem{remark}[theorem]{Remark}

\usepackage{enumerate}

\usepackage{wrapfig}

\usepackage{caption}
\captionsetup{justification=centering} 

\usepackage{mathrsfs}

\usepackage{dsfont}

\usepackage{graphicx}

\usepackage[OT2,T1]{fontenc}

\usepackage{tikz}
\usetikzlibrary{shapes,positioning}
\usetikzlibrary{arrows,chains,matrix,positioning,scopes}
\usetikzlibrary {decorations.pathmorphing, decorations.pathreplacing, decorations.shapes}

 \usepackage[UKenglish]{babel}
\usepackage{graphicx}	
\usepackage{amsthm}
\usepackage{amsmath}
\usepackage{amssymb}
\usepackage{mathtools}
\usepackage{enumerate}
\usepackage{mathrsfs}
\usepackage{thmtools, thm-restate}
\usepackage[table]{xcolor}
\usepackage{tabularx, booktabs}
\usepackage{tikz}
\usepackage{pgfplots}

\usepackage{tikz-cd} 
\usetikzlibrary {arrows.meta,bending,positioning,patterns,decorations}
\usepackage{pbox}
\usepackage{tkz-graph}
\usetikzlibrary{snakes,fit,positioning,calc,shapes}
\usepackage{graphicx}
\usepackage{cleveref,tikz-cd,pbox,wrapfig}
\definecolor{amethyst}{rgb}{0.6, 0.4, 0.8}
\definecolor{atomictangerine}{rgb}{1.0, 0.6, 0.4}
\definecolor{deeppeach}{rgb}{1.0, 0.8, 0.64}
\definecolor{eggshell}{rgb}{0.94, 0.92, 0.84}
\definecolor{lightapricot}{rgb}{0.99, 0.84, 0.69}
\definecolor{lemonchiffon}{rgb}{1.0, 0.98, 0.8}
\definecolor{roundabout}{rgb}{1.0, 0.91, 0.75}
\definecolor{atomictangerine}{rgb}{1.0, 0.6, 0.4}
\definecolor{ruby}{rgb}{0.88, 0.07, 0.37}
\definecolor{sapphire}{rgb}{0.03, 0.15, 0.4}

\def\rootsep{0.03}               
\def\clustersep{0.06}            
\def\cnamescale{0.4}             
\def\cdepthscale{0.4}            
\def\cltopskip{1pt}              
\def\clbottomskip{1pt}           

\def\rootscaleDot{0.1}  \def\rootcolorDot{black}

\def\rootscale{0.5}   \def\rootcolor{gray}
\def\rootscaleA{0.7}  \def\rootcolorA{yellow}
\def\rootscaleB{0.5}  \def\rootcolorB{green}
\def\rootscaleC{0.4}  \def\rootcolorC{sapphire}
\def\rootscaleD{0.45}  \def\rootcolorD{ruby}
\tikzset{
  clA/.style = {very thick,black},
  clB/.style = {thick,purple},rootDot/.style = {circle,scale=\rootscaleDot,fill=\rootcolorDot},
    rcDot/.style 2 args = {right=#1*1.5*\clustersep of {#2.east|-first},rootDot}, rrDot/.style = {right=\rootsep of {#1.east|-first},rootDot}
}

\def\graphdslabelscale{0.6}

\def\GraphScale{0.6}


\tikzset{
  root/.style = {circle,scale=\rootscale,fill=\rootcolor},
    rc/.style 2 args = {right=#1*1.5*\clustersep of {#2.east|-first},root}, rr/.style = {right=\rootsep of {#1.east|-first},root},
  roott/.style = {circle,inner sep=-2pt,minimum size=5pt,black,font=\ttfamily\footnotesize},
    rct/.style 2 args = {right=#1*1.5*\clustersep of {#2.east|-first},roott}, rrt/.style = {right=\rootsep of {#1.east|-first},roott},
  rootA/.style = {circle,scale=\rootscaleA,ball color=\rootcolorA},
    rcA/.style 2 args = {right=#1*1.5*\clustersep of {#2.east|-first},rootA}, rrA/.style = {right=\rootsep of {#1.east|-first},rootA},
  rootB/.style = {circle,scale=\rootscaleB,ball color=\rootcolorB},
    rcB/.style 2 args = {right=#1*1.5*\clustersep of {#2.east|-first},rootB}, rrB/.style = {right=\rootsep of {#1.east|-first},rootB},
  rootC/.style = {diamond,scale=\rootscaleC,ball color=\rootcolorC},
    rcC/.style 2 args = {right=#1*1.5*\clustersep of {#2.east|-first},rootC}, rrC/.style = {right=\rootsep of {#1.east|-first},rootC},
  rootD/.style = {circle,scale=\rootscaleD,ball color=\rootcolorD},
    rcD/.style 2 args = {right=#1*1.5*\clustersep of {#2.east|-first},rootD}, rrD/.style = {right=\rootsep of {#1.east|-first},rootD},
  cluster/.style = {draw=black!90,thick,rounded corners,inner sep=22*\clustersep,outer xsep=22*\clustersep,fit=#1},
  clabel/.style  = {anchor=west,scale=\cdepthscale,black,inner sep=0,outer xsep=1,outer ysep=0},
  clabelL/.style = {above right=-\clustersep of #1t.north east,clabel},
  clabelD/.style = {below right=-\clustersep of #1t.south east,clabel},
  clouter/.style = {inner sep=0,outer sep=0,fit=#1}
}


\def\Cluster #1 = #2;{\node[cluster=#2] (#1) {};}
\def\ClusterL #1[#2] = #3;{
  \node[cluster=#3] (#1t) {}; \node[clabelL=#1] (#1l) {$#2$}; \node[clouter=(#1t)(#1l)] (#1) {};}
\def\ClusterD #1[#2] = #3;{
  \node[cluster=#3] (#1t) {}; \node[clabelD=#1] (#1d) {$#2$}; \node[clouter=(#1t)(#1d)] (#1) {};}
\def\ClusterLD #1[#2][#3] = #4;{
  \node[cluster=#4] (#1t) {}; \node[clabelL=#1] (#1l) {$#2$}; 
  \node[clabelD=#1] (#1d) {$#3$}; \node[clouter=(#1t)(#1l)(#1d)] (#1) {};}
\def\ClusterLDName #1[#2][#3][#4] = #5;{
  \node[cluster=#5] (#1t) {}; \node[clabelL=#1] (#1l) {$#2$}; 
  \node[clabelD=#1] (#1d) {$#3$}; 
  \node[scale=\cnamescale,above=\clustersep/3 of #1t,inner sep=0, outer sep=0] (#1n) {$#4$}; 
  \node[clouter=(#1l)(#1d)(#1t)] (#1) {};}

\newcommand{\Root}[4][]{
  \ifx\relax#2\relax\node[rr#1=#3] (#4) {};\else\node[rc#1={#2}{#3}] (#4) {};\fi}
\newcommand{\RootT}[5][]{
  \ifx\relax#2\relax\node[rrt#1=#3] (#4) {#5};\else\node[rct#1={#2}{#3}] (#4) {#5};\fi}

\def\frob(#1)(#2){\path[draw,thick,shorten <=-22*\clustersep,shorten >=-22*\clustersep](#1.east)--(#2.west|-#1){};}

\usepackage{pbox}
\def\pb#1{\pbox[c]{\textwidth}{\hfil #1\hfil}}

\long\def\clusterpicture#1\endclusterpicture{\pb{\vbox to \cltopskip{\vfill}\\%
  \begin{tikzpicture}\node[coordinate] (first) {};#1\end{tikzpicture}\\[-11pt]\vbox to \clbottomskip{\vfill}}}   
\long\def\clusterpictureopt#1#2\endclusterpicture{\pb{\vbox to \cltopskip{\vfill}\\%
  \begin{tikzpicture}[#1]\node[coordinate] (first) {};#2\end{tikzpicture}\\[-11pt]\vbox to \clbottomskip{\vfill}}}

\def\pb#1{\pbox[c]{\textwidth}{\hfil #1\hfil}}

\def\GraphVertices{\SetVertexNormal[Shape=circle, FillColor=blue!50, LineColor=blue!50, LineWidth=0.8pt]
  \tikzset{VertexStyle/.append style = {inner sep=0.5pt,minimum size=0.3em,font = \tiny\bfseries}}}

\def\BlueEdges{  \SetUpEdge[lw=0.8pt,color=blue!70]
   \tikzset{EdgeStyle/.append style = {shorten <=0.5pt,shorten >=0.5pt}}}

\def\LoopW(#1){
  \path[draw,-,thick,color=blue!70] (#1) edge[out=155,in=90] ($(#1)-(1.3,0)$);
  \path[draw,-,thick,color=blue!70] (#1) edge[out=210,in=270] ($(#1)-(1.3,0)$);
}
\def\LoopE(#1){
  \path[draw,-,thick,color=blue!70] (#1) edge[out=25,in=90] ($(#1)+(1.3,0)$);
  \path[draw,-,thick,color=blue!70] (#1) edge[out=-25,in=270] ($(#1)+(1.3,0)$);
}
\def\LoopS(#1){
  \path[draw,-,thick,color=blue!70] (#1) edge[out=115,in=180] ($(#1)+(0,1.2)$);
  \path[draw,-,thick,color=blue!70] (#1) edge[out=65,in=0] ($(#1)+(0,1.2)$);
}
\def\LoopN(#1){
  \path[draw,-,thick,color=blue!70] (#1) edge[out=-115,in=180] ($(#1)-(0,1.2)$);
  \path[draw,-,thick,color=blue!70] (#1) edge[out=-65,in=0] ($(#1)-(0,1.2)$);
}
\def\EdgeW(#1){
  \path[draw,-,thick,color=blue!70] (#1+) edge[out=180,in=90] ($(#1+)-(1.3,0.3)$);
  \path[draw,-,thick,color=blue!70] (#1-) edge[out=180,in=270] ($(#1+)-(1.3,0.3)$);
}
\def\EdgeE(#1){
  \path[draw,-,thick,color=blue!70] (#1+) edge[out=0,in=90] ($(#1-)+(1.3,0.3)$);
  \path[draw,-,thick,color=blue!70] (#1-) edge[out=0,in=270] ($(#1-)+(1.3,0.3)$);
}
\def\EdgeS(#1){
  \path[draw,-,thick,color=blue!70] (#1+) edge[out=90,in=0] ($(#1-)+(0.3,1.3)$);
  \path[draw,-,thick,color=blue!70] (#1-) edge[out=90,in=180] ($(#1-)+(0.3,1.3)$);
}
\def\EdgeN(#1){
  \path[draw,-,thick,color=blue!70] (#1+) edge[out=270,in=0] ($(#1+)-(0.3,1.3)$);
  \path[draw,-,thick,color=blue!70] (#1-) edge[out=270,in=180] ($(#1+)-(0.3,1.3)$);
}
\def\GCircle(#1,#2)(#3,#4){
  \path(#1,#2) node[coordinate] (1) {};
  \path(#3,#4) node[coordinate] (2) {};
  \path[draw,-,thick,color=blue!70] (1) edge[out=90,in=90] (2);
  \path[draw,-,thick,color=blue!70] (2) edge[out=270,in=270] (1);
}

\def\EdgeSign(#1)(#2)#3(#4)#5{
  \node at ($(#1)!#3!(#2) + (#4)$) [color=black, scale=\graphdslabelscale] {$\scriptstyle #5$};
}

\def\GraphEdgeSignDist{0.55}

\def\GraphEdgeSignS(#1)(#2)#3#4{\EdgeSign(#1)(#2)#3(0,-\GraphEdgeSignDist){#4}}

\def\VSwap#1#2#3#4{\path[draw](#1) edge[<->,#3,shorten >=#4pt,shorten <=#4pt] (#2){};}
\def\VArr#1#2#3#4{\path[draw](#1) edge[->,#3,shorten >=#4pt,shorten <=#4pt] (#2){};}


\def\ESwapOfs#1#2#3#4#5#6#7#8{\VSwap{$(#1)!0.5!(#2) + (#6)$}{$(#3)!0.5!(#4) + (#7)$}{#5}{#8}}

\def\EArrOfs#1#2#3#4#5#6#7#8{\VArr{$(#1)!0.5!(#2) + (#6)$}{$(#3)!0.5!(#4) + (#7)$}{#5}{#8}}


\def\tgrGB{\raise-7pt\hbox{\begin{tikzpicture}[scale=\GraphScale]
  \GraphVertices
  \Vertex[x=1.50,y=0.000,L=1]{1};
  \coordinate (2) at (0.000,0.000);
  \BlueEdges
  \LoopW(1)
\GraphEdgeSignS(1)(2){0.5}{n}\end{tikzpicture}}}

\def\tgrGBex{\raise-7pt\hbox{\begin{tikzpicture}[scale=\GraphScale]
  \GraphVertices
  \Vertex[x=1.50,y=0.000,L=1]{1};
  \coordinate (2) at (0.000,0.000);
  \BlueEdges
  \LoopW(1)
\GraphEdgeSignS(1)(2){0.5}{1}\end{tikzpicture}}}

\def\tgrGC{\raise-7pt\hbox{\begin{tikzpicture}[scale=\GraphScale]
  \GraphVertices
  \Vertex[x=1.50,y=0.000,L=1]{1};
  \coordinate (2) at (0.000,0.000);
  \BlueEdges
  \LoopW(1)
\GraphEdgeSignS(1)(2){0.5}{n}\ESwapOfs1212{}{0,-0.25}{0,0.25}{0.5}\end{tikzpicture}}}

\def\tgrGD{\raise-7pt\hbox{\begin{tikzpicture}[scale=\GraphScale]
  \GraphVertices
  \Vertex[x=1.50,y=0.000,L=\relax]{1};
  \coordinate (2) at (3.00,0.000);
  \coordinate (3) at (0.000,0.000);
  \BlueEdges
  \LoopE(1)
  \LoopW(1)
\GraphEdgeSignS(1)(3){0.5}{n}\GraphEdgeSignS(1)(2){0.5}{n}\end{tikzpicture}}}

\def\tgrGE{\raise-7pt\hbox{\begin{tikzpicture}[scale=\GraphScale]
  \GraphVertices
  \Vertex[x=1.50,y=0.000,L=\relax]{1};
  \coordinate (2) at (3.00,0.000);
  \coordinate (3) at (0.000,0.000);
  \BlueEdges
  \LoopE(1)
  \LoopW(1)
\GraphEdgeSignS(1)(3){0.5}{n}\GraphEdgeSignS(1)(2){0.5}{n}\ESwapOfs1212{}{0,-0.25}{0,0.25}{0.5}\end{tikzpicture}}}

\def\tgrGF{\raise-7pt\hbox{\begin{tikzpicture}[scale=\GraphScale]
  \GraphVertices
  \Vertex[x=1.50,y=0.000,L=\relax]{1};
  \coordinate (2) at (3.00,0.000);
  \coordinate (3) at (0.000,0.000);
  \BlueEdges
  \LoopE(1)
  \LoopW(1)
\GraphEdgeSignS(1)(3){0.5}{n}\GraphEdgeSignS(1)(2){0.5}{n}\ESwapOfs1313{}{0,-0.25}{0,0.25}{0.5}\ESwapOfs1212{}{0,-0.25}{0,0.25}{0.5}\end{tikzpicture}}}

\def\tgrGG{\raise-7pt\hbox{\begin{tikzpicture}[scale=\GraphScale]
  \GraphVertices
  \Vertex[x=1.50,y=0.000,L=\relax]{1};
  \coordinate (2) at (3.00,0.000);
  \coordinate (3) at (0.000,0.000);
  \BlueEdges
  \LoopE(1)
  \LoopW(1)
\GraphEdgeSignS(1)(3){0.5}{n}\GraphEdgeSignS(1)(2){0.5}{n}\ESwapOfs1312{in=160,out=20}{0.2,0.3}{-0.2,0.3}{0.5}\end{tikzpicture}}}

\def\tgrGH{\raise-7pt\hbox{\begin{tikzpicture}[scale=\GraphScale]
  \GraphVertices
  \Vertex[x=1.50,y=0.000,L=\relax]{1};
  \coordinate (2) at (3.00,0.000);
  \coordinate (3) at (0.000,0.000);
  \BlueEdges
  \LoopE(1)
  \LoopW(1)
\GraphEdgeSignS(1)(3){0.5}{n}\GraphEdgeSignS(1)(2){0.5}{n}\EArrOfs1312{in=150,out=30}{0.1,0.29}{0,0.35}{0.5}\EArrOfs1213{in=-60,out=-60}{0,0.2}{0.3,-0.25}{0.5}\end{tikzpicture}}}

\def\tgrGA{\raise-3pt\hbox{\begin{tikzpicture}[scale=\GraphScale]
  \GraphVertices
  \Vertex[x=0.000,y=0.000,L=2]{1};
  \BlueEdges
\end{tikzpicture}}}

\makeatletter
\tikzset{join/.code=\tikzset{after node path={%
\ifx\tikzchainprevious\pgfutil@empty\else(\tikzchainprevious)%
edge[every join]#1(\tikzchaincurrent)\fi}}}
\makeatother

\tikzset{>=stealth',every on chain/.append style={join},
         every join/.style={->}}

\DeclareSymbolFont{cyrletters}{OT2}{wncyr}{m}{n}
\DeclareMathSymbol{\Sha}{\mathalpha}{cyrletters}{"58}

\setcounter{tocdepth}{1}
\mathtoolsset{showonlyrefs}

\pgfplotsset{compat=1.18}

\makeatletter
\let\@wraptoccontribs\wraptoccontribs
\makeatother

\title{Invariants recovering the reduction type of a hyperelliptic curve}
\author{Lilybelle Cowland Kellock}
\contrib[with an appendix by]{Elisa Lorenzo García}
\address{University College London, London WC1H 0AY, UK}
\email{lilybelle.kellock.20@ucl.ac.uk}
\subjclass[2020]{Primary: 11G20, Secondary: 14D10, 14G20, 14H45, 14Q05}

\begin{document}

\begin{abstract}
Tate's algorithm tells us that for an elliptic curve $E$ over a local field $K$ of residue characteristic $\geq 5$, $E/K$ has potentially good reduction if and only if $\textup{ord}(j_E)\geq 0$. It also tells us that when $E/K$ is semistable the dual graph of the special fibre of the minimal regular model of $E/K^{\textup{unr}}$ can be recovered from $\textup{ord}(j_E)$. We generalise these results to hyperelliptic curves of genus $g\geq 2$ over local fields of odd residue characteristic $K$ by defining a list of absolute invariants that determine the potential stable model of a genus $g$ hyperelliptic curve $C$. They also determine the dual graph of the special fibre of the minimal regular model of $C/K^{\textup{unr}}$ if $C/K$ is semistable. This list depends only on the genus of $C$, and the absolute invariants can be written in terms of the coefficients of a Weierstrass equation for $C$. We explicitly describe the method by which the valuations of the invariants recover the dual graphs. Additionally, we show by way of a counterexample that if $g \geq 2$, there is no list of invariants whose valuations determine the dual graph of the special fibre of the minimal regular model of a genus $g$ hyperelliptic curve $C$ over a local field $K$ of odd residue characteristic when $C$ is not assumed to be semistable.
\end{abstract}

\maketitle

\tableofcontents

\section{Introduction}
Reduction types of curves are used to classify and understand curves with bad reduction over local fields. As for Kodaira types of elliptic curves, the object of concern in the study of reduction types is the dual graph of the special fibre of a model of the curve. The dual graph represents how curves in the special fibre intersect and their multiplicities, and is required to calculate many local quantities associated to a curve with bad reduction, for example Tamagawa numbers (see \cite{boschliu} Theorem 1.11). In this paper, we address the question of how the dual graph of the special fibre of the minimal regular model of a semistable hyperelliptic curve $C$ over a local field $K$ of odd residue characteristic can be obtained directly from absolute invariants of $C$. This generalises a corollary of Tate's algorithm \cite{tatealgorithm}, that when $E/K$ is known to be a semistable elliptic curve its Kodaira type can be read off from the valuation of $j_E$ (see \cite{advancedsilverman} p.365). We also show that if $C$ is a genus $g \geq 2$ hyperelliptic curve over a local field $K$ that is not assumed to be semistable, there is no list of invariants whose valuations determine the dual graph of the special fibre of the minimal regular model of $C/K^{\textup{unr}}$. This shows that there is no analogue in the setting of hyperelliptic curves to the fact that the dual graph of the special fibre of the minimal regular model of an elliptic curve $E/K^{\textup{unr}}$ can be read off from the valuation of $\Delta_E$ and $j_E$ (see \cite{advancedsilverman} p.365).

In this article, we consider \textit{absolute invariants} of hyperelliptic curves $C$ of genus $g\geq 2$ over local fields $K$. A polynomial $I\in\mathbb{Z}[a_0,\dots,a_{2g+2}]$ in the coefficients of a binary form $f(X,Z)$ is a weight $k$ \textup{invariant} of genus $g$ hyperelliptic curves if for all $M\in \textup{GL}_2(\Bar{K})$, one has $I(f(M\cdot (X,Z))) = \det(M)^k I(f(X,Z))$ (see \cite{hilbert} for an introduction to invariants of binary forms). An \textit{absolute invariant} $A\in\mathbb{Z}(a_0,\dots,a_{2g+2})$ is a quotient of invariants of the same weight, so that $A(f(X,Z))=A(f(M\cdot (X,Z)))$ and they are invariant in geometric isomorphism classes of hyperelliptic curves. For this reason, we write $A(C)$ for $A(f(X,Z))$, where $C:y^2=f(x)$ is any Weierstrass equation for $C$ and $f(X,Z)$ is the associated binary form.

Throughout, we assume that the genus of $C$ is at least $2$, since this is necessary to use the results of \cite{m2d2} on recovering the dual graph of the special fibre of the minimal regular model. We restrict ourselves to local fields for the same reason, but it is believed by the author that the construction works for hyperelliptic curves over general discretely valued fields that have perfect residue fields with characteristic $\neq 2$. The main result that we prove is the following theorem, where the list of absolute invariants is explicitly described in \S\ref{mainthminvproof}. The list depends only on the genus $g$ of $C$ and does not depend on $K$, the field of definition of $C$, as long as $K$ has odd residue characteristic. 

\begin{theorem}[=Corollary \ref{invthm}]\label{1.1}
There exists an explicit finite list of absolute invariants $A_{g}^{(1)},\dots,A_{g}^{(n_g)}$ for which the valuations $\textup{ord}(A_{g}^{(i)}(C))$ for $i=1,\dots,n_g$, when evaluated on a hyperelliptic curve $C$ of genus $g\geq 2$ over a local field $K$ of odd residue characteristic, uniquely determine:
\begin{enumerate}[(i)]
    \item The dual graph of the special fibre of the minimal regular model of $C/K^{\textup{unr}}$ if $C/K$ is semistable;
    \item The dual graph of the special fibre of the potential stable model of $C$ if $C/K$ is not semistable. 
\end{enumerate}
\end{theorem}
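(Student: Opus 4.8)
The plan is to factor the argument through the theory of cluster pictures developed in \cite{m2d2}. That theory attaches to $C/K$ a combinatorial–numerical object $\Sigma_C$ — the \emph{cluster picture}, recording which roots of $f$ are $p$-adically close together, the depths $d_{\mathfrak{s}}$ of the clusters $\mathfrak{s}$, and the valuation of the leading coefficient — and shows that both the dual graph of the special fibre of the minimal regular model of $C/K^{\textup{unr}}$ in the semistable case, and the dual graph of the potential stable model in general, are explicit functions of $\Sigma_C$. It therefore suffices to prove that the part of $\Sigma_C$ relevant to these dual graphs can be recovered from the valuations of a finite, genus-dependent list of absolute invariants, after which the passage to the dual graph is immediate by \cite{m2d2}.

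First I would record the finiteness input. Since $\deg f \le 2g+2$, there are only finitely many combinatorial types of cluster picture in genus $g$, and for each fixed type the assignment $\Sigma_C \mapsto (\text{dual graph})$ depends only on finitely many $\mathbb{Z}$-linear comparisons (equalities and inequalities) among the relative depths $\delta_{\mathfrak{s}} = d_{\mathfrak{s}} - d_{P(\mathfrak{s})}$ of a cluster $\mathfrak{s}$ and its parent $P(\mathfrak{s})$, playing the role of the chain lengths that appeared as $\textup{ord}(j_E)$ in the elliptic case. Thus the target is a finite packet consisting of the combinatorial type together with the signs of finitely many integer-linear forms in the $\delta_{\mathfrak{s}}$. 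Crucially, the $\delta_{\mathfrak{s}}$ are fixed by the scaling and translation part of $\textup{GL}_2$, and only the full Möbius action can genuinely permute clusters or move one to infinity; this is exactly the ambiguity that absolute, rather than merely relative, invariants are constructed to absorb.

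The heart of the argument is to construct, for each genus $g$, explicit invariants of binary forms of degree $2g+2$ whose valuations compute these depth data. Using the bracket description of the invariant ring — every invariant is a polynomial in the brackets $(ij)$, which on a root model specialise to $r_i - r_j$ up to a power of the leading coefficient — I would build, for each admissible subset of roots $S$ capable of forming a cluster, an invariant $I_S$ engineered so that, on a suitably normalised model, $\textup{ord}(I_S)$ equals a prescribed nonnegative combination $\sum_{\mathfrak{s}\subseteq S} m_{\mathfrak{s}} d_{\mathfrak{s}}$ of the depths. Taking ratios $I_S/I_{S'}$ of invariants of equal weight then yields absolute invariants $A_g^{(i)}$; being genuine $\textup{GL}_2$-invariants, their valuations are model-independent, which is what lets the statement refer to $C$ rather than to a chosen Weierstrass equation. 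Because each $I_S$ is symmetric in the roots, these valuations see only the Galois-stable combinatorics of $\Sigma_C$, which is precisely the data that governs the dual graph over $K^{\textup{unr}}$; and the whole list depends only on $g$, since the combinatorial types and the $I_S$ do.

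The main obstacle I anticipate is cancellation. The valuation of an invariant is the minimum over its monomials of $\sum \textup{ord}(\text{brackets})$, and this minimum can be strictly exceeded when the minimising monomials cancel, so the naive expected valuation need not be attained. I would control this by (a) attaching to each cluster a whole family of invariants and reading off the one realising the minimal valuation, so that at least one member is cancellation-free, and (b) verifying, type by type across the finitely many cluster pictures of genus $g$, that the resulting family separates all cases distinguished by the comparisons of the second paragraph. Establishing that a single finite family suffices simultaneously for the semistable minimal-regular-model graphs of part (i) and the general potential-stable graphs of part (ii) is where the genuine work lies; once $\Sigma_C$ is recovered, both dual graphs follow from \cite{m2d2}.
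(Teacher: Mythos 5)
Your skeleton (finitely many combinatorial types, invariants whose valuations recover the metric data, then \cite{m2d2} to pass to the dual graphs) matches the paper's, but there are two genuine gaps at exactly the points where the paper does its real work. First, the target of your recovery is wrong as stated: the cluster picture and its depths $d_{\mathfrak{s}}$, $\delta_{\mathfrak{s}}$ are \emph{not} model-independent --- a M\"obius change of Weierstrass model can completely reorganize the clusters (the paper exhibits isomorphic curves with different cluster pictures), whereas valuations of absolute invariants are model-independent by construction. So $\textup{ord}(I_S)$ can never equal a combination of depths except ``on a suitably normalised model'', and your proposal never explains how to pin down that normalisation using invariants alone; this is circular. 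The paper resolves this by introducing a model-independent intermediate object, the stable model tree $T_C$ (equivalent to the BY tree of \cite{m2d2}), and by using M\"obius-invariant cross-ratios $(ij,kl)=\frac{(X_i-X_k)(X_i-X_l)(X_j-X_k)(X_j-X_l)}{(X_i-X_j)^2(X_k-X_l)^2}$ as building blocks: their valuations compute distances between vertices of that tree (Proposition \ref{distancevalues}), quantities which, unlike the depths, are isomorphism invariants. Your brackets $(ij)$ are not M\"obius-invariant building blocks, so the engineering step ``$\textup{ord}(I_S)=\sum m_{\mathfrak{s}}d_{\mathfrak{s}}$'' cannot be carried out as described.

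Second, your treatment of cancellation is a hope, not an argument. You correctly identify that the valuation of a symmetrized sum can exceed the minimum of the valuations of its terms, but ``attach a family and read off the one realising the minimal valuation, so that at least one member is cancellation-free'' begs the question of why such a member exists. The paper's solution is the technical heart of the proof: each invariant $\textup{Inv}_{T,I,n}$ is a sum over an $S_{2g+2}$-orbit of a product of cross-ratios raised to strictly decreasing exponents $n,n-1,\dots,1$ matched to the ordering of the distances, and the rearrangement inequality (Fact \ref{weights}) shows that the summand aligned with the true tree $T_C$ is the \emph{unique} summand of minimal valuation, so the valuation of the sum equals that minimum and no cancellation can occur (Theorem \ref{valuationofyprime}). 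Moreover, recovering the tree is not a single shot but an induction (Theorem \ref{wholealgorithm}): at each step one compares the normalised quantities $B_{n+1}(T,I,C)$ over all candidate trees-with-orderings compatible with the data recovered so far, and the same inequality shows the true candidate is selected; a final combinatorial lemma (Proposition \ref{uniquelydetermines}) is needed to show the last summand determines $(T_C,I_C)$ uniquely. Without the exponent-weighting device and the uniqueness-of-minimizer argument, your type-by-type verification has no mechanism by which to succeed.
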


Consider the classical setting of an elliptic curve $E$ over a local field $K$. By Tate's algorithm (see \cite{advancedsilverman} p.365), when $E/K$ has multiplicative reduction the number of components in the special fibre is given by $-\textup{ord}(j_E)$, and $E/K$ has potentially good reduction if and only if $\textup{ord}(j_E)\geq 0$. Theorem \ref{1.1} generalises these two statements to hyperelliptic curves in $(i)$ and $(ii)$ respectively. When the methods of this paper for obtaining absolute invariants are applied to an elliptic curve $E$ over a local field $K$, one obtains a single absolute invariant $I_{1}^{1}=\frac{j_E}{16}-3$ (see Example \ref{ellipticcurvesex}). Since we assume throughout that $K$ has odd residue characteristic, $\textup{ord}(I_{1}^{1})\geq 0$ if and only if $\textup{ord}(j_E)\geq 0$, and if $\textup{ord}(I_{1}^{1})< 0$ then $\textup{ord}(I_{1}^{1})=\textup{ord}(j_E)$, thus the methods of this paper `recover' these results for elliptic curves.

If an elliptic curve $E/K$ is not assumed to have multiplicative reduction and $K$ has residue characteristic $\geq 5$, the dual graph of the special fibre of the minimal regular model can be recovered from the valuation of $j_E$ and $\Delta_E$, which are both invariants of the curve (again, see \cite{advancedsilverman} p.365). We prove the following theorem, which shows that there is no analogue to this fact in the setting of hyperelliptic curves. The theorem implies that if $C$ is a genus $g \geq 2$ hyperelliptic curve over a local field $K$ that is not assumed to be semistable, there is no list of invariants whose valuations determine the dual graph of the special fibre of the minimal regular model of $C/K^{\textup{unr}}$.

\begin{theorem}[=Theorem \ref{nosilverman}]\label{secondtheorem}
Let $K$ be a local field of odd residue characteristic. For $g\geq 2$ there exists a semistable hyperelliptic curve $C:y^2=f_1(x)$ of genus $g$ defined over $K$ and a non-semistable hyperelliptic curve $C':y^2=f_2(x)$ of genus $g$ defined over $K$ for which $A(C)=A(C')$ for every absolute invariant $A$ and $\textup{ord}(I(f_1))=\textup{ord}(I(f_2))$ for every invariant $I$ of weight $k\in\mathbb{Z}^+$, but the dual graph of the special fibre of the minimal regular models of $C/K^{\textup{unr}}$ and $C'/K^{\textup{unr}}$ do not coincide.
\end{theorem}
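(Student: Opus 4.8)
The plan is to pair a curve $C$ of \emph{good} reduction with a \emph{non}-semistable curve $C'$ obtained from it by a purely geometric rescaling, chosen so that the two curves have not merely equal invariant valuations but literally equal invariants. I would fix a monic $f_1\in\mathcal O_K[x]$ of degree $2g+2$ whose reduction $\overline{f_1}$ is separable, so that $C:y^2=f_1(x)$ has good reduction, and set $f_2(x)=t^{-(g+1)}f_1(tx)$ for a scalar $t$ chosen below, with $C':y^2=f_2(x)$. The substitution $x\mapsto tx$ together with a scaling of $y$ gives an isomorphism $C'\cong C$ over $\bar K$, so $A(C')=A(C)$ for every absolute invariant $A$. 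Moreover, writing an invariant $I$ of a binary $(2g+2)$-ic in terms of its coefficient-degree $d_I$ and using the classical relation $k=(g+1)d_I$ between weight and degree, one computes $I(f_2)=t^{-(g+1)d_I}\cdot t^{(g+1)d_I}I(f_1)=I(f_1)$; hence $\textup{ord}(I(f_1))=\textup{ord}(I(f_2))$ for every invariant of positive weight. Thus the invariant-theoretic hypotheses hold for free, and the whole task reduces to choosing $t$ so that $C'$ fails to be semistable while $C$ does not.

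To control semistability I would read it off from the cluster picture via the criterion of \cite{m2d2}. The roots of $f_2$ are $r/t$ as $r$ ranges over the roots of $f_1$; since $\overline{f_1}$ is separable these $r$ are units with pairwise-unit differences, so $C'$ has a single principal top cluster $\mathcal R'$ consisting of all roots, of depth $-\textup{ord}(t)$ and with leading coefficient of valuation $(g+1)\textup{ord}(t)$, whence $\nu_{\mathcal R'}=-(g+1)\textup{ord}(t)$. For $g$ even I would take $t=\pi$: the depth $-1$ is integral, but $\nu_{\mathcal R'}=-(g+1)$ is odd, so the parity condition $\nu_{\mathcal R'}\in 2\mathbb Z$ fails and $C'$ is not semistable. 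For $g$ odd this route is blocked, since $g+1$ is even and no integer power of $\pi$ can render $\nu_{\mathcal R'}$ odd; here the key device is to take $f_1=h(x^2)$ \emph{even} and $t=\sqrt{\pi}$, so that $f_2(x)=\pi^{-(g+1)/2}h(\pi x^2)$ is still defined over $K$ while the top cluster acquires the half-integer depth $-\tfrac12$, now violating the integrality-of-depths condition. In either case $C'/K$ — equivalently $C'/K^{\textup{unr}}$, since semistability is insensitive to unramified base change — is not semistable.

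To conclude, since $C$ has good reduction, the special fibre of the minimal regular model of $C/K^{\textup{unr}}$ is a smooth genus-$g$ curve, whose dual graph is a single vertex. Since $C'$ is not semistable over $K^{\textup{unr}}$, the special fibre of its minimal regular model is not semistable, and because the dual graph records component multiplicities and so detects semistability, it cannot coincide with the good-reduction graph of $C$. This produces a semistable $C$ and a non-semistable $C'$ agreeing on all absolute invariants and all invariant valuations yet having distinct minimal-regular-model dual graphs, which is exactly the assertion; by Theorem \ref{1.1}(ii) the two even share the same potential stable model, underscoring that it is precisely the passage to the minimal regular model that the invariants cannot see.

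I expect the main obstacle to be the parity phenomenon at odd $g$: a naive ramified quadratic twist changes invariant valuations, and compensating by an \emph{integer} rescaling can never flip the parity of $\nu_{\mathcal R'}$ when $g+1$ is even, so the standard twist approach breaks down. The even-polynomial trick, which legitimises the half-integer rescaling $t=\sqrt{\pi}$ over $K$, is what lets one instead break integrality of depths, and isolating this is the crux of the argument. The remaining care lies in correctly invoking the \cite{m2d2} semistability criterion — in particular verifying that $\mathcal R'$ is genuinely a principal cluster — and in confirming that the non-semistable $C'$ has a provably different dual graph rather than an accidentally coincident one, which is guaranteed because good reduction is visible in the dual graph.
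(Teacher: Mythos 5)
Your proposal is correct, and it takes a genuinely different route from the paper. The paper's proof splits into three cases (according to the parity of $g$ and of $d/4$), writes down explicit pairs of curves with elaborate cluster pictures that are isomorphic over $\bar{K}$ via an inversion $x\mapsto \pi^m/x$, reads $\textup{ord}(\Delta_C)=\textup{ord}(\Delta_{C'})$ off the cluster pictures, and converts equality of absolute invariants plus equality of discriminant valuations into equality of all invariant valuations via the trick that $I^a/\Delta_C^b$ is an absolute invariant; (non-)semistability is then checked against \cite{m2d2} Theorem 1.8. You instead rescale a single good-reduction curve, $f_2(x)=t^{-(g+1)}f_1(tx)$, and exploit the weight--degree relation $k=(g+1)d_I$ (every weight-$k$ invariant of a binary $(2g+2)$-ic is homogeneous of degree $k/(g+1)$, as one sees by testing against scalar matrices) to obtain the \emph{literal} equality $I(f_2)=I(f_1)$ for every invariant --- a stronger conclusion than the paper's, reached with no discriminant computation and with only the trivial cluster picture (one top cluster of depth $-\textup{ord}(t)$) to analyse. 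Your semistability analysis is also sound: $\nu_{\mathcal{R}'}=-(g+1)\textup{ord}(t)$ is odd when $g$ is even and $t=\pi$, and for $g$ odd the even-polynomial device $f_1=h(x^2)$, $t=\sqrt{\pi}$ keeps $f_2$ over $K$ while making the depth $-\tfrac12$ non-integral. (Here you should record that $h$ must be chosen with $\bar{h}$ separable and $\bar{h}(0)\neq 0$ so that $\bar{f_1}$ is separable, and your observation that $\mathcal{R}'$, having $2g+2$ singleton children, is principal is exactly the right thing to check.)

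One caveat on the final step. The assertion that ``the dual graph records component multiplicities and so detects semistability'' is an overstatement: multiplicities alone do not detect semistability (an elliptic curve of Kodaira type II has irreducible, reduced special fibre, hence a one-vertex, multiplicity-one dual graph). What is true, and what you actually need, is weaker: the dual graphs in this paper carry genus as well as multiplicity labels, and a single multiplicity-one vertex of genus $g$ forces the special fibre to be irreducible, reduced, and of geometric genus equal to its arithmetic genus $g$, hence smooth; so a non-semistable $C'$ cannot share the good-reduction graph of $C$. Since the paper itself makes the same implicit inference (semistable versus non-semistable implies distinct dual graphs), this does not put you below the paper's own standard of rigour; alternatively, you could close the gap concretely by noting that for $g$ even your $C'$ is $K$-isomorphic to the ramified quadratic twist $y^2=\pi f_1(x)$, whose minimal regular model has an $I_0^*$-type special fibre (a multiplicity-two central component with $2g+2$ rational tails), visibly different from a single genus-$g$ vertex.
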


We wish to highlight that if one is not restricted to using invariants of of the curve, it is possible to recover the dual graph of the special fibre of the minimal strict normal crossings model of a hyperelliptic curve $C$ over a local field $K$ of odd residue characteristic from polynomials in the coefficients of a Weierstrass equation for the curve (see \cite{lilybellepolys} Theorem 1.11).

There have been many previous works on recovering the reduction type of curves from invariants. It was described by Mestre in \cite{mestre} and Liu in \cite{liu} how the dual graph of the special fibre of the potential stable model of a genus $2$ curve can be recovered from the Igusa--Clebsch invariants defined by Igusa in \cite{igusa} and Clebsch in \cite{clebsch}. For genus $3$ hyperelliptic curves, there is a list of invariants describing their isomorphism classes given by Shioda in \cite{shioda} and Tsuyumine in \cite{tsuyumine}. In \cite{elisa}, it is shown that Shioda invariants can be expressed in terms of differences of roots of a Weierstrass equation and that this has applications to studying the reduction type of the curve, which is an approach that we take in this paper. There is also a paper \cite{helminck} that uses tropical invariants to recover the Berkovich skeleta of superelliptic curves. In terms of the general study of dual graphs of special fibres of curves and stable models, a similar construction to the `stable model tree' defined in this paper was used by Bosch in \cite{bosch} to determine the stable type of hyperelliptic curves. There are also the papers \cite{pink1} and \cite{pink2} for calculating the stable reduction type in residue characteristic $\neq 2$ and $2$ respectively. For computing dual graphs, there is a Magma \cite{magma} package based on the papers \cite{tim1}, \cite{tim2} and \cite{simone}.

We use the machinery of cluster pictures introduced in \cite{m2d2} in order to prove the results in this paper. Let $K$ be a local field of odd residue characteristic and let $C/K$ be a hyperelliptic curve given by a Weierstrass equation $C:y^2 = f(x)$ of genus $g$. Write $\mathcal{R}$ for the set of roots of $f(x)$ in $\Bar{K}$, $d=\deg(f)=|\mathcal{R}|\in \{2g + 1, 2g + 2\}$ and $c_f$ for the leading coefficient of $f$ so that
\begin{equation}
    C:y^2=f(x)=c_f\prod_{r\in \mathcal{R}} (x-r).
\end{equation}
To this Weierstrass equation for $C$, one associates its \textit{cluster picture}, a pictorial object encoding the $\pi$-adic distances between the roots of $f(x)$, where $\pi$ is a uniformiser of $K$. Cluster pictures are now a classical approach to studying the arithmetic of hyperelliptic curves over local fields. The main result of \cite{m2d2} that is of relevance to this article is that if $C/K$ is semistable, the dual graph of the special fibre of the minimal regular model of $C/K^{\textup{unr}}$ can be recovered from the cluster picture (see \cite{m2d2} Theorem 8.5). 

\begin{example}\label{jinvariant}
To demonstrate the connection between the $\pi$-adic distances between the roots of $f(x)$ and the reduction type, note that the $j$-invariant of an elliptic curve $E:y^2=f(x)$ can be written as 
\begin{equation}
j_E=-8\cdot \frac{\left((x_1-x_2)^2+(x_1-x_3)^2+(x_2-x_3)^2\right)^3}{(x_1-x_2)^2(x_1-x_3)^2(x_2-x_3)^2},
\end{equation}
where $x_1$, $x_2$ and $x_3$ are the roots of $f(x)$ over $\Bar{K}$. Since $j_E$ is invariant under a change of model, we can assume that the Weierstrass equation is minimal, and so $f(x)$ has a repeated root of multiplicity $2$ mod $\pi$ (i.e. $E/K$ has multiplicative reduction) if and only if $\textup{ord}(j_E)<0$. 
\end{example}

We define absolute invariants that generalise the fact that the $j$-invariant detects how $\pi$-adically close the roots of $f(x)$ are. We concoct absolute invariants that encode the distances between the roots of $f(x)$, where $f(x)$ is any even degree polynomial. A complication of this is that the set of distances between roots depends on the choice of model of $C$, so instead of studying the cluster picture directly we study the \textit{stable model tree} (see Definition \ref{stablemodeltreedef}) which is defined in terms of the cluster picture but is model independent; this is similar to the tree studied in \cite{srinivasan}. From this we can use the results of \cite{m2d2} to recover the relevant dual graphs (see \S\ref{dualgraphstreesection}), since the stable model tree encodes the same information as the BY tree defined in \cite{m2d2} which is used to recover the dual graph. The absolute invariants that recover the stable model tree are constructed by studying the possibilities for the stable model tree and the possible orderings on the distances between their vertices (see \S\ref{invariantssection}).

The absolute invariants constructed in this paper can be written as rational functions of elementary symmetric polynomials in the variables $X_1,\dots,X_d$, and so when evaluated on the roots of a polynomial $f(x)$ over a local field $K$ they can be written in terms of the Weierstrass coefficients of $f(x)$. This means that in order to calculate the absolute invariants one does not need to know the roots of $f(x)$ over $\Bar{K}$ a priori, which could be defined over large extensions. Alternatively, if starting with a hyperelliptic curve $C:y^2=f(x)$ defined over a number field, $f(x)$ can be efficiently factorised over $\mathbb{C}$ and the invariants evaluated on the roots of $f(x)$ over the global field. 

We define the absolute invariants in this paper for a hyperelliptic curve given by a Weierstrass equation of even degree, but if $C:y^2=f(x)$ where $d=\deg(f)$ is odd and $x_1,\dots,x_d$ are the roots of $f(x)$ over $\Bar{K}$, we can evaluate the absolute invariants letting the variable $X_{d+1}$ go to infinity. One can check that this is consistent with evaluating the absolute invariants on a model of this curve that has even degree.

\subsection{Layout of the paper}

The paper is laid out as follows. 

In \S\ref{notation}, we list the notation and graph-theoretic terminology used throughout. In \S\ref{clusterpics}, we give the background involving cluster pictures that will be needed to prove the main results, and we prove Theorem \ref{secondtheorem} which tells us that for hyperelliptic curves of genus $\geq 2$, one needs more information than the valuation of invariants in order to recover the dual graph of the special fibre of the minimal regular model. 

In \S\ref{dualgraphstreesection}, we define an object called the `stable model tree' (see Definition \ref{stablemodeltreedef}) and explain how the dual graph of the special fibre of the minimal regular model of a semistable hyperelliptic curve can be read off from it using the results of \cite{m2d2}.

In \S\ref{invariantssection}, we define the absolute invariants associated to a stable model tree with an ordering on the distances between the vertices (see Definition \ref{invariantsdefinition}). In \S\ref{invariantsofcurves} and \S\ref{comparingvaluations}, we prove results on the valuations of the absolute invariants from \S\ref{invariantssection} when evaluated on a hyperelliptic curve, including a result that compares the valuations of the absolute invariants associated to different possibilities for the stable model tree. 

In \S\ref{mainthminvproof}, we prove the main theorem of this paper, Theorem \ref{wholealgorithm}. This theorem describes how the stable model tree (and thus the dual graph) is uniquely determined inductively by comparing the valuations of the absolute invariants defined in \S\ref{invariantssection}. We give an explicit description of the list of absolute invariants needed to recover the dual graph of the special fibre of the relevant models of $C/K^{\textup{unr}}$ for $C$ of fixed genus. The description is given in terms of the possible stable model trees and the possible orderings on the distances between the vertices in the stable model tree (see Corollary \ref{invthm}).

In \S\ref{genus2section}, we give an explicit description of the absolute invariants that recover the relevant dual graphs for genus $2$ curves. We give an example using the methods of this paper to recover the dual graph of the special fibre of the minimal regular model of a semistable genus $2$ curve. We also explicitly write down the genus $2$ absolute invariants defined in this paper, and give a table that describes the lengths of the chains in the dual graph of a semistable genus $2$ curve in terms of the valuations of these absolute invariants.

\subsection{Notation and terminology}\label{notation}
We will use the following notation.
\begin{align}
& K && \text{ a local field of odd residue characteristic}; \\
& \textup{ord} && \text{ the valuation with respect to a uniformiser of } K; \\
& \Bar{K} && \text{ the algebraic closure of } K; \\
& K^{\textup{unr}} && \text{ the maximal unramified extension of } K; \\
& f(x) && \text{ an even degree polynomial with coefficients in }K; \\
& C && \text{ a hyperelliptic curve over $K$ given by a Weierstrass equation $C:y^2=f(x)$}; \\
& \Delta_C && \text{ the discriminant of $C$, see Definition \ref{discriminant}}; \\ 
& T_C && \text{ the stable model tree of $C$, see Definition \ref{stablemodeltreedef}};\\
& (T,I) && \text{ a possible stable model tree $T$ with ordering $I$, see Definition \ref{setofleafgraphs};} \\
& (T_C,I_C) && \text{ the stable model tree $T_C$ with ordering $I_C$, see Definition \ref{hyperellipticordering};} \\
& \delta(v,w) && \text{ the distance between two vertices $v$ and $w$ in a weighted tree, see Definition \ref{distancetrees}}; \\
& \delta(C_{ij},C_{kl}) && \text{ the distance between two paths $C_{ij}$ and $C_{kl}$ in a weighted tree, see Definition \ref{distancepaths}}; \\
& \delta(\mathfrak{s},\mathfrak{t}) && \text{ the distance between two clusters, see Definition \ref{delta}}; \\
& \delta_n(C) && \text{ the $n$-th largest distance in $T_C$, see Definition \ref{treedefinitions}}; \\
& \delta_{n}(T,I) && \text{ the $n$-th largest distance in $T$ with ordering $I$, see Definition \ref{stablemodeltreedefs}.} \\
& K_n(C) && \text{ see Definition \ref{treedefinitions}};\\
& K_{n}(T,I) && \text{ see Definition \ref{bn+1}}; \\
& S_{n}(T,I) && \text{ a set of `pairs of pairs' of singletons in $(T,I)$, see Definition \ref{stablemodeltreedefs}}; \\
& \textup{Inv}_{T,I,n} && \text{ the $n$-th absolute invariant associated to $(T,I)$, see Definition \ref{invariantsdefinition}}; \\
& B_{n+1}(T,I,C) && \text{ the `averaging function' at the $(n+1)$-st step, see Definition \ref{bn+1}}; \\
& \mathcal{T}_{n+1}(C) && \text{ the set of possible stable models trees at the $(n+1)$-st step, see Definition \ref{possiblen+1st};} \\
& \mathbf{T}_d && \text{ the set of possible stable model trees with orderings for genus $g$, see Definition \ref{setofleafgraphs}.} \\
\end{align}

Where $C:y^2=f(x)$ is a Weierstrass equation for $C/K$, we will assume that $f(x)$ has even degree. We will sometimes use the term `dual graph' to mean the dual graph of the special fibre of the minimal regular model of a semistable hyperelliptic curve, or the dual graph of the special fibre of the potential stable model of a non-semistable hyperelliptic curve. We adopt the convention that $v(0)=\infty$. 

\begin{definition}[As in \cite{liudisc}]\label{discriminant}
For a hyperelliptic curve of genus $g$ given by a Weierstrass equation $C:y^2=f(x)$, define 
\begin{equation}
\Delta_C=16^g c_f^{4g+2}\textup{disc}\left(\frac{1}{c_f}f(x)\right),
\end{equation}
where $c_f$ is the leading coefficient of $f$. 
\end{definition}

In this paper we will consider unweighted trees $T=(V,E)$ and weighted trees $T=(V,E,L)$, where $L:E\rightarrow \mathbb{Q}^{+}$. All graphs will be considered as unlabelled unless otherwise stated, i.e. $G=(\{v_1,\dots,v_d\},\{v_1v_2\})$ is the same graph as $G'=(\{v_1,\dots,v_d\},\{v_1v_3\})$. We will need the following graph-theoretic definitions.

\begin{definition}
For a tree $T$, we call the vertices of degree $1$ in $T$ \textit{leaves}. We call two edges \textit{adjacent} if they are connected to a common vertex. We call two vertices \textit{adjacent} if there is an edge between them.
\end{definition}

\begin{definition}\label{bipartite}
Define $K_{1,d}=(V,E)$, where $V=\{v,v_1\dots,v_{d}\}$ and $E=\{vv_1,\dots,vv_d\}$.
\end{definition}

\begin{definition}\label{distancetrees}
Let $T=(V,E,L)$ be a weighted tree where $L:E\rightarrow \mathbb{Q}^{+}$ is a length function on the edges. For $v,w\in E$, we call the unique path between $v$ and $w$ with the smallest number of edges in $T$ the \textit{shortest path} between $v$ and $w$ and denote it by $P_{vw}$. Define $\delta(v,w)=\sum_{e\in P_{vw}} L(e)$.
\end{definition}

\begin{definition}\label{distancepaths}
Let $s_i$, $s_j$, $s_k$ and $s_l$ be four distinct leaves in a weighted tree $T=(V,E,L)$. Denote by $C_{ij}$ the path in $T$ between $s_i$ and $s_j$ and by $C_{kl}$ the path between $s_k$ and $s_l$. Since $T$ is a tree, if $C_{ij}$ and $C_{kl}$ do not intersect, there is a unique shortest path between $C_{ij}$ and $C_{kl}$ in $T$ that goes between vertex $v$ in $C_{ij}$ and vertex $w$ in $C_{kl}$. Define 
\begin{equation}
    \delta(C_{ij},C_{kl})=\delta(v,w). 
\end{equation}
\end{definition}

\begin{example}
Suppose we are given the tree $T$ below, where the edges extending to the leaves do not have an assigned length. We consider such a tree since this is what the stable model trees defined in \S\ref{dualgraphstreesection} look like. We have drawn the paths $C_{12}$ and $C_{35}$ by making the edges on those paths bold. 
\begin{center}
		\begin{figure}[H]
			\begin{tikzpicture}
				[scale=0.5, auto=left,every node/.style={circle,fill=black!20,scale=0.6}]
                \node[label=above: {$w$}] (n10) at (2,2) {};
				\node[label=left: {$v$}] (n1) at (0,0) {};
				\node (n2) at (2,0)  {};
				\node (n3) at (4,0)  {};

                \node[label=below: {$s_1$}] (n4) at (-0.5,-1) {};
                \node[label=below: {$s_2$}] (n5) at (0.5,-1) {};
                
                \node[label=below: {$s_3$}] (n6) at (1.5,-1) {};
                \node[label=below: {$s_4$}] (n7) at (2.5,-1) {};

                \node[label=below: {$s_5$}] (n8) at (3.5,-1) {};
                \node[label=below: {$s_6$}] (n9) at (4.5,-1) {};

                \draw[line width=0.8mm] (n1) -- (n4); 
                \draw[line width=0.8mm] (n1) -- (n5); 
                
                \draw[line width=0.8mm] (n2) -- (n6); 
                \draw (n2) -- (n7); 

                \draw[line width=0.8mm] (n3) -- (n8); 
                \draw (n3) -- (n9);
				
				\draw (n10) -- (n1) node [text=black, pos=0.4, left, fill=none] {$3$};
                \draw[line width=0.8mm] (n10) -- (n2) node [text=black, pos=0.6, right,fill=none] {$5$};
				\draw[line width=0.8mm] (n10) -- (n3) node [text=black, pos=0.4, right,fill=none] {$3$};
			\end{tikzpicture}
		\end{figure}
\end{center}
The unique path between $C_{12}$ and $C_{35}$ is between $v$ and $w$, hence $\delta(C_{12},C_{35})=\delta(v,w)=3$.
\end{example}

\addtocontents{toc}{\SkipTocEntry}
	\section*{Acknowledgements} 

The author would like to thank Elisa Lorenzo García for posing the problems considered in this paper at the Seminari de Teoria de Nombres de Barcelona in 2022 and for numerous informative conversations. She would also like to thank Vladimir Dokchitser for the generosity of his support and guidance and for many useful discussions, and Holly Green for proofreading the manuscript. The author was supported by the Engineering and Physical Sciences Research Council [EP/L015234/1], the EPSRC Centre for Doctoral Training in Geometry and Number Theory (The London School of Geometry and Number Theory), University College London.

\section{Cluster pictures and reduction types}\label{clusterpics}
We will now give a brief introduction to cluster pictures of hyperelliptic curves, which were introduced in \cite{m2d2}. We also prove Theorem \ref{secondtheorem}, which tells us that for hyperelliptic curves of genus $\geq 2$, one needs more information than the valuation of invariants of the curve in order to recover the dual graph of the special fibre of the minimal regular model. 

Let $C$ be a hyperelliptic curve of genus $g$ over a complete discretely valued field $K$ given by a Weierstrass equation
\begin{equation}
    C:y^2=f(x)=c_f(x-x_1)\cdots(x-x_d),
\end{equation}
where $d=2g+1$ or $d=2g+2$. Let $\mathcal{R}=\{x_1,\dots,x_d\}$ denote the set of roots of $f(x)$ in $\Bar{K}$.  

\begin{definition}[Cluster, from \cite{m2d2} Definition 1.1]
A \textit{cluster} is a non-empty subset $\mathfrak{s}\subseteq\mathcal{R}$ of the form $\mathfrak{s}=D\cap\mathcal{R}$ for some disc $D = \{x \in \Bar{K} \mid \textup{ord}(x- z) \geq d\}$ for some $z \in \Bar{K}$ and $d \in \mathbb{Q}$.
\end{definition}

\begin{definition}[Cluster picture, from \cite{m2d2} Definitions 1.1 and 1.5]
For a cluster $\mathfrak{s}$ with $|\mathfrak{s}| > 1$, its depth $d_{\mathfrak{s}}$ is the maximal $d$ for which $\mathfrak{s}$ is cut out by such a disc $D$ as above. That is,
\begin{equation}
    d_\mathfrak{s} = \text{{min}}_{r,r'\in\mathfrak{s}} \textup{ord}(r-r').
\end{equation}
If $\mathfrak{s}\neq \mathcal{R}$, then its relative
depth is $\delta_\mathfrak{s}=d_\mathfrak{s}-d_{P(\mathfrak{s})}$, where $P(\mathfrak{s})$ is the smallest cluster with $\mathfrak{s}\subsetneq P(\mathfrak{s})$. We refer to this data of the clusters and relative depths as the \textit{cluster picture} of $C$. The clusters of size $1$ are called \textit{singletons}.
\end{definition}

\begin{definition}[From \cite{m2d2} Definition 1.3]
If $\mathfrak{s}\subsetneq\mathfrak{r}$ is a maximal subcluster, we call $\mathfrak{r}$ the \textit{parent} of $\mathfrak{s}$. We call $\mathfrak{s}$ a \textit{child} of $\mathfrak{r}$. 
\end{definition}

\begin{definition}[From \cite{m2d2} Definition 1.4]
If $\mathfrak{s}$ has size $>1$ it is \textit{proper}. If $\mathfrak{s}$ has odd/even size we call it \textit{odd/even}. If $\mathfrak{s}$ is proper, even and has only even children it is \textit{übereven}.
\end{definition}

\begin{definition}[From \cite{semistable} Definition 3.45 and \cite{m2d2} Example D.2.]\label{delta}
Let $\mathcal{C}$ be a cluster picture and let $\Sigma$ be the set of clusters of $\mathcal{C}$. We can define a distance function $\delta:\Sigma\times\Sigma\rightarrow\mathbb{Q}$ between pairs of clusters of size $>1$ as follows. Let $\mathfrak{s}$ and $\mathfrak{r}\in\Sigma$.
\begin{enumerate}[(i)]
    \item $\delta(\mathfrak{s},\mathfrak{s})=0$;
    \item If $\mathfrak{s}\subseteq \mathfrak{r}$ then $\delta(\mathfrak{s},\mathfrak{r})=d_{\mathfrak{s}}-d_{\mathfrak{r}}$;
    \item If $\mathfrak{u}$ is the least common ancestor of $\mathfrak{s}$ and $\mathfrak{r}$, then $\delta(\mathfrak{s},\mathfrak{r})=\delta(\mathfrak{u},\mathfrak{s})+\delta(\mathfrak{u},\mathfrak{r})=d_\mathfrak{s}-d_\mathfrak{u}+d_\mathfrak{r}-d_\mathfrak{u}$,
\end{enumerate}
where the least common ancestor of $\mathfrak{s}$ and $\mathfrak{r}$ is the unique cluster $\mathfrak{u}$ such that $\mathfrak{s},\mathfrak{r}\subset\mathfrak{u}$ and no child of $\mathfrak{u}$ contains both $\mathfrak{s}$ and $\mathfrak{r}$.
\end{definition}

\begin{definition}[BY tree, as in \cite{m2d2} Definition D.6]\label{bytreedefinition}
Let $\Sigma$ be a cluster picture. We define $T_{\Sigma}$, the BY
tree associated to $\Sigma$, as follows. First take the graph with:
\begin{itemize}
\item A vertex $v_{\mathfrak{s}}$ for every proper cluster $\mathfrak{s}$, excluding $\mathfrak{s} = \mathcal{R}$ when $\mathcal{R} = 2g + 2$ and has a child of size $2g + 1$, coloured yellow if $\mathfrak{s}$ is übereven and blue otherwise;
\item An edge linking $v_{\mathfrak{s}}$ to $v_{P(\mathfrak{s})}$ for every proper cluster $\mathfrak{s}\neq \mathcal{R}$, yellow of length $2\delta_{\mathfrak{s}}$ if $\mathfrak{s}$ is even, and blue of length $\delta_{\mathfrak{s}}$ if $\mathfrak{s}$ is odd.
\end{itemize}
To obtain $T_\Sigma$ from this graph we remove, if $\mathcal{R} = 2g + 2$ and $\mathcal{R}$ is a disjoint union of two proper children, the degree $2$ vertex $v_{\mathcal{R}}$ from the vertex set (keeping the underlying
topological space the same). We define the genus of a vertex $v_{\mathfrak{s}}$ as $g(v_{\mathfrak{s}}) = g(\mathfrak{s})$, where $\#\{\textup{odd children of }\mathfrak{s}\}=2g(\mathfrak{s})+1$ or $2g(\mathfrak{s})+2$.
\end{definition}

Theorem 5.18 of \cite{m2d2} states that the dual graph of the special fibre of the minimal regular model of a semistable hyperelliptic curve $C$ over a local field $K$ of odd residue characteristic is determined by the BY tree of $C/K$. Hence, it is the information contained in the BY tree that we want to capture from absolute invariants of the curve in order to obtain the dual graph from absolute invariants.

The table on p.365 of \cite{advancedsilverman} shows that when $E$ is an elliptic curve over a discretely valued field $K$ of residue characteristic $p\geq 5$, the dual graph of the special fibre of the minimal regular model of $E/K^{\textup{unr}}$ can be read off from $\textup{ord}(j_E)$ and $\textup{ord}(\Delta_E)$. In the theorem below, we use cluster pictures and Theorem 5.18 of \cite{m2d2} to show that for a hyperelliptic curve $C$ of genus $g\geq 2$ over a local field $K$, it is not sufficient to know the valuations of invariants of the curve in order to recover the dual graph of the special fibre of the minimal regular model of $C/K^{\textup{unr}}$. 

\begin{theorem}\label{nosilverman}
Let $K$ be a local field of odd residue characteristic. For $g\geq 2$ there exists a semistable hyperelliptic curve $C:y^2=f_1(x)$ of genus $g$ defined over $K$ and a non-semistable hyperelliptic curve $C':y^2=f_2(x)$ of genus $g$ defined over $K$ such that $\textup{ord}(\Delta_{C})=\textup{ord}(\Delta_{C'})$ and $A(C)=A(C')$ for every absolute invariant $A$. In particular, $\textup{ord}(I(f_1))=\textup{ord}(I(f_2))$ for every invariant $I$ of weight $k\in\mathbb{Z}^+$, but the dual graph of the special fibre of the minimal regular models of $C/K^{\textup{unr}}$ and $C'/K^{\textup{unr}}$ do not coincide.
\end{theorem}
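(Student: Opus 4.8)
The plan is to produce $C:y^2=f_1(x)$ and $C':y^2=f_2(x)$ as two models over $K$ of one and the same curve over $\Bar K$, chosen so that $C$ is semistable while $C'$ is not. The equality $A(C)=A(C')$ for every absolute invariant is then automatic: an absolute invariant is a quotient of two invariants of a common weight $k$, and any $M\in\textup{GL}_2(\Bar K)$ together with a scaling $y\mapsto\mu y$ multiplies every weight-$k$ invariant by the same factor, so these factors cancel in the quotient. Thus the real work is to arrange the geometric isomorphism so that, simultaneously, $\textup{ord}(\Delta_C)=\textup{ord}(\Delta_{C'})$ and the reduction types differ; the equality $\textup{ord}(I(f_1))=\textup{ord}(I(f_2))$ for all invariants $I$ will then come for free from the first of these.

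I would take the isomorphism to be the ramified diagonal change of variables $(x,y)\mapsto(\sqrt{\pi}\,x,\mu y)$, so that $f_2(x)=\mu^{-2}f_1(\sqrt{\pi}\,x)$. Scaling the roots of $f_1$ by $\sqrt{\pi}^{-1}$ lowers every depth in the cluster picture by $\tfrac12$; hence if $f_1$ is chosen with all roots in $K^{\textup{unr}}$ and all cluster depths integral (so that $C$ is semistable by the criterion of \cite{m2d2}), the half-integer depths of $f_2$ force $C'$ to be non-semistable over $K$, becoming semistable only over $K(\sqrt{\pi})$. To keep $f_2\in K[x]$ I would choose $f_1$ supported on a single parity of exponents, so that $f_1(\sqrt{\pi}\,x)$ equals $\sqrt{\pi}^{\,\varepsilon}$ times an element of $K[x]$ with $\varepsilon\equiv\deg f_1\pmod 2$: concretely $f_1(x)=h(x^2)$ of degree $2g+2$ when $g$ is odd, and $f_1(x)=x\,h(x^2)$ of degree $2g+1$ when $g$ is even (evaluating the invariants with the extra variable sent to infinity, as in the introduction).

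For the valuations I would use the standard relation that, for binary forms of degree $d=2g+2$, an invariant of degree $p$ in the coefficients has weight $k=(g+1)p$. Consequently $\textup{ord}(I(f_2))-\textup{ord}(I(f_1))=p\bigl[(g+1)\,\textup{ord}(\det M)-2\,\textup{ord}(\mu)\bigr]$ for every invariant $I$, so the vanishing of all these differences is the single linear condition $(g+1)\,\textup{ord}(\det M)=2\,\textup{ord}(\mu)$. With $\textup{ord}(\det M)=\textup{ord}(\sqrt{\pi})=\tfrac12$ this reads $\textup{ord}(\mu)=\tfrac{g+1}{4}$; a short discriminant computation confirms this is exactly the condition $\textup{ord}(\Delta_C)=\textup{ord}(\Delta_{C'})$. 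The point of the parity choice above is precisely that this forced value of $\textup{ord}(\mu)$ is compatible with $f_2\in K[x]$: it makes $\mu^{-2}\sqrt{\pi}^{\,\varepsilon}\in K$, so that $f_2=\mu^{-2}\sqrt{\pi}^{\,\varepsilon}\cdot(\text{element of }K[x])$ indeed has coefficients in $K$.

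Finally I would check that the dual graphs differ. Since $C$ is semistable, the dual graph of the special fibre of the minimal regular model of $C/K^{\textup{unr}}$ is the reduced nodal graph computed from its BY tree by Theorem 5.18 of \cite{m2d2}; since $C'$ is not semistable over $K^{\textup{unr}}$, the special fibre of its minimal regular model is not reduced, so the two dual graphs cannot coincide. To make the distinction completely explicit one can take $f_1$ with distinct roots modulo $\pi$, so that $C$ has good reduction (dual graph a single vertex) while $C'$ has additive reduction and hence a minimal regular model with reducible special fibre. The main obstacle I anticipate is the simultaneous satisfaction of all three requirements — $f_2$ defined over $K$, exact matching of every invariant valuation (which pins $\textup{ord}(\mu)$ to a value whose admissibility depends on the parity of $g$), and genuinely distinct reduction — and in particular the parity bookkeeping that forces the switch between the even- and odd-degree models according to $g\bmod 2$.
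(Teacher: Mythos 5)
Most of your construction is sound and takes a genuinely different route from the paper's. The paper builds three case-by-case explicit families related by inversions $x\mapsto \pi^a/x$, reads off $\textup{ord}(\Delta)$ directly from the cluster pictures, and then bootstraps to all invariants via the observation that $I^a/\Delta_C^b$ is an absolute invariant for suitable $a,b$; you instead use a single diagonal substitution $x\mapsto\sqrt{\pi}\,x$, $y\mapsto\mu y$, a parity trick (an even polynomial $h(x^2)$ for $g$ odd, an odd-degree model $x\,h(x^2)$ for $g$ even) to keep $f_2\in K[x]$, and the weight--degree relation $k=(g+1)p$ to reduce equality of \emph{all} invariant valuations to the single condition $\textup{ord}(\mu)=(g+1)/4$. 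That part is correct and arguably cleaner than the paper's computation, with one imprecision: integrality of cluster depths alone is not the semistability criterion of \cite{m2d2} Theorem 1.8 (one also needs the parity condition $\nu_{\mathfrak{s}}\in 2\mathbb{Z}$ on principal clusters and the bound on the ramification of $K(\mathcal{R})/K$), though your good-reduction choice of $f_1$ satisfies all of it, and the scaled curve $C'$ does fail depth-integrality for the principal cluster $\mathcal{R}$, hence is genuinely non-semistable.

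The genuine gap is the final step. Your argument that the dual graphs differ rests on the claim that non-semistability of $C'/K^{\textup{unr}}$ forces the special fibre of its minimal regular model to be non-reduced. That claim is false: reducedness of the special fibre of the minimal \emph{regular} model does not characterise semistability (that equivalence holds for the minimal regular model with \emph{normal crossings}, not the minimal regular model itself). For instance, $y^2=x^5+\pi$ is a non-semistable genus-$2$ curve (ramification degree $5$ in \cite{m2d2} Theorem 1.8) whose evident model is already regular with special fibre the reduced, irreducible curve $\bar{y}^2=\bar{x}^5$; its dual graph is a single vertex of multiplicity one, just as for good reduction. So "semistable versus non-semistable" does not by itself separate dual graphs of minimal regular models -- this is exactly the delicate point of the theorem. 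For your particular $C'$, a form of a good-reduction curve split by the ramified quadratic extension $K^{\textup{unr}}(\sqrt{\pi})$ (the hyperelliptic analogue of Kodaira type $\mathrm{I}_0^*$), the conclusion is in fact true: the minimal regular model has a central component of multiplicity $2$ with multiplicity-one tails, so its dual graph differs from the single multiplicity-one vertex of $C$. But you must actually establish this, for example by constructing the model as a resolved quotient of the smooth model over $O_{K^{\textup{unr}}(\sqrt{\pi})}$ by the twisted Galois involution $(X,Y)\mapsto(-X,\pm Y)$, or by citing a result on models of ramified quadratic twists. Without that, the assertion that the two dual graphs do not coincide is unproven.
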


\begin{proof}
Let $d=2g+2$ and denote by $\pi$ a uniformiser of $K$. We split the proof into three cases. 

\textit{Case $1$: $g$ is even.} Then $d/2$ is odd and the curves 
\begin{align}
C&:y^2= f_1(x)=\prod_{i=0}^{\frac{d}{2}-1}(x-\frac{\pi}{1+\pi^{2i+1}})\cdot(x+1)\cdot \prod_{j=1}^{\frac{d}{2}-1}(x-\frac{1}{1+\pi^{2j}})\quad \textup{ and } \\
C'&:y^2=f_2(x)=\prod_{i=0}^{\frac{d}{2}-1} (x-1-\pi^{2i+1})\cdot (x+\pi)\cdot \prod_{j=1}^{\frac{d}{2}-1} (x-\pi-\pi^{2j+1})
\end{align}
are defined over $K$ and are isomorphic over $\Bar{K}$ by taking $x\mapsto \pi/x$ and $y\mapsto \frac{(\alpha_1\cdots\alpha_d)^{\frac{1}{2}}}{x^{\frac{d}{2}}}y$ in the second Weierstrass equation, where $\alpha_1,\dots,\alpha_d$ are the roots of $f_2(x)$. By checking the valuations of the differences of the roots, the Weierstrass equations for these curves have the following cluster pictures
\begin{center}
\begin{minipage}[b]{0.4\textwidth}
		\begin{center}
		\begin{figure}[H]
\scalebox{1.5}{
\clusterpicture             
  \Root[D] {1} {first} {r1};
  \Root[D] {} {r1} {r2};
  \Root[Dot] {2} {r2} {r3};
  \Root[Dot] {} {r3} {r4};
    \Root[Dot] {} {r4} {r5};
    \Root[D] {1.5} {r5} {r6};
    \Root[D] {1} {r6} {r7};
    \Root[D] {5} {r7} {r8};
    \Root[D] {} {r8} {r9};
    \Root[Dot] {2} {r9} {r10};
    \Root[Dot] {} {r10} {r11};
    \Root[Dot] {} {r11} {r12};
    \Root[D] {1.5} {r12} {r13};
    \Root[D] {1} {r13} {r14};
  \ClusterLD c1[][2] = (r1)(r2);
  \ClusterLD c2[][2] = (c1)(r3)(r4)(r5);
  \ClusterLD c3[][2] = (c2)(r6);
  \ClusterLD c4[\mathfrak{s}_1][2] = (c3)(r7);
  \ClusterLD c5[][2] = (r8)(r9);
  \ClusterLD c6[][2] = (c5)(r10)(r11)(r12);
  \ClusterLD c7[\mathfrak{s}_2][2] = (c6)(r13);
  \ClusterD c5[0] = (c4)(c7)(r14);
\endclusterpicture}
\vspace{10pt}
\caption*{Cluster picture of $C/K$}
		\end{figure}
				\end{center}
    \end{minipage}\begin{minipage}[b]{0.4\textwidth}
		\begin{center}
		\begin{figure}[H]
\scalebox{1.5}{
\clusterpicture             
  \Root[D] {1} {first} {r1};
  \Root[D] {} {r1} {r2};
  \Root[Dot] {2} {r2} {r3};
  \Root[Dot] {} {r3} {r4};
    \Root[Dot] {} {r4} {r5};
    \Root[D] {1.5} {r5} {r6};
    \Root[D] {1} {r6} {r7};
    \Root[D] {6} {r7} {r8};
    \Root[D] {} {r8} {r9};
    \Root[Dot] {2} {r9} {r10};
    \Root[Dot] {} {r10} {r11};
    \Root[Dot] {} {r11} {r12};
    \Root[D] {1.5} {r12} {r13};
    \Root[D] {1} {r13} {r14};
  \ClusterLD c1[][2] = (r1)(r2);
  \ClusterLD c2[][2] = (c1)(r3)(r4)(r5);
  \ClusterLD c3[][2] = (c2)(r6);
  \ClusterLD c4[\mathfrak{s}_3][1] = (c3)(r7);
  \ClusterLD c5[][2] = (r8)(r9);
  \ClusterLD c6[][2] = (c5)(r10)(r11)(r12);
  \ClusterLD c7[][2] = (c6)(r13);
  \ClusterLD c8[\mathfrak{s}_4][1] = (c7)(r14);
  \ClusterD c5[0] = (c4)(c8);
\endclusterpicture}
\vspace{10pt}
\caption*{Cluster picture of $C'/K$}
		\end{figure}
				\end{center}
    \end{minipage}
\end{center}
where the dots represent clusters that decrease in size by $1$ at each step and each have a relative depth of $2$, and $\mathfrak{s}_1$ and $\mathfrak{s}_2$ have size $d/2$ and $d/2-1$ and $\mathfrak{s}_3$ and $\mathfrak{s}_4$ have size $d/2$. For a hyperelliptic curve $\mathcal{C}:y^2=f(x)$ with leading coefficient $c_f$, the discriminant is given by
\begin{equation}
\Delta_{\mathcal{C}} = 16^g c_f^{4g+2}\textup{disc}(\frac{1}{c_f}f(x)). 
\end{equation}
From the cluster picture, we can read off
\begin{equation}
\textup{ord}\left(\textup{disc}\left(\frac{1}{c_{f_1}}f_1(x)\right)\right)= \textup{ord}\left(\textup{disc}\left(\frac{1}{c_{f_2}}f_2(x)\right)\right)= 2\left(4\sum_{k=2}^{\frac{d}{2}-1}\left(\frac{k}{2}\right)+2\left(\frac{d/2}{2}\right)\right),
\end{equation}
and so $\textup{ord}(\Delta_C)=\textup{ord}(\Delta_{C'})$ since $\textup{ord}(c_{f_1})=\textup{ord}(c_{f_2})=0$ and $K$ was assumed to have odd residue characteristic. Since $C$ and $C'$ are isomorphic over $\Bar{K}$, $A(C)=A(C')$ for any absolute invariant $A$. Thus $\textup{ord}(I(f_1(x)))=\textup{ord}(I(f_2(x)))$ for any invariant $I$ of weight $k$, since we can find an $a,b\in\mathbb{Z}^{+}$ such that $I^a/\Delta_C^b$ is an absolute invariant. By \cite{m2d2} Theorem 1.8, $C/K$ is semistable and $C'/K$ is not semistable.

\textit{Case $2$: $g$ is odd and $n=\frac{d}{4}$ is even}. Denote by $i$ a primitive $4$-th root of unity in $\Bar{K}$. Then the curves
\begin{align}
C:y^2&=f_1(x)=\prod_{j=1}^{\frac{n}{2}}\left(x-\frac{1}{1+\pi^{2j}}\right)\left(x+\frac{1}{1+\pi^{2j}}\right) \cdot \prod_{k=0}^{n-1} \left(x+\frac{i\pi}{(1+\pi^{2k+1})}\right)\left(x-\frac{i\pi}{(1+\pi^{2k+1})}\right) \\
& \hspace{100pt}\cdot\prod_{l=0}^{\frac{n}{2}-1} \left(x-\frac{\pi}{(1+\pi^{2l+1})}\right)\left(x+\frac{\pi}{(1+\pi^{2l+1})}\right) \quad \textup{ and } \\
C':y^2&=f_2(x)=\prod_{j=1}^{\frac{n}{2}}(x-\pi^{\frac{3}{2}}-\pi^{\frac{4j+3}{2}})(x+\pi^{\frac{3}{2}}+\pi^{\frac{4j+3}{2}}) \cdot \prod_{k=0}^{n-1} (x-i(\pi^{\frac{1}{2}}+\pi^{\frac{4k+3}{2}}))(x+i(\pi^{\frac{1}{2}}+\pi^{\frac{4k+3}{2}})) \\
& \hspace{100pt}\cdot\prod_{l=0}^{\frac{n}{2}-1} (x-\pi^{\frac{1}{2}}-\pi^{\frac{4l+3}{2}})(x+\pi^{\frac{1}{2}}+\pi^{\frac{4l+3}{2}}), 
\end{align}
are defined over $K$ and are isomorphic over $\Bar{K}$ by taking $x\mapsto \pi^{\frac{3}{2}}/x$ and $y\mapsto\frac{(\alpha_1\cdots\alpha_d)^{\frac{1}{2}}}{x^{\frac{d}{2}}}y$ in the second Weierstrass equation, where $\alpha_1,\dots,\alpha_d$ are the roots of $f_2(x)$. The Weierstrass equations for these curves have the following cluster pictures

\begin{figure}[H]
\scalebox{1.5}{
\clusterpicture             
\Root[D] {1} {first} {r1};
  \Root[D] {} {r1} {r2};
  \Root[Dot] {2} {r2} {r3};
  \Root[Dot] {} {r3} {r4};
    \Root[Dot] {} {r4} {r5};
    \Root[D] {1.5} {r5} {r6};
    \Root[D] {1} {r6} {r7};
    \Root[D] {6} {r7} {r8};
    \Root[D] {} {r8} {r9};
    \Root[Dot] {2} {r9} {r10};
    \Root[Dot] {} {r10} {r11};
    \Root[Dot] {} {r11} {r12};
    \Root[D] {1.5} {r12} {r13};
    \Root[D] {1} {r13} {r14};
    \Root[D] {8} {r14} {r15};
  \Root[D] {} {r15} {r16};
  \Root[Dot] {2} {r16} {r17};
  \Root[Dot] {} {r17} {r18};
    \Root[Dot] {} {r18} {r19};
    \Root[D] {1.5} {r19} {r20};
    \Root[D] {1} {r20} {r21};
    \Root[D] {6} {r21} {r22};
    \Root[D] {} {r22} {r23};
    \Root[Dot] {2} {r23} {r24};
    \Root[Dot] {} {r24} {r25};
    \Root[Dot] {} {r25} {r26};
    \Root[D] {1.5} {r26} {r27};
    \Root[D] {1} {r27} {r28};
     \Root[D] {6} {r28} {r29};
    \Root[D] {} {r29} {r30};
    \Root[Dot] {2} {r30} {r31};
    \Root[Dot] {} {r31} {r32};
    \Root[Dot] {} {r32} {r33};
    \Root[D] {1.5} {r33} {r34};
    \Root[D] {1} {r34} {r35};
     \Root[D] {6} {r35} {r36};
    \Root[D] {} {r36} {r37};
    \Root[Dot] {2} {r37} {r38};
    \Root[Dot] {} {r38} {r39};
    \Root[Dot] {} {r39} {r40};
    \Root[D] {1.5} {r40} {r41};
    \Root[D] {1} {r41} {r42};
  \ClusterLD c1[][2] = (r1)(r2);
  \ClusterLD c2[][2] = (c1)(r3)(r4)(r5);
  \ClusterLD c3[][2] = (c2)(r6);
  \ClusterLD c4[\mathfrak{s}_1][2] = (c3)(r7);
  \ClusterLD c5[][2] = (r8)(r9);
  \ClusterLD c6[][2] = (c5)(r10)(r11)(r12);
  \ClusterLD c7[][2] = (c6)(r13);
   \ClusterLD c77[\mathfrak{s}_2][2] = (c7)(r14);
  \ClusterLD c8[][2] = (r15)(r16);
  \ClusterLD c9[][2] = (c8)(r17)(r18)(r19);
  \ClusterLD c10[][2] = (c9)(r20);
  \ClusterLD c11[\mathfrak{s}_3][1] = (c10)(r21);
  \ClusterLD c12[][2] = (r23)(r22);
  \ClusterLD c13[][2] = (c12)(r24)(r25)(r26);
  \ClusterLD c14[][2] = (c13)(r27);
  \ClusterLD c15[\mathfrak{s}_4][1] = (c14)(r28);
  \ClusterLD c16[][2] = (r29)(r30);
  \ClusterLD c17[][2] = (c16)(r31)(r32)(r33);
  \ClusterLD c18[][2] = (c17)(r34);
  \ClusterLD c19[\mathfrak{s}_5][1] = (c18)(r35);
  \ClusterLD c20[][2] = (r36)(r37);
  \ClusterLD c21[][2] = (c20)(r38)(r39)(r40);
  \ClusterLD c22[][2] = (c21)(r41);
  \ClusterLD c23[\mathfrak{s}_6][1] = (c22)(r42);
  \ClusterLD c25[][1] = (c11)(c15)(c23)(c19);
  \ClusterD c5[0] = (c77)(c4)(c25);
\endclusterpicture}
\vspace{10pt}
\caption*{Cluster picture of $C/K$}
		\end{figure}
    
\begin{figure}[H]
\scalebox{1.5}{
\clusterpicture             
\Root[D] {1} {first} {r1};
  \Root[D] {} {r1} {r2};
  \Root[Dot] {2} {r2} {r3};
  \Root[Dot] {} {r3} {r4};
    \Root[Dot] {} {r4} {r5};
    \Root[D] {1.5} {r5} {r6};
    \Root[D] {1} {r6} {r7};
    \Root[D] {6} {r7} {r8};
    \Root[D] {} {r8} {r9};
    \Root[Dot] {2} {r9} {r10};
    \Root[Dot] {} {r10} {r11};
    \Root[Dot] {} {r11} {r12};
    \Root[D] {1.5} {r12} {r13};
    \Root[D] {1} {r13} {r14};
    \Root[D] {8} {r14} {r15};
  \Root[D] {} {r15} {r16};
  \Root[Dot] {2} {r16} {r17};
  \Root[Dot] {} {r17} {r18};
    \Root[Dot] {} {r18} {r19};
    \Root[D] {1.5} {r19} {r20};
    \Root[D] {1} {r20} {r21};
    \Root[D] {6} {r21} {r22};
    \Root[D] {} {r22} {r23};
    \Root[Dot] {2} {r23} {r24};
    \Root[Dot] {} {r24} {r25};
    \Root[Dot] {} {r25} {r26};
    \Root[D] {1.5} {r26} {r27};
    \Root[D] {1} {r27} {r28};
     \Root[D] {6} {r28} {r29};
    \Root[D] {} {r29} {r30};
    \Root[Dot] {2} {r30} {r31};
    \Root[Dot] {} {r31} {r32};
    \Root[Dot] {} {r32} {r33};
    \Root[D] {1.5} {r33} {r34};
    \Root[D] {1} {r34} {r35};
     \Root[D] {6} {r35} {r36};
    \Root[D] {} {r36} {r37};
    \Root[Dot] {2} {r37} {r38};
    \Root[Dot] {} {r38} {r39};
    \Root[Dot] {} {r39} {r40};
    \Root[D] {1.5} {r40} {r41};
    \Root[D] {1} {r41} {r42};
  \ClusterLD c1[][2] = (r1)(r2);
  \ClusterLD c2[][2] = (c1)(r3)(r4)(r5);
  \ClusterLD c3[][2] = (c2)(r6);
  \ClusterLD c4[\mathfrak{s}_1'][2] = (c3)(r7);
  \ClusterLD c5[][2] = (r8)(r9);
  \ClusterLD c6[][2] = (c5)(r10)(r11)(r12);
  \ClusterLD c7[][2] = (c6)(r13);
   \ClusterLD c77[\mathfrak{s}_2'][2] = (c7)(r14);
   \ClusterLD c24[][1] = (c77)(c4);
  \ClusterLD c8[][2] = (r15)(r16);
  \ClusterLD c9[][2] = (c8)(r17)(r18)(r19);
  \ClusterLD c10[][2] = (c9)(r20);
  \ClusterLD c11[\mathfrak{s}_3'][1] = (c10)(r21);
  \ClusterLD c12[][2] = (r23)(r22);
  \ClusterLD c13[][2] = (c12)(r24)(r25)(r26);
  \ClusterLD c14[][2] = (c13)(r27);
  \ClusterLD c15[\mathfrak{s}_4'][1] = (c14)(r28);
  \ClusterLD c16[][2] = (r29)(r30);
  \ClusterLD c17[][2] = (c16)(r31)(r32)(r33);
  \ClusterLD c18[][2] = (c17)(r34);
  \ClusterLD c19[\mathfrak{s}_5'][1] = (c18)(r35);
  \ClusterLD c20[][2] = (r36)(r37);
  \ClusterLD c21[][2] = (c20)(r38)(r39)(r40);
  \ClusterLD c22[][2] = (c21)(r41);
  \ClusterLD c23[\mathfrak{s}_6'][1] = (c22)(r42);
  \ClusterD c5[\frac{1}{2}] = (c24)(c11)(c15)(c23)(c19);
\endclusterpicture}
\vspace{10pt}
\caption*{Cluster picture of $C'/K$}
		\end{figure}
\noindent where the dots represent clusters that decrease in size by $1$ at each step and each have a relative depth of $2$, and $\mathfrak{s}_1$, $\mathfrak{s}_1'$, $\mathfrak{s}_2$,  $\mathfrak{s}_2'$, $\mathfrak{s}_5$, $\mathfrak{s}_5'$, $\mathfrak{s}_6$ and $\mathfrak{s}_6'$ have size $n/2$ and $\mathfrak{s}_3$, $\mathfrak{s}_3'$, $\mathfrak{s}_4$ and $\mathfrak{s}_4'$ have size $n$. One can verify from the cluster picture that $\textup{ord}(\Delta_C)=\textup{ord}(\Delta_{C'})$ so, as in Case 1, $\textup{ord}(I(f_1(x)))=\textup{ord}(I(f_2(x)))$ for any invariant $I$ of weight $k$. Again, one can verify from \cite{m2d2} Theorem 1.8 that $C/K$ is semistable and $C'/K$ is not semistable.

\textit{Case $3$: $g$ is odd and $n=\frac{d}{4}$ is odd.} The curves
\begin{align}
C:y^2&=f_1(x)=(x+1)\cdot(x-1+\pi^2)\cdot\prod_{j=2}^{n-1} (x-1-\pi^2-\pi^{2j}) \prod_{k=1}^{n} (x-i(\pi^2+\pi^{2k}))(x+i(\pi^2+\pi^{2k})) \\
&\hspace{100pt} \cdot (x+\pi^2)\cdot\prod_{l=2}^{n}(x-\pi^2-\pi^{2l})\quad \textup{ and } \\
C':y^2&=f_2(x)= (x+\pi^3)\cdot(x-\frac{\pi^3}{1-\pi^2})\cdot \prod_{j=2}^{n-1}(x-\frac{\pi^3}{1+\pi^2+\pi^{2j}})\cdot \prod_{k=0}^{n-1} (x+\frac{i\pi}{1+\pi^{2k}})(x-\frac{i\pi}{1+\pi^{2k}}) \\
&\hspace{100pt} \cdot (x+\pi)\cdot\prod_{l=1}^{n-1}(x-\frac{\pi}{1+\pi^{2l}})
\end{align}
are defined over $K$ and are isomorphic over $\Bar{K}$ by taking $x\mapsto \pi^3/x$ and $y\mapsto\frac{(\alpha_1\cdots\alpha_d)^{\frac{1}{2}}}{x^{\frac{d}{2}}}y$ in the first Weierstrass equation, where $\alpha_1,\dots,\alpha_d$ are the roots of $f_1(x)$. The Weierstrass equations for these curves have the following cluster pictures
\begin{center}
		\begin{center}
		\begin{figure}[H]
\scalebox{1.5}{
\clusterpicture             
  \Root[D] {1} {first} {r1};
  \Root[D] {} {r1} {r2};
  \Root[Dot] {2} {r2} {r3};
  \Root[Dot] {} {r3} {r4};
    \Root[Dot] {} {r4} {r5};
    \Root[D] {1.5} {r5} {r6};
    \Root[D] {1} {r6} {r7};
    \Root[D] {6} {r7} {r8};
    \Root[D] {} {r8} {r9};
    \Root[Dot] {2} {r9} {r10};
    \Root[Dot] {} {r10} {r11};
    \Root[Dot] {} {r11} {r12};
    \Root[D] {1.5} {r12} {r13};
    \Root[D] {1} {r13} {r14};
    \Root[D] {6} {r14} {r15};
  \Root[D] {} {r15} {r16};
  \Root[Dot] {2} {r16} {r17};
  \Root[Dot] {} {r17} {r18};
    \Root[Dot] {} {r18} {r19};
    \Root[D] {1.5} {r19} {r20};
    \Root[D] {1} {r20} {r21};
    \Root[D] {5} {r21} {r22};
    \Root[D] {} {r22} {r23};
    \Root[Dot] {2} {r23} {r24};
    \Root[Dot] {} {r24} {r25};
    \Root[Dot] {} {r25} {r26};
    \Root[D] {1.5} {r26} {r27};
    \Root[D] {1} {r27} {r28};
  \ClusterLD c1[][2] = (r1)(r2);
  \ClusterLD c2[][2] = (c1)(r3)(r4)(r5);
  \ClusterLD c3[\mathfrak{s}_1][2] = (c2)(r6);
  \ClusterLD c5[][2] = (r8)(r9);
  \ClusterLD c6[][2] = (c5)(r10)(r11)(r12);
  \ClusterLD c7[][2] = (c6)(r13);
   \ClusterLD c77[\mathfrak{s}_2][2] = (c7)(r14);
  \ClusterLD c8[][2] = (r15)(r16);
  \ClusterLD c9[][2] = (c8)(r17)(r18)(r19);
  \ClusterLD c10[][2] = (c9)(r20);
  \ClusterLD c11[\mathfrak{s}_3][2] = (c10)(r21);
  \ClusterLD c12[][2] = (r23)(r22);
  \ClusterLD c13[][2] = (c12)(r24)(r25)(r26);
  \ClusterLD c14[\mathfrak{s}_4][2] = (c13)(r27);
  \ClusterLD c15[][2] = (c77)(c14)(c11)(r14)(r28);
  \ClusterD c5[0] = (c3)(r7)(c15);
\endclusterpicture}
\vspace{10pt}
\caption*{Cluster picture of $C/K$}
		\end{figure}
				\end{center}

		\begin{center}
		\begin{figure}[H]
\scalebox{1.5}{
\clusterpicture             
  \Root[D] {1} {first} {r1};
  \Root[D] {} {r1} {r2};
  \Root[Dot] {2} {r2} {r3};
  \Root[Dot] {} {r3} {r4};
    \Root[Dot] {} {r4} {r5};
    \Root[D] {1.5} {r5} {r6};
    \Root[D] {1} {r6} {r7};
    \Root[D] {6} {r7} {r8};
    \Root[D] {} {r8} {r9};
    \Root[Dot] {2} {r9} {r10};
    \Root[Dot] {} {r10} {r11};
    \Root[Dot] {} {r11} {r12};
    \Root[D] {1.5} {r12} {r13};
    \Root[D] {1} {r13} {r14};
    \Root[D] {6} {r14} {r15};
  \Root[D] {} {r15} {r16};
  \Root[Dot] {2} {r16} {r17};
  \Root[Dot] {} {r17} {r18};
    \Root[Dot] {} {r18} {r19};
    \Root[D] {1.5} {r19} {r20};
    \Root[D] {1} {r20} {r21};
    \Root[D] {5} {r21} {r22};
    \Root[D] {} {r22} {r23};
    \Root[Dot] {2} {r23} {r24};
    \Root[Dot] {} {r24} {r25};
    \Root[Dot] {} {r25} {r26};
    \Root[D] {1.5} {r26} {r27};
    \Root[D] {1} {r27} {r28};
  \ClusterLD c1[][2] = (r1)(r2);
  \ClusterLD c2[][2] = (c1)(r3)(r4)(r5);
  \ClusterLD c3[][2] = (c2)(r6);
  \ClusterLD c4[\mathfrak{s}_1'][2] = (c3)(r7);
  \ClusterLD c5[][2] = (r8)(r9);
  \ClusterLD c6[][2] = (c5)(r10)(r11)(r12);
  \ClusterLD c7[][2] = (c6)(r13);
   \ClusterLD c77[\mathfrak{s}_2'][2] = (c7)(r14);
  \ClusterLD c8[][2] = (r15)(r16);
  \ClusterLD c9[][2] = (c8)(r17)(r18)(r19);
  \ClusterLD c10[][2] = (c9)(r20);
  \ClusterLD c11[\mathfrak{s}_3'][2] = (c10)(r21);
  \ClusterLD c12[][2] = (r23)(r22);
  \ClusterLD c13[][2] = (c12)(r24)(r25)(r26);
  \ClusterLD c14[\mathfrak{s}_4'][2] = (c13)(r27);
  \ClusterD c5[1] = (c4)(c77)(c14)(c11)(r14)(r28);
\endclusterpicture}
\vspace{10pt}
\caption*{Cluster picture of $C'/K$}
		\end{figure}
				\end{center}

\end{center}
\noindent where the dots represent clusters that decrease in size by $1$ at each step and each have a relative depth of $2$, and where the clusters $\mathfrak{s}_1$, $\mathfrak{s}_4$ and $\mathfrak{s}_4'$ have size $n-1$ and
$\mathfrak{s}_1'$, $\mathfrak{s}_2$, $\mathfrak{s}_2'$, $\mathfrak{s}_3$ and $\mathfrak{s}_3'$ have size $n$. Again, one can verify from the cluster picture that $\textup{ord}(\Delta_C)=\textup{ord}(\Delta_{C'})$ so, as in Case 1, $\textup{ord}(I(f_1(x)))=\textup{ord}(I(f_2(x)))$ for any invariant $I$ of weight $k$. By \cite{m2d2} Theorem 1.8, $C/K$ is semistable and $C'/K$ is not semistable.
\end{proof}

\section{Dual graphs and stable model trees}\label{dualgraphstreesection}

Let $C:y^2=f(x)$ be a semistable hyperelliptic curve over a local field $K$ of odd residue characteristic, where $f(x)$ has even degree. We want to recover the dual graph of the special fibre of the minimal regular model of $C$ from absolute invariants of the curve. In \cite{m2d2} Theorem 5.18, it is shown that the dual graph is determined by the BY tree of $C/K$ (see Definition \ref{bytreedefinition} above). It will make our lives easier to work with a different tree, from which the BY tree can be recovered, called the \textit{stable model tree}. We define the stable model tree in this section, and describe how the BY tree, and hence dual graph, can be recovered from it. In \S\ref{mainthminvproof}, we describe how the stable model tree (and thus the dual graph) can be recovered from absolute invariants of the curve. 

\begin{definition}[Stable model tree]\label{stablemodeltreedef}
Let $C:y^2=f(x)$ be a Weierstrass model for a hyperelliptic curve over a local field $K$ of odd residue characteristic, where $f(x)$ has even degree $d$, and let $\Sigma$ denote the cluster picture of $C$. We call the following graph $T_C=(V,E,L)$ the \textit{stable model tree of }$C$. Take the graph with:
\begin{itemize}
\item A vertex $v_\mathfrak{s}$ for every cluster $\mathfrak{s}\in \Sigma$, including clusters of size $1$. 
\item An edge linking $v_{\mathfrak{s}}$ to $v_{P(\mathfrak{s})}$ for every cluster $\mathfrak{s}\in \Sigma$.
\item A length function so that the edge $v_{\mathfrak{s}}v_{P(\mathfrak{s})}$ has length $\delta_{\mathfrak{s}}$, excluding the edges extending to clusters of size $1$ which do not have an allocated length.
\end{itemize}
To obtain $T_C$ from this graph, we remove the vertex $v_{\mathcal{R}}$ if $\mathcal{R}$ is a disjoint union of two children $\mathfrak{s}_1$ and $\mathfrak{s}_2$. We create a single edge between $v_{\mathfrak{s}_1}$ and $v_{\mathfrak{s}_2}$ of length $\delta(\mathfrak{s}_1,\mathcal{R})+\delta(\mathfrak{s}_1,\mathcal{R})$ if $\mathfrak{s}_1$ and $\mathfrak{s}_2$ have size $\geq 2$ and with no assigned length if $\mathfrak{s}_1$ or $\mathfrak{s}_2$ have size $1$.

We call the leaf vertices (corresponding to clusters of size $1$) \textit{singletons} of $T_C$, and vertices that are not singletons \textit{proper vertices}. If a proper vertex $v$ becomes a leaf if the singletons of $T_C$ are removed we call it a \textit{proper leaf}.
\end{definition}

We wish to show that there is a one-to-one correspondence between BY trees and stable model trees. To do so, we will need the following propositions and definitions. We will also need the following observation about the stable model tree of a curve with potentially good reduction when proving the main results, as we deal with the case of potentially good reduction separately in Theorem \ref{wholealgorithm}.

\begin{proposition}\label{potgoodredlemma}
Let $C:y^2=f(x)$ be a hyperelliptic curve of genus $g$ over a local field $K$ of odd residue characteristic, where $d=\deg(f)=2g+2$. The following are equivalent:
\begin{enumerate}[(i)]
    \item $C/K$ has potentially good reduction. 
    \item The cluster picture of $C:y^2=f(x)$ has no proper clusters of size $<2g+1$. 
    \item $T_C$ is isomorphic to the complete bipartite graph $K_{1,d}$.
    \item $T_\Sigma$ consists of a single blue vertex of genus $g$. 
\end{enumerate}
\end{proposition}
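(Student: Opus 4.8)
The plan is to prove the chain of equivalences $(i)\Leftrightarrow(ii)$, $(ii)\Leftrightarrow(iv)$, and $(ii)\Leftrightarrow(iii)$, drawing on the criterion for potentially good reduction phrased in terms of cluster pictures. For $(i)\Leftrightarrow(ii)$, I would appeal to the semistability/reduction-type criteria of \cite{m2d2} (in the spirit of their Theorem 1.8 and the surrounding classification): a hyperelliptic curve has potentially good reduction precisely when, after passing to a suitable extension, the special fibre of the stable model is a single smooth component of genus $g$. In cluster-picture language this corresponds to $\mathcal{R}$ having no proper subclusters that split off a nontrivial configuration — equivalently, no proper cluster $\mathfrak{s}$ with $2\le |\mathfrak{s}| < 2g+1$. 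The key point is that a proper cluster of size in this range forces the stable model to acquire more than one component (or positive-genus components joined by nodes), obstructing good reduction, while its absence means the whole root set behaves like a single cluster of genus $g$.

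For $(ii)\Leftrightarrow(iv)$, I would unwind Definition \ref{bytreedefinition} directly. If the only proper cluster is $\mathcal{R}$ itself (size $d=2g+2$), then the BY tree $T_\Sigma$ has a single vertex $v_{\mathcal{R}}$; it is blue since $\mathcal{R}$ is not übereven (its children are all singletons, hence all odd), and its genus is $g$ because $\#\{\text{odd children of }\mathcal{R}\}=d=2g+2$, giving $g(\mathcal{R})=g$ via the convention $2g(\mathfrak{s})+2=d$. The one subtlety to address is the possibility that $\mathcal{R}$ has a child of size $2g+1$; this is precisely excluded by $(ii)$ (such a child would be a proper cluster of size $2g+1$, but more to the point $(ii)$ forbids proper clusters of size $<2g+1$, and a size $2g+1$ child would leave a single leftover root, again contradicting the ``no splitting'' condition), so the vertex-removal clause in Definition \ref{bytreedefinition} does not apply and $T_\Sigma$ is genuinely a single vertex. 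Conversely, a single blue genus-$g$ vertex means there are no proper clusters other than $\mathcal{R}$, which is exactly $(ii)$.

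For $(ii)\Leftrightarrow(iii)$, I would read off the structure of $T_C$ from Definition \ref{stablemodeltreedef}. Under $(ii)$, the cluster picture consists of $\mathcal{R}$ together with its $d$ singleton children and nothing else; since $\mathcal{R}$ is not a disjoint union of two proper children (all children are singletons), the vertex $v_{\mathcal{R}}$ is retained, and $T_C$ has one central proper vertex joined by an edge to each of the $d$ singleton leaves. This is exactly $K_{1,d}$ as in Definition \ref{bipartite}. Conversely, if $T_C\cong K_{1,d}$ then there is a unique proper vertex, so $\mathcal{R}$ is the only proper cluster, yielding $(ii)$. The main obstacle I anticipate is pinning down $(i)\Leftrightarrow(ii)$ rigorously, since it is the only equivalence that reaches outside the combinatorics of the definitions and into the geometry of the stable model; here I would lean on the precise statement of the cluster-picture reduction criterion from \cite{m2d2} rather than reprove it, and take care with the boundary case $|\mathfrak{s}|=2g+1$ so that the degree-$d$ even case and the implicitly-handled odd-degree case ($d=2g+1$, where a singleton at infinity is adjoined) are treated consistently.
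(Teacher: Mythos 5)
Your overall skeleton matches the paper's proof (quote \cite{m2d2} Theorem 1.8(3) for $(i)\Leftrightarrow(ii)$, then unwind Definitions \ref{stablemodeltreedef} and \ref{bytreedefinition} for the other equivalences), but there is a genuine error in how you handle the boundary case, and it propagates through every one of your combinatorial arguments. Condition $(ii)$ forbids proper clusters of size $<2g+1$; it does \emph{not} forbid a proper cluster of size exactly $2g+1$. Your parenthetical claim that a child of size $2g+1$ ``is precisely excluded by $(ii)$'' because it would ``leave a single leftover root, again contradicting the `no splitting' condition'' is wrong: there is no such condition in $(ii)$, and a cluster picture consisting of $\mathcal{R}$ together with one child of size $2g+1$ and one singleton child satisfies $(ii)$ and corresponds, by \cite{m2d2} Theorem 1.8(3), to potentially good reduction (it is the even-degree avatar of an odd-degree model with good reduction). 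The paper's proof of $(ii)\Leftrightarrow(iii)$ explicitly allows exactly two shapes of cluster picture under $(ii)$: either the top cluster $\mathcal{R}$ alone, or $\mathcal{R}$ together with a subcluster of size $d-1=2g+1$.

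Consequently, your argument for $(ii)\Rightarrow(iii)$ (``the cluster picture consists of $\mathcal{R}$ together with its $d$ singleton children and nothing else\dots\ the vertex $v_{\mathcal{R}}$ is retained'') misses the second case, in which $\mathcal{R}$ \emph{is} a disjoint union of two children, $v_{\mathcal{R}}$ is removed by the last clause of Definition \ref{stablemodeltreedef}, and $T_C\cong K_{1,d}$ arises with the vertex of the size-$(2g+1)$ cluster as the centre. Your argument for $(ii)\Rightarrow(iv)$ asserts that the vertex-exclusion clause of Definition \ref{bytreedefinition} ``does not apply''; in the missed case it does apply ($v_{\mathcal{R}}$ is excluded precisely because $\mathcal{R}$ has a child of size $2g+1$), and the single blue genus-$g$ vertex is $v_{\mathfrak{s}}$ for that child, not $v_{\mathcal{R}}$. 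Both of your converse directions are broken for the same reason: a one-vertex $T_\Sigma$, or a $T_C$ with a unique proper vertex, does not imply that $\mathcal{R}$ is the only proper cluster. The conclusions happen to remain true in the case you missed, so this is a gap in case analysis rather than a false theorem; but note that what your argument actually establishes is the equivalence of $(iii)$ and $(iv)$ with the strictly stronger condition ``no proper clusters other than $\mathcal{R}$'', and that condition is \emph{not} equivalent to $(i)$, since curves whose cluster picture has a size-$(2g+1)$ child have potentially good reduction. So the gap must be repaired, not merely glossed: each implication needs the two-case analysis that the paper carries out.
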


\begin{proof}
$(i) \iff (ii)$ is given in \cite{m2d2} Theorem 1.8(3). For $(ii) \iff (iii)$, suppose the cluster picture of $C:y^2=f(x)$ has no proper clusters of size $<2g+1$. This means that the cluster picture either has one cluster, the top cluster $\mathcal{R}$, or it has the top cluster $\mathcal{R}$ and a subcluster of size $d-1$. By the definition of $T_C$, this means that $T_C$ is isomorphic to the complete bipartite graph $K_{1,d}$. Conversely, if $T_C$ is isomorphic to the complete bipartite graph $K_{1,d}$, this can only occur if the cluster picture either has one cluster, the top cluster, or it has the top cluster and a subcluster of size $d-1$. This is because otherwise there would be at least two vertices corresponding to proper leaves of $T_C$ which each have edges extending to at least $2$ singleton vertices of degree $1$, meaning that $T_C$ is not isomorphic to $K_{1,d}$. 

For $(ii)\iff (iv)$, it is clear by Definition \ref{bytreedefinition} that if the cluster picture of $C:y^2=f(x)$ has no proper clusters of size $<2g+1$ then $T_\Sigma$ consists of a single blue vertex of genus $g$. For $(iv) \implies (ii)$, suppose the cluster picture of $C:y^2=f(x)$ has a proper cluster $\mathfrak{s}$ of size $<2g+1$. Then $T_\Sigma$ contains at least two vertices $v_{\mathfrak{s}}$ and $v_{\mathcal{R}}$ if $\mathcal{R}$ is not a union of two clusters, or $v_{\mathfrak{s}}$ and $v_{\mathfrak{s}'}$ if $\mathcal{R}$ is the disjoint union of two proper clusters $\mathfrak{s}$ and $\mathfrak{s}'$.
\end{proof}

\begin{definition}
A directed tree $T$ is called \textit{rooted} if there is a vertex $v$ (the \textit{root}) for which for every vertex $w\neq v$, the path from $v$ to $w$ has all edges on the path directed away from $v$. \textit{Rooting a tree} $T$ \textit{at a vertex} $v$ is the process of designating $v$ to be the root by directing all edges away from it.
\end{definition}

\begin{proposition}\label{bytotc}
As unweighted graphs, $T_\Sigma$ is isomorphic to $T_C$ with the singletons removed. The stable model tree $T_C=(V,E,L)$ is uniquely determined by the BY tree $T_{\Sigma}$ using the following procedure: 
\begin{enumerate}[(i)]
\item Root $T_\Sigma$ at a vertex $v_r$. 
\item Let $k_v$ denote the number of blue edges directed away from a vertex $v$. 
\begin{itemize}
\item The number of singleton vertices attached to $v_r$ in $T_C$ is $2g(v_r)+2-k_{v_r}$.
\item The number of singleton vertices attached to $v\neq v_r$ in $T_C$ is 
\begin{equation}
\begin{cases}
2g(v)+1 -k_v &\textup{ if the edge extending to $v$ in $T_\Sigma$ is blue}; \\
2g(v)+2 -k_v &\textup{ if the edge extending to $v$ in $T_\Sigma$ is yellow}. \\
\end{cases}
\end{equation} 
\end{itemize}
\item The length function $L:E\setminus \{\textup{edges extending to leaves}\}\rightarrow \mathbb{Q}^+$ is given by
\begin{equation}
L(e)=\begin{cases}
L_{BY}(e) &\textup{if $e$ is blue in the BY tree}; \\
\frac{L_{BY}(e)}{2} &\textup{if $e$ is yellow in the BY tree},
\end{cases}
\end{equation}
where $L_{BY}(e)$ is the length of the edge $e$ in the BY tree. 
\end{enumerate}
\end{proposition}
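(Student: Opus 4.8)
The plan is to establish the two assertions in turn: the unweighted isomorphism $T_{\Sigma}\cong T_C\setminus\{\textup{singletons}\}$, and then that the three-step procedure reconstructs the lengths and the leaf-multiplicities of $T_C$ from the data of the BY tree. For the isomorphism I would note that $T_{\Sigma}$ and $T_C$ are both built from the same underlying \emph{cluster tree}: the tree whose vertices are the proper clusters of $\Sigma$ and whose edges join each proper $\mathfrak{s}\neq\mathcal{R}$ to $v_{P(\mathfrak{s})}$. By construction, deleting the singleton leaves of $T_C$ returns exactly this tree, while $T_{\Sigma}$ is this tree after removing $v_{\mathcal{R}}$, so the only point to check is that the two $v_{\mathcal{R}}$-conventions agree. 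Since $f$ has even degree, $|\mathcal{R}|=2g+2$, and $T_C$ suppresses $v_{\mathcal{R}}$ precisely when $\mathcal{R}$ is a disjoint union of two children. This happens in two ways: two proper children (then $v_{\mathcal{R}}$ has degree $2$ and $T_{\Sigma}$ suppresses it, keeping the topological space), or a child of size $2g+1$ together with a singleton (then $v_{\mathcal{R}}$ would be a leaf and $T_{\Sigma}$ excludes it from the outset). In both cases the remaining unweighted graph on the proper clusters coincides, giving the isomorphism.

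For the length function I would argue edge by edge. A non-singleton edge of $T_C$ is a parent edge $v_{\mathfrak{s}}v_{P(\mathfrak{s})}$ of length $\delta_{\mathfrak{s}}$; the corresponding BY edge is blue of length $\delta_{\mathfrak{s}}$ when $\mathfrak{s}$ is odd and yellow of length $2\delta_{\mathfrak{s}}$ when $\mathfrak{s}$ is even, which is exactly the rule $L(e)=L_{BY}(e)$ (blue) and $L(e)=L_{BY}(e)/2$ (yellow). The only edge needing separate care is the one replacing a suppressed $v_{\mathcal{R}}$: in $T_C$ it has length $\delta(\mathfrak{s}_1,\mathcal{R})+\delta(\mathfrak{s}_2,\mathcal{R})=\delta_{\mathfrak{s}_1}+\delta_{\mathfrak{s}_2}$, while the two merged BY edges share a colour (the sizes of $\mathfrak{s}_1,\mathfrak{s}_2$ sum to $2g+2$, so they have equal parity), so summing their BY lengths gives $\delta_{\mathfrak{s}_1}+\delta_{\mathfrak{s}_2}$ in the blue case and $2(\delta_{\mathfrak{s}_1}+\delta_{\mathfrak{s}_2})$ in the yellow case, again matching the rule.

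The combinatorial heart is the singleton count, and the clean statement I would prove first is the rooting-\emph{independent} identity
\begin{equation}
\#\{\textup{singleton children of }\mathfrak{s}\}=2g(\mathfrak{s})+2-B_{v_{\mathfrak{s}}},
\end{equation}
where $B_{v_{\mathfrak{s}}}$ is the total number of blue edges incident to $v_{\mathfrak{s}}$ in the unrooted BY tree. This follows from two facts: the parity identity $|\mathfrak{s}|\equiv\#\{\textup{odd children of }\mathfrak{s}\}\pmod 2$, so that $\#\{\textup{odd children of }\mathfrak{s}\}$ equals $2g(\mathfrak{s})+1$ when $\mathfrak{s}$ is odd and $2g(\mathfrak{s})+2$ when $\mathfrak{s}$ is even; and the bookkeeping $B_{v_{\mathfrak{s}}}=o+[\mathfrak{s}\textup{ odd}][\mathfrak{s}\neq\mathcal{R}]$, where $o$ counts the odd proper children (each contributing one blue child-edge) and the second term records the parent edge. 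Since the odd children are exactly the singleton children together with the $o$ odd proper children, subtracting $o$ and simplifying gives the identity in each case. I would then translate this into the stated rooted formula: on rooting at $v_r$, exactly one edge at each $v\neq v_r$, namely the edge extending to $v$, is directed into $v$, so $k_v=B_{v}-[\textup{that edge is blue}]$, while $k_{v_r}=B_{v_r}$; substituting into the two cases shows the stated expression equals $2g(v)+2-B_{v}$ for every vertex and every root.

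The step I expect to be the main obstacle is precisely this last translation: verifying that the rooting-dependent $k_v$ recombines with the colour of the edge extending to $v$ to recover the root-independent count, so that the procedure is valid for \emph{any} choice of root (the colour correction cancels exactly). A related subtlety to dispatch is the absorbed singleton: when $\mathcal{R}$ is the union of a size-$(2g+1)$ cluster $\mathfrak{s}_1$ and a singleton, that singleton is reattached to $v_{\mathfrak{s}_1}$ in $T_C$, so $v_{\mathfrak{s}_1}$ must gain one extra leaf. Here $v_{\mathfrak{s}_1}$ has no parent edge in $T_{\Sigma}$ and $\mathfrak{s}_1$ is odd, so $B_{v_{\mathfrak{s}_1}}$ drops by $1$ relative to a generic odd cluster, and the identity $2g(\mathfrak{s}_1)+2-B_{v_{\mathfrak{s}_1}}$ then automatically yields one more singleton than $\#\{\textup{odd children of }\mathfrak{s}_1\}-o$, accounting for the reattached leaf. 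The case of two proper children is handled identically, as suppressing $v_{\mathcal{R}}$ leaves $B_{v_{\mathfrak{s}_i}}$ unchanged.
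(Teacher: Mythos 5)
Your proof is correct, and it is organised differently from the paper's. The paper proves the proposition by verifying the procedure at one privileged root: it checks the singleton counts when $T_\Sigma$ is rooted at $v_{\mathcal{R}}$ (using $\#\{\textup{singletons}\}=\#\{\textup{odd children}\}-\#\{\textup{proper odd children}\}$), handles the case where $v_{\mathcal{R}}$ is absent by adjoining it back and rooting at an adjacent vertex, and then runs a separate re-rooting argument showing that moving the root changes $k_v$ exactly in step with the colour of the edge directed into $v$, so the counts are unchanged. You instead isolate the root-free invariant $B_v$ (the total number of blue edges incident to $v$ in the unrooted $T_\Sigma$) and prove the single identity $\#\{\textup{singletons attached to }v_{\mathfrak{s}}\}=2g(\mathfrak{s})+2-B_{v_{\mathfrak{s}}}$ directly from cluster parity, after which every case of the rooted formula collapses to this quantity via $k_v=B_v-[\textup{incoming edge blue}]$ and $k_{v_r}=B_{v_r}$; root-independence then comes for free rather than as a separate case check. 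The cancellation at the heart of both arguments is the same, but your packaging replaces the paper's three-way case analysis (root at $v_{\mathcal{R}}$, root adjacent to a removed $v_{\mathcal{R}}$, re-rooting) with one identity plus one translation. You also make explicit two points the paper treats as immediate from the definitions: that the two $v_{\mathcal{R}}$-conventions of $T_C$ and $T_\Sigma$ coincide (including the absorbed-singleton case when $\mathcal{R}$ has a child of size $2g+1$), and that the length rule survives the suppression of a degree-$2$ vertex $v_{\mathcal{R}}$ because the two merged BY edges necessarily share a colour, their cluster sizes summing to $2g+2$. Both of these checks are genuine content and strengthen the write-up; the only cost of your route is that the invariant $B_v$ and its bookkeeping $B_{v_{\mathfrak{s}}}=o+[\mathfrak{s}\textup{ odd}][\mathfrak{s}\neq\mathcal{R}]$ must be set up carefully in the exceptional $v_{\mathcal{R}}$ configurations, which you do.
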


\begin{proof}
Note that by Definition \ref{bytreedefinition} and \ref{stablemodeltreedef}, $T_C$ and $T_\Sigma$ are isomorphic as unweighted trees when the singletons of $T_C$ are removed. So we must check that the above process recovers the correct length of the edges of $T_C$ and the correct number of singletons attached to each proper vertex.

By Proposition \ref{potgoodredlemma}, if $T_\Sigma$ consists of one blue vertex of genus $g$ then $T_C$ is $K_{1,d}$, which is correctly recovered by the above process since $d=2g+2$.

Now suppose $T_\Sigma$ contains more than one vertex. If $v_\mathcal{R}$ is in the vertex set and $T_\Sigma$ is rooted at $v_\mathcal{R}$. Then by the definition of $T_C$, the above procedure recovers $T_C$ since for a vertex $v_{\mathfrak{s}}$
\begin{align}
\#\{\textup{singleton vertices}\}&=\#\{\textup{odd children of $\mathfrak{s}$}\}-\#\{\textup{proper odd children of $\mathfrak{s}$}\} \\
&= \begin{cases}
2g(v_{\mathfrak{s}})+1 -k_{v_{\mathfrak{s}}} &\textup{ if the edge extending to $v_{\mathfrak{s}}$ in $T_\Sigma$ is blue}; \\
2g(v_{\mathfrak{s}})+2 -k_{v_{\mathfrak{s}}} &\textup{ if the edge extending to $v_{\mathfrak{s}}$ in $T_\Sigma$ is yellow}. \\
\end{cases}
\end{align}

If $v_{\mathcal{R}}$ is excluded from the vertex set in $T_C$ and $T_{\Sigma}$, let $\Tilde{T}_\Sigma$ denote $T_\Sigma$ but with $v_{\mathcal{R}}$ added back in. Again by Definition \ref{stablemodeltreedef}, if $\Tilde{T}_\Sigma$ is rooted at $v_{\mathcal{R}}$ and then $v_{\mathcal{R}}$ is removed and the lengths of the adjacent edges are added, the above process correctly determines $T_C$. Let $v$ and $w$ be the vertices adjacent to $v_{\mathcal{R}}$ and root $T_\Sigma$ at $v$. We want to show that following the above process with $T_\Sigma$ rooted at $v$ recovers the same tree as with $\Tilde{T}_\Sigma$ rooted at $v_{\mathcal{R}}$ and then removing $v_{\mathcal{R}}$. From $w$ and the edges directed away from $v$ onward, the BY tree looks identical to if it were rooted at $v_{\mathcal{R}}$, so the above process correctly recovers the $T_C$ for these parts of the tree. So we must check that the number of singletons attached to $v$ is correctly recovered. Let $k_v$ be the number of blue edges directed away from $v$ in $T_\Sigma$ and $\Tilde{k}_v$ be the number of blue edges directed away from $v$ in $\Tilde{T}_\Sigma$. If $vw$ is yellow in $T_\Sigma$ then $v_\mathcal{R}v$ and $v_\mathcal{R}w$ are yellow in $\Tilde{T}_\Sigma$ and so $k_v=\Tilde{k}_v$ and the above process determines the number of singletons attached to $v$ in $\Tilde{T}_\Sigma$ and $T_\Sigma$ to be $2g(v)+2-k_v$. If $vw$ is blue in $T_\Sigma$ then $v_\mathcal{R}v$ and $v_\mathcal{R}w$ are blue in $\Tilde{T}_\Sigma$ and so $k_v=\Tilde{k}_v+1$. In this case the above process determines the number of singletons attached to $v$ in $\Tilde{T}_\Sigma$ is $2g(v)+1-\Tilde{k}_v$ and the number of singletons attached $v$ in $T_\Sigma$ to be $2g(v)+2-k_v=2g(v)+1-\Tilde{k}_v$.

Thus, if $T_\Sigma$ is rooted at $v_{\mathcal{R}}$ or an adjacent vertex the above process uniquely determines $T_C$. We now show that the tree obtained by procedure described in the proposition does not depend on the initial choice of root vertex. This is due to the fact that when choosing a different root vertex, if the edge directed towards a vertex $v$ goes from yellow to blue then $k_v$ goes down by $1$. Initially there are $2g(v)+2-k_v$ singletons and after choosing a different root there are $2g(v)+1-(k_v-1)=2g(v)+2-k_v$ singletons, so the number of singletons is unchanged. If it goes from blue to yellow then $k_v$ goes up by $1$ and similarly the number of singletons is unchanged. If a vertex $v$ becomes the root vertex, $k_v$ either stays the same if the edge directed towards $v$ was yellow or goes up by $1$ if the edge directed towards $v$ was blue. In both cases, the number of singleton vertices calculated by the above method is unchanged. This completes the proof.
\end{proof}

\begin{corollary}\label{stablemodelinvariant}
Let $C$ and $C'$ be hyperelliptic curves defined over $K$, a local field of odd residue characteristic, that are isomorphic over $\Bar{K}$. Then $T_{C}$ is isomorphic to $T_{C'}$.
\end{corollary}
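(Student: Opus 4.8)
The plan is to reduce the statement to the corresponding statement for BY trees, and then to exploit the fact that the BY tree, unlike the cluster picture itself, is a geometric invariant of the potential stable model. By Proposition \ref{bytotc} the weighted tree $T_C$ is recovered from the BY tree $T_\Sigma$ of $C$ by a procedure that reads off only $T_\Sigma$ (and which, as shown there, is independent of all auxiliary choices). Hence it suffices to prove that $T_{\Sigma}(C)$ and $T_{\Sigma}(C')$ are isomorphic as coloured, genus-decorated, weighted trees. I would stress that one cannot hope to do this by comparing the two cluster pictures directly: these are genuinely model-dependent, as the curves $C,C'$ of Theorem \ref{nosilverman} already show, since they are isomorphic over $\Bar{K}$ yet have visibly different cluster pictures. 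The model-dependence is washed out only at the level of the BY tree (equivalently, the stable model tree).

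Next I would pass to the geometry. Since $C$ has genus $g\geq 2$, its hyperelliptic map to $\mathbb{P}^1$ is canonical (the $g^1_2$ is unique), so any $\Bar{K}$-isomorphism $C_{\Bar{K}}\xrightarrow{\sim} C'_{\Bar{K}}$ descends to an automorphism $M\in\mathrm{PGL}_2(\Bar{K})$ of the $x$-line carrying the branch locus of $C$ onto that of $C'$; equivalently $M(\mathcal{R})=\mathcal{R}'$ for the two root sets. I would then choose a finite extension $L/K$ large enough that $M$ is defined over $L$ and that both $C$ and $C'$ acquire stable reduction over $\mathcal{O}_L$ (possible as the residue characteristic is odd). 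Keeping the valuation on $\Bar{K}$ fixed (extending $\textup{ord}$), the cluster picture, and hence the BY tree, of a curve is computed purely from its root set together with this valuation, so $T_\Sigma(C)=T_\Sigma(C_L)$ and $T_\Sigma(C')=T_\Sigma(C'_L)$. Over $L$ the curves $C_L$ and $C'_L$ are isomorphic as $L$-curves and both have stable reduction, so by the uniqueness of the stable model (Deligne--Mumford) they have the same stable model over $\mathcal{O}_L$, and in particular the special fibres of their stable models are isomorphic together with all of their decorations (the component genera, the node thicknesses, and the way the hyperelliptic involution acts).

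Finally I would invoke the dictionary of \cite{m2d2} between cluster pictures and stable models: for a curve with stable reduction the BY tree is equivalent to the dual graph of the special fibre of the stable model, decorated by the vertex genera, by the blue/yellow colouring recording the behaviour of the involution, and by edge lengths given (up to the standard normalisation) by the node thicknesses. Since all of these data are carried across by the isomorphism of stable special fibres produced above, it follows that $T_\Sigma(C_L)\cong T_\Sigma(C'_L)$, whence $T_\Sigma(C)\cong T_\Sigma(C')$ and so, by Proposition \ref{bytotc}, $T_C\cong T_{C'}$. I expect the main obstacle to be precisely the step identifying the combinatorial BY tree with the geometry of the stable model: this is where the model-dependence of the cluster picture must be shown to cancel, and it is the point at which one genuinely needs the results of \cite{m2d2} (rather than the bare definitions) together with the uniqueness of the potential stable model. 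Once that identification is in hand, the invariance is an immediate consequence of the fact that the stable model depends only on $C_{\Bar{K}}$.
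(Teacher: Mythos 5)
Your proposal is correct, but it takes a genuinely different route from the paper's. The paper's proof is purely combinatorial and consists of three citations: by \cite{m2d2} Definition 14.1 and Theorem 14.4, the isomorphism over the finite extension $F$ of definition forces the cluster pictures of $C$ and $C'$ to be \emph{equivalent} (over $F$, hence over $K$); by \cite{semistable} Theorem 1.1, curves with equivalent cluster pictures have the same BY tree; and Proposition \ref{bytotc} then transfers the isomorphism of BY trees to stable model trees. You replace the middle of this chain by geometry: descent of the $\Bar{K}$-isomorphism to a M\"obius transformation via uniqueness of the $g^1_2$, passage to a field $L$ over which both the isomorphism and stable reduction are realised, uniqueness of the stable model (Deligne--Mumford), and finally the \cite{m2d2} dictionary identifying the BY tree with the dual graph of the stable special fibre decorated by genera, node thicknesses and the involution action. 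This buys conceptual clarity -- it makes visible that the BY tree is an invariant of the potential stable model, which the paper's citation of cluster-picture equivalence leaves implicit -- and your observation that the curves of Theorem \ref{nosilverman} rule out any argument comparing cluster pictures directly is exactly right. What it costs is that your crucial dictionary step is not a single quotable statement in \cite{m2d2}: it must be assembled from their appendix (the equivalence between BY trees and hyperelliptic graphs, Definitions D.6--D.9 and surrounding results) together with the theorems identifying the hyperelliptic graph of a semistable curve with the geometry of its models, which amounts to re-deriving the invariance that the paper obtains in one line from \cite{m2d2} Theorem 14.4 and \cite{semistable} Theorem 1.1. Two minor points: stable reduction over a finite extension requires no hypothesis on the residue characteristic (oddness is needed only for the cluster machinery), and when you keep the valuation normalised by $K$ rather than by $L$ you should remark that both trees are rescaled by the same ramification index, so the comparison of the two curves is unaffected.
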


\begin{proof}
In \cite{m2d2} Definition 14.1, the notion of equivalent cluster pictures is introduced. Let $F$ be the finite extension of $K$ over which the isomorphism between $C$ and $C'$ is defined. By Theorem 14.4 of \cite{m2d2} and its proof, the cluster pictures of $C$ and $C'$ are equivalent over $F$ which implies they are equivalent over $K$. By \cite{semistable} Theorem 1.1, curves with equivalent cluster pictures have the same BY tree, and by Proposition \ref{bytotc} they have the same stable model tree.
\end{proof}

The above corollary tells us that it makes sense to have defined the stable model tree associated to a hyperelliptic curve $C$, as opposed to associating it to a Weierstrass equation for $C$, since $T_C$ is invariant under a change of model. This also makes it clear as to why one might hope to be able to recover the stable model tree from absolute invariants of the curve.

\begin{example}
The curves
\begin{small}
\begin{align}
C:y^2&=f(x)=(x+p^2-p^5)(x+p^2+p^5)(x+p^3+p^7)(x+p^3-p^7)(x-1+p)(x-1-p) \quad \textup{ and }\\
C':y^2&=\left(x+\frac{1}{1+p^2-p^5}\right)\left(x+\frac{1}{1+p^2+p^5}\right)\left(x+\frac{1}{1+p^3+p^7}\right)\left(x+\frac{1}{1+p^3-p^7}\right)\left(x+\frac{1}{p}\right)\left(x-\frac{1}{p}\right)
\end{align}\end{small}

\noindent are isomorphic over $\Bar{\mathbb{Q}}_p$ by taking $x\mapsto \frac{1}{x}+1$ and $y\mapsto \frac{(\alpha_1\cdots\alpha_6)^{\frac{1}{2}}}{x^{3}}y$ in the first Weierstrass equation, where $\alpha_1,\dots,\alpha_6$ are the roots of $f(x)$. For $p\geq 3$ they have the following cluster pictures and the same stable model tree.
\begin{center}
\begin{minipage}[b]{0.35\textwidth}
		\begin{center}
		\begin{figure}[H]
\scalebox{1.5}{
\clusterpicture             
  \Root[D] {1} {first} {r1};
  \Root[D] {} {r1} {r2};
  \Root[D] {3} {r2} {r3};
  \Root[D] {} {r3} {r4};
    \Root[D] {3.5} {r4} {r5};
    \Root[D] {} {r5} {r6};
  \ClusterLD c1[][3] = (r1)(r2);
    \ClusterLD c2[][5] = (r3)(r4);
    \ClusterLD c3[][1] = (r5)(r6);
    \ClusterLD c4[][2] = (c1)(c2);
  \ClusterD c5[0] = (c1)(c2)(c3)(c4);
\endclusterpicture}
\vspace{10pt}
\caption*{Cluster picture of $C/\mathbb{Q}_p$}
		\end{figure}
				\end{center}
    \end{minipage}\begin{minipage}[b]{0.35\textwidth}
		\begin{center}
		\begin{figure}[H]
\scalebox{1.5}{
\clusterpicture             
  \Root[D] {1} {first} {r1};
  \Root[D] {} {r1} {r2};
  \Root[D] {3} {r2} {r3};
  \Root[D] {} {r3} {r4};
    \Root[D] {3.5} {r4} {r5};
    \Root[D] {} {r5} {r6};
  \ClusterLD c1[][3] = (r1)(r2);
    \ClusterLD c2[][5] = (r3)(r4);
    \ClusterLD c4[][3] = (c1)(c2);
  \ClusterD c5[-1] = (c1)(c2)(c4)(r5)(r6);
\endclusterpicture}
\vspace{10pt}
\caption*{Cluster picture of $C'/\mathbb{Q}_p$}
		\end{figure}
				\end{center}
    \end{minipage}\begin{minipage}[b]{0.35\textwidth}
		\begin{center}
		\begin{figure}[H]
			\begin{tikzpicture}
				[scale=0.5, auto=left,every node/.style={circle,fill=black!20,scale=0.6}]
                \node (n10) at (2,2) {};
				\node (n1) at (0,0) {};
				\node (n2) at (2,0)  {};
				\node (n3) at (4,0)  {};

                \node (n4) at (-0.5,-1) {};
                \node (n5) at (0.5,-1) {};
                
                \node (n6) at (1.5,-1) {};
                \node (n7) at (2.5,-1) {};

                \node (n8) at (3.5,-1) {};
                \node (n9) at (4.5,-1) {};

                \draw (n1) -- (n4); 
                \draw (n1) -- (n5); 
                
                \draw (n2) -- (n6); 
                \draw (n2) -- (n7); 

                \draw (n3) -- (n8); 
                \draw (n3) -- (n9);
				
				\draw (n10) -- (n1) node [text=black, pos=0.4, left, fill=none] {$3$};
                \draw (n10) -- (n2) node [text=black, pos=0.6, right,fill=none] {$5$};
				\draw (n10) -- (n3) node [text=black, pos=0.4, right,fill=none] {$3$};
			\end{tikzpicture}
 \caption*{The stable model tree $T_C=T_{C'}$}
		\end{figure}
				\end{center}
    \end{minipage}
\end{center}
\end{example}

\begin{remark}
Let $C$ be a hyperelliptic curve defined over a local field $K$ of odd residue characteristic and let $F/K$ be an extension of ramification degree $e$. Then $T_{C/F}$ is the same tree as $T_{C/K}$ but with all lengths multiplied by $e$.
\end{remark}

The stable model tree encodes information about the distances between clusters in the cluster picture of any Weierstrass equation for the curve. 

\begin{lemma}\label{distanceintc}
Let $C$ be a hyperelliptic curve with cluster picture $\Sigma$ and let $\mathfrak{s},\mathfrak{r}\in \Sigma$ be clusters. Then $\delta(\mathfrak{s},\mathfrak{r})=\delta(v_{\mathfrak{s}}, v_{\mathfrak{r}})$ as vertices in $T_C$.
\end{lemma}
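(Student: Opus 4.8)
The plan is to exploit the fact that the edge lengths of $T_C$ are precisely the relative depths $\delta_\mathfrak{s} = d_\mathfrak{s} - d_{P(\mathfrak{s})}$, so that distances in the weighted tree telescope into differences of depths and match Definition \ref{delta} term for term. I would first dispose of the case in which $v_\mathcal{R}$ genuinely appears as a vertex of $T_C$, i.e. where $\mathcal{R}$ is not a disjoint union of two proper children, and then treat the suppressed-vertex case separately at the end.

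In the main case the proper vertices of $T_C$, together with the edges $v_\mathfrak{s} v_{P(\mathfrak{s})}$, reproduce the inclusion tree of the cluster picture rooted at $v_\mathcal{R}$, where the edge $v_\mathfrak{s} v_{P(\mathfrak{s})}$ carries length $\delta_\mathfrak{s} = d_\mathfrak{s} - d_{P(\mathfrak{s})}$. The key computation is a telescoping observation: if $\mathfrak{t}$ is an ancestor of $\mathfrak{s}$, with chain of successive parents $\mathfrak{s} = \mathfrak{s}_0 \subsetneq \mathfrak{s}_1 \subsetneq \cdots \subsetneq \mathfrak{s}_k = \mathfrak{t}$, then the length of the path $P_{v_\mathfrak{s} v_\mathfrak{t}}$ is $\sum_{i=0}^{k-1} (d_{\mathfrak{s}_i} - d_{\mathfrak{s}_{i+1}}) = d_\mathfrak{s} - d_\mathfrak{t}$. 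Given arbitrary proper clusters $\mathfrak{s}, \mathfrak{r}$ with least common ancestor $\mathfrak{u}$, the unique path between $v_\mathfrak{s}$ and $v_\mathfrak{r}$ in $T_C$ passes through $v_\mathfrak{u}$ and decomposes as $P_{v_\mathfrak{s} v_\mathfrak{u}}$ followed by $P_{v_\mathfrak{u} v_\mathfrak{r}}$; applying the telescoping observation to each piece gives $\delta(v_\mathfrak{s}, v_\mathfrak{r}) = (d_\mathfrak{s} - d_\mathfrak{u}) + (d_\mathfrak{r} - d_\mathfrak{u})$, which is exactly $\delta(\mathfrak{s}, \mathfrak{r})$ by Definition \ref{delta}(iii). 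The subcase $\mathfrak{s} \subseteq \mathfrak{r}$ of Definition \ref{delta}(ii) is recovered as the special case $\mathfrak{u} = \mathfrak{r}$.

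For the remaining case, where $v_\mathcal{R}$ is suppressed because $\mathcal{R} = \mathfrak{s}_1 \sqcup \mathfrak{s}_2$ is a disjoint union of two proper children, I would argue that the suppression is an isometry on the surviving vertices: the single edge replacing $v_{\mathfrak{s}_1} v_\mathcal{R}$ and $v_\mathcal{R} v_{\mathfrak{s}_2}$ is assigned length $\delta(\mathfrak{s}_1, \mathcal{R}) + \delta(\mathfrak{s}_2, \mathcal{R}) = \delta_{\mathfrak{s}_1} + \delta_{\mathfrak{s}_2}$, equal to the sum of the two lengths it replaces, while $v_\mathcal{R}$ has degree $2$. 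Hence tree distances between proper vertices are unchanged and the identity from the main case persists. The one point to verify explicitly is when the least common ancestor of $\mathfrak{s}$ and $\mathfrak{r}$ is $\mathcal{R}$ itself, say $\mathfrak{s} \subseteq \mathfrak{s}_1$ and $\mathfrak{r} \subseteq \mathfrak{s}_2$: there the path runs $v_\mathfrak{s} \to v_{\mathfrak{s}_1} \to v_{\mathfrak{s}_2} \to v_\mathfrak{r}$ of total length $(d_\mathfrak{s} - d_{\mathfrak{s}_1}) + (\delta_{\mathfrak{s}_1} + \delta_{\mathfrak{s}_2}) + (d_\mathfrak{r} - d_{\mathfrak{s}_2}) = (d_\mathfrak{s} - d_\mathcal{R}) + (d_\mathfrak{r} - d_\mathcal{R})$, again matching $\delta(\mathfrak{s}, \mathfrak{r})$.

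The bookkeeping is otherwise routine, and since $\delta$ is only defined on proper clusters I would restrict throughout to vertices $v_\mathfrak{s}, v_\mathfrak{r}$ with $|\mathfrak{s}|, |\mathfrak{r}| > 1$, for which all edges on the connecting path carry lengths. The one mild subtlety — the step I would be most careful about — is confirming that the edge-suppression at the top cluster preserves the metric and correctly reproduces the $\mathfrak{u} = \mathcal{R}$ case of Definition \ref{delta}(iii), since that is the only place where the graph-theoretic structure of $T_C$ departs from the literal inclusion tree of clusters.
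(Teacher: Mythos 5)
Your proof is correct and follows the same route as the paper, which simply declares the lemma ``clear from the definition of the distance between clusters and the definition of the stable model tree''; your telescoping computation and the check that suppressing $v_{\mathcal{R}}$ is an isometry are exactly the routine verifications that one-line proof leaves implicit. Your restriction to proper clusters (since $\delta$ is only defined on clusters of size $>1$) and your reading of the suppressed-edge length as $\delta(\mathfrak{s}_1,\mathcal{R})+\delta(\mathfrak{s}_2,\mathcal{R})$ are both the intended interpretations.
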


\begin{proof}
This is clear from the definition of the distance between clusters (Definition \ref{delta}) and the definition of the stable model tree (Definition \ref{stablemodeltreedef}).
\end{proof}

We now show how the BY tree can be obtained from $T_C$. We will need the following definition. 

\begin{definition}\label{treeprelims}
Let $T_C$ be the stable model tree of a hyperelliptic curve and root $T_C$ at a proper vertex. 
\begin{itemize}
\item We call a vertex $w$ of $T_C$ a \textit{child} of vertex $v$ if there is an edge $vw$ in $T_C$ where the edge is directed from $v$ to $w$. In this case, we call $v$ the \textit{parent} of $w$.
\item We can allocate a proper vertex $v$ a size, which is 
\begin{equation}
\sum_{w} \#\{\textup{$s$ : $s$ is a singleton and there is an edge from $w$ to $s$}\},
\end{equation}
where we take the sum over the proper vertices $w$ for which there is a path from $v$ to $w$ that contains only forwards directed edges, including $w=v$. 
\begin{enumerate}[(i)]
\item We call a proper vertex $v$ \textit{even} if it has even size and \textit{odd} otherwise. 
\item We call a proper vertex $v$ \textit{übereven} if it is even and all of its children are even.
\end{enumerate}
\end{itemize}
\end{definition}

\begin{proposition}\label{byfromtc}
The BY tree of $C/K$ is uniquely determined by $T_C=(V,E,L)$ using the following procedure: 
\begin{enumerate}[(i)]
    \item Root $T_C$ at any proper vertex.
    \item Colour a proper vertex yellow if it is übereven and blue otherwise. 
    \item For any proper vertex $v$, define $g(v)$ such that $\#\{\textup{odd children of } v\}=2g(v)+1$ or $2g(v)+2$.
    \item Colour an edge $e$ yellow if the vertex it terminates at is even and blue otherwise; label it with length $2L(e)$ if $e$ is coloured yellow and $L(e)$ otherwise.
    \item Delete the singleton vertices of $T_C$ and the edges extending to them.
\end{enumerate}
\end{proposition}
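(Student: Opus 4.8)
The plan is to verify that the tree produced by the five-step procedure agrees with the BY tree of Definition \ref{bytreedefinition} in every piece of its data: the underlying unweighted graph, the vertex colours, the vertex genera, and the colours and lengths of the edges. The underlying graph is already under control, since by Proposition \ref{bytotc} the tree $T_\Sigma$ is isomorphic to $T_C$ with its singletons removed, so step (v) produces the correct vertex and edge set. Everything else concerns the decorations, and the single genuine difficulty is that Definition \ref{treeprelims} reads off ``even/odd/übereven'' and the genus after rooting $T_C$ at an \emph{arbitrary} proper vertex, whereas the data feeding Definition \ref{bytreedefinition} is pinned to the canonical root $\mathcal{R}$. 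Thus the heart of the argument is a \emph{root-independence} statement, and this is where I expect the real work to be.

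The key observation I would isolate first is a parity lemma exploiting that $d=2g+2$ is even. For a proper vertex $v$ and an incident edge $e$, deleting $e$ splits the $d$ singletons into two parts; since $d$ is even these parts have equal parity, so the parity $p_e\in\{0,1\}$ of the number of singletons on either side of $e$ is intrinsic to $e$ (no rooting required). Moreover deleting $v$ itself partitions all $d$ singletons among the edges at $v$, so the count of edges $e$ at $v$ with $p_e=1$ is even; call this even number $b_v$. These two facts — that the parity of an edge is well defined, and that each proper vertex meets an even number of odd edges — are exactly what render the construction independent of the chosen root.

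From the parity lemma the edge data is immediate and root-free. Rooting at any proper vertex, an edge $e$ terminating at a child $w$ is coloured by the parity of $w$ (step (iv)), and the size of $w$ in the sense of Definition \ref{treeprelims} equals the number of singletons on $w$'s side of $e$, so the colour is precisely $p_e$; rooted at $v_{\mathcal{R}}$ this $p_e$ is the parity of $|\mathfrak{s}|$ for the edge $v_{\mathfrak{s}}v_{P(\mathfrak{s})}$, matching the BY rule that the edge is yellow iff the child cluster is even. Since $L(e)=\delta_{\mathfrak{s}}$ in $T_C$ by Definition \ref{stablemodeltreedef}, step (iv) records length $2\delta_{\mathfrak{s}}$ on even edges and $\delta_{\mathfrak{s}}$ on odd edges, agreeing with Definition \ref{bytreedefinition}; the edge created when $\mathcal{R}$ is a disjoint union of two children $\mathfrak{s}_1,\mathfrak{s}_2$ is handled identically, as its $T_C$-length $\delta_{\mathfrak{s}_1}+\delta_{\mathfrak{s}_2}$ is doubled exactly when $\mathfrak{s}_1,\mathfrak{s}_2$ are even.

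For the vertex decorations I would prove two reformulations that are manifestly independent of the root. First, a proper vertex $v$ is übereven in the sense of Definition \ref{treeprelims} if and only if \emph{every} incident edge has $p_e=0$: using $b_v$ even, ``$v$ even'' is equivalent to its parent edge being even, while ``all children even'' is equivalent to all remaining incident edges being even, and any singleton neighbour forces an odd incident edge and correctly rules out überevenness. Rooted at $v_{\mathcal{R}}$ this says exactly that $\mathfrak{s}$ and all its maximal subclusters are even, so step (ii) reproduces the BY colouring. Second, writing $n_v$ for the number of odd children in a given rooting, one has $n_v=b_v$ or $n_v=b_v-1$ according as the parent edge is even or odd; in either case the defining relation $n_v\in\{2g(v)+1,\,2g(v)+2\}$ forces $g(v)=\tfrac{b_v}{2}-1$ when $b_v\geq 2$, and the übereven case $b_v=0$ is governed by the same convention used in Definition \ref{bytreedefinition}, so $g(v)$ is root-independent and equals $g(\mathfrak{s})$ computed at $\mathcal{R}$. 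The main obstacle throughout is precisely this root-independence; once the parity lemma is established each item follows, and the only situation needing separate care is the excision of $v_{\mathcal{R}}$, which I would treat as in the proof of Proposition \ref{bytotc} by temporarily reinstating $v_{\mathcal{R}}$ as a genus-$0$, degree-$2$ vertex and then appealing to root-independence.
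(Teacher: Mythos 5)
Your proof is correct, and it shares the paper's overall skeleton: the underlying unweighted graph is handled by Proposition \ref{bytotc}, the procedure is checked against the BY data by rooting at $v_{\mathcal{R}}$ (reinstating $v_{\mathcal{R}}$ as a degree-$2$ vertex when it has been excised), and the remaining content is root-independence. Where you genuinely differ is in how root-independence is proved. The paper argues dynamically, tracking how vertex-size parities, übereven-ness and odd-children counts respond to moving the root; you argue statically, via the parity lemma ($p_e$ is intrinsic because $d=2g+2$ is even, and each proper vertex meets an even number $b_v$ of odd edges) and then root-free formulas for every decoration: an edge is yellow iff $p_e=0$, a vertex is übereven iff all its incident edges are even, and $g(v)=b_v/2-1$ (with the $b_v=0$ case fixed by convention). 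Your formulation buys two things. First, it is tighter than the paper's first observation, which as literally stated --- ``choosing a different root vertex does not change the parity of the size of a vertex'' --- fails for vertices on the path between the two roots: take $\mathcal{R}$ with children a singleton, a triple and a pair, so that $v_{\mathcal{R}}$ has size $6$ rooted at itself but size $3$ rooted at the triple. What is actually root-independent is the parity of the child endpoint of each edge, which is exactly your $p_e$, and the paper's conclusion about edge colours survives precisely for this reason; your lemma makes that explicit. Second, your formulas absorb the potentially good reduction case $T_C=K_{1,d}$ uniformly (there $b_v=d$, no incident edge is even, so the procedure outputs a single blue vertex of genus $g$), where the paper makes a separate appeal to Proposition \ref{potgoodredlemma}. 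What the paper's version buys in return is economy of notions: it never introduces combinatorial quantities beyond those already present in Definition \ref{treeprelims}.
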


\begin{proof}
Note that by Definition \ref{bytreedefinition} and \ref{stablemodeltreedef}, $T_C$ and $T_\Sigma$ are isomorphic as unweighted trees when the singletons of $T_C$ are removed. So we must check that the above process recovers the correct colouring and genus of the vertices and colouring of the edges.

By Proposition \ref{potgoodredlemma}, if $T_C$ is $K_{1,2g+2}$ then the BY tree is a single blue vertex of genus $g$, which is determined from $T_C$ using the above process.

Now suppose $T_C$ contains more than one proper vertex. If $v_{\mathcal{R}}$ is contained in the vertex set and $T_C$ is rooted at the $v_{\mathcal{R}}$ then by Definition \ref{bytreedefinition} the above process recovers the BY tree. If $v_{\mathcal{R}}$ is excluded from the vertex set in $T_C$ and $T_{\Sigma}$, let $\Tilde{T}_C$ denote $T_C$ but with $v_{\mathcal{R}}$ added back in. Again by Definition \ref{bytreedefinition}, if $\Tilde{T}_C$ is rooted at $v_{\mathcal{R}}$ and then $v_{\mathcal{R}}$ is removed and the lengths of the adjacent edges are added, the above process recovers the BY tree. Let $v$ and $w$ be the vertices adjacent to $v_{\mathcal{R}}$ and root $T_C$ at $v$. We want to show that following the above process with $T_C$ rooted at $v$ recovers the same tree as with $\Tilde{T}_C$ is rooted at $v_{\mathcal{R}}$ and then removing $v_{\mathcal{R}}$. From the edges directed away from $v$ onward the tree looks identical to if it were rooted at $v_{\mathcal{R}}$, so the above process correctly recovers the BY tree for these parts of the tree. So we must check that the colour and genus of $v$ is correctly recovered. Note that $w$ is even if and only if $v$ is even since we assumed that $f(x)$ has even degree, and so $v$ is is übereven when $T_C$ is rooted at $v$ if and only if it is übereven when $v_{\mathcal{R}}$ has not been removed and it is rooted at $v_{\mathcal{R}}$, hence the colour of $v$ is correctly determined. If the number of odd children of $v$ is odd when $\Tilde{T}_C$ is rooted at $v_{\mathcal{R}}$ the number of odd children goes up by $1$ when $v_{\mathcal{R}}$ is removed and $T_C$ is rooted at $v$ and so $g(v)$ stays the same. If the number of odd children of $v$ is even when $\Tilde{T}_C$ is rooted at $v_{\mathcal{R}}$ the number of odd children remains the same when $v_{\mathcal{R}}$ is removed and $T_C$ is rooted at $v$ and so $g(v)$ stays the same.

Thus, if $T_C$ is rooted at $v_{\mathcal{R}}$ or an adjacent vertex the above process uniquely determines the BY tree. We now show that the tree obtained by procedure described in the proposition does not depend on the initial choice of root vertex. This is due to the following observations. The first observation is that choosing a different root vertex does not change the parity of the size of a vertex. This means that the coloring of the edges remains unchanged, and thus their lengths. The second is that if an even vertex has only even children, then after designating a different vertex as the root it still has only even children. This means that the colouring of the vertices is unchanged. Finally, if a vertex has an odd number of odd children then after designating a different vertex as the root it either has the same number of odd children or one extra, and if a vertex has an even number of odd children then after changing to a different root vertex it has the same number of odd children or one fewer. This means that the genus of the vertex remains unchanged. This completes the proof.
\end{proof}

From Propositions \ref{bytotc} and \ref{byfromtc} we obtain the following, where by an isomorphism of BY trees we mean an isomorphism of the graph that preserves the colouring and genus of the vertices and the colouring of the edges.

\begin{corollary}
There is an explicit one-to-one correspondence between stable model trees up to isomorphism and BY trees up to isomorphism. 
\end{corollary}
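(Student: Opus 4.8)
The plan is to package Propositions \ref{bytotc} and \ref{byfromtc} into two explicit maps and then check that they are mutually inverse on isomorphism classes. Write $\Phi$ for the map sending a BY tree $T_\Sigma$ to the stable model tree produced by the procedure of Proposition \ref{bytotc}, and $\Psi$ for the map sending a stable model tree $T_C$ to the BY tree produced by the procedure of Proposition \ref{byfromtc}. First I would observe that both procedures are described intrinsically: they refer only to vertex genera, to the vertex and edge colourings, to edge lengths, and to the combinatorial notion of size from Definition \ref{treeprelims}. Hence they carry isomorphic inputs to isomorphic outputs and descend to well-defined maps on isomorphism classes, where an isomorphism of BY trees is required to preserve vertex colours, vertex genera, and edge colours. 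The root-independence established at the end of each proposition is precisely what guarantees that $\Phi$ and $\Psi$ do not depend on the auxiliary choice of root used when running the procedure.

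The clean route to mutual inverseness is to exploit realizability. Both propositions are stated so that, for a fixed curve $C/K$ with cluster picture $\Sigma$, one has $\Phi(T_\Sigma)=T_C$ and $\Psi(T_C)=T_\Sigma$. Since by Definition \ref{bytreedefinition} every BY tree under consideration is the BY tree of some cluster picture, and by Definition \ref{stablemodeltreedef} every stable model tree is the stable model tree of some curve, I can write any BY tree as $T_\Sigma$ and any stable model tree as $T_C$ and immediately conclude $\Psi(\Phi(T_\Sigma))=\Psi(T_C)=T_\Sigma$ and $\Phi(\Psi(T_C))=\Phi(T_\Sigma)=T_C$. Thus $\Phi$ and $\Psi$ are mutually inverse, and the correspondence is explicit because both procedures are explicit.

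For a self-contained combinatorial verification not relying on realizability, I would instead track the data through each round trip. Going $T_\Sigma\mapsto\Phi(T_\Sigma)\mapsto\Psi(\Phi(T_\Sigma))$, the underlying unweighted trees agree because singletons are first inserted and then deleted; the lengths are restored because a blue edge of length $\ell$ stays blue of length $\ell$, while a yellow edge of length $\ell$ is halved to $\ell/2$ in the stable model tree and then doubled back to $2(\ell/2)=\ell$ as a yellow BY edge; and the colourings and genera are restored because the singleton counts prescribed by Proposition \ref{bytotc} are designed so that the combinatorial size of each proper vertex (Definition \ref{treeprelims}) has the same parity as the corresponding cluster, whence the even/odd and übereven classifications, the edge colours, and the value of $g(v)$ are unchanged. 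The reverse round trip $T_C\mapsto\Psi(T_C)\mapsto\Phi(\Psi(T_C))$ is symmetric.

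I expect the main obstacle to be the colour-and-genus bookkeeping rather than anything conceptual: one must confirm that the parity of the combinatorial size, the übereven condition, and the genus $g(v)$ are preserved by each procedure, and in particular that they are insensitive both to the choice of root and to whether the top vertex $v_{\mathcal{R}}$ is present or has been removed with its two adjacent edge lengths summed. Both of these points are already dispatched inside Propositions \ref{bytotc} and \ref{byfromtc}, via the explicit case analysis showing the singleton count and the genus are unchanged under a change of root and under removal of $v_{\mathcal{R}}$. Consequently the corollary reduces to invoking those two propositions and recording that their procedures compose to the identity in both directions.
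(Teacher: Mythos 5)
Your proposal is correct and follows essentially the same route as the paper, which derives the corollary directly from Propositions \ref{bytotc} and \ref{byfromtc} with no further argument: the two explicit procedures are mutually inverse because both are anchored to the same curve $C$, giving $\Phi(T_\Sigma)=T_C$ and $\Psi(T_C)=T_\Sigma$. Your additional checks (well-definedness on isomorphism classes, root-independence, and the combinatorial round-trip verification) simply make explicit the bookkeeping that the paper leaves implicit inside those two propositions.
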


Using the results of \cite{m2d2}, we will now see how the stable model tree $T_C$ can recover the dual graph of the special fibre of the minimal regular model of a semistable hyperelliptic curve $C/K$.

\begin{proposition}\label{semistablemain}\label{nonsemistablemain}
Let $C$ be a hyperelliptic curve over a local field $K$ of odd residue characteristic. Then $T_C$ uniquely determines
\begin{enumerate}[(i)]
    \item The dual graph of the special fibre of the minimal regular model of $C/K^{\textup{unr}}$ if $C$ is semistable;
    \item The dual graph of the special fibre of the potential stable model of $C$ if $C$ is not semsitable. 
\end{enumerate}
\end{proposition}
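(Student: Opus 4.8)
The plan is to reduce both parts to the corresponding statements about the BY tree, using the explicit dictionary between $T_C$ and the BY tree provided by Proposition \ref{byfromtc}. That proposition shows that the BY tree $T_\Sigma$ of $C/K$, together with its vertex colours and genera and its edge colours, is uniquely determined by $T_C$. Hence in both parts it suffices to recover the relevant dual graph from $T_\Sigma$, and the proposition follows by composition.

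For part (i), I would argue that when $C/K$ is semistable the claim is immediate from Theorem 5.18 of \cite{m2d2}, which states precisely that the dual graph of the special fibre of the minimal regular model of $C/K^{\textup{unr}}$ is determined by $T_\Sigma$. Composing this with Proposition \ref{byfromtc} gives that $T_C$ determines the dual graph, as required.

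For part (ii), when $C/K$ is not semistable, I would pass to a finite extension $F/K$ of ramification degree $e$ over which $C$ acquires semistable reduction, so that the potential stable model of $C$ is by definition the stable model of $C/F$. By the Remark following Corollary \ref{stablemodelinvariant}, $T_{C/F}$ is obtained from $T_{C/K}$ by multiplying every edge length by $e$; hence $T_{C/F}$, and therefore by Proposition \ref{byfromtc} the BY tree of $C/F$, is determined by $T_{C/K}$ together with $e$. The key point is that the dual graph of the special fibre of the stable model is read off from the \emph{combinatorial} BY tree---its underlying graph together with the colours and genus decorations of the vertices and the colours of the edges---by a recipe of \cite{m2d2} that does not involve the edge lengths, since passing from the minimal regular model to the stable model contracts the chains of rational curves indexed by the edges and so forgets their lengths. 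Because forgetting lengths annihilates the scaling factor $e$, this combinatorial datum is already determined by $T_{C/K}$ alone via Proposition \ref{byfromtc}, which proves the claim.

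The main obstacle I anticipate is in part (ii). First one must check that $T_{C/K}$ is well-defined and carries the correct information even though $C/K$ is not semistable; this holds because the cluster picture, and hence $T_C$, is built purely from the $\pi$-adic distances between the roots of $f$ and makes sense for any $C$, with the combinatorial structure of the clusters unchanged under base change to $F$. Second, one must pin down, citing the appropriate structural result of \cite{m2d2}, the exact recipe by which the special fibre of the stable model is obtained from the BY tree with its lengths forgotten. The crux is verifying that the length-scaling by $e$ genuinely drops out, so that no information about the extension $F$ beyond what $T_{C/K}$ already encodes is needed.
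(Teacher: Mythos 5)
Your proposal is correct and takes essentially the same route as the paper: part (i) is exactly Theorem 5.18 of \cite{m2d2} composed with Proposition \ref{byfromtc}, and part (ii) passes to the semistabilising extension $F$, invokes the remark that $T_{C/F}$ is $T_{C/K}$ with lengths scaled by $e$, and rests on the fact that the stable-model recipe forgets edge lengths. The structural result you flag as still needing to be pinned down is Theorem 5.24 of \cite{m2d2} (with Definition D.9 there), which states that the dual graph of the special fibre of the potential stable model is obtained from $T_{C/F}$ by assigning length $1$ to every edge of the hyperelliptic graph --- precisely the ``length-scaling drops out'' fact your argument hinges on.
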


\begin{proof}
For $(i)$, note that by Theorem 5.18 of \cite{m2d2}, the dual graph of the special fibre of the minimal regular model of a semistable hyperelliptic curve is determined by the BY tree. By Proposition \ref{byfromtc}, the BY tree is uniquely determined by $T_C$.

For $(ii)$, let $F$ denote the finite extension of $K$ over which $C$ is semistable and let $e$ be the ramification degree. By Theorem 5.24 of \cite{m2d2}, the dual graph of the special fibre of the potential stable model of $C$ can be obtained from $T_{C/F}$ by allocating the length $1$ to all edges in the hyperelliptic graph $G_\Sigma$ (see Definition D.9 of \cite{m2d2}). Hence it can be obtained from $T_{C/K}$ by the same process, since $T_{C/F}$ is $T_{C/K}$ but with the lengths of the edges multiplied by $e$. 
\end{proof}

\begin{example}\label{clusterpictospecfib}
The curve
\begin{equation}
C:y^2=(x+p^2-p^5)(x+p^2+p^5)(x+p^3+p^7)(x+p^3-p^7)(x-1)(x-1-p)
\end{equation}
over $\mathbb{Q}_p$ for $p\geq 3$ is semistable and has the following cluster picture, stable model tree, BY tree and special fibre of its minimal regular model, where the lines indicate components that are isomorphic to $\mathbb{P}^1$.

\hspace{-34pt}\begin{minipage}[b]{0.31\textwidth}
		\begin{center}
		\begin{figure}[H]
\scalebox{1.5}{
\clusterpicture             
  \Root[D] {1} {first} {r1};
  \Root[D] {} {r1} {r2};
  \Root[D] {3} {r2} {r3};
  \Root[D] {} {r3} {r4};
    \Root[D] {3.5} {r4} {r5};
    \Root[D] {} {r5} {r6};
  \ClusterLD c1[][3] = (r1)(r2);
    \ClusterLD c2[][5] = (r3)(r4);
    \ClusterLD c3[][1] = (r5)(r6);
    \ClusterLD c4[][2] = (c1)(c2);
  \ClusterD c5[0] = (c1)(c2)(c3)(c4);
\endclusterpicture}
\vspace{10pt}
\caption*{Cluster picture}
		\end{figure}
				\end{center}
    \end{minipage}\begin{minipage}[b]{0.23\textwidth}
		\begin{center}
		\begin{figure}[H]
			\begin{tikzpicture}
				[scale=0.5, auto=left,every node/.style={circle,fill=black!20,scale=0.6}]
                \node[label=above: {$v$}] (n10) at (2,2) {};
				\node[label=left: {$v_1$}] (n1) at (0,0) {};
				\node[label=left: {$v_2$}] (n2) at (2,0)  {};
				\node[label=left: {$v_3$}] (n3) at (4,0)  {};

                \node (n4) at (-0.5,-1) {};
                \node (n5) at (0.5,-1) {};
                
                \node (n6) at (1.5,-1) {};
                \node (n7) at (2.5,-1) {};

                \node (n8) at (3.5,-1) {};
                \node (n9) at (4.5,-1) {};

                \draw (n1) -- (n4); 
                \draw (n1) -- (n5); 
                
                \draw (n2) -- (n6); 
                \draw (n2) -- (n7); 

                \draw (n3) -- (n8); 
                \draw (n3) -- (n9);
				
				\draw (n10) -- (n1) node [text=black, pos=0.4, left, fill=none] {$3$};
                \draw (n10) -- (n2) node [text=black, pos=0.6, right,fill=none] {$5$};
				\draw (n10) -- (n3) node [text=black, pos=0.4, right,fill=none] {$3$};
			\end{tikzpicture}
 \caption*{$T_{C}$}
		\end{figure}
				\end{center}
    \end{minipage}\begin{minipage}[b]{0.23\textwidth}
		\begin{center}
		\begin{figure}[H]
\begin{tikzpicture}[scale=0.5, auto=left,every node/.style={circle,scale=0.6}]
                \node[line width=0.5mm,draw=bytreeyellowline,fill=bytreeyellowvertex] (n10) at (2,2) {$0$};
				\node[fill=bytreebluevertex] (n1) at (0,0) {$0$};
				\node[fill=bytreebluevertex] (n2) at (2,0)  {$0$};
				\node[fill=bytreebluevertex] (n3) at (4,0)  {$0$};

				\draw (n10) -- (n1)[line width=0.5mm,bytreeyellowline,decorate,decoration={coil,aspect=0}] node [text=black, pos=0.4, left, fill=none] {$6$};
                \draw (n10) -- (n2)[line width=0.5mm,bytreeyellowline,decorate,decoration={coil,aspect=0}] node [text=black, pos=0.8, right,fill=none] {$10$};
				\draw (n10) -- (n3)[line width=0.5mm,bytreeyellowline,decorate,decoration={coil,aspect=0}] node [text=black, pos=0.4, right,fill=none] {$6$};

\end{tikzpicture}
\vspace*{13pt}
\caption*{BY tree}
		\end{figure}
				\end{center}
    \end{minipage}\begin{minipage}[b]{0.31\textwidth}
		\begin{center}
		\begin{figure}[H]
			\begin{tikzpicture}
				[scale=0.3, auto=left,every node/.style={circle,fill=black!20,scale=0.6}]

                \draw[black, thick] (-0.5,5) -- (4.5,5) node [text=black, left, pos=0.5,fill=none] {};
                  \draw[black, thick] (0,5.5) -- (0,4) node [text=black, left, pos=0.5,fill=none] {};
                \draw[black, thick] (4,5.5) -- (4,4) node [text=black, left, pos=0.5,fill=none] {};
                \draw[black, thick] (0.5,4.5) -- (-3,4.5) node [text=black, left, pos=0.5,fill=none] {};
                \draw[black, thick] (3.5,4.5) -- (7,4.5) node [text=black, left, pos=0.5,fill=none] {};

                \draw[black, thick] (0,2.5) -- (4,2.5) node [text=black, left, pos=0.5,fill=none] {};
                \draw[black, thick] (0.5,1.5) -- (0.5,3) node [text=black, left, pos=0.5,fill=none] {};
                \draw[black, thick] (3.5,1.5) -- (3.5,3) node [text=black, left, pos=0.5,fill=none] {};               
                \draw[black, thick] (1,2) -- (-1.5,2) node [text=black, left, pos=0.5,fill=none] {};
                \draw[black, thick] (3,2) -- (5.5,2) node [text=black, left, pos=0.5,fill=none] {};
                \draw[black, thick] (-1,2.5) -- (-1,1) node [text=black, left, pos=0.5,fill=none] {};
                \draw[black, thick] (5,2.5) -- (5,1) node [text=black, left, pos=0.5,fill=none] {};
                \draw[black, thick] (-0.5,1.5) -- (-3,1.5) node [text=black, left, pos=0.5,fill=none] {};
                \draw[black, thick] (4.5,1.5) -- (7,1.5) node [text=black, left, pos=0.5,fill=none] {};

                \draw[black, thick] (-0.5,-0.5) -- (4.5,-0.5) node [text=black, left, pos=0.5,fill=none] {};
                  \draw[black, thick] (0,0) -- (0,-1.5) node [text=black, left, pos=0.5,fill=none] {};
                \draw[black, thick] (4,0) -- (4,-1.5) node [text=black, left, pos=0.5,fill=none] {};
                \draw[black, thick] (0.5,-1) -- (-3,-1) node [text=black, left, pos=0.5,fill=none] {};
                \draw[black, thick] (3.5,-1) -- (7,-1) node [text=black, left, pos=0.5,fill=none] {};

                \draw[black, thick] (-2.25,5) -- (-2.25,-1.75) node [text=black, left, pos=0.5,fill=none] {};
                 \draw[black, thick] (6.25,5) -- (6.25,-1.75) node [text=black, left, pos=0.5,fill=none] {};
        
			\end{tikzpicture}
 \caption*{Special fibre of min. reg. model}
		\end{figure}
				\end{center}
    \end{minipage}
We obtain the BY tree from $T_C$ as follows. Label the vertices $v$, $v_1$, $v_2$ and $v_3$ and root the tree at $v$. Following Proposition \ref{byfromtc}, $v$ is übereven so we colour it yellow, and $v_1$, $v_2$ and $v_3$ are not so we colour them blue. Then $g(v)=0$ and $g(v_i)=0$ for $i=1,2,3$ since $v$ has no odd children and $v_i$ has $2$ odd children for $i=1,2,3$. Since $v_1$, $v_2$ and $v_3$ are the children of $v$ and they all have even size, we colour the edges towards them yellow. To find the dual graph of the special fibre of the minimal regular model from the BY tree, according to \cite{m2d2} Theorem 5.18, one must take a second copy of the BY tree, attach the two copies at the blue vertices, ignore the genus $0$ blue vertices of degree $2$ and half the lengths. This tells us that the special fibre of the minimal regular model of $C/\mathbb{Q}_p^{\textup{unr}}$ looks like the formation of $\mathbb{P}^1$s above.
\end{example}

\section{The absolute invariants}\label{invariantssection}
In this section we will introduce the \textit{absolute invariants} associated to the possible stable model trees for a hyperelliptic curve of genus $g$. These will allow us to recover the stable model tree of a hyperelliptic curve in \S\ref{mainthminvproof}.

\begin{definition}\label{setofleafgraphs}
Let $\mathcal{T}_g$ denote the set of possible stable model trees, with the lengths of the edges omitted, for a hyperelliptic curve of genus $g$ that does not have potentially good reduction. The elements $T=(V,E)\in \mathcal{T}_g$, correspond to the possible stable models of a genus $g$ hyperelliptic curve over a local field, besides potentially good reduction. As in Definition \ref{stablemodeltreedefs}, we call the leaf vertices of $T$ \textit{singletons}, and vertices that are not singletons \textit{proper vertices}. If a proper vertex $v$ becomes a leaf if the singletons of $T$ are removed we call it a \textit{proper leaf}. 

Let $T\in \mathcal{T}_g$ and let $E_s$ denote the edges extending to the singleton vertices. Let
\begin{footnotesize}
\begin{equation}
I:\delta(v_1,v_2)=\dots=\delta(v_q,v_{q+1})>\delta(v_{q+3},v_{q+4})=\dots=\delta(v_{q+3},v_{q+4})> \dots > \delta(v_r,v_{r+1})=\dots=\delta(v_s,v_{s+1})
\end{equation}
\end{footnotesize}
\normalsize 
\noindent denote an ordering on the pairwise distances between proper vertices in $T$ for some $L:E\setminus E_s \rightarrow \mathbb{Q}^+$ and where there will be $v_i=v_j$ for some $i$ and $j$. An ordering
\begin{footnotesize}
\begin{equation}
I':\delta(v_1',v_2')=\dots=\delta(v_q',v_{q+1}')>\delta(v_{q+3}',v_{q+4}')=\dots=\delta(v_{q+3}',v_{q+4}')> \dots > \delta(v_r',v_{r+1}')=\dots=\delta(v_s',v_{s+1}')
\end{equation}
\end{footnotesize}
\normalsize 

\noindent is said to be equivalent to $I$ if there exists an isomorphism $\iota$ of $T$ for which $\iota(v_i)=v_i'$ for all proper vertices $v_i$ in $T$.

For a fixed labelling of the proper vertices of $T$, denote by $\textup{Ineq}_T$ the set of possible orderings on the pairwise distances between the vertices in $T$ as a labelled graph, up to equivalence under an isomorphism of the tree. Define
\begin{align}
\mathbf{T}_g=\{(T,I):T\in \mathcal{T}_g \textup{ and } I \in \textup{Ineq}_T\}. 
\end{align}
In other words, $\mathbf{T}_g$ is the set of all stable model trees with all possible orderings on the distances between vertices for a hyperelliptic curve of genus $g$. 
\end{definition}

\begin{definition}\label{hyperellipticordering}
Let $C$ by a hyperelliptic curve of genus $g$ over a local field $K$ of odd residue characteristic. Denote by $I_C\in\textup{Ineq}_{T_C}$ the ordering on the distances between the proper vertices of the stable model tree $T_C$ of $C$ for a fixed labelling of the edges. We can consider $(T_C,I_C)$ as an element of $\mathbf{T}_g$ by ignoring the lengths of the edges of $T_C$.
\end{definition}

\begin{definition}\label{stablemodeltreedefs}
Let $(T,I)\in\mathbf{T}_g$ with a fixed labelling of the singletons $s_1,\dots,s_{2g+2}$ of $T$.
\begin{enumerate}[(i)]
\item Denote by $\delta_n(T,I)$ the $n$-th largest distance between proper vertices in $T$ with the ordering $I$. 
\item Define
\begin{equation}
S_n(T,I)=\{\{\{s_i,s_j\},\{s_k,s_l\}\}: \textup{$i$, $j$, $k$ and $l$ are distinct and } \delta(C_{ij},C_{kl})=\delta_n(T,I)\},
\end{equation}
where we take all sets to be unordered, so $\{\{s_i,s_j\},\{s_k,s_l\}\}=\{\{s_j,s_i\},\{s_k,s_l\}\}=\{\{s_k,s_l\},\{s_i,s_j\}\}$.
\end{enumerate}
\end{definition}

We will now define the absolute invariants associated to each possible $(T,I)\in\mathbf{T}_g$, from which we will show the dual graph can be recovered.

\begin{definition}[Absolute invariants]\label{invariantsdefinition}
Let $(T,I)\in\mathbf{T}_g$ be a possible stable model tree for a hyperelliptic curve of genus $g$ with an ordering $I$ on the distances between the vertices. Let $d=2g+2$ and fix a labelling of the singletons $s_1,\dots,s_d$ of $T$ corresponding to variables $X_1,\dots,X_d$. Define
\begin{equation}
    (ij,kl)=\frac{(X_i-X_k)(X_i-X_l)(X_j-X_k)(X_j-X_l)}{(X_i-X_j)^2(X_k-X_l)^2}. 
\end{equation}
Denote by $n_{T,I}$ the number of distinct distances between vertices in $T$ with ordering $I$. For $1\leq n\leq n_{T,I}$, define
\begin{equation}
  R_{T,I,n}=\prod_{m=1}^{n}\prod_{\substack{\{\{s_i,s_j\},\{s_k,s_l\}\} \\ \in S_m(T,I)}} (ij,kl)^{n-m+1}.
\end{equation}
There is a natural action of $S_d$ on $R_{T,I,n}$ by letting $\sigma\in S_d$ take $X_i$ to $ X_{\sigma(i)}$. Define
\begin{equation}
    \textup{Inv}_{T,I,n}=\sum_{\sigma} R_{T,I,n}^{\sigma},
\end{equation}
where we take the sum over $\sigma\in S_d/\textup{Stab}_{S_d}(R_{T,I,n})$ under the action of $S_d$ on $R_{T,I,n}$. For a hyperelliptic curve $C:y^2=f(x)=c(x-x_1)\cdots(x-x_d)$ and a fixed labelling of the singletons of $T$ corresponding to the roots of $f(x)$, write $R_{T,I,n}(f)^\sigma$ and $\textup{Inv}_{T,I,n}(C)$ for $R_{T,I,n}^\sigma(x_1,\dots,x_d)$ and $\textup{Inv}_{T,I,n}(x_1,\dots,x_d)$ respectively. 
\end{definition}

\begin{notation}
We will write $\sum_{S_d/\textup{Stab}}R$ for $\sum_{\sigma\in S_d/\textup{Stab}(R)}R^{\sigma}$.
\end{notation}

Note that the invariants $\textup{Inv}_{T,I,n}(C)$ are model independent, so it makes sense to evaluate them on a curve $C$, as opposed to on a Weierstrass equation for the curve:

\begin{lemma}\label{actuallyareinvariants}
If $C/K$ and $C'/K$ are two genus $g$ hyperelliptic curves that are isomorphic over $\Bar{K}$ then $\textup{Inv}_{T,I,n}(C)=\textup{Inv}_{T,I,n}(C')$ for every $(T,I)\in \mathbf{T}_g$.
\end{lemma}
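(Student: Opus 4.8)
The plan is to reduce the statement to two facts: that each elementary factor $(ij,kl)$ appearing in Definition~\ref{invariantsdefinition} is invariant under the simultaneous action of fractional linear transformations on the variables $X_1,\dots,X_d$, and that a $\Bar{K}$-isomorphism of hyperelliptic curves moves the roots of the defining polynomial by exactly such a transformation. For the latter, I would recall the standard description of $\Bar{K}$-isomorphisms of hyperelliptic curves of genus $g\geq 2$: since the hyperelliptic map is canonical, any isomorphism $C'\to C$ over $\Bar{K}$ descends to a map $x\mapsto\gamma(x)=\tfrac{ax+b}{cx+d}$ with $\left(\begin{smallmatrix}a&b\\c&d\end{smallmatrix}\right)\in\textup{GL}_2(\Bar{K})$ (together with a rescaling of $y$, which does not affect the roots) carrying the branch locus of $C'$ onto that of $C$. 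Writing $\alpha_1,\dots,\alpha_d$ for the roots of $f$ (so $C:y^2=f(x)$) and $\beta_1,\dots,\beta_d$ for those of the polynomial defining $C'$, this yields a permutation $\tau\in S_d$ with $\alpha_{\tau(i)}=\gamma(\beta_i)$ for all $i$.

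For the invariance of $(ij,kl)$, the only computation needed is the elementary identity $\gamma(X_m)-\gamma(X_n)=\frac{(ad-bc)(X_m-X_n)}{(cX_m+d)(cX_n+d)}$. Substituting this into $(ij,kl)=\frac{(X_i-X_k)(X_i-X_l)(X_j-X_k)(X_j-X_l)}{(X_i-X_j)^2(X_k-X_l)^2}$, each of the four differences in the numerator and each of the two squared differences in the denominator contributes a factor $(ad-bc)$ together with reciprocals of the linear forms $cX_m+d$. A count of exponents shows that the numerator picks up the factor $(ad-bc)^4\prod_{m\in\{i,j,k,l\}}(cX_m+d)^{-2}$ and the denominator picks up precisely the same factor, so $(ij,kl)$ is left unchanged. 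Consequently every $R_{T,I,n}$, being a product of such factors, is invariant under $\gamma$; so is each conjugate $R_{T,I,n}^{\sigma}$, which is again a product of factors of the same shape with relabelled indices; and therefore so is the orbit sum $\textup{Inv}_{T,I,n}=\sum_{\sigma}R_{T,I,n}^{\sigma}$, viewed as a rational function of $X_1,\dots,X_d$.

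To conclude, I would combine this invariance with the symmetry built into the definition. Since $\textup{Inv}_{T,I,n}$ is the full $S_d$-orbit sum of $R_{T,I,n}$, it is a symmetric function of the variables, so $\textup{Inv}_{T,I,n}(C)$ is independent of the labelling of the roots. Using $\tau$ and $\gamma$ from the first step together with the invariance from the second, one then reads off
\begin{equation}
\textup{Inv}_{T,I,n}(C)=\textup{Inv}_{T,I,n}(\alpha_{\tau(1)},\dots,\alpha_{\tau(d)})=\textup{Inv}_{T,I,n}(\gamma(\beta_1),\dots,\gamma(\beta_d))=\textup{Inv}_{T,I,n}(C'),
\end{equation}
where the first equality is $S_d$-symmetry, the middle one is the relation $\alpha_{\tau(i)}=\gamma(\beta_i)$, and the last is the $\gamma$-invariance.

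The step requiring the most care — and the reason the even-degree convention matters — is ensuring that $\gamma$ never sends a root to infinity, as the computation of the second paragraph fails when some $cX_m+d$ vanishes. I would handle this by noting that for even-degree models the point $\infty$ is not a branch point of $C$ or of $C'$, so $\gamma$ restricts to a bijection between the $2g+2$ finite roots of the two polynomials and no factor $cX_m+d$ vanishes upon evaluation. The invariance identity for $(ij,kl)$ itself is routine; the only genuine input is the classical description of isomorphisms of hyperelliptic curves as root-permuting fractional linear transformations.
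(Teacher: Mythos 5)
Your proposal is correct and takes essentially the same approach as the paper's proof: a $\Bar{K}$-isomorphism acts on the roots by a fractional linear transformation, the building blocks $(ij,kl)$ are invariant under that action, and hence so is the orbit sum $\textup{Inv}_{T,I,n}$. The paper dismisses the computation as ``a simple check''; your write-up just supplies the details it omits (the cancellation of the $(ad-bc)$ and $(cX_m+d)$ factors, the induced root permutation absorbed by $S_d$-symmetry, and the observation that no branch point can be sent to infinity for even-degree models).
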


\begin{proof}
In terms of the Weierstrass equation, any isomorphism from $C:y^2=f(x)$ to $C':y^2=g(x)$ defined over $\Bar{K}$ takes $x$ to $\frac{ax+b}{cx+d}$, where $a$, $b$, $c$ and $d\in \Bar{K}$ (see, for example, \cite{mobius} \S 1.5.1). A simple check shows that $(ij,kl)$ is invariant under this action, and thus so is $\textup{Inv}_{T,I,n}(C)$.
\end{proof}

\begin{lemma}\label{finitenumberofleafgraphs}
The set $\mathbf{T}_g$ is finite. 
\end{lemma}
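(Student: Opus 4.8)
The plan is to establish finiteness by bounding each of the two data that a pair $(T,I)$ consists of. Recall from Definition \ref{setofleafgraphs} that an element of $\mathbf{T}_g$ is a pair $(T,I)$ where $T\in\mathcal{T}_g$ is an isomorphism class of (unweighted) possible stable model tree and $I\in\textup{Ineq}_T$ is an ordering on the pairwise distances between the proper vertices of $T$, taken up to equivalence under an isomorphism of $T$. Since $\mathbf{T}_g=\bigcup_{T\in\mathcal{T}_g}\{(T,I):I\in\textup{Ineq}_T\}$, it suffices to show that $\mathcal{T}_g$ is finite and that $\textup{Ineq}_T$ is finite for each fixed $T$.

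First I would show that $\mathcal{T}_g$ is finite. The key observation is that every $T\in\mathcal{T}_g$ has exactly $d=2g+2$ singleton leaves, since these correspond to the $d$ roots of a degree-$d$ Weierstrass equation. Each proper vertex of $T$ comes from a proper cluster of the cluster picture, and each such vertex has at least two descendants among the singletons (as it corresponds to a cluster of size $\geq 2$); since these singleton sets are nested along the tree in the manner of a cluster picture, the number of proper vertices is bounded by a function of $d$ alone. Concretely, a tree on a fixed finite number of singletons with a bounded number of internal vertices, considered up to isomorphism of unlabelled graphs, has only finitely many possibilities. Hence $\mathcal{T}_g$ is a finite set.

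Next I would show that $\textup{Ineq}_T$ is finite for each $T\in\mathcal{T}_g$. Fix $T$ with its finite set of proper vertices; there are only finitely many pairwise distances $\delta(v_i,v_j)$ to be ordered, namely $\binom{p}{2}$ of them where $p$ is the (finite) number of proper vertices. An element of $\textup{Ineq}_T$ is, by Definition \ref{setofleafgraphs}, a total preorder on this finite set of pairs (a chain of equalities and strict inequalities as displayed), recorded up to the equivalence induced by graph isomorphisms of $T$. The number of total preorders on a finite set is finite (it is bounded by the ordered Bell number of $\binom{p}{2}$), and quotienting by the finite group of automorphisms of $T$ only decreases this count, so $\textup{Ineq}_T$ is finite.

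Combining these, $\mathbf{T}_g$ is a finite union of finite sets and is therefore finite. The main obstacle, and the step deserving the most care, is the first one: giving a clean argument that $\mathcal{T}_g$ is finite, i.e. that the number of combinatorial tree shapes arising from genus-$g$ cluster pictures is bounded purely in terms of $g$. The cleanest route is to bound the number of proper vertices by observing that contracting the singletons leaves a tree whose proper leaves each absorb at least one singleton and whose internal structure is controlled by the nesting of clusters of a fixed total of $d$ roots; once the vertex count is bounded, finiteness up to isomorphism is immediate. Everything after that reduces to the elementary fact that there are finitely many orderings of a finite set.
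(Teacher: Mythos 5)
Your proof is correct and takes essentially the same approach as the paper: first finiteness of the set of underlying tree shapes (the paper deduces this from the finiteness of cluster pictures of a degree $2g+2$ polynomial, which you justify directly by bounding the number of proper vertices via the nesting of clusters on $d=2g+2$ singletons), then finiteness of the possible orderings on the finitely many pairwise distances for each fixed tree. Your write-up simply supplies more combinatorial detail than the paper's brief argument.
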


\begin{proof}
For fixed $g$, there are a finite number of cluster pictures of a polynomial of degree $2g+2$. Hence, there are a finite number of stable model trees with lengths omitted in $\mathcal{T}_g$. Since each stable model tree has a finite number of vertices, there are a finite number of possible orderings on the distances between the proper vertices, and so $\mathbf{T}_g$ is finite. 
\end{proof}

The main theorem that we prove in this paper is the following, that the absolute invariants defined in this section recover the dual graph of the special fibre of the minimal regular model of a semistable hyperelliptic curve. The remaining sections are concerned with the proof of this theorem. 

\begin{theorem}[=Corollary \ref{invthm}]
Let $C$ be a hyperelliptic curve of genus $g$ over a local field of odd residue chatacteristic $K$. The finite set 
\begin{equation}
\left\{(T,I,\textup{ord}(\textup{Inv}_{T,I,n}(C))):(T,I)\in \mathbf{T}_{g} \textup{ and } n\leq n_{T,I}\right\},
\end{equation}
where $n_{T,I}$ is the number of distinct distances between the vertices in $T$ with ordering $I$, uniquely determines:
\begin{enumerate}[(i)]
      \item The dual graph of the special fibre of the minimal regular model of $C/K^{\textup{unr}}$ if $C$ is semistable;
    \item The dual graph of the special fibre of the potential stable model of $C$ if $C/K$ is not semistable. 
\end{enumerate}
\end{theorem}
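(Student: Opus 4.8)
The plan is to reduce everything to the recovery of the weighted stable model tree $T_C$, and then to recover $T_C$ by an inductive comparison of the valuations $\textup{ord}(\textup{Inv}_{T,I,n}(C))$ as $(T,I)$ ranges over the finite set $\mathbf{T}_g$. By Proposition \ref{semistablemain}, the weighted tree $T_C$ already determines the dual graph of the special fibre of the minimal regular model of $C/K^{\textup{unr}}$ when $C$ is semistable, and of the potential stable model when $C$ is not; so it suffices to show that the data $\{(T,I,\textup{ord}(\textup{Inv}_{T,I,n}(C)))\}$ pins down $T_C$ as a weighted tree, i.e. its combinatorial type, the ordering $I_C$ on distances between proper vertices, and the numerical distances $\delta_1(C)>\delta_2(C)>\dots$. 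The potentially good reduction case is detected first and separately: by Proposition \ref{potgoodredlemma} it is exactly the case $T_C\cong K_{1,d}$, where there are no distances between proper vertices, and one checks this is the unique situation producing no nontrivial growth in the invariant valuations. We may assume $\deg f$ is even (the odd case follows by letting $X_{d+1}\to\infty$, as after Example \ref{jinvariant}), so $d=2g+2$.

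The computation at the heart of everything is the valuation identity for the cross-ratio factors. Writing $d_{\mathfrak{s}_{ij}}$ for the depth of the smallest cluster containing $x_i,x_j$, one has $\textup{ord}(x_i-x_j)=d_{\mathfrak{s}_{ij}}$, and substituting the four least common ancestors appearing in $(ij,kl)$ together with Lemma \ref{distanceintc} gives
\begin{equation}
\textup{ord}\big((ij,kl)(C)\big)=-2\,\delta(C_{ij},C_{kl})
\end{equation}
for every quadruple realizing non-intersecting paths in $T_C$. Hence, for the true pair $(T_C,I_C)$, evaluating $R_{T_C,I_C,n}$ at the roots under the identity labelling yields
\begin{equation}
\textup{ord}\big(R_{T_C,I_C,n}(C)\big)=-2\sum_{m=1}^{n}(n-m+1)\,|S_m(T_C,I_C)|\,\delta_m(C).
\end{equation}
Since the exponent weights $n-m+1$ and the distances $\delta_m(C)$ are both decreasing in $m$, a rearrangement argument (to be carried out in \S\ref{invariantsofcurves}) shows the identity labelling \emph{minimises} $\textup{ord}(R^{\sigma}_{T_C,I_C,n}(C))$ over $\sigma\in S_d$. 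As $\textup{Inv}_{T,I,n}=\sum_\sigma R^\sigma$, we have $\textup{ord}(\textup{Inv})\geq\min_\sigma\textup{ord}(R^\sigma)$, with equality once the terms of minimal valuation are shown not to cancel; using that $K$ has odd residue characteristic one obtains
\begin{equation}
\textup{ord}\big(\textup{Inv}_{T_C,I_C,n}(C)\big)=-2\sum_{m=1}^{n}(n-m+1)\,|S_m(T_C,I_C)|\,\delta_m(C).
\end{equation}

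The recovery then proceeds one distance at a time. Suppose after $n$ steps we have identified $\delta_1(C),\dots,\delta_n(C)$ and the sets $S_1,\dots,S_n$, equivalently the part of $T_C$ spanned by the $n$ largest distances; the candidate continuations form the finite set $\mathcal{T}_{n+1}(C)$. For each candidate $(T,I)\in\mathcal{T}_{n+1}(C)$ the averaging function $B_{n+1}(T,I,C)$ subtracts from $\textup{ord}(\textup{Inv}_{T,I,n+1}(C))$ the contribution of the known levels $1,\dots,n$ and divides by the coefficient $K_{n+1}(T,I)$, producing the value $\delta_{n+1}(C)$ would take \emph{if} $(T,I)$ were correct. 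By the identity above, when $(T,I)=(T_C,I_C)$ this returns the genuine $\delta_{n+1}(C)$, which is a legitimate next distance: strictly positive, strictly smaller than $\delta_n(C)$, and consistent across the several invariants involving level $n+1$. The comparison results of \S\ref{comparingvaluations} show that any incorrect candidate fails at least one of these consistency checks, so $(T_C,I_C)$ and the value $\delta_{n+1}(C)$ are uniquely singled out. Iterating until no proper distances remain recovers $(T_C,I_C)$ and all edge lengths, hence $T_C$ as a weighted tree, and Proposition \ref{semistablemain} then yields the dual graph.

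The main obstacle is precisely this distinguishing step: proving that no \emph{incorrect} $(T,I)\in\mathcal{T}_{n+1}(C)$ can reproduce valuation data consistent with being the true tree. This rests on two facts requiring real work — that the symmetrised sum does not lose its leading term to cancellation (so $\textup{ord}(\textup{Inv}_{T,I,n})$ genuinely equals the minimising $\textup{ord}(R^\sigma)$), and that the minimising $\sigma$ for an incorrect template necessarily overshoots or violates the strict ordering $\delta_{n+1}<\delta_n$ once forced through $B_{n+1}$. These are exactly the comparison-of-valuation statements of \S\ref{comparingvaluations}, where the bulk of the argument will lie.
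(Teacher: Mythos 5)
Your overall architecture matches the paper's: reduce to recovering $T_C$ (Proposition \ref{semistablemain}), detect potentially good reduction separately (Proposition \ref{potgoodred}), compute $\textup{ord}((ij,kl))$ as $-2\delta(C_{ij},C_{kl})$ (Proposition \ref{distancevalues}), get the valuation formula for $\textup{Inv}_{T_C,I_C,n}(C)$ via the rearrangement inequality (Theorem \ref{valuationofyprime}), and recover the tree level by level using the normalised quantity $B_{n+1}$. However, there is a genuine gap at the distinguishing step. You claim an incorrect candidate $(T,I)\in\mathcal{T}_{n+1}(C)$ is eliminated because its $B_{n+1}$-value fails a ``consistency check'' (positivity, strict decrease below $\delta_n(C)$, or agreement across invariants). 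This is not what the comparison results give, and it is false as stated: Theorem \ref{semistablemainlemma}(i) yields only the \emph{weak} inequality $B_{n+1}(T_C,I_C,C)\geq B_{n+1}(T,I,C)$ when $K_{n+1}(T,I)<K_{n+1}(C)$, and ties genuinely occur — in Example \ref{bn+1example} the wrong candidates $(T,I_2)$ and $(T,I_3)$ attain exactly $d_1+d_3=\delta_2(C)$, a value that is strictly positive, strictly smaller than $\delta_1(C)$, and indistinguishable from the true one by any of your checks. The missing idea is the tie-breaking rule of Theorem \ref{wholealgorithm}(ii): among the maximisers of $B_{n+1}(-,-,C)$ one must select a candidate with the \emph{largest} $K_{n+1}(-,-)$, and the real content of Theorem \ref{semistablemainlemma} is the asymmetric pair of inequalities (strict when $K_{n+1}(T,I)\geq K_{n+1}(C)$, weak otherwise) which guarantees that this selection returns $R_{T_C,I_C,n+1}$ up to the $S_{2g+2}$-action. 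Without this rule your induction does not single out a unique continuation, so uniqueness — the whole point of the corollary — is not established.

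Two smaller points. First, non-cancellation in $\textup{Inv}_{T_C,I_C,n}(C)=\sum_\sigma R^\sigma$ is not a consequence of odd residue characteristic, as you suggest; it holds because the identity-labelled summand is the \emph{unique} summand of minimal valuation (proof of Theorem \ref{valuationofyprime}: $\sigma$ stabilises $R_{T_C,I_C,n}$ exactly when it stabilises each set of factors of given exponent), and the ultrametric inequality then forces the valuation of the sum to equal that minimum. Second, your closing claim that ``iterating \dots recovers $(T_C,I_C)$ and all edge lengths'' conceals a non-trivial reconstruction statement: one must prove that the final summand $R_{T,I,n_{T,I}}$ determines $(T,I)$ up to isomorphism, which is Proposition \ref{uniquelydetermines} and rests on the inductive graph-theoretic arguments of Lemmas \ref{singletonslemma} and \ref{summandslemma}; knowing the ordered list of distances alone does not obviously pin down the tree, and the paper devotes substantial work to this step.
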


There are two main ideas behind the construction of the invariants. The first was to write an expression in terms of the differences of roots that is indeed an absolute invariant of the curve, as show above in Lemma \ref{actuallyareinvariants}. The second was to fashion absolute invariants that `pick out' the distances between the vertices in $T_C$ when evaluated on $C$, as described in Proposition \ref{distancevalues}.

\begin{example}\label{stablemodeltreedefex}
By writing down the possible cluster pictures without depths for a degree $6$ polynomial (see, for example, \cite{m2d2} Table 8), the trees in $\mathcal{T}_2$ are: 
\vspace{10pt}
\begin{center}
\begin{minipage}[m]{0.2\textwidth}
		\begin{center}
\begin{tikzpicture}
				[scale=0.5, auto=left,every node/.style={circle,fill=black!20,scale=0.6}]
				\node (n1) at (0,0) {};
				\node (n3) at (4,0)  {};

                \node (n4) at (-0.25,-1)  {};
                \node (n5) at (0.25,-1)  {};
                \node (n6) at (3.5,-1)  {};
                \node (n7) at (4.5,-1)  {};

                \node (n8) at (-0.75,-1)  {};
                \node (n9) at (0.75,-1)  {};

				\draw (n1) -- (n4) node [text=black, pos=0.4, left, fill=none] {};
                \draw (n1) -- (n5) node [text=black, pos=0.4, left, fill=none] {};
                \draw (n1) -- (n8) node [text=black, pos=0.4, left, fill=none] {};
                \draw (n1) -- (n9) node [text=black, pos=0.4, left, fill=none] {};
                \draw (n3) -- (n6) node [text=black, pos=0.4, left, fill=none] {};
                \draw (n3) -- (n7) node [text=black, pos=0.4, left, fill=none] {};
                
				\draw (n1) -- (n3) node [text=black, pos=0.4, left, fill=none] {};
        
			\end{tikzpicture}
\end{center}
\end{minipage}\begin{minipage}[m]{0.2\textwidth}
		\begin{center}
\begin{tikzpicture}
				[scale=0.5, auto=left,every node/.style={circle,fill=black!20,scale=0.6}]
				\node (n1) at (0,0) {};
				\node (n3) at (4,0)  {};

                \node (n4) at (-0.5,-1)  {};
                \node (n5) at (0.5,-1)  {};
                \node (n6) at (3.5,-1)  {};
                \node (n7) at (4.5,-1)  {};
                \node (n8) at (0,-1)  {};
                 \node (n9) at (4,-1)  {};

                \draw (n1) -- (n4) node [text=black, pos=0.4, left, fill=none] {};
                \draw (n1) -- (n5) node [text=black, pos=0.4, left, fill=none] {};
                 \draw (n1) -- (n8) node [text=black, pos=0.4, left, fill=none] {};
                \draw (n3) -- (n6) node [text=black, pos=0.4, left, fill=none] {};
                \draw (n3) -- (n7) node [text=black, pos=0.4, left, fill=none] {};
			\draw (n3) -- (n9) node [text=black, pos=0.4, left, fill=none] {};
            
				\draw (n1) -- (n3) node [text=black, pos=0.4, left, fill=none] {};
        
			\end{tikzpicture}
\end{center}
\end{minipage}\begin{minipage}[m]{0.2\textwidth}
		\begin{center}
\begin{tikzpicture}
				[scale=0.5, auto=left,every node/.style={circle,fill=black!20,scale=0.6}]
				\node (n1) at (0,0) {};
				\node (n2) at (2,0)  {};
				\node (n3) at (4,0)  {};

                \node (n4) at (-0.5,-1)  {};
                \node (n5) at (0.5,-1)  {};
                \node (n6) at (1.5,-1)  {};
                \node (n7) at (2.5,-1)  {};
                \node (n8) at (3.5,-1)  {};
                 \node (n9) at (4.5,-1)  {};

                 \draw (n1) -- (n4) node [text=black, pos=0.4, left, fill=none] {};
                 \draw (n1) -- (n5) node [text=black, pos=0.4, left, fill=none] {};

                 \draw (n2) -- (n6) node [text=black, pos=0.4, left, fill=none] {};
                 \draw (n2) -- (n7) node [text=black, pos=0.4, left, fill=none] {};

                 \draw (n3) -- (n8) node [text=black, pos=0.4, left, fill=none] {};
                 \draw (n3) -- (n9) node [text=black, pos=0.4, left, fill=none] {};
				
				\draw (n1) -- (n2) node [text=black, pos=0.4, left, fill=none] {};
    \draw (n2) -- (n3) node [text=black, pos=0.4, left, fill=none] {};
        
			\end{tikzpicture}
\end{center}
\end{minipage}

\begin{minipage}[m]{0.2\textwidth}
		\begin{center}
\begin{tikzpicture}
				[scale=0.5, auto=left,every node/.style={circle,fill=black!20,scale=0.6}]
				\node (n1) at (0,0) {};
				\node (n2) at (2,0)  {};
				\node (n3) at (4,0)  {};

                \node (n4) at (-0.5,-1)  {};
                \node (n5) at (0.5,-1)  {};
                \node (n6) at (0,-1)  {};
                
                \node (n7) at (2,-1)  {};
                
                \node (n8) at (3.5,-1)  {};
                 \node (n9) at (4.5,-1)  {};

                 \draw (n1) -- (n4) node [text=black, pos=0.4, left, fill=none] {};
                 \draw (n1) -- (n5) node [text=black, pos=0.4, left, fill=none] {};

                 \draw (n1) -- (n6) node [text=black, pos=0.4, left, fill=none] {};
                 
                 \draw (n2) -- (n7) node [text=black, pos=0.4, left, fill=none] {};

                 \draw (n3) -- (n8) node [text=black, pos=0.4, left, fill=none] {};
                 \draw (n3) -- (n9) node [text=black, pos=0.4, left, fill=none] {};
				
				\draw (n1) -- (n2) node [text=black, pos=0.4, left, fill=none] {};
    \draw (n2) -- (n3) node [text=black, pos=0.4, left, fill=none] {};
        
			\end{tikzpicture}
\end{center}
\end{minipage}\begin{minipage}[m]{0.2\textwidth}
		\begin{center}
\begin{tikzpicture}
				[scale=0.5, auto=left,every node/.style={circle,fill=black!20,scale=0.6}]
				\node (n1) at (0,0) {};
				\node (n2) at (1.33,0)  {};
				\node (n3) at (2.66,0)  {};
                    \node (n4) at (3.99,0)  {};

                    \node (n5) at (-0.5,-1)  {};
                \node (n6) at (0.5,-1)  {};
                
                \node (n7) at (1.33,-1)  {};
                
                \node (n8) at (2.66,-1)  {};
                
                \node (n9) at (3.49,-1)  {};
                 \node (n10) at (4.49,-1)  {};

                 \draw (n1) -- (n5) node [text=black, pos=0.4, left, fill=none] {};
                 \draw (n1) -- (n6) node [text=black, pos=0.4, left, fill=none] {};

                 \draw (n2) -- (n7) node [text=black, pos=0.4, left, fill=none] {};
                 
                 \draw (n3) -- (n8) node [text=black, pos=0.4, left, fill=none] {};

                 \draw (n4) -- (n9) node [text=black, pos=0.4, left, fill=none] {};
                 \draw (n4) -- (n10) node [text=black, pos=0.4, left, fill=none] {};
				
				\draw (n1) -- (n2) node [text=black, pos=0.4, left, fill=none] {};
    \draw (n2) -- (n3) node [text=black, pos=0.4, left, fill=none] {};
    \draw (n3) -- (n4) node [text=black, pos=0.4, left, fill=none] {};
        
			\end{tikzpicture}
\end{center}
\end{minipage}\begin{minipage}[m]{0.2\textwidth}
		\begin{center}
\begin{tikzpicture}
				[scale=0.5, auto=left,every node/.style={circle,fill=black!20,scale=0.6}]
                \node (n10) at (2,2) {};
				\node (n1) at (0,0) {};
				\node (n2) at (2,0)  {};
				\node (n3) at (4,0)  {};

                 \node (n4) at (-0.5,-1) {};
                \node (n5) at (0.5,-1) {};
                
                \node (n6) at (1.5,-1) {};
                \node (n7) at (2.5,-1) {};

                \node (n8) at (3.5,-1) {};
                \node (n9) at (4.5,-1) {};

                \draw (n1) -- (n4); 
                \draw (n1) -- (n5); 
                
                \draw (n2) -- (n6); 
                \draw (n2) -- (n7); 

                \draw (n3) -- (n8); 
                \draw (n3) -- (n9);
				
				\draw (n10) -- (n1) node [text=black, pos=0.4, left, fill=none] {};
                \draw (n10) -- (n2) node [text=black, pos=0.6, right,fill=none] {};
				\draw (n10) -- (n3) node [text=black, pos=0.4, right,fill=none] {};
			\end{tikzpicture}
\end{center}
\end{minipage}
\end{center}
Let us take $T$ to be the tree on the bottom right, and let us label the edges non extending to singleton vertices with arbitrary lengths $a$, $b$ and $c$. The possibilities for the ordering on the distances between the proper vertices in $T$ are shown in $\textup{Ineq}_T$ below. These are written in the simplest form possible in terms of $a$, $b$ and $c$. 
\begin{center}
\begin{minipage}{0.2\textwidth}
\begin{figure}[H]
			\begin{tikzpicture}
				[scale=0.5, auto=left,every node/.style={circle,fill=black!20,scale=0.6}]
                \node (n10) at (2,2) {};
				\node (n1) at (0,0) {};
				\node (n2) at (2,0)  {};
				\node (n3) at (4,0)  {};

                \node[label=below: {$s_1$}] (n4) at (-0.5,-1) {};
                \node[label=below: {$s_2$}] (n5) at (0.5,-1) {};
                
                \node[label=below: {$s_3$}] (n6) at (1.5,-1) {};
                \node[label=below: {$s_4$}] (n7) at (2.5,-1) {};

                \node[label=below: {$s_5$}] (n8) at (3.5,-1) {};
                \node[label=below: {$s_6$}] (n9) at (4.5,-1) {};

                \draw (n1) -- (n4); 
                \draw (n1) -- (n5); 
                
                \draw (n2) -- (n6); 
                \draw (n2) -- (n7); 

                \draw (n3) -- (n8); 
                \draw (n3) -- (n9);
				
				\draw (n10) -- (n1) node [text=black, pos=0.4, left, fill=none] {$a$};
                \draw (n10) -- (n2) node [text=black, pos=0.6, right,fill=none] {$b$};
				\draw (n10) -- (n3) node [text=black, pos=0.4, right,fill=none] {$c$};
			\end{tikzpicture}
   \caption*{$T$}
   \end{figure}
   \end{minipage}\begin{minipage}{0.7\textwidth}
\begin{align}
\textup{Ineq}_T=\{&\{a=b=c\},\{a>b=c, \ a>2b\},\{a>b=c, \ a=2b\}, \\
&\{a>b=c, \ a<2b\}, \{a=b>c\},\{a>b>c, \ a>b+c\}, \\
&\{a>b>c,\  a=b+c\},\{a>b>c, \ a<b+c\}\}.
\end{align}
   \end{minipage}
\end{center}
Note that, for example, $\{a>b>c, \ a=b+c\}$ is equivalent to $\{a>c>b, \ a=b+c\}$ since there is an isomorphism of the tree that takes the edge labelled with $b$ to the edge labelled with $c$, and we only include orderings up to equivalence under isomorphisms of the tree.

Let us take $(T,I)\in \mathcal{T}_2$, where $T$ is the tree above and $I=\{a>b>c, \ a=b+c\}\in \textup{Ineq}_T$. In this labelling of the edges of $T$, we have $\delta_1(T,I)=a+b$ since $a>b>c$ so $a+b$ is the greatest distance in $T$. In the same vein, $\delta_2(T,I)=a+c$, $\delta_3(T,I)=a=b+c$, $\delta_4(T,I)=b$ and $\delta_5(T,I)=c$. In the fixed labelling of the singletons shown above, $\delta_1(T,I)=a+b$ appears as the distance between the paths $C_{12}$ and $C_{34}$ only, whence $S_{1}(T,I)= \{\{\{s_1,s_2\},\{s_3,s_4\}\}\}$. Similarly, 
\begin{align}
S_{2}(T,I)&= \{\{\{s_1,s_2\},\{s_5,s_6\}\}\} ;\\
S_{3}(T,I)&=\{\{\{s_1,s_2\},\{s_3,s_5\}\},\{\{s_1,s_2\},\{s_3,s_6\}\}, \\
&\hspace{103pt}\{\{s_1,s_2\},\{s_4,s_5\}\},\{\{s_1,s_2\},\{s_4,s_6\}\},\{\{s_3,s_4\},\{s_5,s_6\}\}\}; \\
S_{4}(T,I)&=\{\{\{s_1,s_5\},\{s_3,s_4\}\},\{\{s_1,s_6\},\{s_3,s_4\}\},\{\{s_2,s_5\},\{s_3,s_4\}\},\{\{s_2,s_6\},\{s_3,s_4\}\}\}; \\
S_{5}(T,I)&=\{\{\{s_1,s_3\},\{s_5,s_6\}\},\{\{s_1,s_4\},\{s_5,s_6\}\},\{\{s_2,s_3\},\{s_5,s_6\}\},\{\{s_2,s_4\},\{s_5,s_6\}\}\}.
\end{align}
Now, we have 
\begin{equation}
R_{T,I,1}=\frac{(X_1-X_3)(X_1-X_4)(X_2-X_3)(X_2-X_4)}{(X_1-X_2)^2(X_3-X_4)^2},
\end{equation}
and so 
\begin{equation}
\textup{Inv}_{T,I,1}=\frac{(X_1-X_3)(X_1-X_4)(X_2-X_3)(X_2-X_4)}{(X_1-X_2)^2(X_3-X_4)^2}+\frac{(X_1-X_5)(X_1-X_6)(X_2-X_5)(X_2-X_6)}{(X_1-X_2)^2(X_5-X_6)^2}+\dots,
\end{equation}
where the sum runs over elements of $S_6$ modulo the elements that stabilise $R_{T,I,1}$; there will be $45$ summands in $\textup{Inv}_{T,I,1}$ for this example. Then for $n=2$ and $n=3$, in this labelling of the singletons the summands are
\begin{align}
R_{T,I,2}&=\frac{(X_1-X_3)^2(X_1-X_4)^2(X_2-X_3)^2(X_2-X_4)^2}{(X_1-X_2)^4(X_3-X_4)^4} \cdot \frac{(X_1-X_5)(X_1-X_6)(X_2-X_5)(X_2-X_6)}{(X_1-X_2)^2(X_5-X_6)^2} ;\\
R_{T,I,3}&=\frac{(X_1-X_3)^3(X_1-X_4)^3(X_2-X_3)^3(X_2-X_4)^3}{(X_1-X_2)^6(X_3-X_4)^6} \cdot \frac{(X_1-X_5)^2(X_1-X_6)^2(X_2-X_5)^2(X_2-X_6)^2}{(X_1-X_2)^4(X_5-X_6)^4} \\
&\cdot \prod_{\substack{\{\{s_i,s_j\},\{s_k,s_l\}\} \\ \in S_{3}(T,I)}} \frac{(X_i-X_k)(X_i-X_l)(X_j-X_k)(X_j-X_l)}{(X_i-X_j)^2(X_k-X_l)^2}. 
\end{align}
Again, in order to form $\textup{Inv}_{T,I,2}$ and $\textup{Inv}_{T,I,3}$ we take the sum over $S_6$ modulo stabilisers of $R_{T,I,2}$ and $R_{T,I,3}$ respectively.  
\end{example}

\begin{remark}
Writing down all the possible orderings on the distances between vertices in a tree appears to be a non-trivial problem. For instance, if one has a weighted tree $T$ and four vertices $v_1$, $v_2$, $v_3$ and $v_4$ in $T$, it is not possible that $\delta(v_1,v_2)>\delta(v_3,v_4)>\delta(v_1,v_3),\delta(v_1,v_4),\delta(v_2,v_3),\delta(v_2,v_4)$, since this contradicts the four-point condition of trees (see, for example, the introduction to \cite{buneman}). It should be viable to axiomatise the possible orderings using the four-point condition. 
\end{remark}

\section{Valuation of the absolute invariant associated to a stable model tree}\label{invariantsofcurves}

The main result that we prove in this section is Theorem \ref{valuationofyprime} below, which relates the valuation of the $n$-th absolute invariant associated to $(T_C,I_C)$ to the distances between vertices $\delta_1(C),\dots,\delta_n(C)$ in $T_C$.

\begin{definition}\label{treedefinitions}
Let $T_C$ be the stable model tree of a hyperelliptic curve $C$ with a fixed labelling $s_1,\dots,s_d$ of the singletons of $T_C$.
\begin{enumerate}[(i)]
\item Denote the $n$-th largest distance between non-singleton vertices in $T_C$ by $\delta_n(C)$.
\item Define
\begin{equation}
K_{n}(C)=\#\{\{\{s_i,s_j\},\{s_k,s_l\}\}: \textup{$i$, $j$, $k$ and $l$ are distinct and } \delta(C_{ij},C_{kl})=\delta_n(C)\},
\end{equation}
where we take all sets to be unordered, see Definition \ref{distancepaths} for the definition of $\delta(C_{ij},C_{kl})$.
\end{enumerate}
\end{definition}

\begin{theorem}\label{valuationofyprime}
Let $T_C$ be the stable model tree of a hyperelliptic curve $C$ over a local field $K$. For $n\geq 0$
\begin{align}
    \textup{ord}(\textup{Inv}_{T_C,I_c,n}(C))=-2\sum_{m=1}^{n} K_{m}(C)(n+1-m)\delta_{m}(C).
\end{align}
\end{theorem}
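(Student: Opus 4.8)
The plan is to evaluate the orbit sum $\textup{Inv}_{T_C,I_C,n}(C)=\sum_{\sigma}R_{T_C,I_C,n}^{\sigma}(f)$ term by term, isolate the summand of smallest valuation, and check it survives in the total. The whole argument rests on a formula for the valuation of a single factor $(ij,kl)$ evaluated on the roots. Writing $\mathfrak{s}_{ab}$ for the smallest cluster containing $x_a$ and $x_b$, one has $\textup{ord}(x_a-x_b)=d_{\mathfrak{s}_{ab}}$, and the six pairwise values $\{\textup{ord}(x_a-x_b)\}$ satisfy the non-archimedean (ultrametric) four-point relation. First I would use this, together with Definition \ref{delta} and Lemma \ref{distanceintc}, to show that for four distinct roots whose paths $C_{ij}$ and $C_{kl}$ in $T_C$ are disjoint one has $\textup{ord}((ij,kl))=-2\,\delta(C_{ij},C_{kl})\le 0$, whereas if $C_{ij}$ and $C_{kl}$ meet then $\textup{ord}((ij,kl))\ge 0$. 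Concretely, if $t$ is the shallowest of the relevant least common ancestors the computation reduces to $4d_t-2d_{\mathfrak{s}_{ij}}-2d_{\mathfrak{s}_{kl}}=-2\delta(C_{ij},C_{kl})$ in the disjoint case.

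Next I would compute the tautological summand. In the labelling in which the singletons are the roots and $I_C$ is the genuine ordering, every factor of $R_{T_C,I_C,n}(f)$ indexed by $S_m(T_C,I_C)$ is a disjoint-path pair of paths at distance $\delta_m(C)$, so contributes valuation $-2\delta_m(C)$; there are $K_m(C)$ such factors and each carries exponent $n-m+1$. Summing over $m$ gives
\begin{equation}
\textup{ord}\big(R_{T_C,I_C,n}(f)\big)=\sum_{m=1}^{n}(n-m+1)\,K_m(C)\,(-2\delta_m(C))=-2\sum_{m=1}^{n}K_m(C)(n+1-m)\delta_m(C),
\end{equation}
which is exactly the claimed right-hand side (the case $n=0$ being the empty product with valuation $0$).

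For the lower bound I would reframe the orbit as a rearrangement problem. Let $\mathcal{P}$ be the set of pairs of pairs of singletons, let $w(P)=n+1-m$ if $P\in S_m(T_C,I_C)$ with $m\le n$ and $w(P)=0$ otherwise, and let $v(P)$ be the valuation of the cross-ratio of the roots indexed by $P$ under the true labelling. Since $S_d$ acts on $\mathcal{P}$ permuting the values without changing the multiset, $\textup{ord}(R^{\sigma}(f))=\sum_{P}w(\sigma^{-1}P)\,v(P)$, while the identity gives $\sum_{P}w(P)v(P)$. By the valuation formula of the first step, whenever $w(P)>w(P')$ one has $v(P)\le v(P')$ (larger exponent corresponds to smaller index $m$, hence to larger distance $\delta_m$ and more negative $v$), so $w$ and $v$ are oppositely sorted in the identity labelling. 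The rearrangement inequality then shows the identity minimises $\sum_P w\,v$ over all of $S_{\mathcal{P}}$, a fortiori over the $S_d$-orbit, giving $\textup{ord}(\textup{Inv}_{T_C,I_C,n}(C))\ge \textup{ord}(R_{T_C,I_C,n}(f))$.

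The remaining, and hardest, point is that this inequality is an equality, i.e. that the summands of minimal valuation do not cancel. The equality case of the rearrangement inequality identifies the minimising $\sigma$ as exactly those preserving the partition of $\mathcal{P}$ into distance-classes, so I would analyse the reductions of these leading terms: each minimal $R^{\sigma}(f)$ has leading coefficient a product, over its factors, of expressions $(\bar\alpha-\bar\gamma)^4/(\bar\beta^2\bar\gamma'^2)$ in the reductions of the pertinent root differences, and I would argue these cannot sum to zero in the residue field (using that the residue characteristic is odd and, if needed, a genericity argument reducing to a single configuration realising the tree $T_C$). I expect this non-cancellation step to be the main obstacle, since the lower bound and the identity-term computation are essentially forced by the ultrametric structure, whereas controlling the residues of the competing minimal terms requires a careful bookkeeping of leading coefficients.
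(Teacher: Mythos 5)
Your first three steps coincide with the paper's proof: the factor-valuation formula is Proposition \ref{distancevalues} (packaged as Corollary \ref{values}), the valuation of the tautological summand is computed the same way via Lemma \ref{weightsinleafgraph}, and the lower bound is the paper's appeal to the rearrangement inequality (Fact \ref{weights}). The genuine gap is your final step. You treat ties at the minimal valuation as a real phenomenon to be defeated by residue computations, and the tools you propose cannot do this: the theorem concerns an arbitrary fixed curve $C$, so a ``genericity argument'' can at best prove the formula off a proper closed locus of configurations realising $T_C$, not for all of them; and odd residue characteristic does nothing to stop two leading terms with residues $u$ and $-u$ from cancelling. Indeed, if several cosets genuinely attained the minimum, cancellation could occur for particular curves and the stated equality would simply be false for them — so no bookkeeping of leading coefficients can close the argument; one must rule out ties.

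The point you are missing is that ties are impossible, and you already wrote down the key observation. You note that the equality case of the rearrangement inequality characterises the minimising permutations as exactly those preserving the partition of pairs-of-pairs into distance classes. But by Proposition \ref{distancevalues} the negative-valuation level sets are exactly the classes $S_m(T_C,I_C)$ for $m=1,\dots,n$, and a permutation $\sigma$ fixes the rational function $R_{T_C,I_C,n}$ if and only if it preserves each $S_m(T_C,I_C)$ (each factor determines its pair-of-pairs, e.g.\ from its denominator $(X_i-X_j)^2(X_k-X_l)^2$). So the set of minimisers is precisely $\textup{Stab}_{S_d}(R_{T_C,I_C,n})$, and by Definition \ref{invariantsdefinition} the sum defining $\textup{Inv}_{T_C,I_C,n}$ runs over $S_d/\textup{Stab}_{S_d}(R_{T_C,I_C,n})$. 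Hence exactly one coset, the identity, attains the minimal valuation; every other coset representative changes the exponent function, which is constant on each valuation class, so the strict form of Fact \ref{weights} (which is how the paper states it) gives it strictly larger valuation. The ultrametric property of $\textup{ord}$ then yields the claimed equality immediately, with no analysis of residues at all; this is exactly how the paper's proof of Theorem \ref{valuationofyprime} concludes.
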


\begin{example}
For a stable model tree given by 

\begin{center}
		\begin{figure}[H]
			\begin{tikzpicture}
				[scale=0.5, auto=left,every node/.style={circle,fill=black!20,scale=0.6}]
                \node[label=above: {$v$}] (n10) at (2,2) {};
				\node[label=left: {$v_1$}] (n1) at (0,0) {};
				\node[label=right: {$v_2$}] (n2) at (2,0)  {};
				\node[label=right: {$v_3$}] (n3) at (4,0)  {};

                \node[label=below: {$s_1$}] (n4) at (-0.5,-1) {};
                \node[label=below: {$s_2$}] (n5) at (0.5,-1) {};
                
                \node[label=below: {$s_3$}] (n6) at (1.5,-1) {};
                \node[label=below: {$s_4$}] (n7) at (2.5,-1) {};

                \node[label=below: {$s_5$}] (n8) at (3.5,-1) {};
                \node[label=below: {$s_6$}] (n9) at (4.5,-1) {};

                \draw (n1) -- (n4); 
                \draw (n1) -- (n5); 
                
                \draw (n2) -- (n6); 
                \draw (n2) -- (n7); 

                \draw (n3) -- (n8); 
                \draw (n3) -- (n9);
				
				\draw (n10) -- (n1) node [text=black, pos=0.4, left, fill=none] {$3$};
                \draw (n10) -- (n2) node [text=black, pos=0.6, right,fill=none] {$5$};
				\draw (n10) -- (n3) node [text=black, pos=0.4, right,fill=none] {$3$};
			\end{tikzpicture}
 \caption*{$T_{C}$}
		\end{figure}
\end{center}
we have $\delta_1(C)=8$, $\delta_2(C)=6$, $\delta_3(C)=5$ and $\delta_4(C)=3$. Then
\begin{equation}
K_1(C)=\#\{\{\{s_1,s_2\},\{s_3,s_4\}\},\{\{s_3,s_4\},\{s_5,s_6\}\} \}=2
\end{equation}
since there is a path of length $\delta_1(C)=8$ between $C_{12}$ and $C_{34}$ and between $C_{34}$ and $C_{56}$. Similarly, 
\begin{align}
K_2(C)&=\#\{\{\{s_1,s_2\},\{s_5,s_6\}\}\}=1; \\
K_3(C)&=\#\{\{\{s_3,s_4\},\{s_1,s_5\}\},\{\{s_3,s_4\},\{s_1,s_6\}\},\{\{s_3,s_4\},\{s_2,s_5\}\},\{\{s_3,s_4\},\{s_2,s_6\}\}\}=4; \\
K_{4}(C)&=\#\{\{\{s_1,s_2\},\{s_3,s_5\}\},\{\{s_1,s_2\},\{s_3,s_6\}\}, \{\{s_1,s_2\},\{s_4,s_5\}\}, \{\{s_1,s_2\},\{s_4,s_6\}\}, \\
&\hspace{27pt}\{\{s_5,s_6\},\{s_1,s_3\}\},\{\{s_5,s_6\},\{s_1,s_4\}\}, \{\{s_5,s_6\},\{s_2,s_3\}\}, \{\{s_5,s_6\},\{s_2,s_4\}\} \}=8.
\end{align}
\end{example}

In order to prove Theorem \ref{valuationofyprime}, we will need the following results, the first of which shows that the valuation of the building blocks $\textup{ord}((ij,kl))$ of the absolute invariants `pick out' the distances between vertices in $T_C$.

\begin{proposition}\label{distancevalues}
Let $C$ be a hyperelliptic curve of genus $g$ over a local field $K$ of odd residue characteristic and fix a model $C:y^2=f(x)$, where $f(x)$ has degree $2g+2$. Fix a labelling of the singletons $s_1,\dots,s_d$ of $T_C$ corresponding to the roots $x_1,\dots,x_d$ of $f(x)$. Denote by $C_{ij}$ the unique path between $s_i$ and $s_j$ and by $C_{kl}$ the unique path between $s_k$ and $s_l$ in $T_C$. Then
\begin{equation}
\textup{ord}\left(\frac{(x_i-x_k)(x_i-x_l)(x_j-x_k)(x_j-x_l)}{(x_i-x_j)^2(x_k-x_l)^2}\right)=\begin{cases} 
-2\delta(v,w) & \textup{ if } C_{ij}\textup{ and } C_{kl}\textup{ have no common edges}; \\
> 0 & \textup{ otherwise}, \\
\end{cases}
\end{equation}
where the unique path between $C_{ij}$ and $C_{kl}$ and intersecting each path at one vertex starts at vertex $v$ and ends at $w$. 
\end{proposition}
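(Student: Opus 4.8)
\emph{Proof plan.} The plan is to reduce the statement to a combinatorial computation on the weighted tree $T_C$, using that the $\pi$-adic valuation $\textup{ord}(x_a-x_b)$ records the depth of the least common ancestor of the singletons $s_a,s_b$. First I would record the elementary fact that for any two roots,
\[
\textup{ord}(x_a-x_b)=d_{\mathfrak{s}_{ab}},
\]
where $\mathfrak{s}_{ab}$ is the smallest cluster containing both $x_a,x_b$ (equivalently, the topmost proper vertex on the path $C_{ab}$). Indeed $\mathfrak{s}_{ab}$ is cut out by a disc of depth $d_{\mathfrak{s}_{ab}}$, so $\textup{ord}(x_a-x_b)\ge d_{\mathfrak{s}_{ab}}$; were this strict, the disc $\{x:\textup{ord}(x-x_a)\ge\textup{ord}(x_a-x_b)\}$ would (being nested inside the defining disc, as they share $x_a$) cut out a strictly smaller cluster still containing both roots, contradicting minimality. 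Writing $h(a,b)=\textup{ord}(x_a-x_b)=d_{\mathfrak{s}_{ab}}$, the quantity whose valuation we must compute is
\[
V:=h(i,k)+h(i,l)+h(j,k)+h(j,l)-2h(i,j)-2h(k,l).
\]
Since the six coefficients sum to $0$, $V$ is unchanged if every depth is replaced by its distance to a fixed reference; combining Definition \ref{delta}(ii) with Lemma \ref{distanceintc} lets me write $h(a,b)=d_{\mathcal{R}}+\delta(v_{\mathfrak{s}_{ab}},v_{\mathcal{R}})$ and so express every difference of depths along an ancestor chain as a genuine length in $T_C$. Only depths of proper clusters occur, so the unassigned lengths of the singleton edges never enter.

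Next I would analyse the minimal subtree of $T_C$ spanning $s_i,s_j,s_k,s_l$, which carries one of the quartet topologies $ij\mid kl$, $ik\mid jl$, $il\mid jk$, possibly degenerating to a star. Because $i,j,k,l$ are distinct, each pendant (singleton) edge lies on a single one of the four paths, so $C_{ij}$ and $C_{kl}$ share an edge if and only if they share a \emph{proper} edge, i.e.\ if and only if both traverse the central path of the quartet. This occurs exactly in topologies $ik\mid jl$ and $il\mid jk$ with nondegenerate centre; in topology $ij\mid kl$ (and in the star case) the two paths use disjoint proper edges. Hence ``$C_{ij}$ and $C_{kl}$ have no common edges'' is precisely the topology $ij\mid kl$, and the bridge of Definition \ref{distancepaths} from $v\in C_{ij}$ to $w\in C_{kl}$ is the central path, with $v,w$ its two apices (and $v=w$, $\delta(v,w)=0$ in the star case).

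Finally I would compute $V$ uniformly through the ultrametric four-point property of $h$. Setting $\alpha=h(i,j)+h(k,l)$, $\beta=h(i,k)+h(j,l)$, $\gamma=h(i,l)+h(j,k)$, two of $\alpha,\beta,\gamma$ are always equal and are $\le$ the third, $V=\beta+\gamma-2\alpha$, and the topologies $ij\mid kl$, $ik\mid jl$, $il\mid jk$ correspond to $\alpha,\beta,\gamma$ respectively being largest. In topology $ij\mid kl$ the four cross least common ancestors all coincide with the top $r_0$ of the central path, so $\beta=\gamma$ and $V=-2\big[(d_{v}-d_{r_0})+(d_{w}-d_{r_0})\big]=-2\,\delta(v,w)$ after converting depth differences to lengths via Lemma \ref{distanceintc}; I would check this survives the caterpillar and star specialisations. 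In topology $ik\mid jl$ (resp.\ $il\mid jk$) one finds $\alpha=\gamma$ (resp.\ $\alpha=\beta$), whence $V=\beta-\alpha$ (resp.\ $\gamma-\alpha$) equals the length of the shared central path, a sum of positive edge lengths and so $>0$.

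The main obstacle I anticipate is bookkeeping rather than any single hard idea: I must match ``no common edges'' with the topology $ij\mid kl$ exactly, including the degenerate star where $\delta(v,w)=0$ sits on the boundary of the two cases, and I must verify that the identification $\alpha-\beta=\delta(v,w)$ is independent of where the top cluster $\mathcal{R}$ sits relative to the quartet, since the ancestors $v_{\mathfrak{s}_{ab}}$ are rooted objects whereas the quartet picture is not. Handling these rooting and degeneracy configurations carefully is the delicate part; everything else is a direct valuation computation.
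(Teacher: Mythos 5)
Your proposal is correct, and it reaches the same two formulas as the paper ($-2\delta(v,w)$ in the edge-disjoint case, $+\delta(v,w)>0$ otherwise), but it is organised quite differently. The paper's proof starts from the same basic identity you do — $\textup{ord}(x_a-x_b)$ equals the depth of the smallest cluster containing both roots, converted into lengths in $T_C$ via Lemma \ref{distanceintc} — and then grinds through an explicit case analysis: for each of the two intersection patterns of $C_{ij}$ and $C_{kl}$ it draws the spanning subtree and places $v_{\mathcal{R}}$ in each of three possible positions, computing the six-term valuation sum separately in every configuration, with the $d_{\mathcal{R}}$ terms cancelling by hand each time. You instead make two structural observations that compress this: first, since the six coefficients in $V$ sum to zero, the reference depth $d_{\mathcal{R}}$ drops out once and for all, so rooting only enters through the LCA pattern; second, the ultrametric four-point property gives $V=\beta+\gamma-2\alpha$ with the two smallest of $\alpha,\beta,\gamma$ equal, so the sign and size of $V$ are read off uniformly from which quartet topology ($ij\mid kl$ versus $ik\mid jl$ or $il\mid jk$) occurs, and your matching of "no common edges" with the topology $ij\mid kl$ (pendant edges can never be shared since $i,j,k,l$ are distinct) is exactly right, including the degenerate star with $\delta(v,w)=0$. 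The residual verification you flag — that $\alpha-\beta=\delta(v,w)$ independently of where $\mathcal{R}$ attaches to the quartet, including the case where $v_{\mathcal{R}}$ has been deleted from $T_C$ — is precisely the case analysis the paper does explicitly, but in your framework it reduces to checking that the cross-LCAs all coincide (root attaches to the central path) or that the asymmetry cancels in $\beta=\gamma$ (root attaches on a pendant side), which does go through in every configuration. Net effect: the paper's argument is more self-contained and visual; yours is shorter, explains why the answer depends only on the unrooted quartet shape, and isolates the rooting issue into a single identity rather than six separate computations. One small repair: in your proof that $\textup{ord}(x_a-x_b)=d_{\mathfrak{s}_{ab}}$, the nested disc need not cut out a \emph{strictly} smaller cluster; if it cuts out the same cluster, you should instead conclude that all pairwise valuations within $\mathfrak{s}_{ab}$ exceed $d_{\mathfrak{s}_{ab}}$, contradicting the definition of depth as the minimum pairwise valuation.
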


\begin{proof}
First, suppose $C_{ij}$ and $C_{kl}$ have no common edges. In this case, the stable model tree looks like the tree below, where the dashed lines represent the fact there could be any tree structure in between the vertices, and where it's possible that the vertices connected by a dotted edge are equal. We have indicated the location of the singletons $s_i$, $s_j$, $s_k$ and $s_l$ inside the vertex to which there is an edge going from said singleton. We will check that the correct valuation is recovered for different possibilities of where the vertex $v_\mathcal{R}$ could be in the tree, before it is possibly removed to form the stable model tree (see Definition \ref{stablemodeltreedef}). The three possibilities for the location of $v_{\mathcal{R}}$ have been indicated by black vertices in the tree below, where it is possible that $v_{\mathfrak{s}}=v_{\mathcal{R}}$. 
\begin{center}
			\begin{tikzpicture}
				[scale=1.3, auto=left,every node/.style={circle,fill=black!20,scale=0.8}]
               
			\node[label=below: {$v$}] (n1) at (0,0)  {};
                \node[label=below: {$w$}] (n2) at (3,0)  {};
                
                \node[color=black,label=above: {$v_{\mathfrak{s}}$}] (n12) at (1.5,0)  {};
                \node[color=black, label=below: {$v_{\mathcal{R}}$}] (n13) at (1.5,-1)  {};
                \draw[dashed] (n12) -- (n13);

                \node[color=black,label=left: {$v_{\mathfrak{s}}$}] (n14) at (-0.5,0.5)  {};
                \node[color=black,label=right: {$v_{\mathcal{R}}$}] (n15) at (0,1)  {};
                \draw[dashed] (n14) -- (n15);
                     
                \node[label=left: {$v'$}] (n3) at (-1,1) {$s_i$};
                
                \node (n4) at (-1,-1) {$s_j$};

                \node (n5) at (4,1) {$s_k$};
                
                \node (n6) at (4,-1) {$s_l$};

                \node[color=black,label=left: {$v_{\mathfrak{s}}$}] (n16) at (-1.5,1.5)  {};
                \node[color=black,label=right: {$v_{\mathcal{R}}$}] (n17) at (-1,2)  {};
                \draw[dashed] (n16) -- (n17);

                \node[label=left: {$v''$}] (n7) at (-2,2) {};
                \node (n8) at (-2,-2) {};

                \node (n9) at (5,2) {};
                \node (n10) at (5,-2) {};

				\draw[dashed]  (n1) -- (n2);

                \draw[dashed] (n1) -- (n3); 
                \draw[dashed] (n1) -- (n4);

                \draw[dashed] (n2) -- (n5); 
                \draw[dashed] (n2) -- (n6);

                \draw[dashed] (n3) -- (n7); 
                \draw[dashed] (n4) -- (n8); 

                \draw[dashed] (n5) -- (n9); 
                \draw[dashed] (n6) -- (n10); 
                
			\end{tikzpicture}
\end{center}
If $v_{\mathcal{R}}$ is between $v$ and $w$ then by the definition of the stable model tree and Lemma \ref{distanceintc}
\begin{align}
\textup{ord}\left(\frac{(x_i-x_k)(x_i-x_l)(x_j-x_k)(x_j-x_l)}{(x_i-x_j)^2(x_k-x_l)^2}\right)&= 4(d_{\mathcal{R}}+\delta(v_{\mathcal{R}},v_{\mathfrak{s}}))-2(d_{\mathcal{R}}+\delta(v_{\mathcal{R}},v))-2(d_{\mathcal{R}}+\delta(v_{\mathcal{R}},w)) \\
&=-2\delta(v,w). 
\end{align}
If $v_{\mathcal{R}}$ is between $v'$ and $v$, possibly with $v=v_{\mathcal{R}}$, then  
\begin{align}
\textup{ord}\left(\frac{(x_i-x_k)(x_i-x_l)(x_j-x_k)(x_j-x_l)}{(x_i-x_j)^2(x_k-x_l)^2}\right)=2(d_{\mathcal{R}}&+\delta(v_{\mathcal{R}},v_{\mathfrak{s}}))+2(d_{\mathcal{R}}+\delta(v_{\mathcal{R}},v))-2(d_{\mathcal{R}}+\delta(v_{\mathcal{R}},v_{\mathfrak{s}}))\\
&-2(d_{\mathcal{R}}+\delta(v_{\mathcal{R}},v)+\delta(v,w))=-2\delta(v,w). 
\end{align}
If $v_{\mathcal{R}}$ is between $v''$ and $v'$, then 
\begin{align}
\textup{ord}\left(\frac{(x_i-x_k)(x_i-x_l)(x_j-x_k)(x_j-x_l)}{(x_i-x_j)^2(x_k-x_l)^2}\right)&=2(d_{\mathcal{R}} +\delta(v_{\mathcal{R}},v'))+ 2(d_{\mathcal{R}} +\delta(v_{\mathcal{R}},v))-2(d_{\mathcal{R}}+\delta(v_{\mathcal{R}},v')) \\
&-2(d_{\mathcal{R}}+\delta(v_{\mathcal{R}},v)+\delta(v,w))=-2\delta(v,w). 
\end{align}

Now suppose $C_{ij}$ and $C_{kl}$ have common edges. In this case, the stable model tree looks like the following.
\begin{center}
			\begin{tikzpicture}
				[scale=1.3, auto=left,every node/.style={circle,fill=black!20,scale=0.8}]
               
			\node[label=below: {$v$}] (n1) at (0,0)  {};
                \node[label=below: {$w$}] (n2) at (3,0)  {};
                
                \node[color=black,label=above: {$v_{\mathfrak{s}}$}] (n12) at (1.5,0)  {};
                \node[color=black, label=below: {$v_{\mathcal{R}}$}] (n13) at (1.5,-1)  {};
                \draw[dashed] (n12) -- (n13);

                \node[color=black,label=left: {$v_{\mathfrak{s}}$}] (n14) at (-0.5,0.5)  {};
                \node[color=black,label=right: {$v_{\mathcal{R}}$}] (n15) at (0,1)  {};
                \draw[dashed] (n14) -- (n15);
                     
                \node[label=left: {$v'$}] (n3) at (-1,1) {$s_i$};
                
                \node (n4) at (-1,-1) {$s_k$};

                \node (n5) at (4,1) {$s_j$};
                
                \node (n6) at (4,-1) {$s_l$};

                \node[color=black,label=left: {$v_{\mathfrak{s}}$}] (n16) at (-1.5,1.5)  {};
                \node[color=black,label=right: {$v_{\mathcal{R}}$}] (n17) at (-1,2)  {};
                \draw[dashed] (n16) -- (n17);

                \node[label=left: {$v''$}] (n7) at (-2,2) {};
                \node (n8) at (-2,-2) {};

                \node (n9) at (5,2) {};
                \node (n10) at (5,-2) {};

				\draw[dashed]  (n1) -- (n2);

                \draw[dashed] (n1) -- (n3); 
                \draw[dashed] (n1) -- (n4);

                \draw[dashed] (n2) -- (n5); 
                \draw[dashed] (n2) -- (n6);

                \draw[dashed] (n3) -- (n7); 
                \draw[dashed] (n4) -- (n8); 

                \draw[dashed] (n5) -- (n9); 
                \draw[dashed] (n6) -- (n10); 
                
			\end{tikzpicture}
\end{center}
We have labelled the start and end vertices on the path that is the intersection of $C_{ij}$ and $C_{kl}$ as $v$ and $w$. As in the previous case it is possible that vertices connected by dotted edges are equal, except in this case $v\neq w$ since $C_{ij}$ and $C_{kl}$ are assumed to have common edges. If $v_{\mathcal{R}}$ is between $v$ and $w$, 
\begin{align}
\textup{ord}\left(\frac{(x_i-x_k)(x_i-x_l)(x_j-x_k)(x_j-x_l)}{(x_i-x_j)^2(x_k-x_l)^2}\right)=d_{\mathcal{R}}&+\delta(v_{\mathcal{R}},v)+2(d_{\mathcal{R}}+\delta(v,v_{\mathcal{R}})) + d_{\mathcal{R}}+\delta(v_{\mathcal{R}},w) \\
&-4(d_{\mathcal{R}}+\delta(v_{\mathcal{R}},v_{\mathfrak{s}}))=\delta(v,w)>0. 
\end{align}
If $v_{\mathcal{R}}$ is between $v'$ and $v$ (possibly $v=v_{\mathcal{R}}$) then 
\begin{align}
\textup{ord}\left(\frac{(x_i-x_k)(x_i-x_l)(x_j-x_k)(x_j-x_l)}{(x_i-x_j)^2(x_k-x_l)^2}\right)=2(d_{\mathcal{R}}&+\delta(v_{\mathcal{R}},v_{\mathfrak{s}}))+d_{\mathcal{R}}+\delta(v_{\mathcal{R}},v)+d_{\mathcal{R}}+\delta(v_{\mathcal{R}},w) \\
&-2(d_{\mathcal{R}}+\delta(v_{\mathcal{R}},v_{\mathfrak{s}}))-2(d_{\mathcal{R}}+\delta(v_{\mathcal{R}},v))=\delta(v,w)>0. 
\end{align}
If $v_{\mathcal{R}}$ is between $v'$ and $v''$ then 
\begin{align}
\textup{ord}\left(\frac{(x_i-x_k)(x_i-x_l)(x_j-x_k)(x_j-x_l)}{(x_i-x_j)^2(x_k-x_l)^2}\right)=2(d_{\mathcal{R}}&+\delta(v_{\mathcal{R}},v')+d_{\mathcal{R}}+\delta(v_{\mathcal{R}},v)+d_{\mathcal{R}}+\delta(v_{\mathcal{R}},w) \\
&-2(d_{\mathcal{R}}+\delta(v_{\mathcal{R}},v')-2(d_{\mathcal{R}}+\delta(v_{\mathcal{R}},v))=\delta(v,w)>0. 
\end{align}
This concludes the proof.
\end{proof}

\begin{corollary}\label{values}
Let $C$ be a hyperelliptic curve of genus $g$ over a local field $K$ of odd residue characteristic and fix a model $C:y^2=f(x)$, where $f(x)$ has degree $2g+2$. Fix a labelling $x_1,\dots,x_d$ of the roots of $f(x)$ and denote by $s_1,\dots,s_d$ the singletons of $T_C$ corresponding to the roots. If $\{\{s_i,s_j\},\{s_k,s_l\}\}\in S_{m}(T_C,I_C)$ then 
\begin{equation}
\textup{ord}\left(\frac{(x_i-x_k)(x_i-x_l)(x_j-x_k)(x_j-x_l)}{(x_i-x_j)^2(x_k-x_l)^2}\right)=-2\delta_m(C).
\end{equation}
\end{corollary}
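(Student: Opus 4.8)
The plan is to read the result straight off Proposition \ref{distancevalues} once the hypothesis has been unwound into the geometric statement that $C_{ij}$ and $C_{kl}$ are disjoint paths whose separation is exactly $\delta_m(C)$. First I would translate the membership hypothesis. Applying Definition \ref{stablemodeltreedefs} to the pair $(T_C,I_C)$, and noting that the $m$-th largest inter-vertex distance $\delta_m(T_C,I_C)$ there coincides with the quantity $\delta_m(C)$ of Definition \ref{treedefinitions}, the assumption $\{\{s_i,s_j\},\{s_k,s_l\}\}\in S_m(T_C,I_C)$ says precisely that the indices $i,j,k,l$ are distinct and that $\delta(C_{ij},C_{kl})=\delta_m(C)$.

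Next I would check that this rules out the second (``common edges'') branch of Proposition \ref{distancevalues}. The quantity $\delta(C_{ij},C_{kl})$ is only defined, via Definition \ref{distancepaths}, when the two paths $C_{ij}$ and $C_{kl}$ do not intersect; and its value $\delta_m(C)$ is a genuine positive distance between distinct proper vertices of $T_C$. Were the two paths to share a vertex, the shortest path joining them would be trivial and $\delta(C_{ij},C_{kl})$ would vanish, contradicting positivity. Hence $C_{ij}$ and $C_{kl}$ are disjoint, and in particular have no common edges, so we land in the first case of Proposition \ref{distancevalues}.

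Finally, applying that proposition to the chosen model $C:y^2=f(x)$ with its labelling of the roots gives
\begin{equation}
\textup{ord}\left(\frac{(x_i-x_k)(x_i-x_l)(x_j-x_k)(x_j-x_l)}{(x_i-x_j)^2(x_k-x_l)^2}\right)=-2\delta(v,w),
\end{equation}
where $v\in C_{ij}$ and $w\in C_{kl}$ are the endpoints of the shortest path joining the two paths. By Definition \ref{distancepaths} we have $\delta(v,w)=\delta(C_{ij},C_{kl})$, and by the first step this equals $\delta_m(C)$, which yields the claimed value $-2\delta_m(C)$. I do not expect any real obstacle: the corollary is essentially a bookkeeping consequence of Proposition \ref{distancevalues}. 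The only point meriting care is the second step, confirming that belonging to $S_m$ genuinely excludes the ``common edges'' alternative, which is why I would isolate the disjointness of $C_{ij}$ and $C_{kl}$ explicitly rather than leave it implicit.
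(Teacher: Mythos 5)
Your proposal is correct and follows essentially the same route as the paper: unwind the definition of $S_m(T_C,I_C)$ to see that $\delta(C_{ij},C_{kl})=\delta_m(C)$, and then apply Proposition \ref{distancevalues} (the paper's two-line proof does exactly this, citing the proposition). Your extra step verifying that membership in $S_m$ forces the paths to be disjoint — so that the first case of the proposition applies with $v\neq w$ — is a point the paper leaves implicit, and making it explicit is a harmless refinement rather than a different argument.
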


\begin{proof}
By the definition of $S_{m}(T_C,I_C)$ (Definition \ref{stablemodeltreedefs}), the unique path between $C_{ij}$ and $C_{kl}$ goes between vertices $v$ and $w$ in $T_C$ for which $\delta(v,w)=\delta_m(C)$. So the result follows by Proposition \ref{distancepaths}.
\end{proof}

\begin{lemma}\label{weightsinleafgraph}
Let $(T,I)\in\mathbf{T}_g$ be a stable model tree with an ordering on the distances between the vertices and let $C:y^2=f(x)$ be a Weierstrass model for a hyperelliptic curve $C$ over a local field $K$. Fix a labelling of the singletons $s_1,\dots,s_d$ of $T$ corresponding to the roots $x_1,\dots,x_d$ of $f(x)$. Then for $n\geq 0$
\begin{equation}
    \textup{ord}(R_{T,I,n}^{\sigma})=\sum_{m=1}^{n}\sum_{\substack{ \big\{\{s_{\sigma^{-1}(i)},s_{\sigma^{-1}(j)}\},\\ \{s_{\sigma^{-1}(k)},s_{\sigma^{-1}(l)}\}\big\}\in S_{m}(T,I)}}(n-m+1)\cdot \textup{ord}\left(\frac{(x_i-x_k)(x_i-x_l)(x_j-x_k)(x_j-x_l)}{(x_i-x_j)^2(x_k-x_l)^2}\right).
\end{equation}
\end{lemma}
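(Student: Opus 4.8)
The plan is to prove the identity directly from the product definition of $R_{T,I,n}$ in Definition \ref{invariantsdefinition} by applying the valuation $\textup{ord}$ factor-by-factor and then reindexing the resulting sum via $\sigma$. First I would recall that
\[
R_{T,I,n}=\prod_{m=1}^{n}\prod_{\substack{\{\{s_i,s_j\},\{s_k,s_l\}\} \\ \in S_m(T,I)}} (ij,kl)^{n-m+1},
\]
and that $\sigma\in S_d$ acts by substituting $X_i\mapsto X_{\sigma(i)}$ in every variable. Since the index sets $S_m(T,I)$ and the exponents $n-m+1$ do not involve the variables, applying $\sigma$ commutes with the product, so $R_{T,I,n}^{\sigma}$ is the same product with each factor $(ij,kl)$ replaced by $(ij,kl)^{\sigma}$. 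Because $f(x)$ has distinct roots, every factor is nonzero when evaluated at $x_1,\dots,x_d$, so $\textup{ord}$ behaves as a valuation: it is additive over the product and pulls each integer exponent out as a multiplier. This immediately gives
\[
\textup{ord}(R_{T,I,n}^\sigma)=\sum_{m=1}^{n}\sum_{\substack{\{\{s_i,s_j\},\{s_k,s_l\}\} \\ \in S_m(T,I)}}(n-m+1)\,\textup{ord}\big((ij,kl)^\sigma\big),
\]
where $(ij,kl)^\sigma$ evaluated at the roots is the cross-ratio expression with each $x_i$ replaced by $x_{\sigma(i)}$.

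The second and only substantive step is the reindexing. For a term indexed by $\{\{s_i,s_j\},\{s_k,s_l\}\}\in S_m(T,I)$ I would set $i'=\sigma(i)$, $j'=\sigma(j)$, $k'=\sigma(k)$, $l'=\sigma(l)$, so that
\[
\textup{ord}\big((ij,kl)^\sigma\big)=\textup{ord}\left(\frac{(x_{i'}-x_{k'})(x_{i'}-x_{l'})(x_{j'}-x_{k'})(x_{j'}-x_{l'})}{(x_{i'}-x_{j'})^2(x_{k'}-x_{l'})^2}\right).
\]
Under this substitution the membership condition $\{\{s_i,s_j\},\{s_k,s_l\}\}\in S_m(T,I)$ becomes $\{\{s_{\sigma^{-1}(i')},s_{\sigma^{-1}(j')}\},\{s_{\sigma^{-1}(k')},s_{\sigma^{-1}(l')}\}\}\in S_m(T,I)$. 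Since $\sigma$ is a permutation of $\{1,\dots,d\}$ it induces a bijection on the unordered index sets, so summing over the original indices equals summing over all $\{\{s_{i'},s_{j'}\},\{s_{k'},s_{l'}\}\}$ satisfying the transformed condition. Renaming the primed indices back to $i,j,k,l$ recovers exactly the right-hand side of the lemma.

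The hard part here is not depth but bookkeeping: the precise location of $\sigma^{-1}$ in the indexing condition. It appears precisely because the $\sigma$-action pushes the variables \emph{forward} ($X_i\mapsto X_{\sigma(i)}$) while the reindexing of the summation is done through the \emph{images} $i'=\sigma(i)$, so the constraint on the original set gets pulled back by $\sigma^{-1}$. I would also verify that the unordered pairing conventions of Definition \ref{stablemodeltreedefs} are respected by $\sigma$, which they are since $\sigma$ permutes the singletons $s_1,\dots,s_d$ and thereby carries pairs-of-pairs to pairs-of-pairs bijectively; this guarantees that the induced map on index sets is genuinely a bijection and the reindexing is valid.
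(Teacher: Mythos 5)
Your proposal is correct and matches the paper's approach: the paper's entire proof is that the identity ``follows immediately from the definition of $R_{T,I,n}^{\sigma}$,'' and your argument simply makes explicit the two ingredients behind that immediacy (additivity of $\textup{ord}$ over the nonzero factors, since $f$ has distinct roots, and the bijective reindexing of the pairs-of-pairs under $\sigma$, which is where the $\sigma^{-1}$ in the index condition comes from). No gap; your write-up is a careful expansion of what the paper treats as definitional.
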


\begin{proof}
This follows immediately from the definition of $R_{T,I,n}^{\sigma}$.
\end{proof}

\begin{fact}[Rearrangement inequality]\label{weights}
Let $x_1\geq x_2\geq \dots \geq x_k$ and $w_1\geq w_2\geq \dots \geq w_{k}$ be two descending sequences of rational numbers. Let $\sigma$ be a permutation of $1,\dots,k$ for which $(w_1,\dots,w_k)\neq (w_{\sigma(1)},\dots,w_{\sigma(k)})$. Then 
\begin{equation}
    \sum_{i=1}^{k}w_ix_i > \sum_{i=1}^{k} w_{\sigma(i)}x_i.
\end{equation}
That is to say, the sum is maximised by allocating the highest weight $w_1$ to the highest number $x_1$, the second highest to the second highest and so on.
\end{fact}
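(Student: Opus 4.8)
The plan is to prove this as the classical rearrangement inequality, organised as a bubble-sort argument with the strictness tracked explicitly. First I would observe that the quantity $\sum_{i=1}^k w_{\sigma(i)} x_i$ depends on $\sigma$ only through the sequence $(w_{\sigma(1)},\dots,w_{\sigma(k)})$, so it is harmless to argue at the level of rearrangements of the multiset $\{w_1,\dots,w_k\}$ rather than of permutations themselves. The atomic step is the effect of exchanging two adjacent entries: if $\tau$ produces the sequence obtained from $(w_{\sigma(i)})_i$ by swapping the values in positions $a$ and $a+1$, then a direct computation gives
\begin{equation}
\sum_{i=1}^k w_{\tau(i)} x_i - \sum_{i=1}^k w_{\sigma(i)} x_i = \big(w_{\sigma(a+1)} - w_{\sigma(a)}\big)\big(x_a - x_{a+1}\big).
\end{equation}
This single identity is the engine of the whole argument.

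Next I would exploit the hypothesis $(w_1,\dots,w_k) \neq (w_{\sigma(1)},\dots,w_{\sigma(k)})$. Since $(w_1,\dots,w_k)$ is decreasing, it is the unique decreasing rearrangement of the multiset $\{w_1,\dots,w_k\}$; hence the sequence $(w_{\sigma(i)})_i$, being different from it, is not decreasing and therefore contains an adjacent ascent, i.e.\ an index $a$ with $w_{\sigma(a)} < w_{\sigma(a+1)}$. Swapping positions $a$ and $a+1$ makes the first factor $w_{\sigma(a+1)} - w_{\sigma(a)}$ strictly positive, and since $x_a > x_{a+1}$ the second factor is strictly positive as well, so the swap strictly increases the sum. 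Iterating this (a bubble sort that only ever swaps adjacent ascents) drives the sequence to the sorted sequence $(w_1,\dots,w_k)$, whose associated value is $\sum_{i=1}^k w_i x_i$. Each swap performed was strict and there was at least one of them, so in total $\sum_{i=1}^k w_{\sigma(i)} x_i < \sum_{i=1}^k w_i x_i$, which is the claim.

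The one point requiring genuine care — and what I would flag as the crux — is that the \emph{strict} inequality relies on the $x_i$ being strictly decreasing, not merely weakly so: if $x_a = x_{a+1}$ the displayed difference vanishes, and indeed with a repeated $x$-value the two sums can coincide even though the weight sequences differ. This is why the distinctness condition is placed on the weights: the statement should be read with ``descending'' meaning strictly decreasing, which is exactly how it is invoked, since there the $x_i$ are the distinct distances $\delta_1(C) > \delta_2(C) > \cdots$ in $T_C$. Thus strictness of the $x_i$ is automatic in the application, and no further reduction is needed; the remainder of the proof is the routine swap bookkeeping sketched above.
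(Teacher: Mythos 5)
Your bubble-sort proof is internally correct for what it proves, and since the paper states Fact \ref{weights} without proof (it is cited as a classical fact), the swap argument itself has nothing to be compared against. You are also right that the statement as literally written, with $x_1\geq \dots\geq x_k$ only weakly decreasing, is false: take $x_1=x_2$, $(w_1,w_2)=(2,1)$ and $\sigma$ the transposition; then $(w_{\sigma(1)},w_{\sigma(2)})\neq(w_1,w_2)$ but the two sums coincide. Identifying this is a good catch.

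The genuine gap is your final paragraph. It is not true that the Fact is invoked only with strictly decreasing $x_i$. In the proof of Theorem \ref{valuationofyprime} (and again in Theorem \ref{semistablemainlemma}), the $x_i$ are the negated valuations of \emph{all} the building blocks $(ij,kl)$, and there the value $\delta_m(C)$ occurs with multiplicity $K_m(C)$, which is typically greater than $1$ (e.g.\ $K_1(C)=2$, $K_3(C)=4$, $K_4(C)=8$ in the example following Theorem \ref{valuationofyprime}); the trailing nonpositive entries can repeat as well. So the strict-$x$ version you proved is too weak to support the paper's applications, and your proposed reading of the hypothesis does not repair the statement in a way the paper can use. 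What the applications need is the rearrangement inequality with ties together with its equality case: for weakly decreasing $x$ and $w$, one has $\sum_i w_i x_i \geq \sum_i w_{\sigma(i)} x_i$, with equality if and only if for every value $v$ the multiset $\{w_{\sigma(i)} : x_i = v\}$ equals $\{w_i : x_i = v\}$; that is, strictness holds exactly when $\sigma$ redistributes weights across the level sets of $x$. This is precisely the hypothesis the paper arranges via the quotient by stabilisers: a summand $R_{T_C,I_C,n}^{\sigma}$ distinct from $R_{T_C,I_C,n}$ necessarily assigns some exponent $n+1-m$ to a factor $(ij,kl)$ with $\textup{ord}((ij,kl))\neq -2\delta_m(C)$, i.e.\ it genuinely changes the block assignment. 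Your swap argument does adapt to prove this stronger statement (swaps within a level set of $x$ leave the sum unchanged, and any genuinely different assignment of weight-multisets to level sets forces at least one strict swap across distinct $x$-values), but it is that version, not the strict-$x$ reading, that the paper's proofs rely on.
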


\begin{proof}[Proof of Theorem \ref{valuationofyprime}]
Fix a labelling of the singletons of $T_C$ corresponding to the roots of $f(x)$ and let $R_{T_C,I,n}$ be the summand of $\textup{Inv}_{T_C,I_C,n}$ with this labelling. By the definition of $R_{T_C,I_C,n}$, Lemma \ref{weightsinleafgraph} above and Corollary \ref{values} that gives the valuation of the factors $(ij,kl)$ in terms of the distances $\delta_l(C)$ in $T_C$,
\begin{equation}
    \textup{ord}(R_{T_C,I_C,n}(f))=-2\sum_{m=1}^{n} K_{m}(C)(n+1-m)\delta_{m}(C).
\end{equation}
Since 
\begin{equation}
\textup{ord}(R_{T,I,n}^{\sigma}(f))=\sum_{m=1}^{n}\sum_{\substack{ \big\{\{s_{\sigma^{-1}(i)},s_{\sigma^{-1}(j)}\},\\ \{s_{\sigma^{-1}(k)},s_{\sigma^{-1}(l)}\}\big\}\in S_{m}(T,I)}}(n-m+1)\cdot \textup{ord}\left((ij,kl)\right)
\end{equation}
for $\sigma\in S_d/\textup{Stab}$ by Lemma \ref{weightsinleafgraph}, we want to know when the sum of the valuation of building blocks $(ij,kl)$ multiplied by the exponents is the lowest to know $\textup{ord}(\textup{Inv}_{T_C,I_C,n}(C))$. Note that, by Proposition \ref{distancevalues}, the negation of the valuation of the building blocks $(ij,kl)$ gives a descending sequence of rational numbers
\begin{equation}
x_1\geq x_2\geq \dots \geq x_k
\end{equation}
where $x_{m}=\delta_m(C)$ if $K_{1}(C)+\dots+ K_{m-1}(C)+1\leq m\leq K_{1}(C)+\dots+ K_{m}(C)$ and $x_{m}\leq 0$ for $m>K_1(C)+\dots+K_N(C)$, with $N$ the number of distinct distances in $T_C$ and setting $K_0(C)=0$. This is because, by the definition of $K_m(C)$ and Proposition \ref{distancevalues}, $2\delta_{m}(C)$ appears $K_m(C)$ times in the set of valuations of expressions of the form $(ij,kl)$ and, beyond the valuation of the smallest distance in the tree $\delta_N(C)$, the valuation of $(ij,kl)$ is $\geq 0$. In the sum, we have the descending sequence of exponents 
\begin{equation}
n,\dots,n,n-1,\dots,n-1,\dots,1,\dots,1.
\end{equation}
where $n+1-m$ appears $K_{m}(C)$ times as an exponent by the definition of $R_{T,I,n}$. By Fact \ref{weights}, $\textup{ord}(R_{T_C,I_C,n}(f))$ is the unique summand of $\textup{Inv}_{Y_{n+1}(C)}(C)$ with the lowest (most negative) valuation, since it is the unique summand that allocates $(ij,kl)$ weight $n+1-m$ if and only if $\textup{ord}((ij,kl))=-2\delta_m(C)$; all other summands allocate a higher exponent to a smaller distances between vertices, or to a positive distance $\delta(v,w)>0$ since, by the definition of $R_{T_C,I_C,n}$, $\sigma$ fixes $R_{T_C,I_C,n}$ if and only if it fixes $\{(ij,kl):\textup{ord}((ij,kl))=-2\delta_m(C)\}$ for $m=1,\dots,n$. Hence, 
\begin{equation}
    \textup{ord}(\textup{Inv}_{T_C,I_C,n}(C))=\textup{ord}(R_{T_C,I_C,n}(f))=-2\sum_{m=1}^{n} K_{m}(C)(n+1-m)\delta_{m}(C).
\end{equation}
\end{proof}

\section{Comparing the valuations of the absolute invariants of a hyperelliptic curve}\label{comparingvaluations}

The main theorem we prove in this section is Theorem \ref{valuationofyprime}, which compares the valuation of the absolute invariants defined in \S\ref{invariantssection} when they are evaluated on hyperelliptic curves. We compare the valuations by defining a quantity $B_{n}(T,I,C)$ associated to a possible stable model tree and a possible ordering on the distances between the vertices $(T,I)$ and a fixed hyperelliptic curve $C$. We also prove that $\delta_n(C)=B_n(T_C,I_C,C)$ (see Proposition \ref{correctone}), and we show how one can tell whether a curve has potentially good reduction from the absolute invariants defined in Definition \ref{invariantsdefinition}. 

\begin{definition}[Set of all possible stable model trees at the $(n+1)$-st step]\label{possiblen+1st}
Let $T_C$ be the stable model tree of a hyperelliptic curve $C$ of genus $g$ over a local field $K$ of odd residue characteristic. Define 
\begin{align}
    \mathcal{T}_{n+1}(C)=\{(T,I) \in\mathbf{T}_g: \ R_{T,I,n}=R_{T_C,I_C,n}^{\sigma} \textup{ for some $\sigma\in S_{2g+2}$}\}. 
\end{align}
In other words, $\mathcal{T}_{n+1}(C)$ is the set of all stable model trees with orderings on the distances between vertices for which the $n$-th summand is equal to the $n$-th summand associated to $T_C$, up to a permutation by the symmetric group. We call $\mathcal{T}_{n+1}(C)$ the \textit{set of all possible stable model trees at the $(n+1)$-st step}, since it contains all possibilities for $(T_C,I_C)$ given that $R_{T_C,I_C,n}$ is known.
\end{definition}

We can now define $B_{n+1}(T,I,C)$ for $(T,I)\in \mathcal{T}_{n+1}$, which is a normalised valuation that is shifted in relation to valuation of $\textup{Inv}_{T_C,I_C,n}(C)$ and scaled by $K_{n+1}(T,I)$. It will allow us to compare the valuations of $\textup{Inv}_{T,I,n+1}(C)$ for $(T,I)\in \mathcal{T}_{n+1}$ in order to find $R_{T_C,I_C,n+1}(C)$ and ultimately recover $T_C$.

\begin{definition}\label{bn+1}
For $(T,I)\in \mathcal{T}_{n+1}$, define 
\begin{equation}
B_1(T,I,C)=-\frac{\textup{ord}(\textup{Inv}_{T,I,1}(C))}{2\cdot K_{1}(T,I)}
\end{equation}
and for $n\geq 2$, define
\begin{equation}
    B_{n}(T,I,C)=\frac{-\textup{ord}(\textup{Inv}_{T,I,n}(C)) -2\sum_{m=1}^{n-1} (n+1-m)K_{m}(C)\delta_m(C)}{2\cdot K_{n}(T,I)},
\end{equation}
where $K_{n+1}(T,I)=\# S_{n+1}(T,I)$.
\end{definition}

The following theorem tells us that, out of elements of $\mathcal{T}_{n+1}(C)$ that maximise $B_{n+1}(T,I,C)$, $(T_C,I_C)$ is the element with the greatest value of $K_{n+1}(-,-)$. This is used to prove Theorem \ref{wholealgorithm} in \S\ref{mainthminvproof}, which tells us how the valuation of the absolute invariants associated to elements of $\mathcal{T}_{n+1}(C)$ can be used to recover the $(n+1)$-st summand associated to $T_C$ from $R_{T_C,I_C,n}$.

\begin{theorem}\label{semistablemainlemma}
Let $C$ be a hyperelliptic curve of genus $g$ over a local field $K$. Let $(T,I)\in \mathcal{T}_{n+1}(C)$, with $R_{T,I,n+1}\neq R_{T_C,I_C,n+1}^{\sigma} \textup{ for any $\sigma\in S_{2g+2}$}$.
\begin{enumerate}[(i)]
    \item If $K_{n+1}(T,I)< K_{n+1}(C)$ then $B_{n+1}(T_C,I_C,C)\geq B_{n+1}(T,I,C)$.
    \item If $K_{n+1}(T,I)\geq K_{n+1}(C)$ then $B_{n+1}(T_C,I_C,C)>B_{n+1}(T,I,C)$.
\end{enumerate}
\end{theorem}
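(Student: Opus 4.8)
The plan is to put both quantities into closed form in terms of the distances $\delta_m(C)$ and then compare. First I would compute the left-hand side exactly: applying Theorem \ref{valuationofyprime} with $n+1$ in place of $n$ gives $\textup{ord}(\textup{Inv}_{T_C,I_C,n+1}(C))=-2\sum_{m=1}^{n+1}K_m(C)(n+2-m)\delta_m(C)$, and feeding this into Definition \ref{bn+1} the whole $m\leq n$ part of the numerator cancels against the correction term, leaving
\[
B_{n+1}(T_C,I_C,C)=\frac{2K_{n+1}(C)\delta_{n+1}(C)}{2K_{n+1}(C)}=\delta_{n+1}(C),
\]
where I use $K_{n+1}(T_C,I_C)=K_{n+1}(C)$. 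Thus the statement reduces to proving $B_{n+1}(T,I,C)\leq\delta_{n+1}(C)$ in case (i) and $B_{n+1}(T,I,C)<\delta_{n+1}(C)$ in case (ii).

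Since Theorem \ref{valuationofyprime} only evaluates the invariant attached to a curve's \emph{own} tree, for $(T,I)\neq(T_C,I_C)$ I would instead bound $\textup{ord}(\textup{Inv}_{T,I,n+1}(C))$ from below. As $\textup{Inv}_{T,I,n+1}(C)=\sum_\tau R_{T,I,n+1}^\tau(f)$ and the valuation is ultrametric, $\textup{ord}(\textup{Inv}_{T,I,n+1}(C))\geq\min_\tau\textup{ord}(R_{T,I,n+1}^\tau(f))$. By Lemma \ref{weightsinleafgraph} each summand is a weighted sum of building-block valuations, and by Corollary \ref{values} and Proposition \ref{distancevalues} the valuations of the $(ij,kl)$ on $C$ form the multiset consisting of $-2\delta_m(C)$ with multiplicity $K_m(C)$ for each $m$, together with strictly positive values. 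Membership $(T,I)\in\mathcal{T}_{n+1}(C)$ forces the first $n$ layers of $R_{T,I,n}$ to agree with those of $R_{T_C,I_C,n}$ up to relabelling, so $K_m(T,I)=K_m(C)$ for $m\leq n$, and the exponent multiset of $R_{T,I,n+1}$ is $n+2-m$ with multiplicity $K_m(C)$ for $m\leq n$ together with the exponent $1$ with multiplicity $K_{n+1}(T,I)$. By the rearrangement inequality (Fact \ref{weights}), any assignment of these exponents to distinct building blocks has valuation at least that of the greedy assignment pairing exponent $n+2-m$ with $-2\delta_m(C)$ for $m\leq n$ and filling the $K_{n+1}(T,I)$ weight-one slots with the most negative remaining valuations.

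Pushing this lower bound through Definition \ref{bn+1}, the $m\leq n$ contributions again cancel the correction term, and I obtain $B_{n+1}(T,I,C)\leq \frac{1}{2K_{n+1}(T,I)}\cdot(\text{sum of }-\textup{ord}\text{ over the weight-one slots})$. In case (i), $K_{n+1}(T,I)<K_{n+1}(C)$, so all weight-one slots may take the value $-2\delta_{n+1}(C)$ and the bound equals $\delta_{n+1}(C)$, which gives (i). In case (ii) with $K_{n+1}(T,I)>K_{n+1}(C)$ there are strictly fewer than $K_{n+1}(T,I)$ copies of $-2\delta_{n+1}(C)$ available, so at least one slot receives a valuation exceeding $-2\delta_{n+1}(C)$ (using the strict inequality $\delta_{n+2}(C)<\delta_{n+1}(C)$), and the bound is strictly below $\delta_{n+1}(C)$.

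The main obstacle is the boundary subcase $K_{n+1}(T,I)=K_{n+1}(C)$ of (ii), where the greedy bound gives only $B_{n+1}(T,I,C)\leq\delta_{n+1}(C)$. Here I would exploit the hypothesis $R_{T,I,n+1}\neq R_{T_C,I_C,n+1}^\sigma$ for all $\sigma$ to upgrade to a strict inequality. The key claim is that the greedy valuation is \emph{not attained} by any genuine permutation $\tau$: if it were, then for every $m\leq n+1$ the set $\tau(S_m(T,I))$ would consist solely of pairs-of-pairs lying at distance $\delta_m(C)$ in $T_C$, i.e. would be contained in $S_m(T_C,I_C)$; equality of cardinalities would then force $\tau(S_m(T,I))=S_m(T_C,I_C)$ for all $m\leq n+1$, whence $R_{T,I,n+1}^\tau=R_{T_C,I_C,n+1}$, contradicting the hypothesis. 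Therefore $\min_\tau\textup{ord}(R_{T,I,n+1}^\tau(f))$ strictly exceeds the greedy value, and the ultrametric bound yields $B_{n+1}(T,I,C)<\delta_{n+1}(C)$. The delicate part is justifying that attaining the greedy value really does force the layer-by-layer matching $\tau(S_m(T,I))\subseteq S_m(T_C,I_C)$; this rests on the strict decrease $\delta_1(C)>\delta_2(C)>\cdots$ together with the strict form of Fact \ref{weights}, which rule out any value-preserving redistribution of exponents across different distance layers.
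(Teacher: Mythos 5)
Your proposal is correct and follows essentially the same route as the paper's proof of Theorem \ref{semistablemainlemma}: reduce to $B_{n+1}(T_C,I_C,C)=\delta_{n+1}(C)$ via Theorem \ref{valuationofyprime}, bound each summand of $\textup{Inv}_{T,I,n+1}(C)$ from below by the greedy rearrangement-inequality assignment (using Corollary \ref{values}, Fact \ref{weights}, and $K_m(T,I)=K_m(C)$ for $m\le n$), split into the cases $K_{n+1}(T,I)<K_{n+1}(C)$, $>K_{n+1}(C)$, and $=K_{n+1}(C)$, and in the boundary case invoke the hypothesis $R_{T,I,n+1}\neq R_{T_C,I_C,n+1}^{\sigma}$ to exclude the equality case of the rearrangement inequality. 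The only cosmetic discrepancy is that the leftover building blocks $(ij,kl)$ have valuation $\geq 0$ rather than strictly positive (paths meeting in a single vertex give valuation $0$), which affects nothing in the argument.
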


Before proving Theorem \ref{semistablemainlemma}, we demonstrate the use of $B_{n+1}(T,I,n+1)$ in determining $R_{T_C,I_C,n+1}$ from $R_{T_C,I_C,n}$ by means of an example.

\begin{example}\label{curlytn+1}\label{bn+1example}
Let us take a genus $2$ hyperelliptic curve $C$ over a local field $K$ of odd residue characteristic with stable model tree  
\begin{center}
\begin{figure}[H]
\begin{tikzpicture}
				[scale=0.5, auto=left,every node/.style={circle,fill=black!20,scale=0.6}]
                \node (n10) at (2,2) {};
				\node (n1) at (0,0) {};
				\node (n2) at (2,0)  {};
				\node (n3) at (4,0)  {};

                 \node (n4) at (-0.5,-1) {};
                \node (n5) at (0.5,-1) {};
                
                \node (n6) at (1.5,-1) {};
                \node (n7) at (2.5,-1) {};

                \node (n8) at (3.5,-1) {};
                \node (n9) at (4.5,-1) {};

                \draw (n1) -- (n4); 
                \draw (n1) -- (n5); 
                
                \draw (n2) -- (n6); 
                \draw (n2) -- (n7); 

                \draw (n3) -- (n8); 
                \draw (n3) -- (n9);
				
				\draw (n10) -- (n1) node [text=black, pos=0.4, left, fill=none] {$d_1$};
                \draw (n10) -- (n2) node [text=black, pos=0.6, right,fill=none] {$d_2$};
				\draw (n10) -- (n3) node [text=black, pos=0.4, right,fill=none] {$d_3$};
			\end{tikzpicture}
            \caption*{$T_C$}
            \end{figure}
\end{center}
where $d_1> d_2>d_3$. Suppose we know a priori that 
\begin{equation}
\textup{Inv}_{T_C,I_C,1}=\sum_{S_6/\textup{Stab}}\frac{(X_1-X_3)(X_1-X_4)(X_2-X_3)(X_2-X_4)}{(X_1-X_2)^2(X_3-X_4)^2},
\end{equation}
but we want to know work out $R_{T_C,I_C,2}$ using the possible second absolute invariants for $(T_C,I_C)$ contained in $\mathcal{T}_2(C)$. Then $\mathcal{T}_2(C)$ contains the following stable model trees with the described orderings on the distances between the vertices.

\begin{minipage}{0.4\textwidth}
\begin{figure}[H]
			\begin{tikzpicture}
				[scale=0.5, auto=left,every node/.style={circle,fill=black!20,scale=0.6}]
                \node (n10) at (2,2) {};
				\node  (n1) at (0,0) {};
				\node (n2) at (2,0)  {};
				\node (n3) at (4,0)  {};

                 \node (n4) at (-0.5,-1) {};
                \node (n5) at (0.5,-1) {};
                
                \node (n6) at (1.5,-1) {};
                \node (n7) at (2.5,-1) {};

                \node (n8) at (3.5,-1) {};
                \node (n9) at (4.5,-1) {};

                \draw (n1) -- (n4); 
                \draw (n1) -- (n5); 
                
                \draw (n2) -- (n6); 
                \draw (n2) -- (n7); 

                \draw (n3) -- (n8); 
                \draw (n3) -- (n9);

				\draw (n10) -- (n1) node [text=black, pos=0.4, left, fill=none] {$a$};
                \draw (n10) -- (n2) node [text=black, pos=0.6, right,fill=none] {$b$};
				\draw (n10) -- (n3) node [text=black, pos=0.4, right,fill=none] {$c$};
			\end{tikzpicture}
            \caption*{$T$}
   \end{figure}
   \end{minipage}\begin{minipage}{0.6\textwidth}

$I_1=\{a>b>c, \ a>b+c\}\in \textup{Ineq}_{T};$

$I_2=\{a>b>c, \ a<b+c\}\in \textup{Ineq}_{T};$

$I_3=\{a>b>c, \ a=b+c\}\in \textup{Ineq}_{T};$

$I_4=\{a=b>c\}\in \textup{Ineq}_{T}.$

   \end{minipage}

\begin{minipage}{0.4\textwidth}
\begin{figure}[H]
			\begin{tikzpicture}
				[scale=0.5, auto=left,every node/.style={circle,fill=black!20,scale=0.6}]
				\node  (n1) at (0,0) {};
				\node (n2) at (2,0)  {};
				\node (n3) at (4,0)  {};

                \node (n4) at (-0.5,-1)  {};
                \node (n5) at (0.5,-1)  {};
                \node (n6) at (1.5,-1)  {};
                \node (n7) at (2.5,-1)  {};
                \node (n8) at (3.5,-1)  {};
                 \node (n9) at (4.5,-1)  {};

                 \draw (n1) -- (n4) node [text=black, pos=0.4, left, fill=none] {};
                 \draw (n1) -- (n5) node [text=black, pos=0.4, left, fill=none] {};

                 \draw (n2) -- (n6) node [text=black, pos=0.4, left, fill=none] {};
                 \draw (n2) -- (n7) node [text=black, pos=0.4, left, fill=none] {};

                 \draw (n3) -- (n8) node [text=black, pos=0.4, left, fill=none] {};
                 \draw (n3) -- (n9) node [text=black, pos=0.4, left, fill=none] {};

				\draw (n1) -- (n2) node [text=black, pos=0.5, above, fill=none] {$a$};
                \draw (n2) -- (n3) node [text=black, pos=0.5, above,fill=none] {$b$};
			\end{tikzpicture}
            \caption*{$T'$}
   \end{figure}
   \end{minipage}\begin{minipage}{0.6\textwidth}

$I_1=\{a>b\}\in \textup{Ineq}_{T'};$

$I_2=\{a=b\}\in \textup{Ineq}_{T'}.$

   \end{minipage}

\begin{minipage}{0.4\textwidth}
\begin{figure}[H]
			\begin{tikzpicture}
				[scale=0.5, auto=left,every node/.style={circle,fill=black!20,scale=0.6}]
				\node  (n1) at (0,0) {};
				\node (n2) at (1.33,0)  {};
				\node (n3) at (2.66,0)  {};
                \node (n4) at (3.99,0)  {};

                \node (n5) at (-0.5,-1)  {};
                \node (n6) at (0.5,-1)  {};
                
                \node (n7) at (1.33,-1)  {};
                
                \node (n8) at (2.66,-1)  {};
                
                \node (n9) at (3.49,-1)  {};
                 \node (n10) at (4.49,-1)  {};

                 \draw (n1) -- (n5) node [text=black, pos=0.4, left, fill=none] {};
                 \draw (n1) -- (n6) node [text=black, pos=0.4, left, fill=none] {};

                 \draw (n2) -- (n7) node [text=black, pos=0.4, left, fill=none] {};
                 
                 \draw (n3) -- (n8) node [text=black, pos=0.4, left, fill=none] {};

                 \draw (n4) -- (n9) node [text=black, pos=0.4, left, fill=none] {};
                 \draw (n4) -- (n10) node [text=black, pos=0.4, left, fill=none] {};

				\draw (n1) -- (n2) node [text=black, pos=0.5, above, fill=none] {$a$};
                \draw (n2) -- (n3) node [text=black, pos=0.5, above,fill=none] {$b$};
				\draw (n3) -- (n4) node [text=black, pos=0.5, above,fill=none] {$c$};
			\end{tikzpicture}
            \caption*{$T''$}
   \end{figure}
   \end{minipage}\begin{minipage}{0.6\textwidth}

$I_1=\{a>c>b\}\in \textup{Ineq}_{T''}; \quad i_5=\{b=a>c\}\in \textup{Ineq}_{T''};$

$I_2=\{b>a>c\}\in \textup{Ineq}_{T''}; \quad i_6=\{a=c>b\}\in \textup{Ineq}_{T''};$

$I_3=\{a>b>c\}\in \textup{Ineq}_{T''}; \quad i_7=\{b>a=c\}\in \textup{Ineq}_{T''};$

$I_4=\{a>c=b\}\in \textup{Ineq}_{T''}; \quad i_8=\{a=b=c\}\in \textup{Ineq}_{T''}.$

\vspace{20pt}

   \end{minipage}

This is because these are the exact elements $(T,I)\in\mathbf{T}_2$ for which $R_{T,I,1}=R_{T_C,I_C,1}^{\sigma} \textup{ for some $\sigma\in S_{6}$}$, since the longest distance in each case is between two vertices that both have two singletons. In an arbitrary labelling of the singletons and using the notation
\begin{equation}
(ij,kl)=\frac{(X_i-X_k)(X_i-X_l)(X_j-X_k)(X_j-X_l)}{(X_i-X_j)^2(X_k-X_l)^2},
\end{equation}
the summands associated to the elements of $\mathcal{T}_2$ are
\begin{align}
R_{T,I_1,2}&=  (12,34)^2(12,56)=R_{T,I_2,2}=R_{T,I_3,2} \\
R_{T,I_4,2}&=  (12,34)^2(12,56)(34,56)\\
R_{T',I_1,2}&= (12,34)^2(12,53)(12,54)(12,63)(12,64)(12,56) \\
R_{T',I_2,2}&= (12,34)^2(12,53)(12,54)(12,63)(12,64)(12,56)(34,51)(34,52)(34,61)(34,62)(34,56) \\
R_{T'',I_1,2}&=  (12,34)^2(12,53)(12,54)= R_{T'',I_2,2}=R_{T'',I_3,2}=R_{T'',I_4,2}=R_{T'',I_5,2} \\
R_{T'',I_6,2}&=  (12,34)^2(12,53)(12,54)(34,61)(34,62)=R_{T'',I_7,2}=R_{T'',I_8,2}
\end{align}
By applying Proposition \ref{distancevalues}, we can write down inequalities on the valuation of the associated absolute invariants when evaluated on a hyperelliptic curve $C$ with the above stable model tree in terms of $d_1$, $d_2$ and $d_3$. We can then use this to write down an inequality for the associated `average' $B_{2}(T,I,C)$ in terms of $d_1$, $d_2$ and $d_3$, which are written in the table below.

\begin{center}
\setlength\extrarowheight{2pt}
\begin{tabular}{|>{\centering\arraybackslash}m{1.8cm}||>{\centering\arraybackslash}m{1.64cm}|>{\centering\arraybackslash}m{1.64cm}|>{\centering\arraybackslash}m{1.64cm}|>{\centering\arraybackslash}m{2.3cm}|>{\centering\arraybackslash}m{1.64cm}|>{\centering\arraybackslash}m{2.3cm}|}
    \hline
      $(T,I)$ &  $(T,I_1)$ & $(T,I_2)$ & $(T,I_3)$ & $(T,I_4)$ & $(T',I_1)$ & $(T',I_2)$  \\
        \hline
       $B_{2}(T,I,C)$  & $d_1+d_3$ & $d_1+d_3$  & $d_1+d_3$ &  $\frac{1}{2}(d_1+d_2)+d_3$ & $d_1+\frac{d_3}{5}$ & $\frac{d_1+d_2}{2}+\frac{d_3}{5}$ \\
       \hline 
\end{tabular}
\end{center}

\hspace{5pt}

\begin{center}
\setlength\extrarowheight{2pt}
\begin{tabular}{|>{\centering\arraybackslash}m{1.8cm}||>{\centering\arraybackslash}m{1.3cm}|>{\centering\arraybackslash}m{1.3cm}|>{\centering\arraybackslash}m{1.3cm}|>{\centering\arraybackslash}m{1.3cm}|>{\centering\arraybackslash}m{1.3cm}|>{\centering\arraybackslash}m{1.3cm}|>{\centering\arraybackslash}m{1.3cm}|>{\centering\arraybackslash}m{1.3cm}|}
    \hline
      $(T,I)$ & $(T'',I_1)$ &  $(T'',I_2)$ & $(T'',I_3)$ & $(T'',I_4)$ & $(T'',I_5)$ & $(T'',I_6)$ & $(T'',I_7)$ & $(T'',I_8)$  \\
        \hline
       $B_{2}(T,I,C)$ & $\leq d_1$  & $\leq d_1$ & $\leq d_1$ & $\leq d_1$ & $\leq d_1$ & $\leq \frac{d_1+d_2}{2}$  & $\leq \frac{d_1+d_2}{2}$ & $\leq \frac{d_1+d_2}{2}$ \\
       \hline 
\end{tabular}
\end{center}
Out of the elements of $\mathcal{T}_{2}(C)$, $(T,I_1)$, $(T,I_2)$ and $(T,I_3)$ have the highest value of $B_{2}(T,I,C)$. It turns out that this is telling us that $R_{T_C,I_C,2}=R_{T,I_1,2}=R_{T,I_2,2}=R_{T,I_3,2}$ up to a permutation of the singletons, and $\delta_{2}(C)=B_{2}(T,I_1,C)=d_1+d_3$, as we would expect since we already know the stable model tree. In other words, it tells us that $(T_C,I_C)=(T,I_1)$, $(T,I_2)$ or $(T,I_3)$. This is the idea behind Theorem \ref{wholealgorithm}, which gives the general statement on finding $R_{T_C,I_C,n+1}$ given that $R_{T_C,I_C,n}$ is known by looking at the value of $B_{n+1}(T,I,C)$ for each $(T,I)\in\mathcal{T}_{n+1}(C)$.

A further step using the $3$-rd absolute invariants would be needed in order to distinguish whether $(T_C,I_C)$ is $(T,I_1)$, $(T,I_2)$ or $(T,I_3)$, and it depends on whether $d_1>d_2+d_3$, $d_1<d_2+d_3$ or $d_1=d_2+d_3$. A full description for genus $2$ curves is given in \S\ref{genus2section}.
\end{example}

\begin{proof}[Proof of Theorem \ref{semistablemainlemma}]
First let us prove $(i)$. We know from Theorem \ref{valuationofyprime} that
\begin{equation}
    \textup{ord}(\textup{Inv}_{T_C,I_C,n+1}(C))=-2\sum_{m=1}^{n+1} K_m(C)(n+2-m)\delta_{m}(C).
\end{equation}
Let $d=2g+2$ and fix a labelling of the singletons of $T_C$ corresponding to the roots $x_1,\dots,x_d$ of $f(x)$, where $C:y^2=f(x)$ is a Weierstrass equation for $C$. Let $R_{T,I,n+1}^{\sigma}(f)$ be a summand of $\textup{Inv}_{T,I,n+1}(C)$ with the lowest valuation (there may be multiple summands with the same valuation). We claim that
\begin{equation}
    \textup{ord}(\textup{Inv}_{T,I,n+1}(C))\geq \textup{ord}(R_{T,I,n+1}(f)^{\sigma}) \geq -2\sum_{m=1}^{n} (n+2-m)\cdot K_m(C)\delta_{m}(C) - 2K_{n+1}(T,I)\delta_{n+1}(C).
\end{equation}
As in the proof of Theorem \ref{valuationofyprime}, the negation of the valuation of the building blocks $(ij,kl)$ give a descending sequence of rational numbers
\begin{equation}
x_1\geq x_2\geq \dots \geq x_k
\end{equation}
where $x_{m}=\delta_m(C)$ if $K_{1}(C)+\dots+ K_{m-1}(C)+1\leq m\leq K_{1}(C)+\dots+ K_{m}(C)$ and $x_{m}\leq 0$ for $m>K_1(C)+\dots+K_N(C)$, with $N$ the number of distinct distances in $T_C$ and setting $K_0(C)=0$. In the expression for $R_{T,I,n+1}$, we have the descending sequence of exponents of the factors $(ij,kl)$
\begin{equation}
n+1,\dots,n+1,n,\dots,n,n-1,\dots,n-1,\dots,1,\dots,1.
\end{equation}
where the exponent $n+2-m$ appears $K_{m}(C)$ times for $m=1,\dots,n$ since $(T,I)\in \mathcal{T}_{n+1}(C)$ and so $R_{T,I,n}=R_{T_C,I_C,n}^{\sigma}$ for some $\sigma\in S_{2g+2}$. The exponent $1$ appears $K_{n+1}(T,I)$ times by the definition of $R_{T,I,n+1}$. Hence, as in the proof of Theorem \ref{valuationofyprime} and by Fact \ref{weights}, the valuation of the expression for $R_{T,I,n+1}(f)^{\sigma}$ is greater than or equal to $-(\Sigma_{m=1}^{n} (n+2-m)K_m(C)x_m+K_{n+1}(T,I)\delta_{n+1}(C))$. So for $n\geq 0$ we have
\begin{align}
   - \textup{ord}(\textup{Inv}_{T,I,n+1}(C))-2\sum_{m=1}^n (n+2-m)K_{m}(C)\delta_m(C)\leq 2 K_{n+1}(T,I)\delta_{n+1}(C).
\end{align}
Thus, as required
\begin{equation}
     B_{n+1}(T,I,C)=\frac{- \textup{ord}(\textup{Inv}_{T,I,n+1}(C))-2\sum_{m=1}^n (n+2-m)K_{m}(C)\delta_m(C)}{2\cdot K_{n+1}(T,I)} \leq \delta_{n+1}(C)=B_{n+1}(T_C,I_C,C). 
\end{equation}

$(ii)$ Now suppose $K_{n+1}(T,I)\geq K_{n+1}(C)$ and let $R_{T,I,n+1}(f)^{\sigma}$ be a summand of $\textup{Inv}_{T,I,n+1}(C)$ with the lowest valuation. To prove the second part of the lemma, it suffices to prove that we have the strict inequality 
\begin{equation}
    \textup{ord}(R_{T,I,n+1}(f)^{\sigma}) > -2\sum_{m=1}^{n} (n+2-m)\cdot K_m(C)\delta_{m}(C) - 2K_{n+1}(T,I)\delta_{n+1}(C),
\end{equation}
since if this is the case then $B_{n+1}(T_C,I_C,C)>B_{n+1}(T,I,C)$. Note that since $R_{T,I,n}=R_{T_C,I_C,n}^{\sigma}$ for some $\sigma\in S_{2g+2}$, we have $K_m(T,I)=K_m(C)$ for $m=1,\dots,n$. If $K_{n+1}(T,I)\leq K_{n+1}(C)+\dots+K_{N}(C)$, where $N$ is the number of distinct in $T_C$, then again by Corollary \ref{values} and Fact \ref{weights}, for $n\geq 0$, 
\begin{equation}
    \textup{ord}(R_{T,I,n+1}(f)^{\sigma})\geq -2\sum_{m=1}^{n} (n+2-m)K_m(C)\delta_{m}(C) - 2\sum_{m=1}^{r}K_{n+m}(C)\delta_{m}(C)-2k_{n+r+1}\delta_{n+r+1}(C),
\end{equation}
where $K_{n+1}(T,I)=\sum_{m=1}^{r+m}K_{n+m}(C)+k_{n+r+1}$ and $k_{r+1}\leq K_{r+1}(C)$, by the same argument as $(i)$. If $K_{n+1}(T,I)>K_{n+1}(C)$, it is clear that
\begin{align}
    \textup{ord}(R_{T,I,n+1}(f)^{\sigma})&\geq -2\sum_{m=1}^{n} (n+2-m)K_m(C)\delta_{m}(C) - 2\sum_{m=1}^{r}K_{n+m}(C)\delta_{m}(C)-2k_{n+r+1}\delta_{n+r+1}(C) \\
    &>-2\sum_{m=1}^{n} (n+2-m)\cdot K_m(C)\delta_{m}(C) - 2K_{n+1}(T,I)\delta_{n+1}(C). 
\end{align}
Now suppose $K_{n+1}(T,I)=K_{n+1}(C)$ and assume $\textup{ord}(R_{T,I,n+1}(f)^{\sigma})= -2\sum_{m=1}^{n+1} (n+2-m)K_m(C)\delta_{m}(C)$. Then $R_{T,I,n+1}(f)^{\sigma}$ must allocate exponent $1$ to the $K_{n+1}(C)$ expressions $(ij,kl)$ with $\textup{ord}(ij,kl)=-2\delta_{n+1}(C)$. The only way that this can happen is if $R_{T,I,n+1}=R_{T_C,I_C,n+1}^{\sigma}$ for some $\sigma\in S_{2g+2}$, which is a contradiction since we assumed $R_{T,I,n+1}\neq R_{T_C,I_C,n+1}^{\sigma}$ for any $\sigma\in S_{2g+2}$. Thus 
\begin{align}
    \textup{ord}(R_{T,I,n+1}(f)^{\sigma})>-2\sum_{m=1}^{n} (n+1-m)\cdot K_m(C)\delta_{m}(C) - 2K_{n+1}(T,I)\delta_{n+1}(C)
\end{align}
when $K_{n+1}(T,I)=K_{n+1}(C)$. 

If $K_{n+1}(T,I)>K_{n+1}(C)+\dots+K_{N}(C)$, then since by Corollary \ref{values} there are only $K_{n+1}(C)+\dots+K_{N}(C)$ expressions $(ij,kl)$ of valuation $<0$, by Fact \ref{weights}, for $n\geq 0$, 
\begin{align}
    \textup{ord}(R_{T,I,n+1}(f)^{\sigma})&\geq -2\sum_{m=1}^{N} (n+2-m)K_m(C)\delta_{m}(C) \\
    &>-2\sum_{m=1}^{n} (n+2-m)K_m(C)\delta_{m}(C)-2K_{n+1}(T,I)\delta_{n+1}(C). 
\end{align}
This completes the proof. 
\end{proof}

\begin{proposition}\label{correctone}
Let $T_C$ be the stable model tree of a hyperelliptic curve $C$ over a local field $K$ of odd residue characteristic. Then for $n\geq 0$, $\delta_{n+1}(C)=B_{n+1}(T_C,I_C,C)$.
\end{proposition}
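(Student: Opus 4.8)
The plan is to obtain the identity by direct substitution of the closed formula from Theorem \ref{valuationofyprime} into Definition \ref{bn+1}, exploiting the fact that the subtracted sum appearing in $B_{n+1}$ has been engineered precisely to telescope against all but the top term of that formula. The only genuine input beyond Theorem \ref{valuationofyprime} is the observation that $K_m(T_C,I_C)=K_m(C)$ for every $m$, so that the normalising denominator $2\cdot K_{n+1}(T_C,I_C)$ matches the coefficient produced by the substitution.

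First I would dispose of the base case $n=0$ separately, since the first clause of Definition \ref{bn+1} has a different shape. Here Theorem \ref{valuationofyprime} gives $\textup{ord}(\textup{Inv}_{T_C,I_C,1}(C))=-2K_1(C)\delta_1(C)$, so that
\begin{equation}
B_1(T_C,I_C,C)=-\frac{\textup{ord}(\textup{Inv}_{T_C,I_C,1}(C))}{2\cdot K_1(T_C,I_C)}=\frac{2K_1(C)\delta_1(C)}{2K_1(T_C,I_C)}=\delta_1(C),
\end{equation}
using $K_1(T_C,I_C)=K_1(C)$.

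For $n\geq 1$ I would substitute the level-$(n+1)$ instance of Theorem \ref{valuationofyprime}, namely $-\textup{ord}(\textup{Inv}_{T_C,I_C,n+1}(C))=2\sum_{m=1}^{n+1}(n+2-m)K_m(C)\delta_m(C)$, into the numerator of $B_{n+1}(T_C,I_C,C)$ from Definition \ref{bn+1}. The subtracted sum there is $2\sum_{m=1}^{n}(n+2-m)K_m(C)\delta_m(C)$, so the two sums cancel term by term for $1\leq m\leq n$ and leave only the $m=n+1$ contribution, whose coefficient $(n+2-(n+1))=1$ gives the residual numerator $2K_{n+1}(C)\delta_{n+1}(C)$. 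Dividing by $2\cdot K_{n+1}(T_C,I_C)=2K_{n+1}(C)$ then yields $\delta_{n+1}(C)$, exactly as claimed.

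The only point requiring care — and the closest thing to an obstacle — is justifying the identification $K_m(T_C,I_C)=K_m(C)$ that both the base case and the telescoping rely on. This is a matter of unwinding definitions: by Definition \ref{hyperellipticordering} the ordering $I_C$ is precisely the ordering that $T_C$ induces on the pairwise distances between its proper vertices, so comparing Definition \ref{stablemodeltreedefs}(i) with Definition \ref{treedefinitions}(i) gives $\delta_m(T_C,I_C)=\delta_m(C)$ for all $m$. Consequently the set $S_m(T_C,I_C)$ of Definition \ref{stablemodeltreedefs}(ii) is literally the set of pairs-of-pairs counted by $K_m(C)$ in Definition \ref{treedefinitions}(ii), whence $K_m(T_C,I_C)=\#S_m(T_C,I_C)=K_m(C)$. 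With this identification in place the remaining computation is purely formal, and no analysis of competing summands (as in Theorem \ref{semistablemainlemma}) is needed here, since we are evaluating at the true tree $(T_C,I_C)$ itself.
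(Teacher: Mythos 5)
Your proof is correct and is essentially the paper's own argument: the paper's proof simply cites Theorem \ref{valuationofyprime} together with the definitional identity $K_n(C)=K_n(T_C,I_C)$, and your substitution, telescoping, and treatment of the base case $n=0$ are just that argument written out in full detail.
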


\begin{proof}
This follows immediately from Theorem \ref{valuationofyprime} and the fact that, by definition, $K_n(C)=K_n(T_C,I_C)$.
\end{proof}

In Theorem \ref{wholealgorithm}, we will deal with the case of potentially good reduction separately to the the other possible reduction types. The following proposition gives us a way of identifying when $C/K$ has potentially good reduction from the first absolute invariants associated to elements $(T,I)\in\mathbf{T}_g$. Note that it was already known that a hyperelliptic curve having potentially good reduction can be identified by checking the valuation of invariants of the curve (see, for example, \cite{elisapotentiallygood} Proposition 3.13), but we state the result here for the absolute invariants defined in this paper. 

\begin{proposition}\label{potgoodred}
Let $C$ be a genus $g$ hyperelliptic curve over a local field $K$ of odd residue characteristic. Then $C$ has potentially good reduction if and only if $\textup{ord}(\textup{Inv}_{T,I,1}(C))\geq 0$ for every $(T,I)\in\mathbf{T}_g$.
\end{proposition}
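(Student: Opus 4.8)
The plan is to prove both implications by using Proposition \ref{potgoodredlemma} to translate ``potentially good reduction'' into the statement that $T_C$ is the complete bipartite graph $K_{1,d}$ with $d=2g+2$, and then reading off the valuations of the building blocks $(ij,kl)$ from Proposition \ref{distancevalues}. The forward direction will be a direct valuation count on $K_{1,d}$, and the converse will be proved by contrapositive, exhibiting $(T_C,I_C)$ itself as a witness via Theorem \ref{valuationofyprime}.

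For the forward implication, suppose $C$ has potentially good reduction, so $T_C\cong K_{1,d}$ by Proposition \ref{potgoodredlemma}. Fix a model $C:y^2=f(x)$ of even degree and label the singletons by the roots. For any four distinct indices $i,j,k,l$, the paths $C_{ij}$ and $C_{kl}$ in $K_{1,d}$ meet only at the central vertex, hence share no edges, and the unique shortest path between them is the trivial path at that vertex, of length $0$. Proposition \ref{distancevalues} therefore gives $\textup{ord}((ij,kl))=0$. Since, for every $(T,I)\in\mathbf{T}_g$ and every $\sigma$, the summand $R_{T,I,1}^{\sigma}(f)$ is a product of such factors, each summand of $\textup{Inv}_{T,I,1}(C)$ has valuation $0$, and consequently $\textup{ord}(\textup{Inv}_{T,I,1}(C))\geq 0$.

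For the converse I would argue by contrapositive. If $C$ does not have potentially good reduction, then by Proposition \ref{potgoodredlemma} the tree $T_C$ has at least two proper vertices, and $(T_C,I_C)\in\mathbf{T}_g$. Theorem \ref{valuationofyprime} with $n=1$ yields $\textup{ord}(\textup{Inv}_{T_C,I_C,1}(C))=-2K_1(C)\delta_1(C)$, so it suffices to check $K_1(C)\geq 1$ and $\delta_1(C)>0$. The quantity $\delta_1(C)$ is the diameter of the subtree of proper vertices, which is positive since the relevant edge lengths lie in $\mathbb{Q}^+$ and there is at least one such edge. For $K_1(C)\geq 1$ I would use the standard fact that the diameter of a tree is attained between two leaves: choosing proper vertices $v,w$ with $\delta(v,w)=\delta_1(C)$, maximality forces each of $v,w$ to have exactly one proper neighbour (a proper neighbour on the far side would be strictly further away), so $v$ and $w$ are proper leaves. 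A proper leaf corresponds to a cluster whose remaining children are all singletons, hence, being proper of size $\geq 2$, carries at least two singletons; picking $s_i,s_j$ at $v$ and $s_k,s_l$ at $w$ produces disjoint paths with $\delta(C_{ij},C_{kl})=\delta(v,w)=\delta_1(C)$, so this quadruple lies in $S_1(T_C,I_C)$ and $K_1(C)\geq 1$. Thus $\textup{ord}(\textup{Inv}_{T_C,I_C,1}(C))<0$, completing the contrapositive.

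The main obstacle is the combinatorial step establishing $K_1(C)\geq 1$, namely that the largest distance $\delta_1(C)$ between proper vertices is genuinely realised as the distance between two singleton-paths. This reduces to showing that the two diameter-attaining proper leaves each carry at least two attached singletons, which I would justify from the cluster-picture description (a proper leaf's cluster has only singleton children, and a proper cluster has size at least two), taking care of the boundary cases where $v_{\mathcal{R}}$ is removed from $T_C$ or where a diameter-attaining vertex is $v_{\mathcal{R}}$ itself. The definition of $T_C$ in Definition \ref{stablemodeltreedef} handles precisely these cases by excluding the degenerate configuration of a child of size $2g+1$.
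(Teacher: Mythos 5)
Your proof is correct and follows essentially the same route as the paper: the forward direction reduces via Proposition \ref{potgoodredlemma} to $T_C\cong K_{1,d}$ and shows every building block $(ij,kl)$ has valuation zero (the paper carries out this computation directly from the cluster picture depths, whereas you cite Proposition \ref{distancevalues} with $\delta(v,w)=0$), and the converse applies Theorem \ref{valuationofyprime} to $(T_C,I_C)$. Your explicit verification that $\delta_1(C)>0$ and $K_1(C)\geq 1$ (diameter attained at proper leaves, each carrying at least two singletons) supplies details that the paper's one-line converse leaves implicit.
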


\begin{proof}
Let $C:y^2=f(x)$ be a Weierstrass equation for $C$ where $d=\deg(f)=2g+2$. Denote by  $x_1,\dots,x_d$ the roots of $f(x)$. We know from Proposition \ref{potgoodredlemma} that $C$ has potentially good reduction if and only if its cluster picture either has no subclusters of size less than $d$ or it has one subcluster of size $d-1$. Suppose we are in the first case and the outside cluster has depth $d_{\mathcal{R}}$. By the definition of the cluster picture, this means that we have $\textup{ord}(x_i-x_j)=d_{\mathcal{R}}$ for every $i\neq j$. Thus
\begin{equation}
    \textup{ord}\Big(\frac{(x_i-x_k)(x_i-x_l)(x_j-x_k)(x_j-x_l)}{(x_i-x_j)^2(x_k-x_l)^2}\Big)=0
\end{equation}
and so $\textup{ord}(\textup{Inv}_{T,I,1}(C))\geq 0$. Suppose there is a subcluster of size $d-1$ of depth $\delta$ and the outside depth is $d_{\mathcal{R}}$. If all the roots are in the cluster of size $d-1$, similarly to the above we have valuation $0$. Otherwise, say $x_l$ is the root not in the cluster of size $d-1$, then 
\begin{equation}
    \textup{ord}\Big(\frac{(x_i-x_k)(x_i-x_l)(x_j-x_k)(x_j-x_l)}{(x_i-x_j)^2(x_k-x_l)^2}\Big)=d_{\mathcal{R}} +\delta +d_{\mathcal{R}} +\delta - 2d_{\mathcal{R}}- 2\delta=0
\end{equation}
and so $\textup{ord}(\textup{Inv}_{T,I,1}(C))\geq 0$. For the converse, note that Theorem \ref{valuationofyprime} implies that if $C$ does not have potentially good reduction, if one takes $(T,I)=(T_C,I_C)$ then $\textup{ord}(\textup{Inv}_{T,I,1}(C))<0$.
\end{proof}

\section{Determining the dual graph from the absolute invariants}\label{mainthminvproof}
In this section we use the results of \S\ref{comparingvaluations} to prove the main result of this paper on recovering the dual graph of the special fibre of the minimal regular model of a semistable hyperelliptic curve from absolute invariants. The main theorem we prove is the following.

\begin{theorem}\label{wholealgorithm}
Let $C$ be a hyperelliptic curve of genus $g$ over a local field $K$ of odd residue characteristic, and let $d=2g+2$. The stable model tree $T_C$ is determined by the following procedure. 
\begin{enumerate}[(i)]
\item $T_C=K_{1,d}$, or equivalently $C/K$ has potentially good reduction, if and only if $\textup{ord}(\textup{Inv}_{T,I,1}(C))\geq 0$ for every $(T,I)\in\mathbf{T}_g$. 
\item If $T_C$ does not consist of a single vertex, for $n\geq 0$ given $R_{T_C,I_C,n}$ and $\delta_1(C),\dots,\delta_{n}(C)$, let 
   \begin{align}
    \mathcal{T}_{n+1}(C)=\{(T,I) \in\mathbf{T}_g: \ R_{T,I,n}=R_{T_C,I_C,n}^{\sigma} \textup{ for some $\sigma\in S_{2g+2}$} \}, 
\end{align}
and for $(T,I)\in\mathcal{T}_{n+1}(C)$ let 
\begin{equation}
    B_{n+1}(T,I,C)=\frac{-\textup{ord}(\textup{Inv}_{T,I,n+1}(C)) -2\sum_{m=1}^n (n+2-m)K_{m}(C)\delta_m(C)}{2\cdot K_{n+1}(T,I)}.
\end{equation}
Out of the elements of $\mathcal{T}_{n+1}(C)$, let $(T,I)$ be a tree and ordering with the largest value of $K_{n+1}(-,-)$ satisfying $B_{n+1}(T,I,C)=\underset{(T',I')\in\mathcal{T}_{n+1}(C)}{\emph{\text{max}}} B_{n+1}(T',I',C)$. Then 
\begin{equation}
    R_{T_C,I_C,n+1}=R_{T,I,n+1}^{\sigma} \textup{ for some $\sigma\in S_{2g+2}$} \quad \textup{and} \quad \delta_{n+1}(C)=B_{n+1}(T,I,C).
\end{equation}
\item Let $n_{C}$ denote the number of pairwise differences between vertices in $T_C$. Then $(T_C,I_C)$ is uniquely determined by $R_{T_C,I_C,n_{C}}$, and $\delta_i(T_C,I_C)=\delta(v,w)$ in $(T_C,I_C)$ if and only if $\delta(v,w)=\delta_i(C)$ in $T_C$. Thus $T_C$ is uniquely determined by $R_{T_C,I_C,n_{C}}$ and $\delta_1(C),\dots,\delta_{n_{C}}(C)$. 
\end{enumerate}
\end{theorem}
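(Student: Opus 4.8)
The plan is to verify the three parts of the procedure in turn, treating the induction in (ii) as the engine of the algorithm and deducing (iii) by iterating it to exhaustion.

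Part (i) is essentially immediate. Proposition \ref{potgoodred} shows that $C/K$ has potentially good reduction exactly when $\textup{ord}(\textup{Inv}_{T,I,1}(C)) \geq 0$ for all $(T,I) \in \mathbf{T}_g$, while Proposition \ref{potgoodredlemma} identifies potentially good reduction with $T_C \cong K_{1,d}$. Hence the two conditions in (i) are equivalent, and in this case $T_C$ is the unique tree $K_{1,d}$, terminating the procedure.

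For part (ii), first note that $(T_C,I_C) \in \mathcal{T}_{n+1}(C)$ (take $\sigma = \mathrm{id}$), and that $B_{n+1}$ is computable: the quantities $K_m(C) = K_m(T_C,I_C)$ for $m \leq n$ are the numbers of factors of exponent $n+1-m$ in the known product $R_{T_C,I_C,n}$, and $\delta_1(C),\dots,\delta_n(C)$ are given by hypothesis. Since $\mathbf{T}_g$ is finite (Lemma \ref{finitenumberofleafgraphs}), the maximum of $B_{n+1}$ over $\mathcal{T}_{n+1}(C)$ is attained. By Proposition \ref{correctone}, $B_{n+1}(T_C,I_C,C) = \delta_{n+1}(C)$, and Theorem \ref{semistablemainlemma} gives $B_{n+1}(T_C,I_C,C) \geq B_{n+1}(T,I,C)$ for every $(T,I) \in \mathcal{T}_{n+1}(C)$ with $R_{T,I,n+1} \neq R_{T_C,I_C,n+1}^{\sigma}$; thus $(T_C,I_C)$ attains the maximum, whose value is $\delta_{n+1}(C)$. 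It remains to check the tie-break by largest $K_{n+1}$. Any maximizer $(T,I)$ with $R_{T,I,n+1} \neq R_{T_C,I_C,n+1}^{\sigma}$ must satisfy $K_{n+1}(T,I) < K_{n+1}(C)$, for otherwise Theorem \ref{semistablemainlemma}(ii) forces the strict inequality $B_{n+1}(T_C,I_C,C) > B_{n+1}(T,I,C)$, contradicting maximality. Conversely, any $(T,I)$ with $R_{T,I,n+1} = R_{T_C,I_C,n+1}^{\sigma}$ has $K_{n+1}(T,I) = K_{n+1}(C)$, since equal products (up to permutation) carry equally many exponent-$1$ factors. Therefore the maximizers of largest $K_{n+1}$ are exactly those with $R_{T,I,n+1} = R_{T_C,I_C,n+1}^{\sigma}$, and selecting any one yields $R_{T_C,I_C,n+1}$ up to $S_d$ together with $\delta_{n+1}(C) = B_{n+1}(T,I,C)$, as claimed.

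For part (iii), iterating (ii) from the trivial $R_{T_C,I_C,0}$ produces $R_{T_C,I_C,n}$ and $\delta_n(C)$ for each $n$, and after $n_C = n_{T_C,I_C}$ steps the full product $R_{T_C,I_C,n_C}$ is known. Via the exponent $n_C+1-m$ attached to each factor $(ij,kl)$, this product records the rank $m$ of $\delta(C_{ij},C_{kl})$, equivalently all the sets $S_1(T_C,I_C),\dots,S_{n_C}(T_C,I_C)$. The factors appearing are precisely those for which $C_{ij}$ and $C_{kl}$ are edge-disjoint and separated by positive distance (Proposition \ref{distancevalues}), so each four-element subset of singletons reveals the quartet pairing realized in the tree; the collection of all such pairings reconstructs the topology of $T_C$, and the ranks $m$ recover the ordering $I_C$, determining $(T_C,I_C)$ uniquely in $\mathbf{T}_g$. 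Finally, since $\delta_i(T_C,I_C)$ is by construction the $i$-th largest formal distance between proper vertices and $\delta_i(C)$ its numerical value, the asserted correspondence holds, and solving the resulting system expresses each edge length of $T_C$ through $\delta_1(C),\dots,\delta_{n_C}(C)$ by additivity of the tree metric.

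The main obstacle is the reconstruction claim in (iii): that the ranked quartet data $\{S_m(T_C,I_C)\}$ pins down $(T_C,I_C)$ uniquely. Showing the quartet pairings fix the topology needs care, since singletons sharing a proper vertex, or paths meeting only at a vertex, give factors of valuation $0$ and so never appear among the $S_m$; one must confirm that the surviving separated quartets still determine which singletons hang off which proper vertex and the internal tree structure. By contrast the tie-break verification in (ii) and the formal-to-numerical distance dictionary are comparatively mechanical, resting directly on Theorem \ref{semistablemainlemma}, Proposition \ref{correctone}, and the additivity of $\delta$.
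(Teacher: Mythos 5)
Your treatment of parts (i) and (ii) is correct and matches the paper: part (i) follows from Propositions \ref{potgoodredlemma} and \ref{potgoodred}, and your part (ii) argument — $(T_C,I_C)\in\mathcal{T}_{n+1}(C)$ attains the maximum by Proposition \ref{correctone} and Theorem \ref{semistablemainlemma}, any maximiser with $R_{T,I,n+1}\neq R_{T_C,I_C,n+1}^{\sigma}$ is forced by Theorem \ref{semistablemainlemma}(ii) to have $K_{n+1}(T,I)<K_{n+1}(C)$, while any $(T,I)$ with $R_{T,I,n+1}= R_{T_C,I_C,n+1}^{\sigma}$ has $K_{n+1}(T,I)=K_{n+1}(C)$, so the tie-break by largest $K_{n+1}$ selects exactly the right candidates — is the intended reasoning, spelled out in more detail than the paper gives.

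Part (iii), however, contains a genuine gap, and you have located it yourself. You assert that ``the collection of all such pairings reconstructs the topology of $T_C$, and the ranks $m$ recover the ordering $I_C$,'' and then concede in your final paragraph that this reconstruction claim is ``the main obstacle'' and ``needs care.'' But this claim is precisely the content of the paper's Proposition \ref{uniquelydetermines}, which is stated \emph{after} Theorem \ref{wholealgorithm} and proved as part of the theorem's proof — it is not a prior result you may cite, and a gesture towards classical quartet methods does not suffice in this setting. The difficulties you correctly identify (quartets whose paths share a vertex or edges produce factors of non-negative valuation and so never appear among the $S_m$; several singletons may hang off one proper vertex) are exactly what the paper must overcome: Lemma \ref{singletonslemma} shows that the exponent pattern of $R_{T,I,n_{T,I}}$ distinguishes singletons attached to proper leaves from those attached to internal proper vertices and recovers the sets $S_{T,v}$, and Lemma \ref{summandslemma} then reconstructs $(T,I)$ by induction, pruning a proper leaf, renormalising the surviving exponents, and identifying the result with the summand of the pruned tree. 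Without an argument of this kind (or some substitute proof that distinct elements of $\mathbf{T}_g$ cannot share a final summand up to the $S_{2g+2}$-action), the procedure is not shown to terminate at a \emph{unique} $(T_C,I_C)$, so the proposal does not yet prove the theorem.
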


The BY tree (see \cite{m2d2} Definition D.6) can then be read off from $T_C$ using the procedure described in Proposition \ref{byfromtc}. The dual graph of the special fibre of the minimal regular model of $C/K^{\textup{unr}}$ can be determined from the BY tree using \cite{m2d2} Theorem 5.18 when $C/K$ is semistable.

We delay the proof of Theorem \ref{wholealgorithm}, and first state the following corollary.

\begin{corollary}\label{invthm}
Let $C$ be a hyperelliptic curve of genus $g$ over a local field of odd residue chatacteristic $K$. The finite set 
\begin{equation}
\left\{(T,I,\textup{ord}(\textup{Inv}_{T,I,n}(C))):(T,I)\in \mathbf{T}_{g} \textup{ and } n\leq n_{T,I}\right\},
\end{equation}
where $n_{T,I}$ is the number of distinct distances between the vertices in $T$ with ordering $I$, uniquely determines:
\begin{enumerate}[(i)]
      \item The dual graph of the special fibre of the minimal regular model of $C/K^{\textup{unr}}$ if $C$ is semistable;
    \item The dual graph of the special fibre of the potential stable model of $C$ if $C/K$ is not semistable. 
\end{enumerate}
\end{corollary}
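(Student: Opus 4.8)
The plan is to present this as a packaging of the two results that do the real work: Theorem \ref{wholealgorithm}, which reconstructs the stable model tree $T_C$ from valuations of the absolute invariants, and Proposition \ref{semistablemain}, which passes from $T_C$ to the two dual graphs. Concretely, I would show that the finite dataset in the statement is exactly the input that the algorithm of Theorem \ref{wholealgorithm} consumes, so that running that algorithm on this data outputs the weighted tree $T_C$, and then invoke Proposition \ref{semistablemain}. The only genuinely new content at the level of the corollary is a bookkeeping check that every quantity the algorithm uses is a function of the given valuations.

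First I would dispose of the potentially good reduction case. By part (i) of Theorem \ref{wholealgorithm} (equivalently Proposition \ref{potgoodred}), $C/K$ has potentially good reduction -- i.e. $T_C\cong K_{1,d}$ -- precisely when $\textup{ord}(\textup{Inv}_{T,I,1}(C))\geq 0$ for all $(T,I)\in\mathbf{T}_g$. Since all these first valuations occur in the dataset (taking $n=1$), this condition is decidable from the data, and in the affirmative case $T_C$ is pinned down outright. Assuming instead that $C$ is not potentially good, I would run the inductive step (ii) of Theorem \ref{wholealgorithm} and verify at each stage that its inputs lie in the dataset. The key observations are that $\mathcal{T}_{n+1}(C)$, the numbers $K_{n+1}(T,I)=\#S_{n+1}(T,I)$, and the constants $K_m(T,I)$ are determined by the abstract pair $(T,I)$ alone, hence are independent of $C$; and that $B_{n+1}(T,I,C)$ is assembled from $\textup{ord}(\textup{Inv}_{T,I,n+1}(C))$ (present in the data), these combinatorial constants, and the distances $\delta_1(C),\dots,\delta_n(C)$ already reconstructed at earlier steps (using $K_m(C)=K_m(T_C,I_C)$). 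Thus, starting from the trivial base $R_{T_C,I_C,0}$, each step computes $R_{T_C,I_C,n+1}$ and $\delta_{n+1}(C)$ purely from the dataset, and after $n_C$ steps part (iii) yields $(T_C,I_C)$ together with all edge-lengths, i.e. the full weighted tree $T_C$.

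Finally I would apply Proposition \ref{semistablemain} to conclude that $T_C$ uniquely determines the dual graph of the special fibre of the minimal regular model of $C/K^{\textup{unr}}$ when $C$ is semistable, and of the potential stable model of $C$ otherwise; this is exactly the dichotomy (i)/(ii) in the statement. The main obstacle at the level of this corollary is the self-containedness verification in the inductive step: one must confirm that the maximization of $B_{n+1}$ over $\mathcal{T}_{n+1}(C)$, together with the tie-break selecting the largest value of $K_{n+1}(-,-)$, can be carried out knowing only the given valuations and the previously reconstructed $\delta_m(C)$, and in particular that recognizing termination (knowing $n_C$, equivalently detecting when $(T_C,I_C)$ has become rigid so that $\mathcal{T}_{n_C+1}(C)$ is a single equivalence class) requires no information outside the dataset. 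The substantive inequalities underlying the maximization are already established in Theorem \ref{semistablemainlemma} and Proposition \ref{correctone}, so everything beyond the bookkeeping is a direct citation of Theorem \ref{wholealgorithm} and Proposition \ref{semistablemain}.
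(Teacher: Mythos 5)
Your proposal is correct and follows essentially the same route as the paper: the paper's proof is exactly the citation of Theorem \ref{wholealgorithm} (to recover $T_C$ from the valuations) combined with Proposition \ref{nonsemistablemain} (to pass from $T_C$ to the two dual graphs), plus Lemma \ref{finitenumberofleafgraphs} for finiteness of $\mathbf{T}_g$, which you use implicitly. Your extra bookkeeping — checking that $\mathcal{T}_{n+1}(C)$, the constants $K_m(T,I)$, and the quantities $B_{n+1}(T,I,C)$ are all computable from the dataset and the previously reconstructed $\delta_m(C)$ — is a more careful unwinding of what the paper compresses into a one-line citation, not a different argument.
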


\begin{proof}
By Theorem \ref{wholealgorithm}, $T_C$ is uniquely determined by the valuations of the absolute invariants associated to possible stable model trees and possible orderings on the distances between their vertices in $\mathbf{T}_{g}$. By Lemma \ref{finitenumberofleafgraphs}, $\mathbf{T}_{g}$ is a finite set and so is the set of absolute invariants. By Proposition \ref{nonsemistablemain}, $T_C$ uniquely determines $(i)$ and $(ii)$. 
\end{proof}

\begin{remark}\label{degrees}
We can find a crude bound for the degree\footnote{By degree, we mean the degree of an absolute invariant as a rational function in the roots of an even degree Weierstrass equation for $C$, i.e. $\deg(f_1/f_2)=\max(\deg(f_1),\deg(f_2))$.} of the absolute invariants in 
\begin{equation}
\left\{\textup{Inv}_{T,I,n}(C):(T,I)\in \mathbf{T}_{g} \textup{ and } n\leq n_{T,I}\right\}.
\end{equation}
The number of times a factor $(X_i-X_j)^2$ can appear on the denominator of $\textup{Inv}_{T,I,n}$ is $\left(\frac{d-2}{2}\right)$. Applying Euler's formula tells us that the number of vertices of $T$ is at most $d-2$, and so the exponent for each factor $(X_i-X_j)^2$ is at most $\left(\frac{d-2}{2}\right)$. Hence
\begin{equation}
   \deg(\textup{Inv}_{T,I,n})\leq2\cdot\left(\frac{d}{2}\right)\cdot\left(\frac{d-2}{2}\right)^2= \frac{d(d-1)(d-2)^2(d-3)^2}{4}.
\end{equation}
\end{remark}

Remark \ref{degrees} tells us that for a genus $2$ curve, the valuation of all absolute invariants up to degree $6\cdot 5 \cdot 4^2\cdot 3^2/4=1080$ determine the dual graph. However, in \S\ref{genus2section} we list a set of absolute invariants that uniquely determine the dual graph of a genus $2$ curve using the methods of this paper, and the maximum degree is $300$.

\begin{remark}
It is not always necessary to recover $R_{T_C,I_C,n_{T_C,I_C}}$ in order to determine the stable model tree; we see in \S\ref{genus2section} that $T_C$ is uniquely determined by an earlier summand for genus $2$ curves. At what point the distances between vertices in a tree uniquely determines the tree appears to be an open problem in graph theory (see, for example, \cite{trees} for partial results on the subject).
\end{remark}

\begin{example}\label{ellipticcurvesex}
Let $E:y^2=x^3+ax+b$ be an elliptic curve over a local field $K$ of odd residue characteristic, and write $x_1, x_2$ and $x_3$ for the roots of the cubic. Adding in an extra root by making a change of model, there are two possible stable model trees for $E$, corresponding to $E$ having potentially multiplicative or potentially good reduction, where $\delta\in \mathbb{Q}$:
\begin{center}
\begin{minipage}[b]{0.3\textwidth}
\begin{center}
\begin{figure}[H]
\begin{tikzpicture}
				[scale=0.7, auto=left,every node/.style={circle,fill=black!20,scale=0.6}]
				
				\node (n1) at (0,0)  {};
                \node (n2) at (3,0)  {};

                 \node (n4) at (-0.5,-1)  {};
                \node (n5) at (0.5,-1)  {};
                \node (n6) at (2.5,-1)  {};
                \node (n7) at (3.5,-1)  {};

                \draw (n1) -- (n4) node [text=black, pos=0.4, left, fill=none] {};
                \draw (n1) -- (n5) node [text=black, pos=0.4, left, fill=none] {};
              
                \draw (n2) -- (n6) node [text=black, pos=0.4, left, fill=none] {};
                \draw (n2) -- (n7) node [text=black, pos=0.4, left, fill=none] {};

				\draw  (n1) -- (n2) node [text=black, pos=0.5, below, fill=none] {$\delta$};
                
			\end{tikzpicture}
        \caption*{Potentially multiplicative}
		\end{figure}
  \end{center}
		\end{minipage}\begin{minipage}[b]{0.3\textwidth}
\begin{center}
\begin{figure}[H]
			\begin{tikzpicture}
				[scale=0.7, auto=left,every node/.style={circle,fill=black!20,scale=0.6}]
				
				\node (n1) at (0,0)  {};

                \node (n4) at (-0.5,-0.5)  {};
                \node (n5) at (-0.5,0.5)  {};
                \node (n6) at (0.5,0.5)  {};
                \node (n7) at (0.5,-0.5)  {};

                \draw (n1) -- (n4) node [text=black, pos=0.4, left, fill=none] {};
                \draw (n1) -- (n5) node [text=black, pos=0.4, left, fill=none] {};
              
                \draw (n1) -- (n6) node [text=black, pos=0.4, left, fill=none] {};
                \draw (n1) -- (n7) node [text=black, pos=0.4, left, fill=none] {};
                
			\end{tikzpicture}
		 
        \caption*{Potentially good} 
        
		\end{figure}
  \end{center}
		\end{minipage}
  \end{center}
Applying Theorem \ref{wholealgorithm}, we must check the valuation of the absolute invariant associated to the stable model tree equivalent to potentially multiplicative reduction $I_1^1$, since by Proposition \ref{potgoodred} $E/K$ has potentially good reduction if and only if $\textup{ord}(I_1^1)\geq 0$. We can write out $I_1^1$ in terms of $X_1$, $X_2$, $X_3$ and $X_4$, and let $X_4\rightarrow\infty$ to get an expression\footnote{One can check that the value of the resulting invariant is consistent with starting with a Weierstrass model of the curve that has even degree.} in terms of $X_1$, $X_2$ and $X_3$. Evaluating this on the roots of $f(x)$, we obtain
\begin{equation}
    I_1^1=\frac{(x_1-x_3)(x_2-x_3)}{(x_1-x_2)^2}+\frac{(x_1-x_2)(x_3-x_2)}{(x_1-x_3)^2}+\frac{(x_3-x_1)(x_2-x_1)}{(x_3-x_2)^2}.
\end{equation}
Theorem \ref{wholealgorithm} tells us that $E/K$ has potentially multiplicative reduction if and only if $\textup{ord}(I_1^1)<0$. Writing in terms of the coefficients $a$ and $b$, we obtain
\begin{equation}
    I_1^1= \frac{3^3\cdot 2^4a^3}{4a^3+27b^2}-3=\frac{j_E}{16}-3. 
\end{equation}
Since $K$ was assumed to have odd residue characteristic, Theorem \ref{wholealgorithm} tells us that $E/K$ has potentially multiplicative reduction if and only if $\textup{ord}(j_E)<0$, and recovers this well known fact about elliptic curves. Theorem \ref{wholealgorithm} tells us that if $\textup{ord}(I_1^1)<0$ then $\delta=-\textup{ord}(I_1^1)/2=-\textup{ord}(j_E)/2$. Applying Proposition \ref{byfromtc} to obtain the BY tree and using this to construct the dual graph of the special fibre of the minimal regular model would tell us that the special fibre has $2\delta=-\textup{ord}(j_E)$ components, which is also well known. However, we cannot readily apply the BY tree construction to elliptic curves since Theorem 5.18 of \cite{m2d2} requires $g\geq 2$.
\end{example}

We now proceed to prove Theorem \ref{wholealgorithm}. We will need the following proposition. 

\begin{proposition}\label{uniquelydetermines}
Let $(T,I)\in \mathbf{T}_g$ and let $n_{T,I}$ be the number of distinct distances between the vertices of $T$ with the ordering $I$ on the distances between the vertices. Suppose $(T',I')\in \mathbf{T}_g$ with $(T,I)\neq (T',I')$ and $n_{T',I'}\geq n_{T,I}$. Then 
\begin{equation}
R_{T,I,n_{T,I}}\neq R_{T',I',n_{T,I}}^{\sigma} \textup{ for any $\sigma\in S_{2g+2}$}. 
\end{equation}
\end{proposition}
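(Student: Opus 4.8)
The plan is to show that the single monomial $R_{T,I,n_{T,I}}$ already encodes the full combinatorial type $(T,I)$, so that any $(T',I')$ producing the same monomial up to the $S_{2g+2}$-action must coincide with it. After applying $\sigma^{-1}$ I may assume $\sigma=\mathrm{id}$ and read the hypothesis as an equality of the two explicit products in $X_1,\dots,X_d$. Since each $R$ is by definition a product of the factors $(ij,kl)$ with prescribed exponents, and distinct pairs of pairs give distinct factors, matching the two sides amounts to matching these exponents factor by factor. By the definition of $R_{T,I,n_{T,I}}$ together with Proposition \ref{distancevalues}, the exponent of $(ij,kl)$ equals $n_{T,I}-m+1$ when $\{\{s_i,s_j\},\{s_k,s_l\}\}\in S_m(T,I)$ and equals $0$ when the paths $C_{ij}$ and $C_{kl}$ share an edge or meet in a single vertex. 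Because $n_{T,I}$ is the total number of distinct distances of $T$, every pair of pairs with edge-disjoint paths of positive connecting distance occurs with a strictly positive exponent, and that exponent is a strictly decreasing function of the distance-rank $m$.

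First I would translate the exponent data into quartet data. For four singletons the three pairings correspond to the three ways of splitting them into two pairs; in a tree exactly one pairing has edge-disjoint paths precisely when the minimal subtree on the four leaves is not a star (the quartet is \emph{resolved}), and that pairing is then the unique one with positive connecting distance, so it appears in $R_{T,I,n_{T,I}}$ with positive exponent, while an unresolved (star) quartet contributes no factor. Hence from $R_{T,I,n_{T,I}}$ one reads off, for every quartet of singletons, whether it is resolved and, if so, its topology — that is, the complete quartet system of $T$ — together with the total preorder on the positive distances induced by the exponents. I would then invoke the classical reconstruction of a leaf-labelled tree from its quartet system (the four-point condition: a tree is uniquely determined by its resolved quartets with their topologies and its set of unresolved quartets) to recover the topology of $T$, and use the exponent-ordering to recover $I$. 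This shows that the data carried by $R_{T,I,n_{T,I}}$ determines $(T,I)$ up to the labelling $\sigma$.

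The one genuine difficulty, and the step I expect to be the main obstacle, is the asymmetry coming from the hypothesis $n_{T',I'}\ge n_{T,I}$, which may be strict. Whereas $R_{T,I,n_{T,I}}$ records \emph{all} resolved quartets of $T$, the monomial $R_{T',I',n_{T,I}}$ records only those resolved quartets of $T'$ of distance-rank at most $n_{T,I}$, so $T'$ could a priori resolve extra quartets — its smallest distances — that are invisible at this truncation, making $T$ a strict coarsening of $T'$. Ruling this out requires using the \emph{exact} exponents (the ranks), not merely which factors appear: if $T'$ resolved a quartet left unresolved by $T$, then passing from $T'$ to $T$ would contract a positive-length internal path, strictly decreasing the connecting distances of all quartets whose path crosses it while fixing the others, and I would argue that this necessarily reorders some pair of distances of rank $\le n_{T,I}$ (equivalently, moves some pair of pairs between the sets $S_m$), contradicting the matching of exponents on the quartets that do appear; here the completeness of $R_{T,I,n_{T,I}}$ for $T$ together with $n_{T',I'}\ge n_{T,I}$ forces $n_{T',I'}=n_{T,I}$ and hence $(T,I)=(T',I')$. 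Making this reordering argument clean, and keeping careful track of the degenerate distance-zero (star) quartets throughout, is where the real work lies; the quartet-reconstruction step itself is standard.
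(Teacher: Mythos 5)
Your core claim---that the full monomial $R_{T,I,n_{T,I}}$ already encodes $(T,I)$---is also the heart of the paper's argument, but your route to it is genuinely different. The paper proves this as Lemma \ref{summandslemma} via a self-contained induction: Lemma \ref{singletonslemma} reads off from the exponent pattern which singletons sit at proper leaves and groups the singletons by proper vertex, and one then deletes a proper leaf and recurses. You instead extract the system of resolved quartets with their topologies and ranks and invoke the classical quartet-reconstruction theorem. That substitution is legitimate, but it relies on two facts you use only implicitly: stable model trees have no proper vertices of degree two once singletons are counted (every proper cluster has at least two children, proper leaves carry at least two singletons, and a top vertex with exactly two children is removed in Definition \ref{stablemodeltreedef}), which is what makes quartet reconstruction applicable; and every pair of proper vertices occurs as the connecting pair of some resolved quartet, which is what makes the exponent data recover all of $I$. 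Two smaller corrections: a factor $(ij,kl)$ appears exactly when $C_{ij}$ and $C_{kl}$ are \emph{vertex}-disjoint, equivalently have positive connecting distance, not edge-disjoint (in an unresolved quartet all three pairings are edge-disjoint but share the median vertex); and ``distinct pairs of pairs give distinct factors, so match exponents factor by factor'' is not a valid inference for equality of rational functions, because the factors satisfy multiplicative relations such as $(ij,kl)(ik,jl)(il,jk)=-1$; the inference is only automatic under the paper's implicit convention that the $R$'s are compared as formal products, i.e.\ as exponent data.

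The genuine gap is exactly the step you flag yourself: the case $n_{T',I'}>n_{T,I}$. Everything after ``I would argue that this necessarily reorders some pair of distances'' is assertion, and that assertion is the entire content of the asymmetric case, so as written your argument proves the proposition only when $n_{T',I'}=n_{T,I}$. (You also silently use the standard phylogenetic fact that containment of quartet systems forces $T'$ to refine $T$, which is what licenses speaking of a contraction at all.) The claim is true and can be closed along your lines, but one must exhibit a tie of $T$ that $T'$ is forced to break. Suppose $S_m(T,I)=S_m(T',I')$ for all $m\le n_{T,I}$ and $T'$ strictly refines $T$; let $e=u_1u_2$ be a contracted edge with image $v$ in $T$, and let $w\neq v$ be a proper vertex of $T$ (it exists since $T\neq K_{1,2g+2}$), say lying across $e$ from $u_1$, witnessed by singletons $s_k,s_l$ in two branches of $w$ away from $v$. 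Choose singletons $a_1,a_2$ in two distinct branches of $u_1$ other than $e$, and a singleton $b$ in a branch of $u_2$ other than $e$ and other than the branch towards $w$; all of these exist because every proper vertex has degree at least three and every branch contains a singleton. Then $\{\{a_1,a_2\},\{s_k,s_l\}\}$ and $\{\{a_1,b\},\{s_k,s_l\}\}$ are both resolved in $T$ with connecting distance $\delta(v,w)$, hence lie in a common $S_m(T,I)$ with $m\le n_{T,I}$; but in $T'$ their connecting paths leave from $u_1$ and from $u_2$ respectively and end at the same vertex, so their connecting distances differ by $\delta_{T'}(u_1,u_2)>0$ and they cannot lie in a common $S_m(T',I')$, a contradiction. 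This forces $n_{T',I'}=n_{T,I}$, after which the full-summand reconstruction (yours, or the paper's Lemma \ref{summandslemma}) applies to both sides and finishes the proof. It is fair to note that the paper's own proof of this proposition is a one-line appeal to Lemma \ref{summandslemma}, which as stated also reconstructs a tree only from its \emph{full} summand, so your instinct that the truncated case needs a separate argument is a careful reading of the statement; but a complete proof has to supply that argument, and your proposal does not.
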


In order to prove Proposition \ref{uniquelydetermines}, we will first need the following lemmata. 

\begin{lemma}\label{singletonslemma}
Let $(T,I)\in \mathbf{T}_g$ and let $n_{T,I}$ be the number of distinct distances between the vertices of $T$ with ordering $I$. Fix $R_{T,I,n_{T,I}}^\sigma$ a summand of $\textup{Inv}_{T,I,n_{T,I}}$. Then $R_{T,I,n_{T,I}}^\sigma$ uniquely determines 
\begin{align}
P_{T}&=\{v:\textup{ $v$ is a proper vertex of $T$ and has at least one singleton vertex attached}\},\\
L_{T} &=\{v:\textup{ $v$ is a proper leaf of $T$}\} \textup{ and } \\
S_{T,v}&=\{s_i:s_i\textup{ is attached to $v$ in $T$}\} \textup{ for all $v\in P_{T}$}
\end{align}
for a fixed labelling of the singletons $s_1,\dots,s_{2g+2}$ of $T$.
\end{lemma}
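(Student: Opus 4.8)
The plan is to read off from the product expression defining $R_{T,I,n_{T,I}}^\sigma$ a single combinatorial object and to express each of $P_T$, $S_{T,v}$ and $L_T$ in terms of it. Since $n=n_{T,I}$ is the total number of distinct distances, every exponent $n-m+1$ occurring in the definition of $R_{T,I,n}$ (with $1\le m\le n$) is at least $1$, so each pair of pairs in $S_1(T,I)\cup\dots\cup S_n(T,I)$ contributes a genuine factor $(ij,kl)$. Distinct pairs of pairs give distinct factors, and the sets $S_m(T,I)$ are disjoint because each pair of pairs has a single value $\delta(C_{ij},C_{kl})$. Hence the first step is the observation that the support of $R_{T,I,n_{T,I}}^\sigma$ — the collection of index pairs-of-pairs whose factor occurs — is exactly the set $\mathcal S:=\bigcup_m S_m(T,I)$ (relabelled by $\sigma$) of all $\{\{s_i,s_j\},\{s_k,s_l\}\}$ for which the paths $C_{ij}$ and $C_{kl}$ are disjoint in $T$. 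Everything that follows is a statement about $\mathcal S$, which $R_{T,I,n_{T,I}}^\sigma$ determines; I may assume $\sigma=\mathrm{id}$, the general case following by relabelling.

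The central step is to recover the relation ``$s_i$ and $s_j$ lie on a common proper vertex'' from $\mathcal S$. I would define $s_i\sim s_j$ to hold precisely when $i,j$ are never separated, i.e.\ when no pair of pairs $\{\{s_i,s_k\},\{s_j,s_l\}\}$ lies in $\mathcal S$. If $s_i,s_j$ are attached to the same proper vertex $v$, then for every $k,l$ both $C_{ik}$ and $C_{jl}$ pass through $v$, so they share the vertex $v$ and cannot be disjoint; thus $\{\{s_i,s_k\},\{s_j,s_l\}\}\notin\mathcal S$ and $s_i\sim s_j$. The equivalence classes of $\sim$ then give the proper vertices carrying singletons, so $P_T$ and the sets $S_{T,v}$ are read off as these classes.

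The main obstacle is the converse of this biconditional: if $s_i,s_j$ sit on distinct vertices $v\ne w$, I must produce $s_k,s_l$ with $C_{ik}$ and $C_{jl}$ disjoint (so $s_i\not\sim s_j$). Removing the first edge of the path from $v$ to $w$ splits $T$ into a component $X\ni s_i$ and a component $Y\ni s_j$; any $s_k\in X\setminus\{s_i\}$ and $s_l\in Y\setminus\{s_j\}$ yield paths contained in $X$ and $Y$ respectively, hence disjoint and on four distinct indices. It therefore suffices to show each of $X,Y$ contains a second singleton, which is immediate when $|S_{T,v}|\ge 2$. The delicate case is $|S_{T,v}|=1$, and here I would invoke the structural fact — forced by Definition \ref{stablemodeltreedef}, in particular the removal of $v_{\mathcal R}$ and the reattachment rule when $\mathcal R$ splits off a singleton — that a proper vertex carrying a single singleton has at least two proper neighbours. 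At least one such neighbour leads away from $w$, and its subtree lies inside $X$ and contains a further singleton. Making this structural claim precise and checking the degenerate $v_{\mathcal R}$ configurations is where the real work lies.

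Finally, I would recover $L_T$ from $\mathcal S$ and the partition. A proper leaf necessarily carries at least two singletons, so any class with $|S_{T,v}|=1$ is declared non-leaf. For a class $S_{T,v}$ pick $s_a,s_b\in S_{T,v}$; since the only proper vertex on $C_{ab}$ is $v$, the path $C_{kl}$ meets $v$ if and only if $\{\{s_a,s_b\},\{s_k,s_l\}\}\notin\mathcal S$, and when they are disjoint the connecting path runs between proper vertices so its length is one of the $\delta_m$. As a proper vertex is a leaf of the proper tree exactly when it separates no two singletons lying off it, I conclude that $v$ is a proper leaf if and only if $\{\{s_a,s_b\},\{s_k,s_l\}\}\in\mathcal S$ for all singletons $s_k,s_l\notin S_{T,v}$. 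This criterion is phrased entirely through $\mathcal S$, so $L_T$ is determined, completing the recovery of all three sets.
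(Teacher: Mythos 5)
Your proposal is correct, but it takes a genuinely different route from the paper's proof. The paper works with the \emph{exponents} in $R_{T,I,n_{T,I}}$: for each index $r$ it considers a factor $(rj,kl)^{w_r}$ of greatest exponent containing $r$, characterises ``$s_r$ is attached to a proper leaf'' by whether some $j'$ with $(rj',kl)^{w_r}$ a factor also appears in a factor of exponent exceeding $w_r$, and then reads off $S_{T,v}$ from these maximal-exponent factors (its Claims 1 and 2 and statements (i)--(iii)). You instead use only the \emph{support} of $R_{T,I,n_{T,I}}^\sigma$ --- the set $\mathcal S$ of pairs-of-pairs whose paths are vertex-disjoint, which is indeed $\bigcup_m S_m(T,I)$ since every exponent $n_{T,I}-m+1$ is positive and every disjoint pair of paths connects two proper vertices, so its distance is one of the $\delta_m(T,I)$ --- and recover $P_T$ and the sets $S_{T,v}$ as the classes of the ``never separated'' relation, with a separate support-only criterion for $L_T$. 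This buys a formally stronger and cleaner statement: the unweighted incidence data of which factors occur already determines $P_T$, $L_T$ and the $S_{T,v}$, with no reference to the ordering $I$; the paper's argument, by contrast, stays inside the exponent framework that the subsequent induction in Lemma \ref{summandslemma} manipulates. Both arguments lean on unproven but true structural facts about stable model trees (every proper cluster has at least two children, so proper vertices have degree $\geq 2$ and leaves of $T$ are singletons; proper leaves carry at least two singletons; the paper likewise silently assumes proper leaves carry singletons). The one place you stop short --- that a proper vertex carrying exactly one singleton has at least two proper neighbours, including in the degenerate configurations where $v_{\mathcal R}$ is removed --- is genuinely true and is a routine case check on Definition \ref{stablemodeltreedef}: a non-root proper cluster with one singleton child must have a proper child as well as a parent; a surviving root $v_{\mathcal R}$ has at least three children, hence at least two proper ones if only one is a singleton; and when $v_{\mathcal R}$ is removed and a singleton is reattached to the remaining proper child $\mathfrak s$, that child having no further singleton children forces it to have at least two proper children. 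So the flagged step completes without difficulty and your proof goes through.
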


\begin{proof}
It suffices to prove that $R_{T,I,n_{T,I}}$ uniquely determines $(T,I)$ for a fixed labelling of the singletons, since acting by $\sigma$ corresponds to relabelling the singletons of $T$. 

We will prove the following three statements, from which the lemma follows. Let $s_r$ be a singleton of $T$ and let $(rj,kl)^{w_r}$ be a factor of greatest exponent where $r$ appears. Then
\begin{enumerate}[(i)]
\item $s_r$ is attached to a proper leaf vertex $v_L$ in $T$ if and only if there does not exist a factor $(rj',kl)^{w_r}$ where $j'$ appears in a factor of exponent greater than $w_r$.
\item If $s_r$ is attached to a proper leaf vertex $v_L$, the set of other singleton vertices attached to $v_L$ is
\begin{equation}
\{s_q : (rq,kl)^{w_r} \textup{ is a factor of }R_{T,I,n_{T,I}}\}.
\end{equation} 
\item If $s_r$ is attached to a non-leaf proper vertex $v_P$, the set of other singleton vertices attached to $v_L$ is 
\begin{equation}
\{s_q : (rq,kl)^{w_i} \textup{ is a factor of }R_{T,I,n_{T,I}}\textup{ and $q$ does not appear in a factor of exponent $>w_r$}\}.
\end{equation}
\end{enumerate}

We start by proving two claims.

\textit{Claim 1:} If $s_r$ is attached to a proper leaf vertex $v_L$ in $T$ and $(rj,kl)^{w_r}$ is the factor of greatest exponent where $r$ appears in $R_{T,I,n_{T,I}}$ then $s_j$ must be attached to $v_L$. To see this, let $v$ denote the vertex in $C_{rj}$ on the shortest (in terms of number of edges) path from $C_{rj}$ to $C_{kl}$ in $T$ and $w$ the vertex in $C_{kl}$ on this path. Suppose for a contradiction that $s_j$ is not attached to $v_L$. Then if $s_{j'}$ is attached to $v_L$ we have $\delta(C_{rj'},C_{kl})=\delta(v_L,w)>\delta(v,w)=\delta(C_{rj},C_{kl})$ since $v$ lies on the path from $v_L$ to $w$, which by the definition of $R_{T,I,n_{T,I}}$ contradicts the fact that $(ri,kl)^{w_r}$ is the factor of greatest exponent where $r$ appears. 

\textit{Claim 2:} Suppose $s_r$ is attached to a non-leaf proper vertex $v_P$ in $T$ and let $(rj,kl)^{w_r}$ be the factor of greatest exponent where $r$ appears. Let $v$ denote the vertex in $C_{rj}$ on the shortest path from $C_{rj}$ to $C_{kl}$ and $w$ the vertex in $C_{kl}$ on this path. We claim that $v=v_P$. For a contradiction, suppose $v\neq v_P$ and let $v_L$ be a proper leaf such that $v_P$ lies on the path from $v_L$ to $v$. Suppose $s_{j'}$ is attached to $v_L$. Then $\delta(C_{rj'},C_{kl})=\delta(v_P,w)>\delta(v,w)=\delta(C_{rj},C_{kl})$, contradicting the fact that $(rj,kl)^{w_r}$ is the factor of greatest exponent where $r$ appears. 

For the forwards direction of $(i)$, suppose $s_r$ is attached to a proper leaf vertex $v_L$ in $T$ and let $(rj,kl)^{w_r}$ be the factor of greatest exponent where $r$ appears. Suppose for a contradiction that $(rj',kl)^{w_r}$ appears as a factor of $R_{T,I,n_{T,I}}$ and that $j'$ appears in the factor $(ij',k'l')^{w_{j'}}$ with $w_{j'}>w_r$. Since $(rj',kl)^{w_r}$ appears as a factor of $R_{T,I,n_{T,I}}$, by Claim 1 $s_j'$ is attached to $v_L$ in $T$. Then since $s_r$ and $s_j'$ are attached to $v_L$, we have $\delta(C_{ir},C_{k'l'})=\delta(C_{ij'},C_{k'l'})$ and so $(ir,k'l')^{w_{j'}}$ appears as a factor of $R_{T,I,n_{T,I}}$, contradicting the fact that $(rj,kl)^{w_r}$ is the factor of greatest exponent where $r$ appears.

For the converse of $(i)$, suppose $s_r$ is attached to a non-leaf proper vertex $v_P$ in $T$ and let $(rj,kl)^{w_r}$ be the factor of greatest exponent where $r$ appears. By Claim 2, $v_P$ is the vertex in $C_{rj}$ on the shortest path from $C_{rj}$ to $C_{kl}$. Let $w$ denote the vertex in $C_{kl}$ on this path and let $v_L$ be a proper leaf such that $v_P$ lies on the path from $v_L$ to $w$. Then if $s_{j'}$ is attached to $v_L$ we have $\delta(C_{rj'},C_{kl})=\delta(v_P,w)=\delta(C_{rj},C_{kl})$ and so $(rj',kl)^{w_r}$ appears as a factor. But for $s_i$ attached to $v_L$, we have $\delta(C_{ij'},C_{kl})=\delta(v_L,w)>\delta(v_P,w)=\delta(C_{rj},C_{kl})$, and so $j'$ appears in a factor with a greater exponent.

For $(ii)$, if $s_r$ is attached to a proper leaf vertex $v_L$, the set of singleton vertices attached to $v_L$ is
\begin{equation}
S_{T,v_L}=\{s_r\}\cup\{s_q : (rq,kl)^{w_r} \textup{ is a factor of }R_{T,I,n_{T,I}}\},
\end{equation} 
by Claim 1 and the fact that if $s_q\in S_{T,v_L}$ then $(rq,kl)^{w_r}$ appears as a factor of $R_{T,I,n_{T,I}}$ since $s_r\in S_{T,v_L}$.  

For $(iii)$, we want to show that if $s_r$ is attached to a non-leaf proper vertex $v_P$, the set of singleton vertices attached to $v_P$ is
\begin{equation}
S_{T,v_P}=\{s_r\}\cup\{s_q : (rq,kl)^{w_r} \textup{ is a factor of }R_{T,I,n_{T,I}}\textup{ and $q$ does not appear in a factor of exponent $>w_r$}\}.
\end{equation}
The forward inclusion follows from the fact that if $s_q\in S_{T,v_P}$ then $(rq,kl)^{w_r}$ appears as a factor of $R_{T,I,n_{T,I}}$. For the reverse inclusion, we will show that if $s_q\not\in S_{T,v_P}$ then either $(rq,kl)^{w_r}$ is not a factor of $R_{T,I,n_{T,I}}$ or $q$ appears in a factor of exponent $>w_r$. Suppose $s_q\not\in S_{T,v_P}$. Since $s_r\in v_P$, either $s_q$ is attached to a vertex $v'$ such that $v_P$ lies on the path from $v'$ to $C_{kl}$, or $s_q$ is attached to a vertex such that the path between $C_{rq}$ and $C_{kl}$ goes from a vertex $v$ in $C_{rq}$ which lies on the path between $v_P$ and $C_{kl}$. In the former case, there exists an $s_i$ attached to a proper leaf $v_L$ (possibly with $v_L=v'$) such that $v'$ and $v_P$ lie on the path from $v_L$ to $C_{kl}$. As such, $(iq,kl)$ appears with exponent greater than $w_r$. In the latter case, $(rq,kl)$ appears with exponent smaller than $w_r$ and so $(rq,kl)^{w_r}$ is not a factor of $R_{T,I,n_{T,I}}$.

In this fixed labelling of the singletons, this uniquely determines $P_{T}$, $L_{T}$ and $S_{T,v}$ for $v\in P_{T}$ since $n_{T,I}$ is the number of distinct distances between the proper vertices of $(T,I)$, so all variables $X_1,\dots,X_{2g+2}$ appear in the expression for $R_{T,I,n_{T,I}}$ and the position of all corresponding singletons can be identified.
\end{proof}

\begin{lemma}\label{summandslemma}
Let $(T,I)\in \mathbf{T}_g$ and let $n_{T,I}$ be the number of distinct distances between the vertices of $T$ with ordering $I$. Then $R_{T,I,n_{T,I}}^\sigma$ uniquely determines $(T,I)$. 
\end{lemma}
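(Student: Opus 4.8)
The plan is to show that the monomial $R_{T,I,n_{T,I}}^{\sigma}$ carries enough combinatorial information to reconstruct both the tree $T$ and the ordering $I$. Since acting by $\sigma\in S_{2g+2}$ merely relabels the singletons, I may assume $\sigma=\textup{id}$ and work with a fixed labelling $s_1,\dots,s_{2g+2}$; reconstructing the leaf-labelled pair and then passing to its isomorphism class recovers the element of $\mathbf{T}_g$. From $R_{T,I,n_{T,I}}$ I read off two pieces of data: the set of factors $(ij,kl)$ that occur and the exponent with which each occurs. By Definition \ref{invariantsdefinition}, a factor $(ij,kl)$ occurs precisely when $\{\{s_i,s_j\},\{s_k,s_l\}\}\in S_m(T,I)$ for some $m\le n_{T,I}$, i.e. precisely when $C_{ij}$ and $C_{kl}$ are non-intersecting paths (Definition \ref{distancepaths}) with $\delta(C_{ij},C_{kl})=\delta_m(T,I)>0$, and in that case its exponent equals $n_{T,I}-m+1$. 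Thus the largest exponent appearing equals $n_{T,I}$, and for every occurring factor the relation $m=n_{T,I}-(\textup{exponent})+1$ recovers the rank $m$ of $\delta(C_{ij},C_{kl})$. In particular $R_{T,I,n_{T,I}}$ determines, for every quadruple of distinct singletons, whether $C_{ij}$ and $C_{kl}$ are vertex-disjoint and, when they are, the relative order of $\delta(C_{ij},C_{kl})$ among all such path--path distances. I also invoke Lemma \ref{singletonslemma}, which already extracts from $R_{T,I,n_{T,I}}^{\sigma}$ the sets $P_T$, $L_T$ and $S_{T,v}$ for $v\in P_T$.

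First I would reconstruct the topology of $T$ as a leaf-labelled tree. A factor $(ij,kl)$ occurs if and only if the quartet $ij\mid kl$ is the resolved topology of $\{s_i,s_j,s_k,s_l\}$ in $T$: indeed $C_{ij}$ and $C_{kl}$ are vertex-disjoint exactly when some edge of $T$ separates $\{s_i,s_j\}$ from $\{s_k,s_l\}$, whereas the other two pairings share that separating edge and so do not occur. Hence $R_{T,I,n_{T,I}}$ determines the induced quartet topology of every four-element subset of the singletons (including which quadruples are unresolved, namely those contributing no factor). I would then show that every split of the singletons induced by an edge of $T$ is detected by such a resolved quartet. The key point is that in a stable model tree every proper vertex has degree at least $3$: every proper cluster has at least two children, giving two descending edges, and in addition a parent edge or a further child; proper leaves in particular carry at least two singletons. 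Consequently each side of an internal edge $e=vv'$ contains at least two singletons in distinct branches at the endpoint on that side, and choosing $s_i,s_j$ on the $v$-side and $s_k,s_l$ on the $v'$-side produces a resolved quartet whose separating edge is $e$. Since the collection of edge-splits determines a tree uniquely (the four-point condition; see \cite{buneman}), this recovers the full topology of $T$, including any internal proper vertices carrying no singletons, and it glues consistently with the pendant data of Lemma \ref{singletonslemma}.

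Having recovered $T$, I would recover the ordering $I$. By Definition \ref{distancepaths} every path--path distance $\delta(C_{ij},C_{kl})$ equals $\delta(v,w)$ for the proper vertices $v\in C_{ij}$, $w\in C_{kl}$ realizing the connecting path; conversely I claim every distance $\delta(v,w)$ between two proper vertices arises this way. Let $e_v$ be the edge of the $v$-to-$w$ path incident to $v$. Because $v$ has degree at least $3$, the side of $e_v$ containing $v$ has at least two singletons in distinct branches at $v$, and similarly at $w$; picking $s_i,s_j$ in distinct $v$-branches and $s_k,s_l$ in distinct $w$-branches gives $\delta(C_{ij},C_{kl})=\delta(v,w)$. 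Thus every proper-vertex distance is realizable, its rank is read from the exponent of the corresponding factor, and comparing these ranks over all pairs $(v,w)$---whose positions in $T$ are now known---reconstructs the total preorder $I$ on the pairwise distances between proper vertices. This determines $(T,I)$ uniquely.

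The main obstacle is this realizability step: the recovery of $I$ rests entirely on the fact that every distance between proper vertices actually occurs as some $\delta(C_{ij},C_{kl})$, for otherwise $R_{T,I,n_{T,I}}$ would be blind to where that distance sits in $I$, and two distinct orderings could yield the same summand. This is exactly where the structural input that every proper vertex of a stable model tree has degree at least $3$ (equivalently, that proper leaves carry at least two singletons, so there are no pendant degree-$2$ proper vertices) is essential, and I would isolate it as a preliminary observation about stable model trees. The secondary technical point is the split-detection argument guaranteeing that singleton-free internal vertices are reconstructed, which I would phrase cleanly through the four-point condition rather than by an ad hoc case analysis.
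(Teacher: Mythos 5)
Your proof is correct, but it takes a genuinely different route from the paper's. The paper argues by induction on the number of proper vertices carrying singletons: using Lemma \ref{singletonslemma} it locates a proper leaf $v_L$ and its attached singletons, deletes from $R_{T,I,n_{T,I}}$ every factor involving all but one of those singleton variables, renormalises the surviving exponents, identifies the resulting expression as the top summand $R_{T_N,I_N,n_{T_N,I_N}}$ of the pruned tree $T_N$ (namely $T$ with $v_L$ removed and the retained singleton reattached to its neighbour), applies the inductive hypothesis to recover $T_N$, and then reattaches $v_L$; the ordering $I$ is read off from the exponents only at the very end. You instead reconstruct $(T,I)$ in one shot: occurrence of a factor $(ij,kl)$ is equivalent to the paths $C_{ij}$ and $C_{kl}$ being vertex-disjoint, i.e.\ to the quartet $ij\mid kl$ being resolved; the structural fact that every proper vertex of a stable model tree has degree at least $3$ (every proper cluster has at least two children, and degree-$2$ top vertices are removed in Definition \ref{stablemodeltreedef}) guarantees that every edge-split and every distance between proper vertices is witnessed by some resolved quartet; standard split/quartet reconstruction then recovers the topology, and the exponents recover the ranks, hence $I$. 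Both arguments are sound. What your route buys: it is non-inductive, shorter modulo the cited reconstruction theorem, and it makes explicit the two structural inputs (degree $\geq 3$ and realisability of every proper-vertex distance as some $\delta(C_{ij},C_{kl})$) that the paper uses only implicitly — for instance, the fact that the exponents occurring in $R_{T,I,n_{T,I}}$ run through $1,\dots,n_{T,I}$ presupposes every $S_m(T,I)\neq\emptyset$, which is precisely your realisability claim. What the paper's route buys: it is self-contained, with no appeal to an external tree-reconstruction theorem, and it reuses Lemma \ref{singletonslemma} directly, whereas your invocation of that lemma is actually redundant, since the quartet reconstruction already returns the leaf-labelled tree together with its singleton attachments. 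One small point of precision: the result you need is Buneman's theorem that a leaf-labelled tree with no degree-$2$ internal vertices is determined by its splits (equivalently, by its induced quartets), which is a sharper statement than the four-point condition you name; \cite{buneman} contains it, but the appeal should be phrased accordingly.
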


\begin{proof}
It suffices to prove that $R_{T,I,n_{T,I}}$ uniquely determines $(T,I)$, since acting by $\sigma$ corresponds to relabelling the singletons of $T$. We will first prove that $R_{T,I,n_{T,I}}$ uniquely determines $T$ with a fixed labelling of the singletons. We proceed by induction on $n=\# P_{T}$, the number of proper vertices with at least one singleton attached, which can be obtained from $R_{T,I,n_{T,I}}$ by Lemma \ref{singletonslemma}.

\textit{Base case:} If $n=2$, $T$ consists of two proper leaf vertices attached by an edge. By Lemma \ref{singletonslemma}, the set of singletons attached to each proper leaf is uniquely determined by $R_{T,I,1}$, hence $T$ with a corresponding labelling of the singletons is uniquely determined by $R_{T,I,1}$.

\textit{Inductive hypothesis:} We will assume that $R_{T,I,n_{T,I}}$ uniquely determines $T$ with a fixed labelling of the singletons for $\# P_{T}=N$. 

\textit{Inductive step:}
Suppose $\# P_{T}=N+1$. By Lemma \ref{singletonslemma}, $R_{T,I,n_{T,I}}$ uniquely determines $P_{T}$, $L_{T}$ and $S_{T,v}$ for all $v\in P_{T}$. Fix $v_L\in L_{T}$ a leaf of $T$ and assume, without loss of generality, that 
\begin{equation}
S_{T,v_L}=\{s_1,\dots,s_a\}. 
\end{equation}
In the expression for $R_{T,I,n_{T,I}}$, delete any factors containing the variables $X_2,\dots,X_a$ and denote by $R'$ the resulting expression. 

Let $e$ denote the number of distinct exponents of the factors $(ij,kl)$ in the expression for $R'$ and define $e_{ijkl}=e-m+1$ for $(ij,kl)$ a factor of $R' $ of $m$-th largest exponent. Now define
\begin{equation}
R'' = \prod_{\substack{(ij,kl) \textup{ a factor } \\\textup{of } R' }}(ij,kl)^{e_{ijkl}}. 
\end{equation}
Denote by $v$ the vertex attached to the fixed proper leaf $v_L$ in $T$. Considering $T$ as a graph with a fixed labelling of the singletons, let $(T_N,I_N)$ denote $T$ but with $v_L$ removed and $s_1$ attached to $v$, and with the ordering on the distances between the vertices $I_N$ induced by $I$. We claim that 
\begin{equation}
R''=R_{T_N,I_N,n_{T_N,I_N}}.
\end{equation}
Henceforth, let $C_{ij}$ denote the shortest path between singletons $s_i$ and $s_j$ in $T$ and let $\Tilde{C}_{ij}$ denote the shortest path between singletons $s_i$ and $s_j$ in $T_N$. In order to prove the claim, we will first show that $(ij,kl)$ appears as a factor in $R'' $ if and only if it appears in $R_{T_N,I_N,n_{T_N,I_N}}$. For $i,j,k,l\neq 1$, it is clear that $(ij,kl)$ appears as a factor of $R''$ if and only if it appears as a factor of $R_{T_N,I_N,n_{T_N,I_N}}$. This is due to the fact that for $i,j,k,l\not\in\{1,\dots, a\}$, $C_{ij}=\Tilde{C}_{ij}$ and $C_{kl}=\Tilde{C}_{kl}$ as paths in $T$ and $T_N$ by the definition of $T_N$. 

We will now show that $(1j,kl)$ appears as a factor in $R'' $ if and only if it appears in $R_{T_N,I_N,n_{T_N,I_N}}$. For the forwards direction, suppose $(1j,kl)$ appears as a factor in $R'' $. We claim that the shortest path between $C_{1j}$ and $C_{kl}$ goes between $w$ and $w'$ in $T$ if and only if the shortest path between $\Tilde{C}_{1j}$ and $\Tilde{C}_{kl}$ goes between $w$ and $w'$ in $T_N$. We first note that $s_j$, $s_k$ and $s_l$ are not attached to $v_L$ in $T$ since otherwise this factor would have been deleted. We also note that $s_k$ and $s_l$ are not attached to $v$ in $T$ since otherwise $s_j$ is attached to $v_L$ which is a contradiction. Thus, $C_{kl}=\Tilde{C}_{kl}$ as paths in $T$ and $T_N$, and $\Tilde{C}_{1j}$ is $C_{1j}$ but with $v_L$ removed, since $v$ is the second vertex on any path from $v_L$ to another vertex in $T$ and $s_1$ is assumed to be attached to $v$ in $T_N$. Since $v_L$ is a proper leaf vertex, this means that the shortest path between $C_{1j}$ and $C_{kl}$ goes between $w$ and $w'$ in $T$ if and only if the shortest path between $\Tilde{C}_{1j}$ and $\Tilde{C}_{kl}$ goes between $w$ and $w'$ in $T_N$. Hence $(1j,kl)$ appears as a factor in $R_{T_N,I_N,n_{T_N}}$. 

For the converse, suppose $(1j,kl)$ appears as a factor in $R_{T_N,I_N,n_{T_N}}$. Then $s_j$, $s_k$ and $s_l$ are not attached to $v_L$ since $v_L$ is not a vertex in $T_N$ and $s_k$ and $s_l$ are not attached to $v$ since otherwise $\delta(\Tilde{C}_{1j},\Tilde{C}_{kl})=0$ meaning that the factor would not show up in $R_{T_N,I_N,n_{T_N}}$. Then similarly to the above, $C_{kl}=\Tilde{C}_{kl}$ as paths in $T$ and $T_N$ and $\Tilde{C}_{1j}$ is $C_{1j}$ but with $v_L$ removed. This means that the shortest path between $\Tilde{C}_{1j}$ and $\Tilde{C}_{kl}$ goes between $w$ and $w'$ in $T_N$ if and only if the shortest path between $C_{1j}$ and $C_{kl}$ goes between $w$ and $w'$ in $T$. Hence $(1j,kl)$ appears as a factor in $R_{T,I,n_{T,I}}$, and thus appears in $R''$, since $s_j$, $s_k$ and $s_l$ are not attached to $v_L$ so this factor is not deleted when forming $R''$ from $R_{T,I,n_{T,I}}$.

By definition, the ordering on the distances between the vertices in $T_N$ is the same as the ordering on distances between the those vertices in $T$. The above shows that for $(ij,kl)$ a factor of $R''$ and $R_{T_N,I_N,n_{T_N,I_N}}$, the shortest path between $\Tilde{C}_{ij}$ and $\Tilde{C}_{kl}$ goes between $w$ and $w'$ in $T_N$ if and only if the shortest path between $C_{ij}$ and $C_{kl}$ goes between $w$ and $w'$ in $T$. Thus $\delta(C_{ij},C_{kl})$ is the $m$-th largest remaining distance in $T$ (once $v_L$ is removed) if and only if $\delta(\Tilde{C}_{ij},\Tilde{C}_{kl})$ is the $m$-th largest distance in $T_N$. This means that $(ij,kl)$ has the same exponent in  $R''$ as in $R_{T_N,I_N,n_{T_N,I_N}}$ and so 
\begin{equation}
R'' =R_{T_N,I_N,n_{T_N,I_N}}.
\end{equation}

By the inductive hypothesis, $R_{T_N,I_N,n_{T_N,I_N}}$ uniquely determines $T_N$ with a fixed labelling of the singletons. By the definition of $T_N$ coming from $T$, this uniquely determines $T$ as being $T_N$ with an extra vertex $v_L$ and an edge between $v_L$ and $v$, where $v$ is the vertex of $T$ attached to the singleton $s_1$, and with $s_1$ then removed from $v$ attached to $v_L$. The set of other vertices attached to $v_L$, is determined by $R_{T,I,n_{T,I}}$ by Lemma \ref{singletonslemma}. Thus $R_{T,I,n_{T,I}}$ uniquely determines $T$ as an unweighted tree and with a fixed labelling of the singletons of $T$.

By the definition of $R_{T,I,n_{T,I}}$, if $v$ is the vertex on the path from $C_{ij}$ to $C_{kl}$ in $T$ and $w$ is the vertex in $C_{kl}$ then $\delta(v,w)=\delta(C_{ij},C_{kl})=\delta_m(T,I)$ if and only if the exponent of $(ij,kl)$ in $R_{T,I,n_{T,I}} $ is $n_{T,I}-m+1$, and $\delta(C_{ij},C_{kl})=\delta(v,w)>\delta(v',w')=\delta(C_{i'j'},C_{k'l'})$ if and only if the exponent of $(ij,kl)$ is greater than the exponent of $(i'j',k'l')$. Hence, $R_{T,I,n_{T,I}}$ uniquely determines $I$, the ordering on the distances between the vertices in $T$.
\end{proof}

\begin{proof}[Proof of Proposition \ref{uniquelydetermines}]
By Lemma \ref{summandslemma}, if $R_{T,I,n_{T,I}} =R_{T',I',n_{T,I}}^\sigma $ then $T\cong T'$ as unweighted graphs and $I$ is equivalent to $I'$. 
\end{proof}

\begin{proof}[Proof of Theorem \ref{wholealgorithm}]
By Propositions \ref{potgoodredlemma} and \ref{potgoodred}, $T_C$ consists of a single vertex, or equivalently $C/K$ has potentially good reduction, if and only if $\textup{ord}(\textup{Inv}_{T,I,1}(C))\geq 0$ for every $(T,I)\in\mathbf{T}_g$. 

We know that $(T_C,I_C)\in \mathcal{T}_{n+1}(C)$ since $\mathcal{T}_{n+1}(C)$ is the set of all stable model trees with orderings for which $\textup{Inv}_{T,I,n}=\textup{Inv}_{T_C,I_C,n}$. By Theorem \ref{valuationofyprime}, Proposition \ref{correctone} and Theorem \ref{semistablemainlemma}, for $(T,I)\in \mathcal{T}_{n+1}(C)$ a tree and ordering with the largest value of $K_{n+1}(-,-)$ satisfying $B_{n+1}(T,I,C)=\underset{(T',I')\in\mathcal{T}_{n+1}(C)}{\textup{max}} B_{n+1}(T',I',C)$,
\begin{equation}
R_{T_C,I_C,n+1}=R_{T,I,n+1}^{\sigma}\textup{ for some $\sigma\in S_{2g+2}$} \quad \textup{ and } \quad \delta_{n+1}(C)=B_{n+1}(T,I,C). 
\end{equation}
By Proposition \ref{uniquelydetermines}, $(T_C,I_C)$ is uniquely determined by $R_{T_C,I_C,n_{T_C}}$. By the definition of $(T_C,I_C)$, if $\delta_i(T_C,I_C)=\delta(v,w)$ then $\delta(v,w)=\delta_i(C)$ in $T_C$.
\end{proof}

\section{Full description of the list of absolute invariants and algorithm for genus 2}\label{genus2section}
In this section we give a complete description of the absolute invariants presented in this paper that recover the dual graph of the special fibre of the minimal regular model of a semistable genus $2$ curve, since all genus $2$ curves are hyperelliptic. We present a theorem and a table describing how the series of valuations of absolute invariants associated to a genus $2$ curve uniquely determines the dual graph of the special fibre. We explicitly list the absolute invariants needed to recover the dual graph, and we give an example showing how to calculate the special fibre of a specific genus $2$ curve using this list. 

\begin{theorem}\label{genus2theorem}
Let $C$ be a semistable genus $2$ curve over local field $K$ of odd residue characteristic, and denote by $v$ the valuation with respect to a uniformiser of $K$.
\begin{enumerate}[1.]
        \item $C/K$ has potentially good reduction if and only if $v(X(C))\geq 0$ for every absolute invariant $X$ listed below Table \ref{genus2table} labelled with one letter. 
        \item If $C/K$ does not have potentially good reduction, the dual graph of the special fibre of the minimal regular model of $f(x)$ over $K^{\textup{unr}}$ is uniquely determined by the valuations of the absolute invariants listed below Table \ref{genus2table} using the following procedure. 
        \begin{enumerate}[(i)]
            \item Evaluate the value of 
        \begin{equation}
            B_1(X,C)=-\frac{v(X(C))}{2\cdot K_1(X)}
        \end{equation}
         for each absolute invariant $X$ in Table \ref{genus2table} labelled with one letter. Choose the absolute invariant labelled with one letter that has the greatest value of $K_1(-)$ out of those that maximise the value of $B_1(-,C)$ and call this $I_1$.
        \item If the `$I_1I_2$' column is not empty, evaluate the value of 
        \begin{equation}
            B_2(I_1Y,C)=\frac{-v(I_1Y(C))-4K(I_1)\delta_1(C)}{2\cdot K_2(I_1Y)},
        \end{equation}
        for each absolute invariant $I_1Y$ in Table \ref{genus2table} labelled with two letters and with $I_1$ as the first character. Choose the absolute invariant labelled with two letters and with $I_1$ as the first letter that has the greatest value of $K_2(-)$ out of those that maximise the value of $B_2(-,C)$ and call this $I_1I_2$. 
        \item If the `$I_1I_2I_3$' column is not empty, evaluate the value of 
        \begin{equation}
            B_3(I_1I_2Z,C)=\frac{-v(I_1I_2Z(C))-4K_2(I_1I_2)\delta_2(C)-6K_1(I_1)\delta_1(C)}{2\cdot K_3(I_1I_2Z)},
        \end{equation}
        for each absolute invariant $I_1I_2Z$ in Table \ref{genus2table} labelled with three letters and with $I_1 I_2$ as the first two letters. Choose the graph labelled with three letters and with $I_1 I_2$ as the first two letters that has the greatest value of $K_3(-)$ out of those that maximise the value of $B_3(-,C)$ and call this $I_1 I_2 I_3$.
        \item The special fibre is given in the row associated to the final calculated absolute invariant, and lengths of the chains of $\mathbb{P}^1$'s are written in the `Lengths of chains' column in terms of $v(I_1(C))$, $v(I_1I_2(C))$ and $v(I_1I_2I_3(C))$.
        \end{enumerate}  
    \end{enumerate}
\end{theorem}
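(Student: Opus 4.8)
The plan is to obtain Theorem \ref{genus2theorem} as the $g=2$ instance of the general algorithm of Theorem \ref{wholealgorithm}, where the one-, two- and three-letter labels of the invariants in Table \ref{genus2table} record the successive summands $R_{T_C,I_C,1}$, $R_{T_C,I_C,2}$ and $R_{T_C,I_C,3}$ produced at the first three steps of that algorithm (each letter being a representative of the relevant summand up to the $S_6$-action). First I would dispose of Part 1: the invariants labelled with a single letter are precisely the $\textup{Inv}_{T,I,1}(C)$ as $(T,I)$ ranges over $\mathbf{T}_2$, one representative per distinct first summand, so Part 1 is a direct restatement of Proposition \ref{potgoodred}, namely that $C/K$ has potentially good reduction if and only if $v(\textup{Inv}_{T,I,1}(C))\geq 0$ for all $(T,I)\in\mathbf{T}_2$ (here $v=\textup{ord}$).

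For Part 2 I would begin by matching definitions. The functions $B_1,B_2,B_3$ of the statement are exactly the specialisations of $B_{n+1}(T,I,C)$ from Definition \ref{bn+1} at $n=0,1,2$: expanding the sum defining $B_3$ and using $K_m(C)=K_m(T_C,I_C)$ reproduces the coefficients $6K_1(I_1)\delta_1(C)$ and $4K_2(I_1I_2)\delta_2(C)$ in the statement, and similarly for $B_2$. The selection rule (maximise $B$, then maximise $K$) is precisely Theorem \ref{wholealgorithm}(ii), so each column of the table carries out one step of the algorithm; the grouping of the two-letter invariants $I_1Y$ under a given $I_1$ (and the three-letter invariants $I_1I_2Z$ under $I_1I_2$) is exactly the set $\mathcal{T}_{n+1}(C)$ of Definition \ref{possiblen+1st}, and the "column not empty" clauses encode early termination once $(T_C,I_C)$ is already pinned down. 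By Theorem \ref{wholealgorithm} each such step recovers $R_{T_C,I_C,n+1}$ up to a permutation in $S_6$ together with the distance $\delta_{n+1}(C)=B_{n+1}$.

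The genus-specific content is that three steps always suffice. By Remark \ref{degrees} a genus $2$ stable model tree has at most $d-2=4$ proper vertices, hence at most three internal edges (edges not extending to singletons). I would then verify, by running through the six trees of $\mathcal{T}_2$ from Example \ref{stablemodeltreedefex} and all the orderings in their $\textup{Ineq}_T$, that $R_{T_C,I_C,3}$ (or an earlier summand) already separates every $(T,I)\in\mathbf{T}_2$ that can occur, and that the three recovered distances $\delta_1(C),\delta_2(C),\delta_3(C)$ determine the at-most-three internal edge lengths as explicit linear combinations; for the star tree, for instance, the three largest path-distances are one of $\{a+b,a+c,b+c\}$, $\{a+b,a+c,a\}$ or $\{2a,a+c,a\}$ according to the ordering, and each triple solves uniquely for the edge lengths. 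This is the point where I sharpen the general uniqueness Proposition \ref{uniquelydetermines}, which a priori needs $n_{T_C}$ steps and for which $n_{T_C}$ can reach six. Having thus recovered $T_C$ as a weighted tree, the BY tree is produced by Proposition \ref{byfromtc}, the dual graph by \cite{m2d2} Theorem 5.18, and the chain lengths in the final column are read off as the edge lengths of $T_C$, written via $\delta_i(C)=B_i$.

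The main obstacle is the finite but lengthy verification that actually produces Table \ref{genus2table}: enumerating $\mathbf{T}_2$, computing each $R_{T,I,n}$ and the corresponding $B$-values, confirming that the three-level branching indeed separates all cases within three steps (and detecting where it terminates after one or two), and translating every terminal weighted tree into its explicit special fibre together with the formulas for the chain lengths. Once this case analysis is carried out, the theorem follows from the matching of the $B$-functions to Definition \ref{bn+1} and from the general results Theorem \ref{wholealgorithm}, Proposition \ref{byfromtc} and \cite{m2d2} Theorem 5.18 quoted above.
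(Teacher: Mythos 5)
Your proposal is correct and follows essentially the same route as the paper: reduce Part 1 to Proposition \ref{potgoodred}, identify each letter-labelled invariant with an $\textup{Inv}_{T_i,I_j,n}$ after enumerating the six trees of $\mathcal{T}_2$ and their orderings, match the $B$-functions to Definition \ref{bn+1} and invoke Theorem \ref{wholealgorithm}, with the genus-specific observation that the listed final summands already separate all $(T,I)$ (so fewer than $n_{T_C}$ steps suffice), and then pass to the special fibres via Proposition \ref{byfromtc} and Theorem 5.18 of \cite{m2d2}. The only difference is presentational: you spell out the matching of the $B$-coefficients and the early-termination role of the empty columns, which the paper leaves implicit in its "follows immediately" step.
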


\begin{remark}
We present $24$ absolute invariants from which the dual graph of the special fibre of the minimal regular model of a semistable genus $2$ curve can be obtained, but it is shown in \cite{mestre} and \cite{liu} that the special fibre of the potential stable model of genus $2$ curves can be described in terms of the valuations of combinations of Igusa invariants, of which there are four. Moreover, it is known that the Igusa invariants generate the ring of invariants of genus $2$ curves in odd residue characteristic (\cite{igusa} Proposition 3), so each of the $24$ invariants can be written in terms of Igusa invariants, as illustrated in Appendix \ref{appendix}.
\end{remark}

In the table, the genus $1$ components of the special fibre are labelled by $g1$, and the other components are isomorphic to $\mathbb{P}^1$. We use the notation 

\begin{center}
\begin{minipage}[t]{0.45\textwidth}
\begin{center}
\begin{tikzpicture}
				[scale=0.2, auto=left,every node/.style={circle,fill=black!20,scale=0.6}]

                \draw[black, thick] (0.5,2.5) -- (-2.75,0.5) node [text=black, left, pos=0.5,fill=none] {};
                \draw[black, thick] (2.5,2.5) -- (5.75,0.5) node [text=black, left, pos=0.5,fill=none] {};
                \draw[black, thick, dashed] (-1,2) -- (4,2) node [text=black, above, pos=0.5,fill=none] {$n$};

                \draw[black, thick] (-2.25,3) -- (-2.25,-2) node [text=black, below, pos=1,fill=none] {$c$};
                 \draw[black, thick] (5.25,3) -- (5.25,-2) node [text=black, below, pos=1,fill=none] {$c'$};

			\end{tikzpicture}
            \end{center}
            
$n$ components between two side components $c$ and $c'$. If $n=0$ then $c$ and $c'$ intersect.
\end{minipage}\hspace{20pt}\begin{minipage}[t]{0.45\textwidth}
\begin{center}
\begin{tikzpicture}
				[scale=0.2, auto=left,every node/.style={circle,fill=black!20,scale=0.6}]

                \draw[black, thick] (10.5,-2.5) -- (8.5,-5.75) node [text=black, left, pos=0.5,fill=none] {};
                \draw[black, thick] (10.5,-3.5) -- (8.5,-0.25) node [text=black, left, pos=0.5,fill=none] {};

                \draw[black, thick] (4,-2.5) -- (6,-5.75) node [text=black, left, pos=0.5,fill=none] {};
                \draw[black, thick] (4,-3.5) -- (6,-0.25) node [text=black, left, pos=0.5,fill=none] {};

                \draw[black, thick, dashed] (9.5,-0.75) -- (5,-0.75) node [text=black, above, pos=0.5,fill=none] {$m$};
        
                 \draw[black, thick] (11.25,-5.25) -- (-1.75,-5.25) node [text=black, below, pos=0,fill=none] {$c$};
                 
			\end{tikzpicture}
            \end{center}
$m$ components forming an $(m+1)$-gon with the component $c$. 
\end{minipage}
\end{center}
For each special fibre, we have given an example of a cluster picture of a Weierstrass equation $C:y^2=f(x)$ for a curve that has the corresponding special fibre; the cluster picture we present is a canonical representative called the `balanced cluster picture' (see \cite{m2d2} Lemma 15.1). In the notation of the paper, for a fixed genus $2$ curve over a local field $K$, $\textup{Inv}_{T_C,I_C,1}=I_1$, $\textup{Inv}_{T_C,I_C,2}=I_1I_2$ and $\textup{Inv}_{T_C,I_C,3}=I_1I_2I_3$. The absolute invariants referred to in Table \ref{genus2table} are listed below using the shorthand notation 
\begin{equation}
(ij,kl)=\frac{(X_i-X_k)(X_i-X_l)(X_j-X_k)(X_j-X_l)}{(X_i-X_j)^2(X_k-X_l)^2}.
\end{equation}
By $\sum_{S_6/\textup{Stab}}R$, we mean $\sum_{\sigma\in S_6/\textup{Stab}(R)}R^{\sigma}$ where $R^\sigma$ takes the variable $X_i$ to $X_{\sigma(i)}$ in the expression $R$.

\begin{figure}[H]
\caption*{\underline{List of genus $2$ absolute invariants}}
\begin{scriptsize}\begin{align}
&A=\sum_{S_6/\textup{Stab}} (12,34)(12,35)(12,45)  && K_1(A)=3\\
&B=\sum_{S_6/\textup{Stab}} (12,34)(12,35)(12,36)(12,45)(12,46)(12,56)  &&K_1(B)= 6\\
&C=\sum_{S_6/\textup{Stab}} (12,45)(12,46)(12,56)(13,45)(13,46)(13,56)(23,45)(23,46)(23,56)  &&K_1(C)=9\\
&D=\sum_{S_6/\textup{Stab}} (12,34)  &&K_1(D)=1\\
&E=\sum_{S_6/\textup{Stab}} (12,34)(12,56)(34,56)  &&K_1(E)=3\\
&F=\sum_{S_6/\textup{Stab}} (12,34)(12,56) &&K_1(F)=2\\
&AA=\sum_{S_6/\textup{Stab}} (12,34)^2(12,35)^2(12,45)^2(34,16)(35,16)(45,16)(34,26)(35,26)(45,26) &&K_2(AA)=6\\
&AB=\sum_{S_6/\textup{Stab}} (12,34)^2(12,35)^2(12,45)^2(12,63)(12,64)(12,65) &&K_2(AB)=3\\
&AC=\sum_{S_6/\textup{Stab}} (12,34)^2(12,35)^2(12,45)^2(34,16)(35,16)(45,16)(34,26)(35,26)(45,26)(12,63)(12,64)(12,65) &&K_2(AC)= 9\\
&DA=\sum_{S_6/\textup{Stab}} (12,34)^2(12,56) &&K_2(DA)=1 \\
&DB=\sum_{S_6/\textup{Stab}} (12,34)^2(12,56)(34,56) &&K_2(DB)= 2\\
&DC=\sum_{S_6/\textup{Stab}} (12,34)^2(12,56)(12,35)(12,36)(12,45)(12,46) &&K_2(DC)= 5\\
&DD=\sum_{S_6/\textup{Stab}} (12,34)^2(12,56)(12,35)(12,36)(12,45)(12,46)(34,56)(34,15)(34,16)(34,25)(34,26) &&K_2(DD)= 10\\
&DE=\sum_{S_6/\textup{Stab}} (12,34)^2(12,35)(12,45) &&K_2(DE)= 2\\
&DF=\sum_{S_6/\textup{Stab}} (12,34)^2(12,35)(12,45)(34,16)(34,26) &&K_2(DF)= 4\\
&FA=\sum_{S_6/\textup{Stab}} (12,34)^2(12,56)^2(34,56) &&K_2(FA)=1 \\
&FB=\sum_{S_6/\textup{Stab}} (12,34)^2(12,56)^2(12,35)(12,36)(12,45)(12,46) &&K_2(FB)= 4\\
&FC=\sum_{S_6/\textup{Stab}} (12,34)^2(12,56)^2(34,56)(12,35)(12,36)(12,45)(12,46) &&K_2(FC)= 5\\
&DAA=\sum_{S_6/\textup{Stab}} (12,34)^3(12,56)^2(34,56) &&K_3(DAA)=1 \\
&DAB=\sum_{S_6/\textup{Stab}} (12,34)^3(12,56)^2(12,35)(12,36)(12,45)(12,46) &&K_3(DAB)=4 \\
&DAC=\sum_{S_6/\textup{Stab}} (12,34)^3(12,56)^2(34,56)(12,35)(12,36)(12,45)(12,46) &&K_3(DAC)=5 \\
&DEA=\sum_{S_6/\textup{Stab}} (12,34)^3(12,35)^2(12,45)^2(12,36)(12,46)(12,56) &&K_3(DEA)= 3\\
&DEB=\sum_{S_6/\textup{Stab}} (12,34)^3(12,35)^2(12,45)^2(34,16)(34,26) && K_3(DEB)=2 \\
&DEC=\sum_{S_6/\textup{Stab}} (12,34)^3(12,35)^2(12,45)^2(12,36)(12,46)(12,56)(34,16)(34,26) &&K_3(DEC)=5 
\end{align}\end{scriptsize}
\end{figure}

\newpage

\setlength\extrarowheight{2pt}
\begin{center}
   {\setlength{\LTleft}{-0.05in}
 \setlength{\LTright}{0in}
\begin{longtable}{|>{\centering\arraybackslash}m{0.5cm}|>{\centering\arraybackslash}m{0.6cm}|>{\centering\arraybackslash}m{0.9cm}|>{\centering\arraybackslash}m{3.3cm}|>{\centering\arraybackslash}m{5.8cm}|>{\centering\arraybackslash}m{3.4cm}|} \caption{Genus $2$ algorithm}\label{genus2table} \\
\hline $I_1$ & $I_1I_2$ & $I_1I_2I_3$ & Special fibre & Length of chains & Balanced cluster picture \\ \hline \hline 
$A$
    & $AA$ & & 
    \begin{tikzpicture}
				[scale=0.2, auto=left,every node/.style={circle,fill=black!20,scale=0.6}]

                \node[white] (n1) at (3,8) {};
                
                \draw[black, thick] (0.5,5) -- (-2.75,3) node [text=black, left, pos=0.5,fill=none] {};
                \draw[black, thick] (2.5,5) -- (5.75,3) node [text=black, left, pos=0.5,fill=none] {};
                \draw[black, thick, dashed] (-1,4.5) -- (4,4.5) node [text=black, above, pos=0.5,fill=none] {$a-1$};

                \draw[black, thick] (2.5,2.5) -- (5.75,0.5) node [text=black, left, pos=0.5,fill=none] {};
                \draw[black, thick] (3.5,2.5) -- (0.25,0.5) node [text=black, left, pos=0.5,fill=none] {};

                \draw[black, thick] (2.5,-4) -- (5.75,-2) node [text=black, left, pos=0.5,fill=none] {};
                \draw[black, thick] (3.5,-4) -- (0.25,-2) node [text=black, left, pos=0.5,fill=none] {};

                \draw[black, thick, dashed] (0.75,1.5) -- (0.75,-3) node [text=black, right, pos=0.5,fill=none] {$b-1$};

                \draw[black, thick] (-2.25,4.25) -- (-2.25,-3) node [text=black, left, pos=1,fill=none] {$g1$};
                 \draw[black, thick] (5.25,4.25) -- (5.25,-3) node [text=black, left, pos=0.5,fill=none] {};
                 
			\end{tikzpicture}
            
            $4a>b$
    
    & 
    
    $a= \frac{1}{24} (2 v(A) - v(AA))$ 
    
    $b = \frac{1}{6} (-4 v(A) + v(AA))$ & 
    
    \scalebox{1.5}{
\clusterpicture            
  \Root[D] {1} {first} {r1};
  \Root[D] {} {r1} {r2};
  \Root[D] {} {r2} {r3};
  \Root[D] {3.5} {r3} {r4};
    \Root[D] {} {r4} {r5};
    \Root[D] {1.5} {r5} {r6};
  \ClusterLD c1[][a] = (r1)(r2)(r3);
    \ClusterLD c2[][\frac{b}{2}] = (r4)(r5);
    \ClusterLD c3[][a] = (c2)(r6);
  \ClusterD c4[0] = (c1)(c2)(c3);
\endclusterpicture} \\ \hline 

$A$ & $AB$ & &
\begin{tikzpicture}
				[scale=0.2, auto=left,every node/.style={circle,fill=black!20,scale=0.6}]

                \node[white] (n1) at (3,8) {};
                
                \draw[black, thick] (0.5,5) -- (-2.75,3) node [text=black, left, pos=0.5,fill=none] {};
                \draw[black, thick] (2.5,5) -- (5.75,3) node [text=black, left, pos=0.5,fill=none] {};
                \draw[black, thick, dashed] (-1,4.5) -- (4,4.5) node [text=black, above, pos=0.5,fill=none] {$a-1$};

                \draw[black, thick] (2.5,2.5) -- (5.75,0.5) node [text=black, left, pos=0.5,fill=none] {};
                \draw[black, thick] (3.5,2.5) -- (0.25,0.5) node [text=black, left, pos=0.5,fill=none] {};

                \draw[black, thick] (2.5,-4) -- (5.75,-2) node [text=black, left, pos=0.5,fill=none] {};
                \draw[black, thick] (3.5,-4) -- (0.25,-2) node [text=black, left, pos=0.5,fill=none] {};

                \draw[black, thick, dashed] (0.75,1.5) -- (0.75,-3) node [text=black, right, pos=0.5,fill=none] {$b-1$};

                \draw[black, thick] (-2.25,4.25) -- (-2.25,-3) node [text=black, left, pos=1,fill=none] {$g1$};
                 \draw[black, thick] (5.25,4.25) -- (5.25,-3) node [text=black, left, pos=0.5,fill=none] {};
                 
			\end{tikzpicture}

$b>4a$
    
& 
$a = \frac{1}{12} (-3 v( A) + v( {AB}))$

$ b =\frac{1}{3} (2 v( A) - v( {AB}))$
& \scalebox{1.5}{
\clusterpicture            
  \Root[D] {1} {first} {r1};
  \Root[D] {} {r1} {r2};
  \Root[D] {} {r2} {r3};
  \Root[D] {3.5} {r3} {r4};
    \Root[D] {} {r4} {r5};
    \Root[D] {1.5} {r5} {r6};
  \ClusterLD c1[][a] = (r1)(r2)(r3);
    \ClusterLD c2[][\frac{b}{2}] = (r4)(r5);
    \ClusterLD c3[][a] = (c2)(r6);
  \ClusterD c4[0] = (c1)(c2)(c3);
\endclusterpicture} \\ \hline 

$A$ & $AC$
                & & 
                \begin{tikzpicture}
				[scale=0.2, auto=left,every node/.style={circle,fill=black!20,scale=0.6}]

                \node[white] (n1) at (3,8) {};
                
                \draw[black, thick] (0.5,5) -- (-2.75,3) node [text=black, left, pos=0.5,fill=none] {};
                \draw[black, thick] (2.5,5) -- (5.75,3) node [text=black, left, pos=0.5,fill=none] {};
                \draw[black, thick, dashed] (-1,4.5) -- (4,4.5) node [text=black, above, pos=0.5,fill=none] {$a-1$};

                \draw[black, thick] (2.5,2.5) -- (5.75,0.5) node [text=black, left, pos=0.5,fill=none] {};
                \draw[black, thick] (3.5,2.5) -- (0.25,0.5) node [text=black, left, pos=0.5,fill=none] {};

                \draw[black, thick] (2.5,-4) -- (5.75,-2) node [text=black, left, pos=0.5,fill=none] {};
                \draw[black, thick] (3.5,-4) -- (0.25,-2) node [text=black, left, pos=0.5,fill=none] {};

                \draw[black, thick, dashed] (0.75,1.5) -- (0.75,-3) node [text=black, right, pos=0.5,fill=none] {$b-1$};

                \draw[black, thick] (-2.25,4.25) -- (-2.25,-3) node [text=black, left, pos=1,fill=none] {$g1$};
                 \draw[black, thick] (5.25,4.25) -- (5.25,-3) node [text=black, left, pos=0.5,fill=none] {};
                 
			\end{tikzpicture}
   
    $4a=b$
                
                & 
                $a=\frac{1}{36} (2 v( A) - v( {AC}))$

                $b=\frac{1}{9} (2 v( A) - v( {AC}))$
                
                & \scalebox{1.5}{
\clusterpicture            
  \Root[D] {1} {first} {r1};
  \Root[D] {} {r1} {r2};
  \Root[D] {} {r2} {r3};
  \Root[D] {3.5} {r3} {r4};
    \Root[D] {} {r4} {r5};
    \Root[D] {1.5} {r5} {r6};
  \ClusterLD c1[][a] = (r1)(r2)(r3);
    \ClusterLD c2[][\frac{b}{2}] = (r4)(r5);
    \ClusterLD c3[][a] = (c2)(r6);
  \ClusterD c4[0] = (c1)(c2)(c3);
\endclusterpicture} \\ \hline 
                
$B$ & & & 
\begin{tikzpicture}
				[scale=0.2, auto=left,every node/.style={circle,fill=black!20,scale=0.6}]

                \node[white] (n1) at (3,8) {};              
                
                \draw[black, thick] (1,4) -- (5.75,2) node [text=black, left, pos=0.5,fill=none] {};
                \draw[black, thick] (2,4) -- (-2.75,2) node [text=black, left, pos=0.5,fill=none] {};

                \draw[black, thick] (1,-4) -- (5.75,-2) node [text=black, left, pos=0.5,fill=none] {};
                \draw[black, thick] (2,-4) -- (-2.75,-2) node [text=black, left, pos=0.5,fill=none] {};

                \draw[black, thick, dashed] (-2.25,3) -- (-2.25,-3) node [text=black, left, pos=0.5,fill=none] {$a-1$};

                 \draw[black, thick] (5.25,4.75) -- (5.25,-4.75) node [text=black, right, pos=1,fill=none] {$g1$};
                 
			\end{tikzpicture}

& 

$a=-\frac{v( B)}{6}$

& \scalebox{1.5}{
\clusterpicture            
  \Root[D] {1} {first} {r1};
  \Root[D] {} {r1} {r2};
  \Root[D] {2} {r2} {r3};
  \Root[D] {} {r3} {r4};
    \Root[D] {} {r4} {r5};
    \Root[D] {} {r5} {r6};
  \ClusterLD c1[][\frac{a}{2}] = (r1)(r2);
  \ClusterD c4[0] = (c1)(r3)(r4)(r5)(r6);
\endclusterpicture} \\ \hline 

$C$ & & & 

\begin{tikzpicture}
				[scale=0.2, auto=left,every node/.style={circle,fill=black!20,scale=0.6}]

                \node[white] (n1) at (3,5) {};

                \draw[black, thick] (0.5,2.5) -- (-2.75,0.5) node [text=black, left, pos=0.5,fill=none] {};
                \draw[black, thick] (2.5,2.5) -- (5.75,0.5) node [text=black, left, pos=0.5,fill=none] {};
                \draw[black, thick, dashed] (-1,2) -- (4,2) node [text=black, above, pos=0.5,fill=none] {$a-1$};

                \draw[black, thick] (-2.25,3) -- (-2.25,-2) node [text=black, left, pos=1,fill=none] {$g1$};
                 \draw[black, thick] (5.25,3) -- (5.25,-2) node [text=black, left, pos=1,fill=none] {$g1$};

                  \node[white] (n2) at (3,-2) {};
                 
			\end{tikzpicture}
&
$a=-\frac{v( C)}{36}$

& \scalebox{1.5}{
\clusterpicture            
  \Root[D] {1} {first} {r1};
  \Root[D] {} {r1} {r2};
  \Root[D] {} {r2} {r3};
  \Root[D] {2} {r3} {r4};
    \Root[D] {} {r4} {r5};
    \Root[D] {} {r5} {r6};
  \ClusterLD c1[][a] = (r1)(r2)(r3);
  \ClusterLD c2[][a] = (r4)(r5)(r6);
  \ClusterD c4[0] = (c1)(c2);
\endclusterpicture} \\ \hline 

$D$
& 
$DA$
& $DAA$ &

\begin{tikzpicture}
				[scale=0.2, auto=left,every node/.style={circle,fill=black!20,scale=0.6}]

                \node[white] (n1) at (3,8) {};
                
                \draw[black, thick] (0.5,5) -- (-2.75,3) node [text=black, left, pos=0.5,fill=none] {};
                \draw[black, thick] (2.5,5) -- (5.75,3) node [text=black, left, pos=0.5,fill=none] {};
                \draw[black, thick, dashed] (-1,4.5) -- (4,4.5) node [text=black, above, scale=0.8,pos=0.5,fill=none] {$a-1$};

                \draw[black, thick] (0.5,2.5) -- (-2.75,0.5) node [text=black, left, pos=0.5,fill=none] {};
                \draw[black, thick] (2.5,2.5) -- (5.75,0.5) node [text=black, left, pos=0.5,fill=none] {};
                \draw[black, thick, dashed] (-1,2) -- (4,2) node [text=black, above, scale=0.8,pos=0.5,fill=none] {$b-1$};

                \draw[black, thick] (0.5,0) -- (-2.75,-2) node [text=black, left, pos=0.5,fill=none] {};
                \draw[black, thick] (2.5,0) -- (5.75,-2) node [text=black, left, pos=0.5,fill=none] {};
                \draw[black, thick, dashed] (-1,-0.5) -- (4,-0.5) node [text=black, above, scale=0.8,pos=0.5,fill=none] {$c-1$};

                \draw[black, thick] (-2.25,4.25) -- (-2.25,-2.75) node [text=black, left, pos=0.5,fill=none] {};
                 \draw[black, thick] (5.25,4.25) -- (5.25,-2.75) node [text=black, left, pos=0.5,fill=none] {};
                 
			\end{tikzpicture}

$b+c>a>b>c$

& 

$a =\frac{1}{2} (2 v( D) - 3 v( {DA}) + v( {DAA}))$

$b =\frac{1}{2} (-4 v( D) + 3 v( {DA}) - v( {DAA}))$

$c =\frac{1}{2} (2 v( D) + v( {DA}) - v( {DAA}))$

& \scalebox{1.5}{
\clusterpicture            
  \Root[D] {1} {first} {r1};
  \Root[D] {} {r1} {r2};
  \Root[D] {2.5} {r2} {r3};
  \Root[D] {} {r3} {r4};
    \Root[D] {2.5} {r4} {r5};
    \Root[D] {} {r5} {r6};
  \ClusterLD c1[][\frac{a}{2}] = (r1)(r2);
  \ClusterLD c2[][\frac{b}{2}] = (r3)(r4);
  \ClusterLD c3[][\frac{c}{2}] = (r5)(r6);
  \ClusterD c4[0] = (c1)(c2)(c3);
\endclusterpicture} \\ \hline 

$D$
& 
$DA$
& $DAB$ &

\begin{tikzpicture}
				[scale=0.2, auto=left,every node/.style={circle,fill=black!20,scale=0.6}]

                \node[white] (n1) at (3,8) {};
                
                \draw[black, thick] (0.5,5) -- (-2.75,3) node [text=black, left, pos=0.5,fill=none] {};
                \draw[black, thick] (2.5,5) -- (5.75,3) node [text=black, left, pos=0.5,fill=none] {};
                \draw[black, thick, dashed] (-1,4.5) -- (4,4.5) node [text=black, above, scale=0.8,pos=0.5,fill=none] {$a-1$};

                \draw[black, thick] (0.5,2.5) -- (-2.75,0.5) node [text=black, left, pos=0.5,fill=none] {};
                \draw[black, thick] (2.5,2.5) -- (5.75,0.5) node [text=black, left, pos=0.5,fill=none] {};
                \draw[black, thick, dashed] (-1,2) -- (4,2) node [text=black, above, scale=0.8,pos=0.5,fill=none] {$b-1$};

                \draw[black, thick] (0.5,0) -- (-2.75,-2) node [text=black, left, pos=0.5,fill=none] {};
                \draw[black, thick] (2.5,0) -- (5.75,-2) node [text=black, left, pos=0.5,fill=none] {};
                \draw[black, thick, dashed] (-1,-0.5) -- (4,-0.5) node [text=black, above, scale=0.8,pos=0.5,fill=none] {$c-1$};

                \draw[black, thick] (-2.25,4.25) -- (-2.25,-2.75) node [text=black, left, pos=0.5,fill=none] {};
                 \draw[black, thick] (5.25,4.25) -- (5.25,-2.75) node [text=black, left, pos=0.5,fill=none] {};
                 
			\end{tikzpicture}

$a>b+c$

$b>c$

& 

$a=\frac{1}{4}(-v( D) + 2 v( {DA}) - v( {DAB}))$

$b=\frac{1}{4} (-3 v( D) - 2 v( {DA}) + v( {DAB}))$

$c =\frac{1}{4} (9 v( D) - 6 v( {DA}) + v( {DAB}))$

& \scalebox{1.5}{
\clusterpicture            
  \Root[D] {1} {first} {r1};
  \Root[D] {} {r1} {r2};
  \Root[D] {2.5} {r2} {r3};
  \Root[D] {} {r3} {r4};
    \Root[D] {2.5} {r4} {r5};
    \Root[D] {} {r5} {r6};
  \ClusterLD c1[][\frac{a}{2}] = (r1)(r2);
  \ClusterLD c2[][\frac{b}{2}] = (r3)(r4);
  \ClusterLD c3[][\frac{c}{2}] = (r5)(r6);
  \ClusterD c4[0] = (c1)(c2)(c3);
\endclusterpicture} \\ \hline 

$D$
& 
$DA$
& $DAC$ &

\begin{tikzpicture}
				[scale=0.2, auto=left,every node/.style={circle,fill=black!20,scale=0.6}]

                \node[white] (n1) at (3,8) {};
                
                \draw[black, thick] (0.5,5) -- (-2.75,3) node [text=black, left, pos=0.5,fill=none] {};
                \draw[black, thick] (2.5,5) -- (5.75,3) node [text=black, left, pos=0.5,fill=none] {};
                \draw[black, thick, dashed] (-1,4.5) -- (4,4.5) node [text=black, above,scale=0.8, pos=0.5,fill=none] {$a-1$};

                \draw[black, thick] (0.5,2.5) -- (-2.75,0.5) node [text=black, left, pos=0.5,fill=none] {};
                \draw[black, thick] (2.5,2.5) -- (5.75,0.5) node [text=black, left, pos=0.5,fill=none] {};
                \draw[black, thick, dashed] (-1,2) -- (4,2) node [text=black, above, scale=0.8,pos=0.5,fill=none] {$b-1$};

                \draw[black, thick] (0.5,0) -- (-2.75,-2) node [text=black, left, pos=0.5,fill=none] {};
                \draw[black, thick] (2.5,0) -- (5.75,-2) node [text=black, left, pos=0.5,fill=none] {};
                \draw[black, thick, dashed] (-1,-0.5) -- (4,-0.5) node [text=black, above, scale=0.8,pos=0.5,fill=none] {$c-1$};

                \draw[black, thick] (-2.25,4.25) -- (-2.25,-2.75) node [text=black, left, pos=0.5,fill=none] {};
                 \draw[black, thick] (5.25,4.25) -- (5.25,-2.75) node [text=black, left, pos=0.5,fill=none] {};
                 
			\end{tikzpicture}

$b+c=a>b>c$

& 

$a=\frac{1}{2} (-2 v( D) + 3 v( {DA}) - v( {DAC}))$

$b=\frac{1}{2} (-3 v( {DA}) + v( {DAC}))$

$c=\frac{1}{2} (6 v( D) - 5 v( {DA}) + v( {DAC}))$

& \scalebox{1.5}{
\clusterpicture            
  \Root[D] {1} {first} {r1};
  \Root[D] {} {r1} {r2};
  \Root[D] {2.5} {r2} {r3};
  \Root[D] {} {r3} {r4};
    \Root[D] {2.5} {r4} {r5};
    \Root[D] {} {r5} {r6};
  \ClusterLD c1[][\frac{a}{2}] = (r1)(r2);
  \ClusterLD c2[][\frac{b}{2}] = (r3)(r4);
  \ClusterLD c3[][\frac{c}{2}] = (r5)(r6);
  \ClusterD c4[0] = (c1)(c2)(c3);
\endclusterpicture} \\ \hline 

$D$ & $DB$ & &

\begin{tikzpicture}
				[scale=0.2, auto=left,every node/.style={circle,fill=black!20,scale=0.6}]

                \node[white] (n1) at (3,8) {};
                
                \draw[black, thick] (0.5,5) -- (-2.75,3) node [text=black, left, pos=0.5,fill=none] {};
                \draw[black, thick] (2.5,5) -- (5.75,3) node [text=black, left, pos=0.5,fill=none] {};
                \draw[black, thick, dashed] (-1,4.5) -- (4,4.5) node [text=black, above, scale=0.8,pos=0.5,fill=none] {$a-1$};

                \draw[black, thick] (0.5,2.5) -- (-2.75,0.5) node [text=black, left, pos=0.5,fill=none] {};
                \draw[black, thick] (2.5,2.5) -- (5.75,0.5) node [text=black, left, pos=0.5,fill=none] {};
                \draw[black, thick, dashed] (-1,2) -- (4,2) node [text=black, above,scale=0.8, pos=0.5,fill=none] {$a-1$};

                \draw[black, thick] (0.5,0) -- (-2.75,-2) node [text=black, left, pos=0.5,fill=none] {};
                \draw[black, thick] (2.5,0) -- (5.75,-2) node [text=black, left, pos=0.5,fill=none] {};
                \draw[black, thick, dashed] (-1,-0.5) -- (4,-0.5) node [text=black, above, scale=0.8,pos=0.5,fill=none] {$b-1$};

                \draw[black, thick] (-2.25,4.25) -- (-2.25,-2.75) node [text=black, left, pos=0.5,fill=none] {};
                 \draw[black, thick] (5.25,4.25) -- (5.25,-2.75) node [text=black, left, pos=0.5,fill=none] {};
                 
			\end{tikzpicture}

$a>b$

& 
$a = -\frac{v( D)}{2} $

$ b =\frac{1}{2} (3 v( D) - v( {DB}))$
& \scalebox{1.5}{
\clusterpicture            
  \Root[D] {1} {first} {r1};
  \Root[D] {} {r1} {r2};
  \Root[D] {2.5} {r2} {r3};
  \Root[D] {} {r3} {r4};
    \Root[D] {2.5} {r4} {r5};
    \Root[D] {} {r5} {r6};
  \ClusterLD c1[][\frac{a}{2}] = (r1)(r2);
  \ClusterLD c2[][\frac{a}{2}] = (r3)(r4);
  \ClusterLD c3[][\frac{b}{2}] = (r5)(r6);
  \ClusterD c4[0] = (c1)(c2)(c3);
\endclusterpicture} \\ \hline 

$D$ & $DC$ & & 

\begin{tikzpicture}
				[scale=0.2, auto=left,every node/.style={circle,fill=black!20,scale=0.6}]
                
                \node[white] (n1) at (3,2) {};

                \draw[black, thick] (2.5,-2.5) -- (0.5,-5.75) node [text=black, left, pos=0.5,fill=none] {};
                \draw[black, thick] (2.5,-3.5) -- (0.5,-0.25) node [text=black, left, pos=0.5,fill=none] {};

                \draw[black, thick] (-4,-2.5) -- (-2,-5.75) node [text=black, left, pos=0.5,fill=none] {};
                \draw[black, thick] (-4,-3.5) -- (-2,-0.25) node [text=black, left, pos=0.5,fill=none] {};

                \draw[black, thick, dashed] (1.5,-0.75) -- (-3,-0.75) node [text=black, above, pos=0.5,fill=none] {$b-1$};

                \draw[black, thick] (10.5,-2.5) -- (8.5,-5.75) node [text=black, left, pos=0.5,fill=none] {};
                \draw[black, thick] (10.5,-3.5) -- (8.5,-0.25) node [text=black, left, pos=0.5,fill=none] {};

                \draw[black, thick] (4,-2.5) -- (6,-5.75) node [text=black, left, pos=0.5,fill=none] {};
                \draw[black, thick] (4,-3.5) -- (6,-0.25) node [text=black, left, pos=0.5,fill=none] {};

                \draw[black, thick, dashed] (9.5,-0.75) -- (5,-0.75) node [text=black, above, pos=0.5,fill=none] {$a-1$};
        
                 \draw[black, thick] (11.25,-5.25) -- (-4.75,-5.25) node [text=black, left, pos=0.5,fill=none] {};
                 
			\end{tikzpicture}

   $a>b$

& 

$a =\frac{1}{5} (2 v( D) - v( {DC}))$

$b =\frac{1}{5} (-7 v( D) + v( {DC}))$
& \scalebox{1.5}{
\clusterpicture            
  \Root[D] {1} {first} {r1};
  \Root[D] {} {r1} {r2};
  \Root[D] {2.5} {r2} {r3};
  \Root[D] {} {r3} {r4};
    \Root[D] {2} {r4} {r5};
    \Root[D] {} {r5} {r6};
  \ClusterLD c1[][\frac{a}{2}] = (r1)(r2);
  \ClusterLD c2[][\frac{b}{2}] = (r3)(r4);
  \ClusterD c4[0] = (c1)(c2)(r5)(r6);
\endclusterpicture} \\ \hline 

$D$ & $DD$ & & 

\begin{tikzpicture}
				[scale=0.2, auto=left,every node/.style={circle,fill=black!20,scale=0.6}]

                \node[white] (n1) at (3,2) {};
                \node[white] (n1) at (3,-7) {};

                \draw[black, thick] (2.5,-2.5) -- (0.5,-5.75) node [text=black, left, pos=0.5,fill=none] {};
                \draw[black, thick] (2.5,-3.5) -- (0.5,-0.25) node [text=black, left, pos=0.5,fill=none] {};

                \draw[black, thick] (-4,-2.5) -- (-2,-5.75) node [text=black, left, pos=0.5,fill=none] {};
                \draw[black, thick] (-4,-3.5) -- (-2,-0.25) node [text=black, left, pos=0.5,fill=none] {};

                \draw[black, thick, dashed] (1.5,-0.75) -- (-3,-0.75) node [text=black, above, pos=0.5,fill=none] {$a-1$};

                \draw[black, thick] (10.5,-2.5) -- (8.5,-5.75) node [text=black, left, pos=0.5,fill=none] {};
                \draw[black, thick] (10.5,-3.5) -- (8.5,-0.25) node [text=black, left, pos=0.5,fill=none] {};

                \draw[black, thick] (4,-2.5) -- (6,-5.75) node [text=black, left, pos=0.5,fill=none] {};
                \draw[black, thick] (4,-3.5) -- (6,-0.25) node [text=black, left, pos=0.5,fill=none] {};

                \draw[black, thick, dashed] (9.5,-0.75) -- (5,-0.75) node [text=black, above, pos=0.5,fill=none] {$a-1$};
        
                 \draw[black, thick] (11.25,-5.25) -- (-4.75,-5.25) node [text=black, left, pos=0.5,fill=none] {};
                 
			\end{tikzpicture}
& 

$a=-\frac{v( D)}{2}$

& \scalebox{1.5}{
\clusterpicture            
  \Root[D] {1} {first} {r1};
  \Root[D] {} {r1} {r2};
  \Root[D] {2.5} {r2} {r3};
  \Root[D] {} {r3} {r4};
    \Root[D] {2} {r4} {r5};
    \Root[D] {} {r5} {r6};
  \ClusterLD c1[][\frac{a}{2}] = (r1)(r2);
  \ClusterLD c2[][\frac{a}{2}] = (r3)(r4);
  \ClusterD c4[0] = (c1)(c2)(r5)(r6);
\endclusterpicture}  \\ \hline 

$D$ & 
$DE$
& $DEA$ & 

\begin{tikzpicture}
				[scale=0.2, auto=left,every node/.style={circle,fill=black!20,scale=0.6}]

                \node[white] (n1) at (3,8) {};
                
                \draw[black, thick] (0.5,5) -- (-2.75,3) node [text=black, left, pos=0.5,fill=none] {};
                \draw[black, thick] (2.5,5) -- (5.75,3) node [text=black, left, pos=0.5,fill=none] {};
                \draw[black, thick, dashed] (-1,4.5) -- (4,4.5) node [text=black, above, pos=0.5,fill=none] {$b-1$};

                \draw[black, thick] (2.5,2.5) -- (5.75,0.5) node [text=black, left, pos=0.5,fill=none] {};
                \draw[black, thick] (3.5,2.5) -- (0.25,0.5) node [text=black, left, pos=0.5,fill=none] {};

                \draw[black, thick] (2.5,-4) -- (5.75,-2) node [text=black, left, pos=0.5,fill=none] {};
                \draw[black, thick] (3.5,-4) -- (0.25,-2) node [text=black, left, pos=0.5,fill=none] {};

                \draw[black, thick, dashed] (0.75,1.5) -- (0.75,-3) node [text=black, right, pos=0.5,fill=none] {$c-1$};

                \draw[black, thick] (-5,2.5) -- (-1.75,0.5) node [text=black, left, pos=0.5,fill=none] {};
                \draw[black, thick] (-4,2.5) -- (-7.25,0.5) node [text=black, left, pos=0.5,fill=none] {};

                \draw[black, thick] (-5,-4) -- (-1.75,-2) node [text=black, left, pos=0.5,fill=none] {};
                \draw[black, thick] (-4,-4) -- (-7.25,-2) node [text=black, left, pos=0.5,fill=none] {};

                \draw[black, thick, dashed] (-6.75,1.5) -- (-6.75,-3) node [text=black, right, pos=0.5,fill=none] {$a-1$};

                \draw[black, thick] (-2.25,4.25) -- (-2.25,-3) node [text=black, left, pos=0.5,fill=none] {};
                 \draw[black, thick] (5.25,4.25) -- (5.25,-3) node [text=black, left, pos=0.5,fill=none] {};
                 
			\end{tikzpicture}

   $a>c+4b$

& 

$a=\frac{1}{3} (-v( D) + 2 v( {DE}) - v( {DEA}))$

$b=\frac{1}{24} (8 v( D) - 7 v( {DE}) + 2 v( {DEA}))$

$ c=\frac{1}{2} (-4 v( D) + v( {DE}))$

& \scalebox{1.5}{
\clusterpicture            
  \Root[D] {1} {first} {r1};
  \Root[D] {} {r1} {r2};
  \Root[D] {2} {r2} {r3};
  \Root[D] {3} {r3} {r4};
    \Root[D] {} {r4} {r5};
    \Root[D] {2} {r5} {r6};
  \ClusterLD c1[][\frac{a}{2}] = (r1)(r2);
  \ClusterLD c2[][\frac{c}{2}] = (r4)(r5);
  \ClusterLD c3[][b] = (c1)(r3);
  \ClusterLD c4[][b] = (c2)(r6);
  \ClusterD c5[0] = (c3)(c4);
\endclusterpicture}

\\ \hline

$D$ & $DE$ & $DEB$ & 

\begin{tikzpicture}
				[scale=0.2, auto=left,every node/.style={circle,fill=black!20,scale=0.6}]

                \node[white] (n1) at (3,8) {};
                
                \draw[black, thick] (0.5,5) -- (-2.75,3) node [text=black, left, pos=0.5,fill=none] {};
                \draw[black, thick] (2.5,5) -- (5.75,3) node [text=black, left, pos=0.5,fill=none] {};
                \draw[black, thick, dashed] (-1,4.5) -- (4,4.5) node [text=black, above, pos=0.5,fill=none] {$b-1$};

                \draw[black, thick] (2.5,2.5) -- (5.75,0.5) node [text=black, left, pos=0.5,fill=none] {};
                \draw[black, thick] (3.5,2.5) -- (0.25,0.5) node [text=black, left, pos=0.5,fill=none] {};

                \draw[black, thick] (2.5,-4) -- (5.75,-2) node [text=black, left, pos=0.5,fill=none] {};
                \draw[black, thick] (3.5,-4) -- (0.25,-2) node [text=black, left, pos=0.5,fill=none] {};

                \draw[black, thick, dashed] (0.75,1.5) -- (0.75,-3) node [text=black, right, pos=0.5,fill=none] {$c-1$};

                \draw[black, thick] (-5,2.5) -- (-1.75,0.5) node [text=black, left, pos=0.5,fill=none] {};
                \draw[black, thick] (-4,2.5) -- (-7.25,0.5) node [text=black, left, pos=0.5,fill=none] {};

                \draw[black, thick] (-5,-4) -- (-1.75,-2) node [text=black, left, pos=0.5,fill=none] {};
                \draw[black, thick] (-4,-4) -- (-7.25,-2) node [text=black, left, pos=0.5,fill=none] {};

                \draw[black, thick, dashed] (-6.75,1.5) -- (-6.75,-3) node [text=black, right, pos=0.5,fill=none] {$a-1$};

                \draw[black, thick] (-2.25,4.25) -- (-2.25,-3) node [text=black, left, pos=0.5,fill=none] {};
                 \draw[black, thick] (5.25,4.25) -- (5.25,-3) node [text=black, left, pos=0.5,fill=none] {};
                 
			\end{tikzpicture}

   $c+4b>a>c$

& 

$a=\frac{1}{2} (-v( D) - 2 v( {DE}) + v( {DEB}))$

$b=\frac{1}{8} (3 v( D) + v( {DE}) - v( {DEB}))$

$c=\frac{1}{2} (-4 v( D) + v( {DE}))$

& \scalebox{1.5}{
\clusterpicture            
  \Root[D] {1} {first} {r1};
  \Root[D] {} {r1} {r2};
  \Root[D] {2} {r2} {r3};
  \Root[D] {3} {r3} {r4};
    \Root[D] {} {r4} {r5};
    \Root[D] {2} {r5} {r6};
  \ClusterLD c1[][\frac{a}{2}] = (r1)(r2);
  \ClusterLD c2[][\frac{c}{2}] = (r4)(r5);
  \ClusterLD c3[][b] = (c1)(r3);
  \ClusterLD c4[][b] = (c2)(r6);
  \ClusterD c5[0] = (c3)(c4);
\endclusterpicture} 

\\ \hline 

$D$ & $DE$ & $DEC$ & 

\begin{tikzpicture}
				[scale=0.2, auto=left,every node/.style={circle,fill=black!20,scale=0.6}]

                \node[white] (n1) at (3,8) {};
                
                \draw[black, thick] (0.5,5) -- (-2.75,3) node [text=black, left, pos=0.5,fill=none] {};
                \draw[black, thick] (2.5,5) -- (5.75,3) node [text=black, left, pos=0.5,fill=none] {};
                \draw[black, thick, dashed] (-1,4.5) -- (4,4.5) node [text=black, above, pos=0.5,fill=none] {$b-1$};

                \draw[black, thick] (2.5,2.5) -- (5.75,0.5) node [text=black, left, pos=0.5,fill=none] {};
                \draw[black, thick] (3.5,2.5) -- (0.25,0.5) node [text=black, left, pos=0.5,fill=none] {};

                \draw[black, thick] (2.5,-4) -- (5.75,-2) node [text=black, left, pos=0.5,fill=none] {};
                \draw[black, thick] (3.5,-4) -- (0.25,-2) node [text=black, left, pos=0.5,fill=none] {};

                \draw[black, thick, dashed] (0.75,1.5) -- (0.75,-3) node [text=black, right, pos=0.5,fill=none] {$c-1$};

                \draw[black, thick] (-5,2.5) -- (-1.75,0.5) node [text=black, left, pos=0.5,fill=none] {};
                \draw[black, thick] (-4,2.5) -- (-7.25,0.5) node [text=black, left, pos=0.5,fill=none] {};

                \draw[black, thick] (-5,-4) -- (-1.75,-2) node [text=black, left, pos=0.5,fill=none] {};
                \draw[black, thick] (-4,-4) -- (-7.25,-2) node [text=black, left, pos=0.5,fill=none] {};

                \draw[black, thick, dashed] (-6.75,1.5) -- (-6.75,-3) node [text=black, right, pos=0.5,fill=none] {$a-1$};

                \draw[black, thick] (-2.25,4.25) -- (-2.25,-3) node [text=black, left, pos=0.5,fill=none] {};
                 \draw[black, thick] (5.25,4.25) -- (5.25,-3) node [text=black, left, pos=0.5,fill=none] {};
                 
			\end{tikzpicture}

  $a=c+4b$

& 

$a=\frac{1}{5}(- v( D) + 2 v( {DE}) -v( {DEC})$

$b=\frac{1}{40} (12 v( D) - 9 v( {DE}) + 2 v( {DEC}))$

$c=\frac{1}{2} (-4 v( D) + v( {DE}))$

& \scalebox{1.5}{
\clusterpicture            
  \Root[D] {1} {first} {r1};
  \Root[D] {} {r1} {r2};
  \Root[D] {2} {r2} {r3};
  \Root[D] {3} {r3} {r4};
    \Root[D] {} {r4} {r5};
    \Root[D] {2} {r5} {r6};
  \ClusterLD c1[][\frac{a}{2}] = (r1)(r2);
  \ClusterLD c2[][\frac{c}{2}] = (r4)(r5);
  \ClusterLD c3[][b] = (c1)(r3);
  \ClusterLD c4[][b] = (c2)(r6);
  \ClusterD c5[0] = (c3)(c4);
\endclusterpicture}

\\ \hline 

$D$ & $DF$ & & 

\begin{tikzpicture}
				[scale=0.2, auto=left,every node/.style={circle,fill=black!20,scale=0.6}]

                \node[white] (n1) at (3,8) {};
                
                \draw[black, thick] (0.5,5) -- (-2.75,3) node [text=black, left, pos=0.5,fill=none] {};
                \draw[black, thick] (2.5,5) -- (5.75,3) node [text=black, left, pos=0.5,fill=none] {};
                \draw[black, thick, dashed] (-1,4.5) -- (4,4.5) node [text=black, above, pos=0.5,fill=none] {$b-1$};

                \draw[black, thick] (2.5,2.5) -- (5.75,0.5) node [text=black, left, pos=0.5,fill=none] {};
                \draw[black, thick] (3.5,2.5) -- (0.25,0.5) node [text=black, left, pos=0.5,fill=none] {};

                \draw[black, thick] (2.5,-4) -- (5.75,-2) node [text=black, left, pos=0.5,fill=none] {};
                \draw[black, thick] (3.5,-4) -- (0.25,-2) node [text=black, left, pos=0.5,fill=none] {};

                \draw[black, thick, dashed] (0.75,1.5) -- (0.75,-3) node [text=black, right, pos=0.5,fill=none] {$a-1$};

                \draw[black, thick] (-5,2.5) -- (-1.75,0.5) node [text=black, left, pos=0.5,fill=none] {};
                \draw[black, thick] (-4,2.5) -- (-7.25,0.5) node [text=black, left, pos=0.5,fill=none] {};

                \draw[black, thick] (-5,-4) -- (-1.75,-2) node [text=black, left, pos=0.5,fill=none] {};
                \draw[black, thick] (-4,-4) -- (-7.25,-2) node [text=black, left, pos=0.5,fill=none] {};

                \draw[black, thick, dashed] (-6.75,1.5) -- (-6.75,-3) node [text=black, right, pos=0.5,fill=none] {$a-1$};

                \draw[black, thick] (-2.25,4.25) -- (-2.25,-3) node [text=black, left, pos=0.5,fill=none] {};
                 \draw[black, thick] (5.25,4.25) -- (5.25,-3) node [text=black, left, pos=0.5,fill=none] {};
                 
			\end{tikzpicture}

& 

$a=\frac{1}{4} (-6 v( D) + v( {DF}))$

$b=\frac{1}{8} (4 v( D) - v( {DF}))$

& \scalebox{1.5}{
\clusterpicture            
  \Root[D] {1} {first} {r1};
  \Root[D] {} {r1} {r2};
  \Root[D] {2} {r2} {r3};
  \Root[D] {3} {r3} {r4};
    \Root[D] {} {r4} {r5};
    \Root[D] {2} {r5} {r6};
  \ClusterLD c1[][\frac{a}{2}] = (r1)(r2);
  \ClusterLD c2[][\frac{a}{2}] = (r4)(r5);
  \ClusterLD c3[][b] = (c1)(r3);
  \ClusterLD c4[][b] = (c2)(r6);
  \ClusterD c5[0] = (c3)(c4);
\endclusterpicture} \\ \hline 

$E$ & & &
\begin{tikzpicture}
				[scale=0.2, auto=left,every node/.style={circle,fill=black!20,scale=0.6}]

                \node[white] (n1) at (3,8) {};
                
                \draw[black, thick] (0.5,5) -- (-2.75,3) node [text=black, left, pos=0.5,fill=none] {};
                \draw[black, thick] (2.5,5) -- (5.75,3) node [text=black, left, pos=0.5,fill=none] {};
                \draw[black, thick, dashed] (-1,4.5) -- (4,4.5) node [text=black, above, scale=0.8,pos=0.5,fill=none] {$a-1$};

                \draw[black, thick] (0.5,2.5) -- (-2.75,0.5) node [text=black, left, pos=0.5,fill=none] {};
                \draw[black, thick] (2.5,2.5) -- (5.75,0.5) node [text=black, left, pos=0.5,fill=none] {};
                \draw[black, thick, dashed] (-1,2) -- (4,2) node [text=black, above, scale=0.8,pos=0.5,fill=none] {$a-1$};

                \draw[black, thick] (0.5,0) -- (-2.75,-2) node [text=black, left, pos=0.5,fill=none] {};
                \draw[black, thick] (2.5,0) -- (5.75,-2) node [text=black, left, pos=0.5,fill=none] {};
                \draw[black, thick, dashed] (-1,-0.5) -- (4,-0.5) node [text=black, above, scale=0.8,pos=0.5,fill=none] {$a-1$};

                \draw[black, thick] (-2.25,4.25) -- (-2.25,-2.75) node [text=black, left, pos=0.5,fill=none] {};
                 \draw[black, thick] (5.25,4.25) -- (5.25,-2.75) node [text=black, left, pos=0.5,fill=none] {};
                 
			\end{tikzpicture}

& $a=-\frac{v( E)}{6}$ & \scalebox{1.5}{
\clusterpicture            
  \Root[D] {1} {first} {r1};
  \Root[D] {} {r1} {r2};
  \Root[D] {2.5} {r2} {r3};
  \Root[D] {} {r3} {r4};
    \Root[D] {2.5} {r4} {r5};
    \Root[D] {} {r5} {r6};
  \ClusterLD c1[][\frac{a}{2}] = (r1)(r2);
  \ClusterLD c2[][\frac{a}{2}] = (r3)(r4);
  \ClusterLD c3[][\frac{a}{2}] = (r5)(r6);
  \ClusterD c4[0] = (c1)(c2)(c3);
\endclusterpicture} \\ \hline

$F$
                & $FA$ & &
\begin{tikzpicture}
				[scale=0.2, auto=left,every node/.style={circle,fill=black!20,scale=0.6}]

                \node[white] (n1) at (3,8) {};
                
                \draw[black, thick] (0.5,5) -- (-2.75,3) node [text=black, left, pos=0.5,fill=none] {};
                \draw[black, thick] (2.5,5) -- (5.75,3) node [text=black, left, pos=0.5,fill=none] {};
                \draw[black, thick, dashed] (-1,4.5) -- (4,4.5) node [text=black, above, scale=0.8,pos=0.5,fill=none] {$a-1$};

                \draw[black, thick] (0.5,2.5) -- (-2.75,0.5) node [text=black, left, pos=0.5,fill=none] {};
                \draw[black, thick] (2.5,2.5) -- (5.75,0.5) node [text=black, left, pos=0.5,fill=none] {};
                \draw[black, thick, dashed] (-1,2) -- (4,2) node [text=black, above, scale=0.8,pos=0.5,fill=none] {$b-1$};

                \draw[black, thick] (0.5,0) -- (-2.75,-2) node [text=black, left, pos=0.5,fill=none] {};
                \draw[black, thick] (2.5,0) -- (5.75,-2) node [text=black, left, pos=0.5,fill=none] {};
                \draw[black, thick, dashed] (-1,-0.5) -- (4,-0.5) node [text=black, above, scale=0.8,pos=0.5,fill=none] {$b-1$};

                \draw[black, thick] (-2.25,4.25) -- (-2.25,-2.75) node [text=black, left, pos=0.5,fill=none] {};
                 \draw[black, thick] (5.25,4.25) -- (5.25,-2.75) node [text=black, left, pos=0.5,fill=none] {};
                 
			\end{tikzpicture}

$2b>a>b$

& 

$a=\frac{1}{2} (-3 v( F) + v( {FA}))$

$b=\frac{1}{2} (2 v( F) - v( {FA}))$

& \scalebox{1.5}{
\clusterpicture            
  \Root[D] {1} {first} {r1};
  \Root[D] {} {r1} {r2};
  \Root[D] {2.5} {r2} {r3};
  \Root[D] {} {r3} {r4};
    \Root[D] {2.5} {r4} {r5};
    \Root[D] {} {r5} {r6};
  \ClusterLD c1[][\frac{a}{2}] = (r1)(r2);
  \ClusterLD c2[][\frac{b}{2}] = (r3)(r4);
  \ClusterLD c3[][\frac{b}{2}] = (r5)(r6);
  \ClusterD c4[0] = (c1)(c2)(c3);
\endclusterpicture} \\ \hline 

$F$
                & $FB$ & &
\begin{tikzpicture}
				[scale=0.2, auto=left,every node/.style={circle,fill=black!20,scale=0.6}]

                \node[white] (n1) at (3,8) {};
                
                \draw[black, thick] (0.5,5) -- (-2.75,3) node [text=black, left, pos=0.5,fill=none] {};
                \draw[black, thick] (2.5,5) -- (5.75,3) node [text=black, left, pos=0.5,fill=none] {};
                \draw[black, thick, dashed] (-1,4.5) -- (4,4.5) node [text=black, above, scale=0.8,pos=0.5,fill=none] {$a-1$};

                \draw[black, thick] (0.5,2.5) -- (-2.75,0.5) node [text=black, left, pos=0.5,fill=none] {};
                \draw[black, thick] (2.5,2.5) -- (5.75,0.5) node [text=black, left, pos=0.5,fill=none] {};
                \draw[black, thick, dashed] (-1,2) -- (4,2) node [text=black, above, scale=0.8,pos=0.5,fill=none] {$b-1$};

                \draw[black, thick] (0.5,0) -- (-2.75,-2) node [text=black, left, pos=0.5,fill=none] {};
                \draw[black, thick] (2.5,0) -- (5.75,-2) node [text=black, left, pos=0.5,fill=none] {};
                \draw[black, thick, dashed] (-1,-0.5) -- (4,-0.5) node [text=black, above, scale=0.8,pos=0.5,fill=none] {$b-1$};

                \draw[black, thick] (-2.25,4.25) -- (-2.25,-2.75) node [text=black, left, pos=0.5,fill=none] {};
                 \draw[black, thick] (5.25,4.25) -- (5.25,-2.75) node [text=black, left, pos=0.5,fill=none] {};
                 
			\end{tikzpicture}

$a>2b$

& 

$a=\frac{1}{4} (2 v( F) - v( {FB}))$

$b=\frac{1}{4} (-4 v( F) + v( {FB}))$

& \scalebox{1.5}{
\clusterpicture            
  \Root[D] {1} {first} {r1};
  \Root[D] {} {r1} {r2};
  \Root[D] {2.5} {r2} {r3};
  \Root[D] {} {r3} {r4};
    \Root[D] {2.5} {r4} {r5};
    \Root[D] {} {r5} {r6};
  \ClusterLD c1[][\frac{a}{2}] = (r1)(r2);
  \ClusterLD c2[][\frac{b}{2}] = (r3)(r4);
  \ClusterLD c3[][\frac{b}{2}] = (r5)(r6);
  \ClusterD c4[0] = (c1)(c2)(c3);
\endclusterpicture} \\ \hline 

$F$
                & $FC$ & &
\begin{tikzpicture}
				[scale=0.2, auto=left,every node/.style={circle,fill=black!20,scale=0.6}]

                \node[white] (n1) at (3,8) {};
                
                \draw[black, thick] (0.5,5) -- (-2.75,3) node [text=black, left, pos=0.5,fill=none] {};
                \draw[black, thick] (2.5,5) -- (5.75,3) node [text=black, left, pos=0.5,fill=none] {};
                \draw[black, thick, dashed] (-1,4.5) -- (4,4.5) node [text=black, above, scale=0.8,pos=0.5,fill=none] {$a-1$};

                \draw[black, thick] (0.5,2.5) -- (-2.75,0.5) node [text=black, left, pos=0.5,fill=none] {};
                \draw[black, thick] (2.5,2.5) -- (5.75,0.5) node [text=black, left, pos=0.5,fill=none] {};
                \draw[black, thick, dashed] (-1,2) -- (4,2) node [text=black, above,scale=0.8, pos=0.5,fill=none] {$b-1$};

                \draw[black, thick] (0.5,0) -- (-2.75,-2) node [text=black, left, pos=0.5,fill=none] {};
                \draw[black, thick] (2.5,0) -- (5.75,-2) node [text=black, left, pos=0.5,fill=none] {};
                \draw[black, thick, dashed] (-1,-0.5) -- (4,-0.5) node [text=black, above, scale=0.8,pos=0.5,fill=none] {$b-1$};

                \draw[black, thick] (-2.25,4.25) -- (-2.25,-2.75) node [text=black, left, pos=0.5,fill=none] {};
                 \draw[black, thick] (5.25,4.25) -- (5.25,-2.75) node [text=black, left, pos=0.5,fill=none] {};
                 
			\end{tikzpicture}

$2b=a>b$

& 

$a=\frac{1}{2} (3 v( F) - v( {FC}))$ 

$b=\frac{1}{2} (-4 v( F) + v( {FC}))$

& \scalebox{1.5}{
\clusterpicture            
  \Root[D] {1} {first} {r1};
  \Root[D] {} {r1} {r2};
  \Root[D] {2.5} {r2} {r3};
  \Root[D] {} {r3} {r4};
    \Root[D] {2.5} {r4} {r5};
    \Root[D] {} {r5} {r6};
  \ClusterLD c1[][\frac{a}{2}] = (r1)(r2);
  \ClusterLD c2[][\frac{b}{2}] = (r3)(r4);
  \ClusterLD c3[][\frac{b}{2}] = (r5)(r6);
  \ClusterD c4[0] = (c1)(c2)(c3);
\endclusterpicture} \\ \hline 

\end{longtable}}

\end{center}

\begin{proof}[Proof of Theorem \ref{genus2theorem}]
The possible stable model trees and possible orderings on the distances between the vertices are listed below. In the orderings, we have only included the distances up to which we will show that the stable model tree is uniquely determined by the corresponding absolute invariants. We have labelled the edges consistently with the labelling of the lengths of chains in Table \ref{genus2table} above.

\begin{center}
\begin{minipage}{0.4\textwidth}
\begin{figure}[H]
			\begin{tikzpicture}
				[scale=0.5, auto=left,every node/.style={circle,fill=black!20,scale=0.6}]
				\node  (n1) at (0,0) {};
				\node (n2) at (2,0)  {};
				\node (n3) at (4,0)  {};

                \node (n4) at (-0.5,-1)  {};
                \node (n5) at (0.5,-1)  {};
                \node (n6) at (0,-1)  {};
                
                \node (n7) at (2,-1)  {};
                
                \node (n8) at (3.5,-1)  {};
                 \node (n9) at (4.5,-1)  {};

                 \draw (n1) -- (n4) node [text=black, pos=0.4, left, fill=none] {};
                 \draw (n1) -- (n5) node [text=black, pos=0.4, left, fill=none] {};

                 \draw (n1) -- (n6) node [text=black, pos=0.4, left, fill=none] {};
                 
                 \draw (n2) -- (n7) node [text=black, pos=0.4, left, fill=none] {};

                 \draw (n3) -- (n8) node [text=black, pos=0.4, left, fill=none] {};
                 \draw (n3) -- (n9) node [text=black, pos=0.4, left, fill=none] {};

				\draw (n1) -- (n2) node [text=black, pos=0.5, above, fill=none] {$2a$};
                \draw (n2) -- (n3) node [text=black, pos=0.5, above,fill=none] {$\frac{b}{2}$};
			\end{tikzpicture}
            \caption*{$T_1$}
   \end{figure}
   \end{minipage}\begin{minipage}{0.6\textwidth}

$I_1=\{4a>b\};$

$I_2=\{b>4a\};$

$I_3=\{4a=b\}.$

   \end{minipage}
   \end{center}
\begin{center}
\begin{minipage}{0.4\textwidth}
\begin{figure}[H]
			\begin{tikzpicture}
				[scale=0.5, auto=left,every node/.style={circle,fill=black!20,scale=0.6}]
				\node  (n1) at (0,0) {};
				\node (n3) at (4,0)  {};

                \node (n4) at (-0.25,-1)  {};
                \node (n5) at (0.25,-1)  {};
                \node (n6) at (3.5,-1)  {};
                \node (n7) at (4.5,-1)  {};

                \node (n8) at (-0.75,-1)  {};
                \node (n9) at (0.75,-1)  {};

				\draw (n1) -- (n4) node [text=black, pos=0.4, left, fill=none] {};
                \draw (n1) -- (n5) node [text=black, pos=0.4, left, fill=none] {};
                \draw (n1) -- (n8) node [text=black, pos=0.4, left, fill=none] {};
                \draw (n1) -- (n9) node [text=black, pos=0.4, left, fill=none] {};
                \draw (n3) -- (n6) node [text=black, pos=0.4, left, fill=none] {};
                \draw (n3) -- (n7) node [text=black, pos=0.4, left, fill=none] {};

				\draw (n1) -- (n3) node [text=black, pos=0.5, above, fill=none] {$\frac{a}{2}$};
			\end{tikzpicture}
            \caption*{$T_2$}
   \end{figure}
   \end{minipage}\begin{minipage}{0.6\textwidth}

\hspace{10pt}

   \end{minipage}
\end{center}
\begin{center}
\begin{minipage}{0.4\textwidth}
\begin{figure}[H]
			\begin{tikzpicture}
				[scale=0.5, auto=left,every node/.style={circle,fill=black!20,scale=0.6}]
				\node  (n1) at (0,0) {};
				\node (n2) at (4,0)  {};

                \node (n4) at (-0.5,-1)  {};
                \node (n5) at (0.5,-1)  {};
                \node (n6) at (3.5,-1)  {};
                \node (n7) at (4.5,-1)  {};
                \node (n8) at (0,-1)  {};
                 \node (n9) at (4,-1)  {};

                \draw (n1) -- (n4) node [text=black, pos=0.4, left, fill=none] {};
                \draw (n1) -- (n5) node [text=black, pos=0.4, left, fill=none] {};
                 \draw (n1) -- (n8) node [text=black, pos=0.4, left, fill=none] {};
                \draw (n2) -- (n6) node [text=black, pos=0.4, left, fill=none] {};
                \draw (n2) -- (n7) node [text=black, pos=0.4, left, fill=none] {};
			\draw (n2) -- (n9) node [text=black, pos=0.4, left, fill=none] {};

				\draw (n1) -- (n2) node [text=black, pos=0.5, above, fill=none] {$2a$};
			\end{tikzpicture}
            \caption*{$T_3$}
   \end{figure}
   \end{minipage}\begin{minipage}{0.6\textwidth}
   
\hspace{10pt}

   \end{minipage}
\end{center}
\begin{center}
\begin{minipage}{0.4\textwidth}
\begin{figure}[H]
			\begin{tikzpicture}
				[scale=0.5, auto=left,every node/.style={circle,fill=black!20,scale=0.6}]
                \node (n10) at (2,2) {};
				\node  (n1) at (0,0) {};
				\node (n2) at (2,0)  {};
				\node (n3) at (4,0)  {};

                \node (n4) at (-0.5,-1) {};
                \node (n5) at (0.5,-1) {};
                
                \node (n6) at (1.5,-1) {};
                \node (n7) at (2.5,-1) {};

                \node (n8) at (3.5,-1) {};
                \node (n9) at (4.5,-1) {};

                \draw (n1) -- (n4); 
                \draw (n1) -- (n5); 
                
                \draw (n2) -- (n6); 
                \draw (n2) -- (n7); 

                \draw (n3) -- (n8); 
                \draw (n3) -- (n9);

				\draw (n10) -- (n1) node [text=black, pos=0.4, left, fill=none] {$\frac{a}{2}$};
                \draw (n10) -- (n2) node [text=black, pos=0.6, right,fill=none] {$\frac{b}{2}$};
				\draw (n10) -- (n3) node [text=black, pos=0.4, right,fill=none] {$\frac{c}{2}$};
			\end{tikzpicture}
            \caption*{$T_4$}
   \end{figure}
   \end{minipage}\begin{minipage}{0.6\textwidth}

$I_1=\{a>b>c, \ a<b+c\}; \quad I_5=\{a=b=c\};$

$I_2=\{a>b>c, \ a>b+c\}; \quad I_6=\{a>b=c,a<2b\};$

$I_3=\{a>b>c, \ a=b+c\}; \quad I_7=\{a>b=c,a>2b\};$

$I_4=\{a=b>c\}; \hspace{57pt} I_8=\{a>b=c,a=2b\}.$

   \end{minipage}
\end{center}
\begin{center}
\begin{minipage}{0.4\textwidth}
\begin{figure}[H]
			\begin{tikzpicture}
				[scale=0.5, auto=left,every node/.style={circle,fill=black!20,scale=0.6}]
				\node  (n1) at (0,0) {};
				\node (n2) at (2,0)  {};
				\node (n3) at (4,0)  {};

                \node (n4) at (-0.5,-1)  {};
                \node (n5) at (0.5,-1)  {};
                \node (n6) at (1.5,-1)  {};
                \node (n7) at (2.5,-1)  {};
                \node (n8) at (3.5,-1)  {};
                 \node (n9) at (4.5,-1)  {};

                 \draw (n1) -- (n4) node [text=black, pos=0.4, left, fill=none] {};
                 \draw (n1) -- (n5) node [text=black, pos=0.4, left, fill=none] {};

                 \draw (n2) -- (n6) node [text=black, pos=0.4, left, fill=none] {};
                 \draw (n2) -- (n7) node [text=black, pos=0.4, left, fill=none] {};

                 \draw (n3) -- (n8) node [text=black, pos=0.4, left, fill=none] {};
                 \draw (n3) -- (n9) node [text=black, pos=0.4, left, fill=none] {};

				\draw (n1) -- (n2) node [text=black, pos=0.5, above, fill=none] {$\frac{a}{2}$};
                \draw (n2) -- (n3) node [text=black, pos=0.5, above,fill=none] {$\frac{b}{2}$};
			\end{tikzpicture}
            \caption*{$T_5$}
   \end{figure}
   \end{minipage}\begin{minipage}{0.6\textwidth}

$I_1=\{a>b\};$

$I_2=\{a=b\}.$

   \end{minipage}
\end{center}
\begin{center}
\begin{minipage}{0.4\textwidth}
\begin{figure}[H]
			\begin{tikzpicture}
				[scale=0.5, auto=left,every node/.style={circle,fill=black!20,scale=0.6}]
				\node  (n1) at (0,0) {};
				\node (n2) at (1.33,0)  {};
				\node (n3) at (2.66,0)  {};
                \node (n4) at (4,0)  {};

                \node (n5) at (-0.5,-1)  {};
                \node (n6) at (0.5,-1)  {};
                
                \node (n7) at (1.33,-1)  {};
                
                \node (n8) at (2.66,-1)  {};
                
                \node (n9) at (3.49,-1)  {};
                 \node (n10) at (4.49,-1)  {};

                 \draw (n1) -- (n5) node [text=black, pos=0.4, left, fill=none] {};
                 \draw (n1) -- (n6) node [text=black, pos=0.4, left, fill=none] {};

                 \draw (n2) -- (n7) node [text=black, pos=0.4, left, fill=none] {};
                 
                 \draw (n3) -- (n8) node [text=black, pos=0.4, left, fill=none] {};

                 \draw (n4) -- (n9) node [text=black, pos=0.4, left, fill=none] {};
                 \draw (n4) -- (n10) node [text=black, pos=0.4, left, fill=none] {};

				\draw (n1) -- (n2) node [text=black, pos=0.5, above, fill=none] {$\frac{a}{2}$};
                \draw (n2) -- (n3) node [text=black, pos=0.5, above,fill=none] {$2b$};
				\draw (n3) -- (n4) node [text=black, pos=0.5, above,fill=none] {$\frac{c}{2}$};
			\end{tikzpicture}
            \caption*{$T_6$}
   \end{figure}
   \end{minipage}\begin{minipage}{0.6\textwidth}

$I_1=\{a>c+4b\};$

$I_2=\{c+4b>a>c\};$

$I_3=\{a=c+4b\};$

$I_4=\{a=c\}.$

   \end{minipage}
\end{center}

By Proposition \ref{potgoodred}, $C/K$ has potentially good reduction if and only if $v(X(C))\geq 0$ for every absolute invariant listed above labelled with one letter. Following Definition \ref{invariantsdefinition}, we have 

\begin{center}
\begin{minipage}[t]{0.4\textwidth}
\begin{align}
A&=\textup{Inv}_{T_1,I_j,1} \textup{ for $j=1,2,3$};\\
B&=\textup{Inv}_{T_2,I,1};\\
C&=\textup{Inv}_{T_3,I,1};\\
D&=\textup{Inv}_{T_4,I_j,1} \textup{ for $j=1,2,3,4$};\\
D&=\textup{Inv}_{T_5,I_j,1} \textup{ for $j=1$ and $2$};\\
D&=\textup{Inv}_{T_6,I_j,1} \textup{ for $j=1,2,3,4$};\\
E&=\textup{Inv}_{T_4,I_5,1}; \\
F&=\textup{Inv}_{T_4,I_j,1} \textup{ for $j=6,7,8$};
\end{align}
\end{minipage}\begin{minipage}[t]{0.4\textwidth}
\begin{align}
AA&=\textup{Inv}_{T_1,I_1,2};\\
AB&=\textup{Inv}_{T_1,I_2,2};\\
AC&=\textup{Inv}_{T_1,I_3,2};\\
DA&=\textup{Inv}_{T_4,I_j,2} \textup{ for $j=1,2,3$};\\
DB&=\textup{Inv}_{T_4,I_4,2};\\
DC&=\textup{Inv}_{T_5,I_1,2};\\
DD&=\textup{Inv}_{T_5,I_2,2}; \\
DE&=\textup{Inv}_{T_6,I_j,2} \textup{ for $j=1,2,3$}; \\
DF&=\textup{Inv}_{T_6,I_4,2}; \\
FA&=\textup{Inv}_{T_4,I_6,2}; \\
FB&=\textup{Inv}_{T_4,I_7,2}; \\
FC&=\textup{Inv}_{T_4,I_8,2};\\
\end{align}
\end{minipage}\begin{minipage}[t]{0.2\textwidth}
\begin{align}
DAA&=\textup{Inv}_{T_4,I_1,3};\\
DAB&=\textup{Inv}_{T_4,I_2,3};\\
DAC&=\textup{Inv}_{T_4,I_3,3};\\
DEA&=\textup{Inv}_{T_6,I_1,3};\\
DEB&=\textup{Inv}_{T_6,I_2,3};\\
DEC&=\textup{Inv}_{T_6,I_3,3}. 
\end{align}
\end{minipage}
\end{center}

Theorem \ref{genus2theorem} then follows immediately from Theorem \ref{wholealgorithm}, and the fact that the final absolute invariant in the series of absolute invariants for a stable model tree with an ordering on the distances between the vertices above uniquely determines the stable model trees with such an ordering. The special fibres corresponding to the possible stable model trees above can be written down by constructing the BY tree using Proposition \ref{byfromtc} and applying Theorem 5.18 of \cite{m2d2}. 
\end{proof}

\begin{example}\label{genus2example}
Let us take the semistable genus $2$ curve given by the Weierstrass equation 
\begin{equation}
\mathcal{C}:y^2=x^6+6x^5-386x^4-1668x^3+17539x^2+67326x-50274
\end{equation}
over $\mathbb{Q}_7$. We will use Theorem \ref{genus2theorem} to recover the dual graph of the special fibre of the minimal regular model of $\mathcal{C}/\mathbb{Q}_7^{\textup{unr}}$. The valuations of absolute invariants above were calculated using SageMath \cite{sage}.

Denote by $v$ the $7$-adic valuation. First, we need the valuation of all absolute invariants that are labelled with one letter. We have
\begin{align}
    v(A(\mathcal{C}))=-11, \quad v(B(\mathcal{C}))=-21&, \quad v(C(\mathcal{C}))=-13,
    \quad v(D(\mathcal{C}))=-5, \\
    \quad v(E(\mathcal{C}))=-12, \quad &\textup{and} \quad  v(F(\mathcal{C}))=-9. 
\end{align}
Since at least one of them has valuation $< 0$, $\mathcal{C}/K$ does not have potentially good reduction. Since $K_1(A)=3$, $K_1(B)=6$, $K_1(C)=9$, $K_1(D)=1$, $K_1(E)=3$ and $K_1(F)=2$, we have
\begin{equation}
    B_1(A,\mathcal{C})=\frac{11}{6},\quad  B_1(B,\mathcal{C})=\frac{7}{4},  \quad B_1(C,\mathcal{C})=\frac{13}{18}, \quad B_1(D,\mathcal{C})=\frac{5}{2},  \quad B_1(E,\mathcal{C})=2, \quad \textup{and}\quad B_1(F,\mathcal{C})=\frac{9}{4}.
\end{equation}
Since $B_1(D,\mathcal{C})$ is the largest out of these, Theorem \ref{genus2theorem} tells us that $I_1=D$ and $\delta_{1}(\mathcal{C})=\frac{5}{2}$. Given this, in order to find the second absolute invariant of $C$ we need to calculate the valuation of $DA$, $DB$, $DC$, $DD$, $DE$ and $DF$. We find that 
\begin{align}
    v(DA(\mathcal{C}))=-14, \quad v(DB(\mathcal{C}))=-17&, \quad v(DC(\mathcal{C}))=-26,
    \quad v(DD(\mathcal{C}))=-37, \\
    \quad v(DE(\mathcal{C}))=-16, \quad &\textup{and} \quad  v(DF(\mathcal{C}))=-20.
\end{align}
Since $K_2(DA)=1$, $K_2(DB)=2$, $K_2(DC)=5$, $K_2(DD)=10$, $K_2(DE)=2$, $K_2(DF)=4$ and $\delta_1(\mathcal{C})=\frac{5}{2}$, we have 
\begin{align}
    B_2(DA,\mathcal{C})=2,\quad  B_2(DB,\mathcal{C})=\frac{7}{4},&  \quad B_2(DC,\mathcal{C})=\frac{8}{5}, \quad B_2(DD,\mathcal{C})=\frac{27}{20},  \\
    \quad B_2(DE,\mathcal{C})=\frac{3}{4}, \quad &\textup{and}\quad B_2(DF,\mathcal{C})=\frac{5}{4}.
\end{align}
Since $B_2(DA,\mathcal{C})$ is the largest, this tells us that $I_1I_2=DA$ and $\delta_2(\mathcal{C})=2$. Now, the possible $3$-rd absolute invariants are $DAA$, $DAB$ and $DAC$. We have 
\begin{equation}
    v(DAA(\mathcal{C}))=-26, \quad v(DAB(\mathcal{C}))=-35 \quad \textup{ and } \quad v(DAC(\mathcal{C}))=-38
\end{equation}
and so since $K_3(DAA)=1$, $K_3(DAB)=4$, $K_3(DAC)=5$ and $\delta_2(\mathcal{C})=2$, we obtain
\begin{equation}
    B_3(DAA,\mathcal{C})=\frac{3}{2}, \quad B_3(DAB,\mathcal{C})=\frac{3}{2} \quad \textup{ and } \quad B_3(DAC,\mathcal{C})=\frac{3}{2}.
\end{equation}
Thus $I_1I_2I_3=DAC$ and $\delta_3(C)=B_3(DAC,\mathcal{C})=\frac{3}{2}$. By Theorem \ref{genus2theorem}, this uniquely determines the special fibre of the minimal regular model of $C/K^{\textup{unr}}$ from Table \ref{genus2table} as the following, where each line represents a component that is isomorphic to $\mathbb{P}^1$.
\begin{center}
\begin{minipage}[b]{1\textwidth}
		\begin{center}
		\begin{figure}[H]
			\begin{tikzpicture}
				[scale=0.3, auto=left,every node/.style={circle,fill=black!20,scale=0.6}]
               
                \draw[black, thick] (2.25,2.5) -- (-2.75,1) node [text=black, left, pos=0.5,fill=none] {};
                \draw[black, thick] (0.75,2.5) -- (5.75,1) node [text=black, left, pos=0.5,fill=none] {};
                
                \draw[black, thick] (-2,-1) -- (5,-1) node [text=black, left, pos=0.5,fill=none] {};

                \draw[black, thick] (-2.25,2.25) -- (2,-5.25) node [text=black, left, pos=0.5,fill=none] {};
                 \draw[black, thick] (5.25,2.25) -- (1,-5.25) node [text=black, left, pos=0.5,fill=none] {};
                 
			\end{tikzpicture}
 \caption*{Special fibre of the minimal regular model of $C/K^{\textup{unr}}$}
		\end{figure}
				\end{center}
    \end{minipage}
\end{center}
The lengths of the chains can be calculated using the formula displayed in the table 
\begin{align}
a&=\frac{1}{2} (-2 v(D(\mathcal{C})) + 3 v(DA(\mathcal{C}) - v(DAC(\mathcal{C})))=\frac{1}{2}(-2\cdot(-5)+3\cdot(-14)-(-38))=3; \\
b&=\frac{1}{2} (-3 v(DA(\mathcal{C})) + v(DAC(\mathcal{C})))=\frac{1}{2}(-3\cdot(-14)+(-38))=2; \\
c&=\frac{1}{2} (6 v(D(\mathcal{C})) - 5 v(DA(\mathcal{C})) + v(DAC(\mathcal{C})))=\frac{1}{2}(6\cdot (-5)-5\cdot(-14)+(-38))=1. 
\end{align}
\end{example}

\appendix

\section{On invariants of genus 2 curves by Elisa Lorenzo Garc\'ia}\label{appendix}

Absolutely irreducible, smooth curves of genus $2$ over an algebraically closed field $K$ are hyperelliptic. Hence, if $K$ is a local field with odd characteristic, they are given by an affine hyperelliptic model $$y^2=f(x)=c_f\prod_{i=1}^d(x-x_i)$$ with $f\in K[x]$ a polynomial of degree $d=5$ or $6$. The projective model, in weighted projective space $\mathbb{P}^2_{1,3,1}(K)$ can be written as $$Y^2=f(X,Z)=\prod_{i=1}^6(\alpha_i X-\beta_i Z).$$
If $d=6$, then $c_f=\prod_{i=1}^6\alpha_i$ and $x_i=\beta_i/\alpha_i$ and if $d=5$ then $\alpha_6=0$, $c_f=\prod_{i=1}^5\alpha_i\cdot \beta_6$ and $x_i=\beta_i/\alpha_i$. 

The moduli space $\mathcal{M}_2$ of smooth genus 2 curves was first studied and parametrised by Igusa in \cite{igusa}. He defined the Igusa invariants $J_2$, $J_4$, $J_6$ and $J_{10}$ in \cite{igusa} p.621-622 as
\begin{eqnarray*}
J_2 &=&2^{-6}\sum(12)^2(34)^2(56)^2,\\
J_4 & =& 2^{-9}3^{-2}\left(48J_2^2-\sum(123)^2(456)^2\right),\\
J_6 & = & 2^{-7}3^{-2}\left(16J_2^3-320J_2J_4-\sum(123)^2(456)^2(14)^2(25)^2(36)^2\right),\\
J_{10} & =&2^{-12}\prod_{i<j}(ij)^2,
\end{eqnarray*}
where $(S)=\prod_{\substack{i,j\in S\\i\neq j}}(\alpha_j\beta_i-\alpha_i\beta_j)$ and the sums run through all permutations in the symmetric group $S_6$.

Although it may appear otherwise, these expressions are well-defined in characteristic $2$ and $3$ since they can be written as integer combinations of the coefficients of $f$ (see \S4 of \cite{igusa}). Prior to Igusa, Gordan \cite{gordan}, Clebsch \cite{clebsch} and Bolza \cite{bolza} had already studied some of these invariants for binary sextics.

For a general introduction to invariants of hyperelliptic curves (or binary forms), we invite the reader to consult \cite{LR11}. Briefly, two binary forms $f(X,Z)$ and $g(X,Z)$ of the same degree $d$ are said to be equivalent if there exists a matrix $M\in \operatorname{GL}_2(\bar{K})$ such that $f=g\circ M$, that is, if they define isomorphic hyperelliptic curves. A polynomial $P$ on the coefficients of a binary form of degree $d$ is said to be an \textit{invariant} of \textit{weight} $k$ if for all $M\in \operatorname{GL}_2(\bar{K})$, one has $P(f\circ M)=\operatorname{det}(M)^k\cdot P(f)$. An \textit{absolute invariant} is a quotient of same weight invariants, so that $\frac{P}{Q}(f\circ M)=\frac{P}{Q}(f)$.     

\begin{theorem}[Corollary on p. 632 of \cite{igusa}] Two sextic binary forms $f$ and $g$ define isomorphic genus 2 curves if and only if $(J_2(f):J_4(f):J_6(f):J_{10}(f))=(J_2(g):J_4(g):J_6(g):J_{10}(g))\in \mathbb{P}^3_{1,2,3,5}(\bar{K})$ as points is a weighted projective space. 
\end{theorem}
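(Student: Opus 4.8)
The plan is to prove the two implications separately: the forward one is a formal consequence of the weights of the $J_{2i}$, while the converse is where Igusa's structure theory of the invariant ring of binary sextics does the work.

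\emph{Forward implication.} Suppose $f$ and $g$ define isomorphic curves. By the description of isomorphisms of hyperelliptic curves recalled in the proof of Lemma \ref{actuallyareinvariants}, there is $M\in\operatorname{GL}_2(\bar K)$ with $f=g\circ M$. Since each factor $\alpha_j\beta_i-\alpha_i\beta_j$ scales by $\det(M)$, the invariants $J_2,J_4,J_6,J_{10}$ scale by $\det(M)^{6},\det(M)^{12},\det(M)^{18},\det(M)^{30}$ respectively. Writing $\lambda=\det(M)^{6}$, this is exactly the relation $(J_2:J_4:J_6:J_{10})(f)=(\lambda J_2:\lambda^{2}J_4:\lambda^{3}J_6:\lambda^{5}J_{10})(g)$ defining equality of points in $\mathbb{P}^3_{1,2,3,5}$, and since smoothness gives $J_{10}\neq0$ the point is well defined. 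This settles the ``only if'' direction.

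\emph{Converse.} Now assume the two weighted points agree. First I would normalise: as $J_{10}(f),J_{10}(g)\neq0$, applying a scalar matrix to $g$ (which multiplies the $J_{2i}$ by compatible powers of a single scalar, by the weights just computed) reduces us to the case $J_{2i}(f)=J_{2i}(g)$ for all $i$. It then suffices to show that two binary sextics of nonzero discriminant with equal $J_2,J_4,J_6,J_{10}$ are $\operatorname{GL}_2(\bar K)$-equivalent (the degree-$5$ case being the locus $\alpha_6=0$, which is not $\operatorname{GL}_2$-stable and so causes no loss of generality). The structural input is the classical theorem (Clebsch \cite{clebsch}, Bolza \cite{bolza}, Gordan \cite{gordan}, and in all characteristics $\neq2$ by Igusa \cite{igusa}) that the full ring of invariants is generated by $J_2,J_4,J_6,J_{10}$ together with a single skew invariant $J_{15}$ of degree $15$, subject to one relation $J_{15}^2=G(J_2,J_4,J_6,J_{10})$. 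Thus $J_{15}$ is pinned down up to sign by the four even invariants; and the matrix $\operatorname{diag}(1,-1)$, of determinant $-1$, fixes $J_2,J_4,J_6,J_{10}$ (all of even weight) while sending $J_{15}\mapsto-J_{15}$, so the two sign choices lie in one common $\operatorname{GL}_2$-orbit. Hence the four even invariants determine the entire invariant tuple up to the action, and on the discriminant-nonzero locus, where the $\operatorname{SL}_2$-action is properly stable with closed orbits and finite stabilisers, the categorical quotient is a geometric quotient whose points biject with orbits; equal invariant values therefore force $f\sim g$.

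\emph{Main obstacle.} The real labour is twofold. First, the structural input above---that the invariant ring is generated by $J_2,J_4,J_6,J_{10},J_{15}$ with the single relation $J_{15}^2=G$---is Igusa's hard computation, carried out in \cite{igusa} uniformly in the characteristic by exhibiting the $J_{2i}$ as explicit integral polynomials in the coefficients of $f$; this is what keeps the invariants and the relation well defined in the small residue characteristics $3$ and $5$, and since we work over $\bar K$ with $\operatorname{char}\neq2$ it is exactly what Igusa's integral construction of $\mathcal{M}_2$ supplies. Second, to make the orbit-separation explicit (and to treat the strata where the quotient argument is delicate) I would invoke the reconstruction of Mestre \cite{mestre} and Liu \cite{liu}: from the absolute invariants one assembles a ternary conic and cubic whose intersection returns the six branch points up to $\operatorname{PGL}_2$, recovering the curve up to isomorphism. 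The points demanding genuine care are the special strata on which this conic degenerates---curves with extra automorphisms---which must be analysed separately or via the charts of $\mathbb{P}^3_{1,2,3,5}$ centred at the successive vanishing loci of the $J_{2i}$.
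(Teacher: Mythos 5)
Your proposal is correct in outline, but it takes a genuinely different route from the paper, for the simple reason that the paper contains no proof of this statement at all: it is quoted as the Corollary on p.~632 of \cite{igusa}, whose own proof runs through Igusa's explicit construction of the arithmetic moduli space $\mathcal{M}_2$ as the projective spectrum of the ring of even invariants. You instead reprove the statement from three ingredients: the weight computation $J_{2i}(f)=\det(M)^{6i}J_{2i}(g)$ for the forward direction; the structure theorem for the $\operatorname{SL}_2$-invariant ring of binary sextics (generators $J_2,J_4,J_6,J_{10},J_{15}$ with the single relation $J_{15}^2=G$); and GIT separation of properly stable orbits, with the sign ambiguity of the skew invariant $J_{15}$ resolved by the determinant $-1$ substitution $\operatorname{diag}(1,-1)$. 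This is a sound argument, and it correctly isolates the one genuinely subtle point: why the four even invariants separate $\operatorname{GL}_2$-orbits even though they do not generate the full $\operatorname{SL}_2$-invariant ring. The citation buys the paper brevity and uniformity in characteristic; your argument buys transparency about the mechanism, at the cost of still outsourcing the two hard inputs. Three caveats are worth recording. First, an isomorphism of hyperelliptic curves gives $f=e^2\,(g\circ M)$ rather than $f=g\circ M$; the scalar must be absorbed into $M$, which is harmless over $\bar K$ but should be said. Second, in positive characteristic $\operatorname{SL}_2$ is not linearly reductive, so the claim that invariants separate disjoint closed orbits requires geometric reductivity, and the generation-and-relation statement in characteristics $3$ and $5$ is exactly part of what Igusa verifies — so your proof ultimately rests on the same classical foundations as the citation it replaces. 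Third, the closing paragraph invoking the Mestre--Liu reconstruction is superfluous and slightly muddles the logic: the GIT argument already handles curves with extra automorphisms, since finite stabilisers are no obstruction to orbit separation, so no separate analysis of special strata is needed once the quotient argument is in place.
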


\begin{remark} An equivalent result holds in characteristic $2$, but a fifth invariant $J_8$ is needed.
\end{remark}

Igusa invariants are indeed invariants according to previous definition (see \cite[I.11 Lecture XXIX]{hilbert}) and are easy to compute from the coefficients\footnote{This is not true for genus greater than $3$ since the expression of the invariants in terms of the coefficients of the binary form is too large. In this case, a basis of the invariants is computed using differential operators called \textit{transvectants} applied to the equation of the curve. Invariants obtained from transvectants are useful for the reconstruction process of a curve from its invariants, see e.g. \cite{thomas}.} of $f$, so there is no need to factor $f$ and compute its roots over $\Bar{K}$. Igusa invariants are implemented in SageMath \cite{sage} and Magma \cite{magma}. They determine isomorphism classes of genus $2$ curves, and given a point $(J_2:J_4:J_6:J_{10})\in \mathbb{P}^3_{1,2,3,5}(\bar{K})$ with $J_{10}\neq 0$, there exists a genus $2$ curve with those given invariants (see \cite{mestre}). For this reason, we say that they parametrise the moduli space of irreducible smooth projective curves of genus $2$. Cardona and Quer used these invariants to describe the stratification of $\mathcal{M}_2$ by automorphism groups \cite{cardona}. 

Another application of Igusa invariants is describing reduction types: first Mestre \cite{mestre} in large characteristic (using a similar and simplified argument to the one in this paper) and then Liu \cite{liu2} in all characteristics, described the special fibre of the stable model of genus $2$ curves in terms of the valuations of some combinations of Igusa invariants. In this paper, Cowland Kellock goes further and describes the special fibre of the minimal regular model in the semistable case in terms of valuations of invariants (see Theorem \ref{wholealgorithm}). 

The goal of this Appendix is to give expressions of the genus $2$ absolute invariants defined in \S\ref{genus2section} in terms of Igusa invariants. Although the absolute invariants are quotients of polynomials that are symmetric in the roots of $f(x)$ so can be written in terms of the coefficients of $f(x)$, the expressions are large and this would be computationally expensive; writing them in terms of the Igusa invariants avoids this issue. This avoids expressing the absolute invariants in terms of the coefficients of the equation of the curve, which are very large. The most compact way to write them down is as a linear combination of products of Igusa invariants.

\begin{example} In terms of the roots of a Weierstrass equation for the curve, the absolute invariant $D$ from \S\ref{genus2section} can be written as 
$$D=\frac{\frac{1}{|\textup{Stab}|}\sum_{S_6}(12,34)^3(1234,56)^2(56)^2}{\Delta},$$ 
where $(S,T)=\prod_{i\in S, j\in T}(\alpha_j\beta_i-\alpha_i\beta_j)$, $(S)^2=\prod_{\substack{i,j\in S\\i\neq j}}(\alpha_j\beta_i-\alpha_i\beta_j)^2$, $\Delta=\prod_{i<j}(ij)^2$ and $|\textup{Stab}|$ is the stabiliser of $(12,34)^3(1234,56)^2(56)^2$ under the natural action of $S_6$. The numerator and denominator are invariants of weight $10$ and we can write
$$\sum_{S_6}(12,34)^3(1234,56)^2(56)^2=-1/2J_2J_4^2 + 2J_4J_6$$ 
in terms of the Igusa invariants $J_2$, $J_4$ and $J_6$. However, in terms of the coefficients of the curve
$$y^2=ax^6+bx^5+cx^4+dx^3+ex^2+fx+g,$$ the sum takes $53$ lines, the first $2$ being:
\footnotesize{\begin{align*}-7383200a^5g^5 + 61236000a^4bfg^4 + 12830400a^4ceg^4 -
    15552000a^4cf^2g^3 + 7873200a^4d^2g^4 - 16912800a^4defg^3 &+\\
7290000a^4df^3g^2 + 6842880a^4e^3g^3 - 2916000a^4e^2f^2g^2 -
    15552000a^3b^2eg^4 - 13932000a^3b^2f^2g^3 - 16912800a^3bcdg^4 &+\dots
\end{align*}}
\normalsize 
Notice that the absolute invariants $DEA$ and $DEC$ in \S\ref{genus2section} are the ones with numerator and denominator of the largest weight, and this weight is $100$.  
\end{example}

\begin{theorem}(Prop. 3 in \cite{igusa}) The ring of invariants of binary forms of degree $6$ is generated by the Igusa invariants, which are algebraically independent.
\end{theorem}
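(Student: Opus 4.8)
The statement is classical, going back to Clebsch and Bolza and refined by Igusa, and the plan is to recover it by combining a transcendence-degree count, the Jacobian criterion, and a Hilbert series computation. First I would fix the grading: write $V$ for the $7$-dimensional space of binary sextics and regard the ring of invariants $R=\bigoplus_k R_k$ as the ring of $\operatorname{SL}_2$-invariant polynomials on $V$, graded by degree in the coefficients $a_0,\dots,a_6$ (so $J_2,J_4,J_6,J_{10}$ lie in degrees $2,4,6,10$). By Hilbert's finiteness theorem $R$ is a finitely generated graded algebra, and the reductivity of $\operatorname{SL}_2$ together with the Hochster--Roberts theorem makes $R$ Cohen--Macaulay; I would use both facts as black boxes.

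Next I would pin down the transcendence degree and algebraic independence. Since a generic sextic has only the finite (order-two) stabilizer coming from the hyperelliptic involution, the $\operatorname{SL}_2$-orbits are $3$-dimensional and the fraction field of $R$ has transcendence degree $7-3=4$. Hence at most four invariants can be algebraically independent, and it suffices to verify that $J_2,J_4,J_6,J_{10}$ are. For this I would apply the Jacobian criterion: exhibit one sextic $f_0$ with distinct roots at which the four gradients $\nabla J_2,\dots,\nabla J_{10}$ (with respect to $a_0,\dots,a_6$) are linearly independent, a finite and mechanical check obtained by specializing to explicit roots. Algebraic independence follows, and the four invariants form a homogeneous system of parameters provided $R$ is finite over $\mathbb{C}[J_2,J_4,J_6,J_{10}]$; I would establish this by showing that their common zero locus in $V$ lies in the nullcone (every sextic annihilating all four is unstable), a Hilbert--Mumford computation.

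The crux, and the main obstacle, is generation: proving that these invariants together with a single further invariant of degree $15$ generate all of $R$. Here I would compute the Hilbert--Poincar\'e series $\sum_k(\dim_{\mathbb{C}}R_k)\,t^k$ by the Cayley--Sylvester / Molien--Weyl formula for $\operatorname{SL}_2$ acting on binary sextics and show that it equals
\[ \frac{1+t^{15}}{(1-t^{2})(1-t^{4})(1-t^{6})(1-t^{10})}. \]
Given the system of parameters from the previous step and the Cohen--Macaulay property, this series forces $R$ to be a free module of rank two over $\mathbb{C}[J_2,J_4,J_6,J_{10}]$ with basis $\{1,J_{15}\}$, where $J_{15}$ is any invariant of degree $15$ outside the subring generated by the even invariants (its existence is guaranteed by the $t^{15}$ term, and $J_{15}^2$ then necessarily lies in $\mathbb{C}[J_2,J_4,J_6,J_{10}]$).

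This yields both assertions in the operative sense: $R$ is generated by the Igusa invariants, and the even invariants $J_2,J_4,J_6,J_{10}$ are algebraically independent (the degree-$15$ invariant satisfying a relation rather than being independent, which is the mild imprecision hidden in the one-line phrasing). I expect the hardest and least formulaic part to be the exact evaluation of the Poincar\'e series and the verification that no generator of degree exceeding $15$ is required; the algebraic independence, by contrast, reduces to a single-point Jacobian computation. Since all absolute invariants appearing in \S\ref{genus2section} are quotients of invariants of equal \emph{even} weight, the algebraically independent subring $\mathbb{C}[J_2,J_4,J_6,J_{10}]$ is exactly what is needed to express them, which is how the result is used here.
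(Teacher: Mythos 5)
The paper contains no proof of this statement: it is quoted verbatim from Igusa (Proposition 3 of \cite{igusa}) and used in the appendix purely as a black box, both for the corollary counting invariants of weight $k$ by restricted partitions and to justify the interpolation that rewrites the numerators of the absolute invariants of \S\ref{genus2section} in terms of $J_2,J_4,J_6,J_{10}$. So there is nothing internal to compare your argument against; you have supplied an actual proof where the paper has a citation. Your route is the standard modern one --- Hochster--Roberts gives Cohen--Macaulayness, the nullcone/Hilbert--Mumford computation shows that $J_2,J_4,J_6,J_{10}$ form a homogeneous system of parameters, Cohen--Macaulayness plus finiteness forces freeness, and the Cayley--Sylvester/Molien series $(1+t^{15})/((1-t^2)(1-t^4)(1-t^6)(1-t^{10}))$ pins down a free basis $\{1,J_{15}\}$ --- and it is sound as a skeleton. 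You are also right, and it is to your credit, that the statement as quoted is imprecise: the full ring of $\mathrm{SL}_2$-invariants (equivalently of weighted invariants in the sense of this paper, since for sextics any $\mathrm{SL}_2$-invariant of degree $n$ in the coefficients is a $\mathrm{GL}_2$-invariant of weight $3n$) is \emph{not} generated by the four Igusa invariants; the skew invariant of degree $15$ is needed, and only the even part $A=\mathbb{C}[J_2,J_4,J_6,J_{10}]$ of your decomposition $R=A\oplus J_{15}A$ is a polynomial ring. That even part is precisely what the appendix's corollary and interpolation use, so your structure theorem does deliver the statement in the form in which the paper needs it.

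Two caveats keep this from being a complete proof. First, the two load-bearing computations are asserted rather than performed: (a) that every sextic with $J_2=J_4=J_6=J_{10}=0$ has a root of multiplicity at least $4$, which is what makes the four invariants a homogeneous system of parameters (and which also renders your separate Jacobian check redundant, since a homogeneous system of parameters of a four-dimensional ring is automatically algebraically independent); and (b) the evaluation of the Molien--Weyl integral yielding the displayed Poincar\'e series. Both are classical facts, but they are exactly where the content of the theorem lives. Second, your argument is intrinsically a characteristic-zero argument: $\mathrm{SL}_2$ is not linearly reductive in positive characteristic, so Hochster--Roberts and Molien--Weyl are both unavailable there, whereas Igusa's own proof is arithmetic (over $\mathbb{Z}$) and is what makes the cited result valid in the odd positive characteristics allowed for the local fields of this paper; it also explains the appendix's remark that characteristic $2$ requires the extra invariant $J_8$. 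For the appendix's purpose this is nearly harmless (the interpolation is carried out over $\mathbb{Q}$, and the resulting identities, with denominators supported at $2$, reduce to odd characteristic), but it is a genuine gap between what your argument proves and what the cited proposition asserts. A last small slip: the order-two generic stabilizer is the kernel $\{\pm I\}$ of $\mathrm{SL}_2\to\mathrm{PGL}_2$, which acts trivially on sextics; it is not the hyperelliptic involution, which acts on $y$ and not on the binary form.
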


\begin{corollary} The dimension of the vector space of invariants of weight $k$ of smooth genus 2 curves is equal to the numbers of restricted partitions of $k$ where the parts are restricted to being elements of $\{2,4,6,10\}$.
\end{corollary}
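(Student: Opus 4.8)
The plan is to read off the dimension count directly from the structure theorem just cited, which identifies the full ring of invariants with a polynomial ring on four algebraically independent homogeneous generators of known weights. Write $R=\bigoplus_{k\geq 0}R_k$ for the ring of invariants of binary sextics, where $R_k$ is the $K$-vector space of invariants of weight $k$. This is a graded $K$-algebra: a product of invariants of weights $k$ and $k'$ is an invariant of weight $k+k'$ because $\det(M)^k\det(M)^{k'}=\det(M)^{k+k'}$, and the sum is direct because an invariant of weight $k$ is homogeneous of degree $k/3$ in the coefficients of $f$, so invariants of distinct weights have distinct polynomial degrees and are linearly independent.

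First I would invoke the preceding theorem (Prop.~3 of \cite{igusa}), that $R$ is generated by $J_2,J_4,J_6,J_{10}$ and that these are algebraically independent over $K$. Introduce the polynomial algebra $S=K[x_2,x_4,x_6,x_{10}]$, graded by setting $\deg(x_{2i})=2i$, so that $S=\bigoplus_k S_k$. The substitution $x_{2i}\mapsto J_{2i}$ defines a $K$-algebra homomorphism $\varphi\colon S\to R$. Since each $J_{2i}$ is homogeneous of weight $2i$, the image $\varphi(x_2^a x_4^b x_6^c x_{10}^d)=J_2^a J_4^b J_6^c J_{10}^d$ has weight $2a+4b+6c+10d$, so $\varphi(S_k)\subseteq R_k$ and $\varphi$ is a morphism of graded algebras. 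Generation makes $\varphi$ surjective and algebraic independence makes it injective; a bijective graded morphism restricts to an isomorphism on each homogeneous component, hence $\dim_K R_k=\dim_K S_k$ for every $k$.

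It then remains to compute $\dim_K S_k$, which is elementary. The monomials $x_2^a x_4^b x_6^c x_{10}^d$ with $(a,b,c,d)\in\mathbb{Z}_{\geq 0}^4$ form a $K$-basis of $S$, and such a monomial lies in $S_k$ exactly when $2a+4b+6c+10d=k$. Thus $\dim_K S_k$ is the number of tuples $(a,b,c,d)$ of non-negative integers solving $2a+4b+6c+10d=k$. Reading $a,b,c,d$ as the multiplicities of the parts $2,4,6,10$ respectively, such a tuple is precisely a partition of $k$ into parts drawn from $\{2,4,6,10\}$, which gives the claim. Equivalently, the Hilbert series of $R$ is $\prod_{i\in\{2,4,6,10\}}(1-t^i)^{-1}$, whose coefficient of $t^k$ is the stated restricted partition number.

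There is essentially no serious obstacle: all the mathematical content sits in the cited structure theorem, and the remainder is the standard dictionary between monomials in weighted generators and restricted partitions. The one point deserving a moment of care is the upgrade from an abstract ring isomorphism to a \emph{graded} isomorphism; this is immediate once one records that each $J_{2i}$ is homogeneous of the asserted weight $2i$, so that $\varphi$ preserves the grading and the equality of dimensions holds degree by degree.
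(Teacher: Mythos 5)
Your argument is correct and coincides with the paper's treatment: the corollary is stated there with no explicit proof, being regarded as the immediate observation that, by Igusa's theorem, the weight-$k$ piece of the invariant ring has as a basis the monomials $J_2^aJ_4^bJ_6^cJ_{10}^d$ with $2a+4b+6c+10d=k$, which is exactly your graded-isomorphism/Hilbert-series formalization. The only nit is a harmless normalization slip: your aside that a weight-$k$ invariant is of degree $k/3$ in the coefficients refers to the convention $I(f\circ M)=\det(M)^k I(f)$ (under which $J_{2i}$ actually has weight $3\cdot 2i$ for sextics), whereas the corollary and the rest of your proof index $J_{2i}$ by weight $2i$; under either convention weight is proportional to degree, so the linear independence of invariants of distinct weights, which is all you use this for, still holds.
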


The idea of this Appendix is to use interpolation to express the numerators of the absolute invariants from \S\ref{genus2section} terms of a linear combination of products of Igusa invariants. This technique is already used in \cite[Thm. 4.1]{elisa}. For an invariant $I$ of weight $k$ we proceed as follows:
\begin{itemize}
\item[i)] Compute the partitions of $k$ restricted to $\{2,4,6,10\}$.
\item[ii)] Compute $I$ and the Igusa Invariants of many curves of the form $y^2=x(x^2-1)(x-a)(x-b)(x-c)$ and the corresponding products $J_2^{e_2}J_4^{e_4}J_6^{e_6}J_{10}^{e_{10}}$ with $2e_2+4e_4+6e_6+10e_{10}=k$.
\item[iii)] Solve the system $I=\sum_e \lambda_e\cdot J_2^{e_2}J_4^{e_4}J_6^{e_6}J_{10}^{e_{10}}$.
\item[iv)] Check that the solution is unique.
\end{itemize}

The Magma code used to compute the numerators of the absolute invariants in terms of Igusa invariants as well as their description is available in \cite{ancillaryfile}. We caution the reader that this is the numerator of the invariants where the sum over the whole symmetric group is taken, rather than the sum modulo stabilisers. For each of the $24$ absolute invariants we had to solve a linear system over the rational numbers, the largest ones being of size $713\times2500$ and $947\times1500$. Each of them took less than a couple of minutes using the servers at \textit{Université de Rennes 1}. When producing several curves to perform the interpolation it was important to produce enough linearly independent equations, otherwise solving the linear system took very long and we had to stop the program after a couple of hours. In this Appendix, we have only written down the absolute invariant $D$ in terms of Igusa invariants because the other absolute invariants contain too many terms and would take a large amount of space. 

Using the absolute invariants from \S\ref{genus2section} written in terms of the Igusa invariants, we implemented the algorithm from Theorem \ref{genus2theorem} that outputs the `balanced cluster picture' with depths from Table \ref{genus2table} of a given degree $5$ or $6$ univariate polynomial and the corresponding stable reduction type \cite{ancillaryfile}. The algorithm was tested and double checked with several tens of curves having each of the possible reduction types.

\end{document}